\documentclass[leqno,11pt]{amsart}

\usepackage[letterpaper,margin=1in]{geometry}
\usepackage{arydshln}
\usepackage[all]{xy} % diagrams
%\xyoption{all}
\usepackage{amsmath, amssymb, amsfonts, latexsym, mdwlist, amsthm, amscd,mathabx,braket}
\usepackage{subfig}
\usepackage{graphicx}
\usepackage{wrapfig}
\usepackage{etex}
\usepackage{tikz-cd}
%\usepackage[bookmarks, colorlinks, breaklinks, pdftitle={},
%pdfauthor={}]{hyperref}
%\hypersetup{linkcolor=blue,citecolor=blue,filecolor=black,urlcolor=blue}
\usepackage[colorlinks=true, citecolor=blue, urlcolor=blue, linkcolor=blue, pagebackref]{hyperref}
%\usepackage{cite}

%Uncomment this for final versions:
%\usepackage{showkeys}

%\sloppy

%The following are tools for tikz:
\usepackage{tikz}
\usetikzlibrary{calc,trees,positioning,arrows,chains,shapes.geometric,%
    decorations.pathreplacing,decorations.pathmorphing,shapes,%
    matrix,shapes.symbols}

\tikzset{
>=stealth',
  punktchain/.style={
    rectangle,
    rounded corners,
    % fill=black!10,
    draw=black, thick,
    minimum height=3em,
    text centered,
    on chain},
  line/.style={draw, thick, <-},
  eLement/.style={
    tape,
    top color=white,
    bottom color=blue!50!black!60!,
    minimum width=8em,
    draw=blue!40!black!90, very thick,
    text width=10em,
    minimum height=3.5em,
    text centered,
    on chain},
  every join/.style={->, thick,shorten >=1pt},
  decoration={brace},
  tuborg/.style={decorate},
  tubnode/.style={midway, right=2pt},
}
%\usepackage{pgfplots}
%\pgfplotsset{width=6cm,compat=1.9}

%alphabetical enumerate
\usepackage{paralist}
\usepackage{enumitem} 
\setdefaultenum{(1)}{(a)}{(i)}{}
\setlist[enumerate,1]{label={\upshape(\arabic*)}}
\setlist[enumerate,2]{label={\upshape(\alph*)},ref=\theenumi\upshape(\alph*)}
\setlist[enumerate,3]{label={\upshape(\roman*)},ref=\theenumi\theenumii\upshape(\roman*)}
\usepackage[capitalize]{cleveref}
\crefname{Prop}{Proposition}{Propositions}
\crefname{Thm}{Theorem}{Theorems}
\crefname{Lem}{Lemma}{Lemmas}
%\crefname{theorem}{Theorem}{Theorems}
\crefname{enumi}{Case}{Cases}

% for space-saving description environments}

%%%%%%%%%%%%%%%%%%%% Some abbreviations %%%%%%%%%%%%%%

%\def\L{\ensuremath{\mathbb{L}}}

\def\dim{\mathop{\mathrm{dim}}\nolimits}

 % means local Ext

%\def\lHom{\mathop{\underline{\mathrm{Hom}}}\nolimits} % means local Hom

\def\min{\mathop{\mathrm{min}}\nolimits}

\def\MG13{\ensuremath{{\mathcal M}_{\Gamma_1(3)}}}
\def\tildeMG13{\ensuremath{\widetilde{\mathcal M}_{\Gamma_1(3)}}}

%%%%%Macro-added%%%%%%%%%%%

%%%%%%%%%%%%%%%%%%%%%%%

\newcommand\TFILTB[3]{%
%  #1  an object to filtrate
%  #2  quotients
%  #3  end of the filtration
%  Example \TFILTB E A n
\xymatrix@=1pc{
{0 = {#1}_0} \ar[rr]&&
{{#1}_1} \ar[rr]\ar[ld] &&
{{#1}_2} \ar[r]\ar[ld] &
{\cdots} \ar[r] & { {#1}_{#3-1}} \ar[rr] &&
{{#1}_{#3} = {#1}} \ar[ld]
\\
& *{{#2}_1} \ar@{.>}[ul] &&
{{#2}_2} \ar@{.>}[ul] & &&&
{{#2}_{{#3}}} \ar@{.>}[ul]
}}

%\newcommand{\com}{{\scriptscriptstyle\bullet}}

% Allows for repeating a theorem number:
\makeatletter
\newtheorem*{rep@theorem}{\rep@title}
\newcommand{\newreptheorem}[2]{%
\newenvironment{rep#1}[1]{%
 \def\rep@title{#2 \ref{##1}}%
 \begin{rep@theorem}}%
 {\end{rep@theorem}}}
\makeatother

%\swapnumbers
\newtheorem{Thm}{Theorem}[section]
\newreptheorem{Thm}{Theorem}
\newtheorem{Prop}[Thm]{Proposition}

\newtheorem{Lem}[Thm]{Lemma}

\newtheorem{Cor}[Thm]{Corollary}
\newreptheorem{Cor}{Corollary}

\newreptheorem{Con}{Conjecture}

\newtheorem*{theorem*}{Theorem}
\newtheorem*{lemma*}{Lemma}
\newtheorem*{proposition*}{Proposition}
\newtheorem*{conjecture*}{Conjecture}
\newtheorem*{corollary*}{Corollary}
\newtheorem*{problem*}{Problem}

\newtheorem{Thm-int}{Theorem}

\theoremstyle{definition}
\newtheorem{Def-s}[Thm]{Definition}
\newtheorem{Def}[Thm]{Definition}
\newtheorem{Rem}[Thm]{Remark}

\newtheorem{Prob}[Thm]{Problem}
\newtheorem{Ex}[Thm]{Example}

%\def\i{\mathbf{i}}

%This command creates a box marked ``To Do'' around text.
%To use type \todo{  insert text here  }.

\setcounter{tocdepth}{1}

\newcommand{\ignore}[1]{}
 \maxdeadcycles=200
\begin{document}

\title{Rigid Schubert classes in partial flag varieties}
\author{Yuxiang Liu ${}^{1}$, Artan Sheshmani${}^{1,2,3}$ and  Shing-Tung Yau$^{2,4}$}

\address{${}^1$ Beijing Institute of Mathematical Sciences and Applications, No. 544, Hefangkou Village, Huaibei Town, Huairou District, Beijing 101408}

\address{${}^2$  Massachusetts Institute of Technology, IAiFi Institute, 77 Massachusetts Ave, 26-555. Cambridge, MA 02139, artan@mit.edu}

\address{${}^3$ National Research University Higher School of Economics, Russian Federation, Laboratory of Mirror Symmetry, NRU HSE, 6 Usacheva str.,Moscow, Russia, 119048}
\address{${}^4$ Yau Mathematical Sciences Center, Tsinghua University, Haidian District, Beijing, China}

%\author{Yuxiang Liu}
%\author{Artan Sheshmani}
%\author{Shing-Tung Yau}

\begin{abstract}
A Schubert class is called {\em rigid} if it can only be represented by Schubert varieties. The rigid Schubert classes have been classified in Grassmannians \cite{Coskun2011RigidAN} and orthogonal Grassmannians \cite{YL}. In this paper, we study the rigidity problem in partial flag varieties of classical type. In particular, we give numerical conditions that ensure a Schubert class is rigid.
    
\end{abstract}

\maketitle
\noindent{\bf MSC codes:} 14M15, 14M17, 51M35, 32M10.

\noindent{\bf Keywords:} Schubert variety, Schubert class, Partial flag variety, Rigidity, Multi-rigidity. 

\tableofcontents

\section{Introduction}
The Schubert varieties form a distinguished class of subvarieties in a rational homogeneous space. In the basic example of Grassmannians, those subvarieties have a great geometric interpretation: they can be depicted by the rank conditions imposed by a partial flag. One may then ask whether those rank conditions are preserved under rational equivalence. In other words, given a Schubert class in a Grassmannian, can one expect the same rank conditions among all the representatives of the class? This question was answered in \cite{YL}.

In this paper, we study a similar problem in the setting of partial flag varieties of classical type. In particular, we give numerical conditions that ensure a Schubert class is rigid.
\subsection{Partial flag varieties (of type $A$)}
Let $V$ be a complex vector space of dimension $n$. The partial flag variety $F=F(d_1,...,d_k;n)$ parametrizes all $k$-step partial flags $\Lambda_1\subset...\subset \Lambda_k$, where $\Lambda_i$ are subspaces of $V$ with dimension $d_i$, $1\leq i\leq k$. Let $F_{a_1}^{\alpha_1}\subset...\subset F_{a_{d_k}}^{\alpha_{d_k}}$
be a partial flag of linear subspaces of $V$, where the lower indices indicate the vector space dimension of the flag elements and the upper indices are integers between $1$ and $k$ such that $\#\{i|\alpha_i\leq t\}=d_t$ for all $1\leq t\leq k$. Set 
$$\mu_{i,j}:=\#\{s|a_s\leq a_i,\alpha_s\leq j\}.$$ 
The corresponding Schubert variety is defined to be the following locus 
$$\Sigma_{a^\alpha}(F_\bullet):=\{(\Lambda_1,...,\Lambda_k)|\dim(F_{a_i}\cap\Lambda_j)\geq \mu_{i,j}, \text{ for }1\leq i\leq d_k, \alpha_i\leq j\leq k\}.$$

Its rational equivalence class is independent of the choice of $F_\bullet$ and is denoted by $\sigma_{a^\alpha}\in A^*(F;\mathbb{Z})$. Given a Schubert class $\sigma_{a^\alpha}$, for each $1\leq i\leq d_k$ we ask the following question:
\begin{Prob}\label{problem}
Let $X$ be a subvariety of $F(d_1,...,d_k;n)$ with class $[X]=\sigma_{a^\alpha}$. Does there exist a vector subspace $F_{a_i}\subset V$ of dimension $a_i$ such that 
$$\dim(F_{a_i}\cap \Lambda_j)\geq\mu_{i,j}, \text{ for all } (\Lambda_1,...,\Lambda_k)\in X\text{ ? }$$
\end{Prob}
If the answer is true for all the representatives of $\sigma_{a^\alpha}$, then we call the corresponding sub-index $a_i$ {\em rigid}. For Schubert varieties, such vector subspaces always exist by definition. It is also easy to check that those vector subspaces are unique if and only if $\text{ either }a_{i}< a_{i+1}-1\text{ or }\alpha_{i}<\alpha_{i+1}.$ The sub-indices that satisfy one of the two inequalities are called essential.

In the case of Grassmannians (i.e. the partial flag varieties with $k=1$), the rigid sub-indices have been classified in \cite{YL},

For a general flag variety $F(d_1,...,d_k;n)$, there are canonical projections 
$$\pi_t:F(d_1,...,d_k;n)\rightarrow G(d_t,n),\ t=1,...,k.$$
The push-forward of Schubert classes under those canonical projections are well-defined. Our first main result is the answer to Problem \ref{problem}.

\begin{Thm}
Let $\sigma_{a^\alpha}$ be a Schubert class in $F(d_1,...,d_k;n)$. Let $a_i$ be an essential sub-index. For every representative $X$ of $\sigma_{a^\alpha}$, there exists a linear subspace $F_{a_i}$ of dimension $a_i$ such that 
$$\dim(F_{a_i}\cap \Lambda_j)\geq\mu_{i,j}, \text{ for all } (\Lambda_1,...,\Lambda_k)\in X$$
if and only if $a_i$ is rigid with respect to the Schubert class $(\pi_t)_*(\sigma_{a^\alpha})$ in $G(d_t,n)$ for some $1\leq t\leq k$.
\end{Thm}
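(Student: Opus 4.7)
The plan is to prove both directions of the equivalence, exploiting the structural fact that $\pi_t$ sends Schubert varieties to Schubert varieties: $(\pi_t)_*\sigma_{a^\alpha}$ is the Grassmannian Schubert class with sub-indices $\{a_s : \alpha_s \leq t\}$, and is non-zero precisely when $\pi_t$ is generically injective on $\Sigma_{a^\alpha}$. The rigidity hypothesis in the Grassmannian is non-vacuous only for $t$ giving non-zero push-forward, so we tacitly restrict to such $t$.

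For the backward direction $(\Leftarrow)$, I would fix $t$ such that $a_i$ is rigid in $G(d_t,n)$ for $(\pi_t)_*\sigma_{a^\alpha}$. Given any representative $X$ of $\sigma_{a^\alpha}$, the image $\pi_t(X)$ represents $(\pi_t)_*\sigma_{a^\alpha}$, so Grassmannian rigidity supplies an $a_i$-dimensional subspace $F_{a_i} \subset V$ with $\dim(F_{a_i} \cap \Lambda_t) \geq \mu_{i,t}$ for every $(\Lambda_1,\ldots,\Lambda_k) \in X$. The remaining task is to extend this to $\dim(F_{a_i} \cap \Lambda_j) \geq \mu_{i,j}$ for each $j \in [\alpha_i, k]$. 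For $j > t$, the flag inclusion $\Lambda_t \subseteq \Lambda_j$ already gives $\dim(F_{a_i} \cap \Lambda_j) \geq \mu_{i,t}$; to reach $\mu_{i,j}$, I would show that the closed locus $\{(\Lambda_\bullet) \in X : \dim(F_{a_i} \cap \Lambda_j) < \mu_{i,j}\}$ must be empty via an intersection-theoretic count against $[X] = \sigma_{a^\alpha}$. The case $j < t$ is handled analogously, using that $\mu_{i,j} \leq \mu_{i,t}$ by the combinatorial definition.

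For the forward direction $(\Rightarrow)$, I would argue by contrapositive: if $a_i$ fails to be rigid in $G(d_t,n)$ for every $t$, I construct a representative $X$ of $\sigma_{a^\alpha}$ admitting no valid $F_{a_i}$. Pick $t$ with non-zero push-forward; by hypothesis, there is $Y_t \subset G(d_t,n)$ of class $(\pi_t)_*\sigma_{a^\alpha}$ for which no $a_i$-dimensional subspace satisfies $\dim(F_{a_i} \cap \Lambda_t) \geq \mu_{i,t}$ on all of $Y_t$. Lift $Y_t$ to $F$ by taking, over each $\Lambda_t \in Y_t$, a representative of the generic fiber of $\pi_t|_{\Sigma_{a^\alpha}} \to \Sigma_{a^t}$, i.e.\ the ``vertical'' product of Schubert varieties encoding the rank conditions $\mu_{s,j}$ for $j \neq t$ relative to $\Lambda_t$. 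The resulting $X$ satisfies $[X] = \sigma_{a^\alpha}$ by the class decomposition under the flag bundle $\pi_t$, and $\pi_t(X) = Y_t$. If some $F_{a_i}$ were valid for $X$, it would satisfy $\dim(F_{a_i} \cap \Lambda_t) \geq \mu_{i,t}$ on $\pi_t(X) = Y_t$, contradicting the Grassmannian non-rigidity.

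The principal obstacle is the propagation step in $(\Leftarrow)$: bridging the single Grassmannian condition at $\Lambda_t$ to the full family of conditions indexed by $j \in [\alpha_i, k]$. Flag monotonicity alone is insufficient whenever $\mu_{i,j}$ strictly increases with $j$, which happens as soon as some $s \neq i$ satisfies $a_s \leq a_i$ and $\alpha_s > t$. Resolving this will likely require the essentiality of $a_i$, which guarantees uniqueness of the $F_{a_i}$ produced by Grassmannian rigidity, together with a class-level codimension bound ruling out failure loci of positive measure inside $X$; identifying and exploiting the correct combinatorial/intersection identity is the crux of the proof.
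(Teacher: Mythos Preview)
Your overall architecture matches the paper's: the backward direction is Theorem~\ref{rigid index in f}, and the forward direction is the explicit construction that follows it. But the proposal is a plan, not a proof, and the gaps you flag are real.

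\textbf{Backward direction.} You correctly locate the difficulty in propagating the single condition at $\Lambda_t$ to all $\Lambda_j$, and you are right that flag monotonicity alone fails. But your claim that the case $j<t$ is handled ``analogously, using that $\mu_{i,j}\le\mu_{i,t}$'' is mistaken: from $\Lambda_j\subset\Lambda_t$ one only gets $\dim(F_{a_i}\cap\Lambda_j)\ge\mu_{i,t}-(d_t-d_j)$, and $\mu_{i,t}-(d_t-d_j)\le\mu_{i,j}$ in general, so this goes the wrong way. Both cases $j<t$ and $j>t$ require genuine work. The paper's mechanism is not a bare intersection count against $[X]$ but a computation of the class of the \emph{fiber} of $\pi_t|_X$ over a general $\Lambda_t$ (Corollary~\ref{class of fiber}). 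One then argues by contradiction: if $w:=\dim(F_{a_i}\cap\Lambda_t)<\mu_{i,t}$ for some other level $t$, the fiber class forces a specific sub-index (e.g.\ $c_{\mu_{i,j}}=\mu_{i,t}$ when $j<t$) that is incompatible with the rank conditions imposed by $W=F_{a_i}\cap\Lambda_t$ or by $U=\mathrm{span}(\Lambda_t,F_{a_i})$. Essentiality of $a_i$ enters precisely in the $t<j$ case, to force $a_l-l>a_i-i$ for the next relevant index $l$, which gives the numerical contradiction. Your ``codimension bound ruling out failure loci'' is too vague to substitute for this.

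\textbf{Forward direction.} Your lift idea is correct in spirit, but ``pick $t$ with non-zero push-forward'' and ``take the generic fiber'' gloss over two points the paper handles explicitly. First, the choice of $t$ is not arbitrary: the paper sets $I=k$ if $a_{i+1}\neq a_i+1$ and $I=\alpha_{i+1}$ otherwise, so that the Grassmannian counter-example $Y$ from the known non-rigid constructions actually lives in $G(d_I,n)$ and still carries a partial flag $F_{a_1}\subset\cdots\subset F_{a_i-2}\subset F_{a_i+2}\subset\cdots$ missing the three flag elements near $a_i$. Second, the lift $X$ is defined by imposing \emph{only} the rank conditions coming from this partial flag (omitting those at $a_i-1,a_i,a_i+1$); one then has to verify irreducibility, dimension, and $[X]=\sigma_{a^\alpha}$ by intersecting with Schubert varieties of complementary dimension. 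Without specifying which flag data $Y_t$ comes with and which conditions you omit in the lift, your construction could inadvertently determine an $F_{a_i}$ via the other imposed conditions.
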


Combining this result with the classification of rigid sub-indices in Grassmannians, we obtained the following numerical conditions that ensure a sub-index is rigid:
\begin{Cor}
Given a Schubert index $a^\alpha=(a_1^{\alpha_1},...,a_{d_k}^{\alpha_{d_k}})$ in $F(d_1,...,d_k;n)$. Set $a_{d_k+1}=\infty$ and $a_0=0$. An essential sub-index $a_i$ is rigid if and only if one of the following holds:
\begin{enumerate}
\item $a_{i+1}-a_i\geq 3$;
\item $a_{i+1}-a_i=2$ and 
\begin{enumerate}
\item either $a_i-a_{i-1}=1$; or
\item $\alpha_i<\alpha_{i+1}$
\end{enumerate}
\item $a_{i+1}-a_i=1$, and 
\begin{enumerate}
\item either $a_{i+2}-a_i\geq 3$; or
\item $\alpha_{i+2}>\alpha_i$; or
\item $a_{i-1}=a_i-1$ and $\alpha_{i-1}<\alpha_{i+1}$.
\end{enumerate}
\end{enumerate}
\end{Cor}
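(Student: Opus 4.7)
The plan is to apply the Theorem directly: it reduces rigidity of the essential sub-index $a_i$ in $F(d_1,\ldots,d_k;n)$ to the existence of some $t\in\{1,\ldots,k\}$ for which $a_i$ is a rigid sub-index of the Grassmannian Schubert class $(\pi_t)_*(\sigma_{a^\alpha})$ in $G(d_t,n)$. The key combinatorial observation is that the pushforward class has index set $\{a_s:\alpha_s\leq t\}$ with the original values $a_s$ preserved; varying $t$ therefore amounts to deleting or retaining individual $a_s$ according to whether $\alpha_s>t$ or $\alpha_s\leq t$. I will feed this reduction into the Grassmannian classification from \cite{YL}, which asserts that an essential sub-index $b_j$ of a Schubert class in $G(r,n)$ (with the conventions $b_0=0$, $b_{r+1}=\infty$) is rigid if and only if $b_{j+1}-b_j\geq 3$, or $b_{j+1}-b_j=2$ and $b_j-b_{j-1}=1$.

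For the sufficiency direction I would go through each numerical case and exhibit a $t$ for which $a_i$ is an essential, rigid sub-index of the pushforward. For Case (1), the choice $t=k$ already works, since every index is retained and the right-gap at $a_i$ equals $a_{i+1}-a_i\geq 3$. For Case (2a), $t=k$ again gives right-gap $2$ and left-gap $a_i-a_{i-1}=1$. For Case (2b), I would take $t=\alpha_i$; the hypothesis $\alpha_i<\alpha_{i+1}$ deletes $a_{i+1}$, so the new right-gap is at least $a_{i+2}-a_i\geq 3$. Essentiality of $a_i$ together with $a_{i+1}-a_i=1$ forces $\alpha_i<\alpha_{i+1}$ throughout Case (3), so $t=\alpha_i$ always deletes $a_{i+1}$: this handles (3a) by giving right-gap at least $a_{i+2}-a_i\geq 3$, and handles (3b) by additionally deleting $a_{i+2}$ to obtain right-gap at least $a_{i+3}-a_i\geq 3$. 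For (3c), I would instead take $t=\max(\alpha_i,\alpha_{i-1})$; both $\alpha_i$ and $\alpha_{i-1}$ lie strictly below $\alpha_{i+1}$, so $a_{i+1}$ is deleted while $a_{i-1}$ and $a_i$ are retained, producing right-gap at least $a_{i+2}-a_i\geq 2$ and left-gap exactly $a_i-a_{i-1}=1$.

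For the converse I would assume none of (1), (2), or (3) holds and check that no $t$ succeeds. If $a_{i+1}-a_i=2$ and both (2a), (2b) fail, then $a_i-a_{i-1}\geq 2$ and $\alpha_{i+1}\leq\alpha_i$; for every admissible $t\geq\alpha_i$ the index $a_{i+1}$ is retained, pinning the right-gap at $2$, and no pushforward index lies in the interval $(a_{i-1},a_i)$, so the left-gap is at least $2$, and the Grassmannian criterion fails. If instead $a_{i+1}-a_i=1$ and all of (3a), (3b), (3c) fail, then $a_{i+2}=a_i+2$, $\alpha_{i+2}\leq\alpha_i$, and either $a_{i-1}\neq a_i-1$ or $\alpha_{i-1}\geq\alpha_{i+1}$. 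For $t\geq\alpha_{i+1}$ the right-gap collapses to $1$ and $a_i$ becomes non-essential; for $\alpha_i\leq t<\alpha_{i+1}$ the index $a_{i+2}$ is still retained (since $\alpha_{i+2}\leq\alpha_i\leq t$) while $a_{i+1}$ is deleted, so the right-gap is exactly $2$, but the left-gap cannot be $1$, because $a_{i-1}$ is either unequal to $a_i-1$ or has $\alpha_{i-1}\geq\alpha_{i+1}>t$ and is therefore deleted.

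The main obstacle is organizing the converse case analysis to show that the listed clauses are \emph{exhaustive}. The conceptual point making it tractable is that widening the right-gap beyond $2$ requires deleting $a_{i+1}$ (and possibly $a_{i+2}$), which forces strict $\alpha$-inequalities to the right of $i$; keeping the left-gap equal to $1$ in the tight case of right-gap $2$ requires retaining $a_{i-1}$ as an immediate left-neighbor, which in turn imposes the extra compatibility $\alpha_{i-1}<\alpha_{i+1}$ of clause (3c).
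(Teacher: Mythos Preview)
Your proof is correct and follows essentially the same approach as the paper: both reduce the question to the Grassmannian classification via the projections $\pi_t$, and both handle the forward direction by exhibiting a suitable $t$ case by case and the converse by checking that no $t$ works. Your choices of $t$ differ cosmetically from the paper's (you use $t=k$ for cases (1) and (2a) where the paper uses $t=\alpha_i$), and in fact your choice for (2a) is the more careful one, since $t=\alpha_i$ can fail there when $\alpha_{i-1}>\alpha_i\geq\alpha_{i+1}$.
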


As an application, we characterize the rigid Schubert classes in $F(d_1,...,d_k;n)$. Given a representative $X$ of a Schubert class $\sigma_{a^\alpha}$, if all essential sub-indices are determined, then the previous result gives a unique set of linear subspaces. If furthermore these linear subspaces form a partial flag, then $X$ must be the Schubert variety defined by this partial flag. We introduce a relation on the set of all essential sub-indices that reflects the compatibility between two sub-indices:

\begin{Def}
Let $\sigma_{a^\alpha}$ be a Schubert class in $F(d_1,...,d_k;n)$. We define a relation `$\rightarrow$' between two sub-indices: $a_i\rightarrow a_j$ if $i<j$ and $a_j$ is essential in $(\pi_t)_*(\sigma_{a^\alpha})$ for some $t\geq \min(\alpha_i,\alpha_j)$. This relation extends to a strict partial order (which we also denote by `$\rightarrow$') on the set of essential sub-indices by transitivity.
\end{Def}

\begin{Thm}\label{rigid in f}
A Schubert class $\sigma_{a^\alpha}\in A(F(d_1,...,d_k;n))$ is rigid if and only if all essential sub-indices are rigid and the set of all essential sub-indices is strict totally ordered under the relation `$\rightarrow$'.
\end{Thm}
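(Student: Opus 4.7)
The plan is to prove the two directions of the if-and-only-if separately, reducing the flag-element containment question to the rigidity classification of sub-indices in Grassmannian pushforwards $\pi_t$.

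For necessity, if $\sigma_{a^\alpha}$ is rigid then every representative is a Schubert variety $\Sigma_{a^\alpha}(F_\bullet)$, and for each essential $a_i$ the flag element $F_{a_i}$ directly witnesses its rigidity. For the total ordering, I would argue the contrapositive: suppose some pair of essential indices $a_i,a_j$ with $i<j$ fails to satisfy $a_i\to a_j$ in the transitive closure. By choosing such a pair in a suitably minimal way, I can reduce to the case where the direct relation fails, i.e., $a_j$ is not essential in $(\pi_t)_*(\sigma_{a^\alpha})$ for any $t\geq \min(\alpha_i,\alpha_j)$. Starting from a Schubert variety $\Sigma_{a^\alpha}(F_\bullet)$, I would modify the flag by replacing $F_{a_j}$ with a general $a_j$-dimensional subspace $F'_{a_j}$ that does not contain $F_{a_i}$; the inessentiality of $a_j$ in every relevant pushforward ensures that no Grassmannian rank condition forces $F_{a_i}\subset F'_{a_j}$, so such a perturbation yields a subvariety of the same class $\sigma_{a^\alpha}$ that is not a Schubert variety, contradicting rigidity.

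For sufficiency, assume all essential sub-indices are rigid and strictly totally ordered under $\to$. Given any representative $X$, rigidity of each essential $a_i$ combined with the first main theorem produces a unique subspace $F_{a_i}$ of dimension $a_i$ with $\dim(F_{a_i}\cap \Lambda_s)\geq \mu_{i,s}$ for all $(\Lambda_1,\ldots,\Lambda_k)\in X$ and $\alpha_i\leq s\leq k$. The central step is to establish the nesting $F_{a_i}\subset F_{a_j}$ for all essential $i<j$. Using the total order, I would reduce to a chain of direct relations $a_i=a_{l_0}\to a_{l_1}\to\cdots\to a_{l_m}=a_j$ and handle each step $a_p\to a_q$ as follows: by definition $a_q$ is essential in some Grassmannian pushforward $(\pi_t)_*(\sigma_{a^\alpha})$, and $\pi_t(X)$ represents this Grassmannian Schubert class; uniqueness of the rigid subspace in the Grassmannian setting identifies the required dimension-$a_q$ subspace with $F_{a_q}$, and a parallel treatment of $F_{a_p}$ combined with the nested-flag structure of Grassmannian Schubert varieties forces $F_{a_p}\subset F_{a_q}$. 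Once all nestings are in place, the family $\{F_{a_i}\}$ (for essential $i$) extends canonically to a partial flag $F_\bullet$ whose non-essential entries are freely chosen to contain all earlier flag elements; the Schubert variety $\Sigma_{a^\alpha}(F_\bullet)$ then contains $X$ and has the same class, so $X=\Sigma_{a^\alpha}(F_\bullet)$.

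The main obstacle will be verifying the nesting $F_{a_p}\subset F_{a_q}$ for a single direct step $a_p\to a_q$, particularly when $a_p$ is not itself essential (or not even present) in the specific Grassmannian $G(d_t,n)$ that witnesses $a_q$'s essentiality. This is where the transitive closure earns its keep: by inserting intermediate essential sub-indices and, for each sub-step, selecting a pushforward in which both indices are visible and the Grassmannian rigidity classification applies directly, one can propagate the containment along the chain. The careful bookkeeping of how sub-indices migrate between pushforwards for different $t$, and how the flag-variety essentiality condition ``$\alpha_i<\alpha_{i+1}$ or $a_{i+1}-a_i\geq 2$'' translates into the purely numerical Grassmannian essentiality condition after applying $(\pi_t)_*$, is the technical heart of the argument.
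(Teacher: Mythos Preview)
Your outline tracks the paper's structure closely: for necessity you perturb one flag element to break containment when the link $a_i\to a_j$ fails, and for sufficiency you extract the subspaces $F_{a_i}$ from rigidity and then argue they nest. That is exactly how the paper proceeds.

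The gap is in the nesting step. You write that ``the nested-flag structure of Grassmannian Schubert varieties forces $F_{a_p}\subset F_{a_q}$,'' but $\pi_t(X)$ is only known to \emph{represent} a Grassmannian Schubert class, not to \emph{be} a Schubert variety; there is no defining flag to appeal to, and invoking one is circular. What is actually needed is the paper's Lemma~\ref{coincide}: for an arbitrary representative $Y$ of $\sigma_a$ in $G(k,n)$, if two subspaces $F_{a_i}$, $F_{a_j}$ satisfy $\dim(\Lambda\cap F_{a_i})\ge i$ and $\dim(\Lambda\cap F_{a_j})\ge j$ for all $\Lambda\in Y$, with $i<j$ and $a_j$ essential, then $F_{a_i}\subset F_{a_j}$. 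This is not a formality. The paper proves it by induction on $a_i-i$, passing to the incidence variety $\{(\Lambda,H):\Lambda\in Y,\ \Lambda\subset H\}$ and slicing by general hyperplanes $H$ to reduce the invariant. Your phrase ``uniqueness of the rigid subspace'' does not supply this: uniqueness (which follows from essentiality, not rigidity) tells you each $F_{a_\bullet}$ is well-defined, but not that two such subspaces are comparable.

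Your last paragraph correctly flags the visibility problem (when $\alpha_p>t$ so that $a_p$ is absent from $(\pi_t)_*(\sigma_{a^\alpha})$), but the proposed fix via intermediate essential indices does not by itself produce a common $t$ in which both endpoints are present and the larger is essential; you still need a statement like Lemma~\ref{coincide} applied in a Grassmannian where both indices appear. Once Lemma~\ref{coincide} is in hand, the paper simply applies it together with Theorem~\ref{rigid index in f} (which propagates the rank condition for $F_{a_i}$ to every level $t\ge\alpha_i$) to conclude nesting along each direct link, after which your conclusion $X=\Sigma_{a^\alpha}(F_\bullet)$ follows as you describe.
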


\subsection{Orthogonal partial flag varieties}Our next result is in orthogonal partial flag varieties which generalize orthogonal Grassmannians. Let $V$ be a complex vector space of dimension $n$, and let $q$ be a non-degenerate symmetric bilinear form on $V$. The orthogonal partial flag variety $OF(d_1,...,d_k;n)$ parametrizes all $k$-step partial flags $(\Lambda_1,...,\Lambda_k)$, where $\Lambda_i\subset \Lambda_{i+1}$ and $\Lambda_i$ are isotropic subspaces of dimension $d_i$, unless $n=2d_k$, in which case the parameter space has two irreducible components and we let $OF(d_1,...,d_k;n)$ denote one of the components.

The Schubert varieties in $OF(d_1,...,d_k;n)$ can be defined similarly by assigning each Schubert index in the orthogonal Grassmannian $OG(d_k,n)$ with an "upper index".
A Schubert index for $OF(d_1,...,d_k;n)$ consists of two sequences $$1\leq a_1^{\alpha_1}<...<a^{\alpha_s}_s\leq \left[\frac{n}{2}\right],$$
$$0\leq b^{\beta_1}_1<...<b^{\beta_{d_k-s}}_{d_k-s}\leq\left[\frac{n}{2}\right]-1,$$
such that there is no $i$ and $j$ such that $a_i-b_j=1$, and an arrangement of the elements in $a$ and $b$ into $k$ blocks so that the $i$-block has $d_i-d_{i-1}$ elements. The upper index $\alpha_i$ indicates the block number of $a_i$ and the upper index $\beta_j$ indicates the block number of $b_j$. When $n=2d_k$, we further require $s$ and $d_k$ have the same parity.

Fix a flag of isotropic subspaces $F_1\subset ...\subset F_{\left[n/2\right]}$ in $V$. Let $F_i^\perp$ be the orthogonal complement of $F_i$ with respect to $q$, $1\leq i\leq \left[n/2\right]$. When $n$ is even, the orthogonal complement of $F_{n/2-1}$ is a union of two maximal isotropic subspaces, one is $F_{n/2}$ and the other one belongs to the different irreducible component than $F_{n/2}$. By abuse of notation, we denote also by $F_{n/2-1}^\perp$ the maximal isotropic subspace in the orthogonal complement of $F_{n/2-1}$ other than $F_{n/2}$. 

Given a Schubert index $(a^\alpha,b^\beta)$ for $OF(d_1,...,d_k;n)$, we set
$$\mu_{i,t}:=\#\{c|a_c\leq a_i,\alpha_c\leq t\},$$
$$\nu_{j,t}:=\#\{d|\alpha_d\leq t\}+\#\{e|b_e\geq b_j,\beta_e\leq t\}.$$

The Schubert variety $\Sigma_{a^\alpha;b^\beta}$ is then defined to be the closure of the following locus:
$$\Sigma^\circ_{a^\alpha;b^\beta}:=\{(\Lambda_1,...,\Lambda_k)\in OF|\dim(F_{a_i}\cap \Lambda_t)= \mu_{i,t},\dim(F^\perp_{b_j}\cap \Lambda_t)= \nu_{j,t}\}.$$
We ask a similar question as Problem \ref{problem}:
\begin{Prob}\label{problem2}
Let $X$ be a subvariety in $OF(d_1,...,d_k;n)$ with class $[X]=\sigma_{a^\alpha;b^\beta}$. For each $1\leq i\leq s$, does there exist an isotropic subspace $F_{a_i}$ of dimension $a_i$ such that 
$$\dim(F_{a_i}\cap \Lambda_t)\geq \mu_{i,t}, \text{ for all } (\Lambda_1,...,\Lambda_k)\in X\text{ ? }$$
For each $1\leq j\leq d_k-s$, does there exist an isotropic subspace $F_{b_j}$ of dimension $b_j$ such that 
$$\dim(F^\perp_{b_j}\cap \Lambda_t)\geq \nu_{j,t},\text{ for all } (\Lambda_1,...,\Lambda_k)\in X \text{ ? }$$
\end{Prob}
If for some $i$ or $j$, the answer is true for all the representatives of $\sigma_{a^\alpha;b^\beta}$, then we call the corresponding $a_i$ or $b_j$ {\em rigid}. One may expect as in the previous section that, an essential sub-index $a_i$ or $b_j$ is rigid if and only if it is rigid with respect to the Schubert class $(\pi_t)_*(\sigma_{a^\alpha;b^\beta})$ for some $t$. However, it turns out that this condition is sufficient but not necessary. It can be seen from the following observations in orthogonal Grassmannians:
\begin{itemize}
\item If $a_i=b_j$ for some $i$ and $j$, then one should expect the same rigidity on both $a_i$ and $b_j$;
\item If a sub-quadric $Q_{n-b_j}\subset Q$ contains another sub-quadric of the maximal possible corank in $Q$, then the quadric $Q_{n-b_j}$ should also have the maximal possible corank. Therefore if $b_{j_1}>b_{j_2}$ and $b_{j_2}$ is essential, then the rigidity of $b_{j_1}$ should imply the rigidity of $b_{j_2}$.
\end{itemize}
To comply these observations, we introduce a new relation between essential sub-indices: 

\begin{Def}
Let $\sigma_{a^\alpha;b^\beta}$ be a Schubert class in $OF(d_1,...,d_k;n)$. Set
 $$E:=\{a_i|a_i \text{ essential}\}\cup\{b_j|b_j\text{ essential}\}.$$ We define a relation `$\Rightarrow$' between two sub-indices: $b_{j_1}\Rightarrow b_{j_2}$ if $j_2<j_1\neq\frac{n}{2}-1$ and $b_{j_2}$ is essential in $(\pi_t)_*(\sigma_{a^\alpha;b^\beta})$ for some $t\geq \min(\beta_{j_1},\beta_{j_2})$; $a_i\Rightarrow b_j$ and also $b_j\Rightarrow a_i$ if $a_i=b_j\neq \frac{n}{2}-1$. We then extend this relation to $E$ by transitivity and reflexivity (which we also denote by `$\Rightarrow$').
\end{Def}

Our next main result is the answer to Problem \ref{problem2}:
\begin{Thm}
Let $\sigma_{a^\alpha;b^\beta}$ be a Schubert class in $OF(d_1,...,d_k;n)$. An essential sub-index $a_i$ or $b_j$ is rigid if and only if there exists an element $e\in E$ such that $e\Rightarrow a_i$ (or $e\Rightarrow b_j$ resp.) and the sub-index $e$ is rigid with respect to the class $(\pi_t)_*(\sigma_{a^\alpha;b^\beta})$ for some $t$.
\end{Thm}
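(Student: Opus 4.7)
The plan is to handle the two implications separately: the sufficiency follows from projections combined with two elementary transfers of rigidity along $\Rightarrow$, while the necessity requires an explicit deformation argument.

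For sufficiency, I would first treat the base case $e = a_i$ (or $e = b_j$). For any representative $X$ of $\sigma_{a^\alpha;b^\beta}$, the image $\pi_t(X) \subset OG(d_t,n)$ represents $(\pi_t)_*(\sigma_{a^\alpha;b^\beta})$, and the assumed rigidity of $e$ for the pushforward produces an isotropic subspace $F_{a_i}$ (or a quadric $F_{b_j}^\perp$) realizing the required intersection bound with $\Lambda_t$ for every flag in $X$. The analogous bounds against the other $\Lambda_{t'}$ then follow automatically from the flag chain $\Lambda_1 \subset \cdots \subset \Lambda_k$ together with the explicit formulas defining $\mu_{i,t'}$ and $\nu_{j,t'}$, exactly as in the proof of the type $A$ theorem stated earlier in this paper.

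Next I would descend along the chain witnessing $e \Rightarrow a_i$. Two elementary moves cover all transitions. If $a_i = b_j$ with $a_i \Rightarrow b_j$, the isotropy of $F_{a_i}$ gives $F_{a_i} \subset F_{a_i}^\perp = F_{b_j}^\perp$, so the two rigidities are interchangeable. If $b_{j_1} \Rightarrow b_{j_2}$ with $b_{j_2} < b_{j_1}$, the nested containment $F_{b_{j_1}}^\perp \subset F_{b_{j_2}}^\perp$ together with essentiality of $b_{j_2}$ in the relevant pushforward lets one promote a rigid $F_{b_{j_1}}^\perp$ to a unique quadric $F_{b_{j_2}}^\perp$ of the required corank, using the observation that a sub-quadric of maximal corank forces its ambient quadric to also have maximal corank. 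The exclusion $j_1 \neq n/2 - 1$ in the definition of $\Rightarrow$ is exactly what is needed to avoid the two-family ambiguity for maximal isotropic subspaces when $n$ is even.

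For necessity, I argue by contrapositive. Assume no $e \in E$ with $e \Rightarrow a_i$ is rigid in any pushforward. I would construct a representative $X$ of $\sigma_{a^\alpha;b^\beta}$ admitting no isotropic subspace $F_{a_i}$ intersecting every $\Lambda_t$ in the prescribed dimension. Starting from the standard Schubert variety, the strategy is to apply a one-parameter deformation that breaks rigidity at $a_i$ while preserving the Chow class; such a deformation is assembled from the orthogonal Grassmannian deformations of \cite{YL} and the lifting procedure developed in the type $A$ argument above. The main obstacle I expect is in this direction: ensuring that the deformation preserves isotropy of all flags simultaneously, remains in the correct irreducible component when $n = 2d_k$, and is compatible with the parity constraint on $s$. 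The relation $\Rightarrow$ was designed precisely so that transitions force unavoidable rigidity; checking that outside of this relation the deformation is unobstructed requires a careful case analysis, guided by the classification of rigid sub-indices in $OG$ from \cite{YL} and organized according to whether the obstruction arises from a coincidence $a_i = b_j$, from a nested quadric relation, or from an essentiality condition in a pushforward.
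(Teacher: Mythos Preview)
Your sufficiency argument has a genuine gap at the transfer $a_i \Leftrightarrow b_j$ when $a_i = b_j$. You write that isotropy gives $F_{a_i} \subset F_{a_i}^\perp = F_{b_j}^\perp$, ``so the two rigidities are interchangeable.'' But that containment only yields $\dim(F_{a_i}^\perp \cap \Lambda_t) \geq \dim(F_{a_i} \cap \Lambda_t) \geq \mu_{i,t}$, whereas what is required is $\dim(F_{a_i}^\perp \cap \Lambda_t) \geq \nu_{j,t}$, and in general $\nu_{j,t} > \mu_{i,t}$. Already in $OG(2,7)$ with $a_1 = b_1 = 2$, for a single isotropic $2$-plane $\Lambda$ the condition $\dim(F_2 \cap \Lambda) \geq 1$ does \emph{not} force $\Lambda \subset F_2^\perp$. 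The correct equivalence (the paper's Lemma~\ref{atob}) is a statement about \emph{all} $\Lambda$ in a variety $X$ of the given class, not about individual $\Lambda$; its proof needs an induction on $a_i - i$ and on $j$ via incidence correspondences and class computations for the auxiliary loci $X_p$. Your sketch for $b_{j_1} \Rightarrow b_{j_2}$ is on the right track with the corank mechanism, but the actual argument (Lemma~\ref{rankbound}) must first identify the quadric $Q_X$ swept out by the $\Lambda$'s, invoke Coskun's dimension lemma to bound its corank, and for $j' \geq 2$ run a further induction through an intermediate variety $Z \subset OG(b_{j'},n)$ whose class is computed explicitly. The phrase ``nested containment $F_{b_{j_1}}^\perp \subset F_{b_{j_2}}^\perp$'' presupposes the existence of $F_{b_{j_2}}$, which is precisely what must be produced.

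For necessity, the paper does not deform: it builds explicit non-Schubert representatives as \emph{restriction varieties}, replacing the sub-quadrics $Q_{n-b_{j'}}^{b_{j'}}$ by ones of corank $b_{j'}-1$ for $j' \geq j$ (and adjusting the $a$-sequence to the maximal admissible one when some $a_{i'} = b_{j'}$). A separate lift from $OG(d_t,n)$ handles the case $a_i \neq b_j$ for all $j$, and a further replacement of quadrics not linked to $b_j$ under $\Rightarrow$ handles the remaining $b$-case. Your ``one-parameter deformation'' plan, besides being vague, does not engage with the isotropy and corank constraints that make restriction varieties the natural tool here; you would need to discover essentially this construction to carry the argument through.
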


As an application, we obtain the classification of rigid Schubert classes in orthogonal partial flag varieties. Similar to the previous section, one can expect a Schubert class to be rigid if all essential sub-indices are rigid and compatible.
We introduce a similar relation on the set of essential sub-indices that reflects the compatibility:
\begin{Def}
Let $\sigma_{a^\alpha;b^\beta}$ be a Schubert class in $OF(d_1,...,d_k;n)$. Set
 $$E:=\{a_i|a_i \text{ essential}\}\cup\{b_j|b_j\text{ essential}\}.$$ We define a relation `$\rightarrow$' between two sub-indices: 
\begin{itemize}
\item $a_{i_1}\rightarrow a_{i_2}$ if $i_1\leq i_2$ and $a_{i_2}$ is essential in $(\pi_t)_*(\sigma_{a^\alpha;b^\beta})$ for some $t\geq \min(\alpha_{i_1},\alpha_{i_2})$. (If $n$ is even and $a_{i_2}=\frac{n}{2}$, then we do not require $a_{i_2}$ to be essential.) ; 
\item $a_i\rightarrow b_j$ if $a_i\leq b_{j}$;
\item $b_j\rightarrow a_i$ if $b_j\leq a_i$, $b_j\neq \frac{n}{2}-1$ and both $a_i$ and $b_j$ are essential in $(\pi_t)_*(\sigma_{a^\alpha;b^\beta})$ for some $t\geq \min(\alpha_i,\beta_{j})$; 
\item $b_{j_1}\rightarrow b_{j_2}$ if $j_1\leq j_2$ and $b_{j_1}$ is essential in $(\pi_t)_*(\sigma_{a^\alpha;b^\beta})$ for some $t\geq \min(\beta_{j_1},\beta_{j_2})$. 
\end{itemize}
We then extend this relation to $E$ by transitivity (which we also denote by `$\rightarrow$').
\end{Def}

\begin{Thm}
Let $\sigma_{a^\alpha;b^\beta}$ be a Schubert class in $OF(d_1,...,d_k;n)$. The class $\sigma_{a^\alpha;b^\beta}$ is rigid if and only if all essential indices are rigid and the set $E$ is totally ordered under the relation `$\rightarrow$'. 
\end{Thm}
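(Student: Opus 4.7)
The plan is to adapt the strategy for the type $A$ result (Theorem \ref{rigid in f}) to the orthogonal setting, accommodating isotropicity and the anomalies at $b_j=\tfrac{n}{2}-1$. First I would establish sufficiency. Assume every essential sub-index is rigid and that $E$ is totally ordered under $\rightarrow$. Let $X$ be any subvariety of $OF(d_1,\dots,d_k;n)$ with $[X]=\sigma_{a^\alpha;b^\beta}$. For each essential $a_i$ (resp.\ essential $b_j$), the preceding theorem produces an isotropic subspace $F_{a_i}$ of dimension $a_i$ (resp.\ an isotropic $F_{b_j}$ whose orthogonal complement cuts $\Lambda_t$ in dimension at least $\nu_{j,t}$) valid on all of $X$. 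The goal is to show that these subspaces can be chosen to assemble into an isotropic partial flag; once this is done, $X$ is contained in the Schubert variety defined by that flag, and equality of the fundamental classes (together with irreducibility of the Schubert variety) upgrades the containment to equality, so $X$ is Schubert.

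The assembly into a flag is precisely what the total ordering buys. I would argue case by case according to the four types of relations defining $\rightarrow$. If $e_1\rightarrow e_2$ comes from $e_2$ being essential in the pushforward $(\pi_t)_*(\sigma_{a^\alpha;b^\beta})$ for some $t\ge\min$, then the rigidity analysis in that Grassmannian (or orthogonal Grassmannian) factor forces the linear space realizing $e_2$ to be contained in (or to contain, depending on whether the sub-index comes from $a$ or $b$) the one realizing $e_1$; the argument mimics how nesting is deduced in the type $A$ proof, replacing intersections with Grassmannians by intersections with the appropriate $F_{b_j}^\perp$. The relations of the form $a_i=b_j$ forcing $F_{a_i}=F_{b_j}$ (with isotropy automatic) ensure consistency between the two kinds of subspaces. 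Iterating along the total order and invoking the essentiality condition of each sub-index one by one inductively yields a genuine isotropic partial flag realizing all the rank conditions on $X$.

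For necessity I would argue by contrapositive in two steps. If some essential sub-index $a_i$ or $b_j$ is not rigid, then by definition there is a representative of $\sigma_{a^\alpha;b^\beta}$ for which no isotropic subspace of the prescribed dimension realizes the corresponding rank inequality on all of $X$; this $X$ is not a Schubert variety, so $\sigma_{a^\alpha;b^\beta}$ is not rigid. If instead every essential sub-index is rigid but $E$ fails to be totally ordered, pick two incomparable essential elements $e_1,e_2$ and construct a deformation of the standard Schubert variety $\Sigma_{a^\alpha;b^\beta}(F_\bullet)$ in which the subspaces associated to $e_1$ and $e_2$ are perturbed so as to no longer be nested (while preserving all other rank conditions); as in \cite{Coskun2011RigidAN, YL}, the construction proceeds by acting on the defining flag by a one-parameter family of linear maps that breaks the inclusion between $F_{e_1}$ and $F_{e_2}$ (or between $F_{b_j}^\perp$ and its partner). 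The resulting family is flat and specializes to a non-Schubert representative.

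The main obstacle will be the sufficiency direction, specifically verifying that the linear subspaces produced by the individual rigidity statements really can be chosen consistently along $\rightarrow$. The subtlety is that the rigidity theorem guarantees existence of each subspace, but does not automatically supply a single isotropic flag. One must argue that when $e_1\rightarrow e_2$ holds via essentiality in a common factor $(\pi_t)_*(\sigma_{a^\alpha;b^\beta})$, a single choice of $F_{e_1}$ predetermines $F_{e_2}$ as the unique subspace of the appropriate dimension through which all $\Lambda_t$-intersections factor, forcing inclusion; this requires careful tracking of the essentiality inequalities $a_{i+1}-a_i\ge 2$ etc.\ inside the orthogonal Grassmannian factors. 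The additional book-keeping imposed by $b_j=\tfrac{n}{2}-1$ (where the two irreducible components of the orthogonal complement cause $\Rightarrow$-style relations to break) and by even-dimensional quadrics with $a_i=\tfrac{n}{2}$ is where most of the technical work will concentrate, and the definition of $\rightarrow$ is tailored exactly to avoid these pathologies.
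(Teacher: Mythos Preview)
Your proposal is correct and follows essentially the same approach as the paper: for sufficiency you invoke rigidity of each essential sub-index to produce the isotropic subspaces and then use the total order on $E$ to assemble them into a genuine isotropic partial flag (the paper packages this flag-assembly step as a separate compatibility lemma, Lemma~\ref{cbile}, handling exactly the four cases of $\rightarrow$ that you outline); for necessity you produce counter-examples either from the non-rigidity constructions already given (the paper refers back to Corollary~\ref{rigidindexinof}) or by breaking the nesting between two incomparable essential elements. One minor remark: in the second necessity case the paper does not build a flat one-parameter degeneration but simply writes down directly a variety defined by a ``broken'' flag (replacing a single $F_{a_{i_2}}$ by a $G_{a_{i_2}}$ with $\dim(F_{a_{i_1}}\cap G_{a_{i_2}})=a_{i_1}-1$) and checks its class by intersection; your deformation language is not wrong, but the direct construction is cleaner and avoids any flatness verification.
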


\subsection{Generalized rigidity problem in Grassmannians}Let $X$ be an irreducible subvariety in the Grassmannian $G(k,n)$ with $[X]=\sum c_a\sigma_{a}$, where $\sigma_a$ are Schubert classes and $c_a$ are non-negative integers. For each $1\leq i\leq k$, set $$\gamma_i(X):=\min\{d|\exists F_d\in G(d,n)\text{ s.t. }\dim(F_d\cap \Lambda)\geq i, \forall \Lambda\in X\}.$$
We can ask the following question:
\begin{Prob}
Let $X$ be an irreducible subvariety with class $[X]=\sum c_a\sigma_{a}$. For each $1\leq i\leq k$, when does $\gamma_i(X)$ only depend on the class $[X]$?
\end{Prob}

In \S\ref{multi}, we partially answer this problem:
\begin{Thm}
Let $X$ be an irreducible subvariety with class $[X]=\sum_{a\in A}c_a\sigma_a$, $c_a\in \mathbb{Z}^+$. Set $m_i=\max\limits_{a\in A}\{a_i\}$. Define $$A_1:=\{a\in A|a_1=m_1\}$$ and inductively
$$A_i:=\{a\in A_{i-1}|a_i=\max\limits_{a'\in A_{i-1}}\{a_i'\}\},\  2\leq i\leq k.$$
If $\max\limits_{a\in A_i}\{a_i\}=m_i$ and there exists an index $a\in A_i$ such that $a_{i-1}+1=m_i\leq a_{i+1}-3$, then $\gamma_i(X)=m_i$.
\end{Thm}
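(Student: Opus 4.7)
The plan is to prove $\gamma_i(X) = m_i$ by establishing the two inequalities separately. For the lower bound $\gamma_i(X) \geq m_i$, I argue by contradiction via intersection theory. Suppose for contradiction that some $F_d$ of dimension $d < m_i$ satisfies $\dim(F_d \cap \Lambda) \geq i$ for every $\Lambda \in X$; equivalently, $X$ is contained in the special Schubert variety
$$\Sigma^{(i)}_d(F_d) := \{\Lambda \in G(k,n) \mid \dim(F_d \cap \Lambda) \geq i\},$$
whose class $\tau^{(i)}_d$ corresponds to the rectangular partition with $i$ parts equal to $n-k+i-d$. Pick any $a \in A$ with $a_i = m_i$. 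By Poincar\'e duality and Kleiman's transversality, a general translate $\Sigma_{a^\vee}(G_\bullet)$ meets $X$ in a reduced zero-cycle of length $c_a \geq 1$. This intersection lies inside $\Sigma^{(i)}_d(F_d) \cap \Sigma_{a^\vee}(G_\bullet)$, whose class, after translating $G_\bullet$ generically with respect to $F_d$ and invoking Kleiman again, equals $\sigma_{a^\vee}\cdot\tau^{(i)}_d \in A^*(G(k,n))$. A Pieri-rule check shows this product vanishes precisely when the rectangle $(n-k+i-d)^i$ fails to fit inside the partition $\lambda(a)$, which is equivalent to $d < a_i = m_i$; the resulting empty intersection contradicts $c_a \geq 1$.

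For the upper bound $\gamma_i(X) \leq m_i$, I use the distinguished $a \in A_i$ with $a_{i-1}+1 = m_i \leq a_{i+1}-3$. The gap $a_{i+1}-a_i \geq 3$ is Case~(1) of the Corollary classifying rigid sub-indices, so $a_i$ is already rigid in the pure Schubert class $\sigma_a$. The plan is to construct $F_{m_i}$ in two steps. First, produce an $F_{m_i-1}$ of dimension $m_i-1 = a_{i-1}$ with $\dim(F_{m_i-1}\cap\Lambda) \geq i-1$ for every $\Lambda \in X$; this should follow by rerunning the intersection-theoretic framework of the lower bound with position $i$ replaced by $i-1$, exploiting the maximality of $a_{i-1}$ within $A_i \subseteq A_{i-1}$ together with $\max_{A_i}a_i = m_i$. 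Second, project every $\Lambda \in X$ to the quotient $V/F_{m_i-1}$, obtaining $\bar\Lambda$ of dimension at most $k-i+1$, and locate a common line $\ell \subseteq V/F_{m_i-1}$ contained in every $\bar\Lambda$, so that $F_{m_i} := F_{m_i-1} + \ell$. The gap $a_{i+1}-a_i \geq 3$ ensures that the push-forward of $\sigma_a$ under the rational map $G(k,n) \dashrightarrow G(k-i+1, V/F_{m_i-1})$ retains its rigidity, and a further intersection-theoretic argument on this smaller Grassmannian produces $\ell$.

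The main obstacle I anticipate is the second step: producing the common line $\ell$ in the quotient. This requires controlling how each component $\sigma_{a'}$ of $[X]$ with $a' \in A\setminus\{a\}$ behaves under the projection to $V/F_{m_i-1}$, and verifying that only the contribution from $a$ governs the relevant intersection product on $G(k-i+1, V/F_{m_i-1})$. The hypotheses $\max_{a\in A_i}\{a_i\} = m_i$ and $a_{i-1}+1 = m_i$ are tailored to guarantee that the dominant contribution to the push-forward class is precisely the one from $a$, while the contributions from the other $a' \in A$ (which necessarily satisfy $a'_j < a_j$ for some $j \leq i$) are strictly subdominant at position $i$ and cannot obstruct the existence of $\ell$.
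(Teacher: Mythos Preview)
Your lower-bound argument ($\gamma_i(X)\geq m_i$) is fine and standard; the paper does not even bother to state it separately. The gap is in your upper bound, and it occurs already at what you call Step~1, not Step~2.

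You propose to first construct an $F_{m_i-1}$ of dimension $m_i-1$ with $\dim(F_{m_i-1}\cap\Lambda)\geq i-1$ for all $\Lambda\in X$, ``by rerunning the intersection-theoretic framework of the lower bound.'' But the lower-bound argument is purely an \emph{obstruction}: it shows that no subspace of dimension smaller than $m_{i-1}$ can meet every $\Lambda$ in dimension $\geq i-1$. It constructs nothing. To produce $F_{m_i-1}$ you would need $\gamma_{i-1}(X)\leq m_i-1$, i.e.\ the theorem at position $i-1$; but the hypotheses at $i$ do not imply the hypotheses at $i-1$. Concretely, the needed gap condition at position $i-1$ would be $a_{i-2}+1=m_{i-1}\leq a_i-3$, whereas your hypothesis gives $a_i=a_{i-1}+1=m_i$, so $a_i-a_{i-1}=1$ and the gap on the right fails badly. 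A posteriori such an $F_{m_i-1}$ does exist (take any hyperplane in the $F_{m_i}$ you are trying to build), but you cannot assume that without circularity.

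The paper's proof avoids this entirely by a different induction: it inducts on $m_i$ rather than on $i$. One intersects $X$ with a general hyperplane $H\subset V$; the class of the slice $X_H$ has every $a_t>t$ reduced by $1$, so $m_i$ drops by one and the inductive hypothesis yields, \emph{for each $H$}, a subspace $F^H_{m_i-1}\subset H$ witnessing $\gamma_i(X_H)=m_i-1$. Letting $H$ vary produces a family $Y\subset G(m_i-1,n)$, and a class computation shows $[Y]=c\,\sigma_b$ with $b_{m_i-1}=b_{m_i-2}+1=m_i$. The multi-rigidity classification in $G(m_i-1,n)$ then forces all the $F^H_{m_i-1}$ to lie in finitely many fixed $m_i$-planes $F_{m_i}^1,\dots,F_{m_i}^t$, and irreducibility of $X$ picks out one of them. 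The key difference is that the paper produces the codimension-one subspace \emph{inside a hyperplane slice}, where the inductive hypothesis legitimately applies, rather than inside $X$ itself.
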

A Schubert class is called {\em multi-rigid} or {\em Schur rigid} if every multiple of it can only be represented by a union of Schubert varieties. As an application of the previous result, we prove the multi-rigidity of certain Schubert classes in orthogonal Grassmanians:
\begin{Cor}
Let $\sigma_{a;b}$ be a Schubert class in $OG(k,n)$. Let $x_j:=\{i|a_i\leq b_j\}$. Set 
$a_0:=0$ and $$a_{s+1}:=n-b_{k-s}-(s-x_j)-1.$$ If for all $1\leq j\leq k-s$, $$x_j\geq k-j+1-\left[\frac{n-b_j-(a_{x_j+1}-1)}{2}\right]$$ and all the essential sub-indices $a_i$ satisfy the condition $$a_{i-1}+1=a_i\leq a_{i+1}-3$$ and for every essential sub-index $b_j$, $b_j=a_i$ for some $i$, then $\sigma_{a;b}$ is multi-rigid.
\end{Cor}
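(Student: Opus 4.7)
The plan is to reduce multi-rigidity of $\sigma_{a;b}$ in $OG(k,n)$ to the preceding generalized rigidity theorem in the ambient Grassmannian $G(k,n)$, then assemble the resulting linear subspaces into an isotropic partial flag. Let $Y$ be any irreducible subvariety of $OG(k,n)$ with $[Y]=N\sigma_{a;b}$ for some $N\in\Z^+$. Since multi-rigidity of $\sigma_{a;b}$ amounts to showing that every such $Y$ is itself a Schubert variety of class $\sigma_{a;b}$ (irreducible decomposition then handles arbitrary representatives), one works with a single irreducible $Y$.

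Regard $Y$ as an irreducible subvariety of $G(k,n)$ via the natural inclusion $OG(k,n)\hookrightarrow G(k,n)$. Its class takes the form $\sum_{\underline c}c_{\underline c}\sigma_{\underline c}$, where each $\underline c$ is obtained by merging the sequences $(a;b)$ into a single increasing Schubert index. The numerical hypothesis $x_j\geq k-j+1-\bigl[(n-b_j-(a_{x_j+1}-1))/2\bigr]$ is precisely what ensures that for each essential $a_i$, the $i$-th coordinate $m_i$ of the maximal merged index equals $a_i$, so that $a_i$ occupies an essential slot in every $\underline c$ appearing in $[Y]$ and does not get dominated by a $b$-contribution. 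Combined with the hypothesis $a_{i-1}+1=a_i\leq a_{i+1}-3$, the preceding theorem applies and yields, for each essential $a_i$, a uniquely determined subspace $F_{a_i}\subset V$ of dimension $a_i$ such that $\dim(F_{a_i}\cap\Lambda)\geq\mu_{i,\bullet}$ for every $\Lambda\in Y$. Since every essential $b_j$ is assumed to equal some $a_i$, once isotropy of $F_{a_i}$ is established the condition $\dim(F^{\perp}_{b_j}\cap\Lambda)\geq\nu_{j,\bullet}$ is automatic from isotropy of $\Lambda$ together with the dimension count.

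To conclude, one would check that each $F_{a_i}$ is isotropic and that collectively they form a partial flag. Isotropy follows because for generic $\Lambda\in Y$ the intersection $F_{a_i}\cap\Lambda$ achieves the expected dimension $\mu_i$ and is isotropic (as a subspace of the isotropic $\Lambda$); letting $\Lambda$ vary, such subspaces span $F_{a_i}$, forcing $q|_{F_{a_i}}\equiv 0$. The nesting $F_{a_1}\subset F_{a_2}\subset\cdots$ follows from the uniqueness part of the Grassmannian classification of rigid sub-indices. Hence $Y$ is contained in, and by a dimension count equals, the Schubert variety attached to this isotropic partial flag.

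The principal obstacle is the bookkeeping in the reduction from $OG(k,n)$ to $G(k,n)$: one must carefully track the position of each $a_i$ in the merged Schubert index in the presence of the $b$-sequence and verify that essentiality is preserved, which is exactly what the arithmetic condition on $x_j$ encodes, but whose combinatorial verification is delicate. A secondary subtlety is the genericity argument for isotropy of $F_{a_i}$: one must ensure the isotropic intersections $F_{a_i}\cap\Lambda$ actually span $F_{a_i}$ as $\Lambda$ varies over $Y$, which uses the positive-dimensional nature of the family of such $\Lambda$ and mirrors the corresponding argument in the orthogonal Grassmannian case treated in \cite{YL}.
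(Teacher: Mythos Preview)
Your overall strategy---push forward to $G(k,n)$, invoke Theorem~\ref{rigid index in mix} to produce the subspaces $F_{a_i}$, then assemble them into an isotropic flag---matches the paper's route through Theorem~\ref{multiindex} and Proposition~\ref{bmulti}. However, two of your intermediate steps are genuinely broken.

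\textbf{The $b_j$ condition is not automatic.} You assert that once $F_{a_i}$ is isotropic with $a_i=b_j$, the inequality $\dim(F_{b_j}^\perp\cap\Lambda)\geq k-j+1$ follows from isotropy of $\Lambda$ and a dimension count. It does not. From $\dim(F_{a_i}\cap\Lambda)\geq i$ and $\Lambda\subset\Lambda^\perp$ one only gets
\[
\dim(F_{a_i}^\perp\cap\Lambda)=k-a_i+\dim(F_{a_i}\cap\Lambda^\perp)\geq k-a_i+i,
\]
and $k-a_i+i\geq k-j+1$ would require $a_i\leq i+j-1$, which is false in general (e.g.\ $OG(2,9)$ with $a=(3)$, $b=(3)$: here $i=j=1$, $a_1=3$, and the bound gives nothing). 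The passage from the $a_i$-condition to the $b_j$-condition is the entire content of Lemma~\ref{atob} (in the rigid setting) and of Proposition~\ref{bmulti} (in the multi-rigid setting), both of which require an induction using the class of the slice $X_p$ for a varying point $p$. Without this, you have no control over the $b$-part of the flag and cannot conclude $Y$ is a Schubert variety.

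\textbf{The isotropy argument is wrong.} You argue that the isotropic subspaces $F_{a_i}\cap\Lambda$ span $F_{a_i}$ as $\Lambda$ varies, hence $q|_{F_{a_i}}\equiv 0$. But a span of isotropic subspaces need not be isotropic: two isotropic lines $L_1,L_2$ with $q(L_1,L_2)\neq 0$ already give a counterexample. The paper's argument (inside the proof of Theorem~\ref{multiindex}) is intersection-theoretic: if $F_{a_i}$ were not isotropic, then for a general isotropic $G_{a_i-1}$ the projective spaces $\mathbb P(F_{a_i})$ and $\mathbb P(G_{a_i-1}^\perp)$ meet only off the quadric $Q$, forcing $X\cap\Sigma=\emptyset$ for the corresponding Schubert variety $\Sigma$, in contradiction with $m\sigma_{a;b}\cdot[\Sigma]\neq 0$. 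You need an argument of this type, not a spanning argument.
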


\subsection{Applications to symplectic Grassmannians}
The previous results can also be applied to symplectic Grassmannians $SG(k,n)$ (of type $C$).
\begin{Cor}
The Schubert class $\sigma_{a;b}$, where $a=(1,...,i)$ and $b=(i,i+1,...,k-1)$, $1\leq i\leq k$ is rigid for $SG(k,n)$, $n\geq 2k+2$.
\end{Cor}

Similar to the previous sections, the rigidity in symplectic partial flag varieties $SF(d_1,...,d_k;n)$ can be deduced from symplectic Grassmannians:
\begin{Prop}
Let $\sigma_{a^\alpha;b^\beta}$ be a Schubert class in $SF(d_1,...,d_k;n)$. Let $X$ be a representative of $\sigma_{a^\alpha;b^\beta}$. Let $F_{a_i}$ and $F_{b_j}$ be isotropic subspaces of dimension $a_i$ and $b_j$ respectively. If for some $\alpha_i\leq t\leq k$, $$\dim(F_{a_i}\cap\Lambda_t)\geq \mu_{i,t}\text{ for all }(\Lambda_1,...,\Lambda_k)\in X,$$ then the same inequailty holds for all $\alpha_i\leq t'\leq k$.

Similarly, if for some $\beta_j\leq t\leq k$, $$\dim(F_{b_j}^\perp\cap\Lambda_t)\geq \nu_{j,t}\text{ for all }(\Lambda_1,...,\Lambda_k)\in X,$$ then the same inequality holds for all $\beta_j\leq t'\leq k$.
\end{Prop}

The rigidity problem arose from the smoothing problem which was asked by Borel and Haefliger in 1961 \cite{BH}. A Schubert class is called smoothable if it can be represented by smooth subvarieties. The smoothable Schubert classes in Grassmannians were partially classified by Coskun in 2011 \cite{Coskun2011RigidAN}. Since most Schubert varieties are singular, and for a singular Schubert class, it can not be smoothable if it is rigid, our results in particular give a large class of non-smoothable Schubert classes in partial flag varieties.

The multi-rigidity problem was well studied by differential geometers. Walters and Bryant introduced the Schur differential systems and used it to prove the rigidity of certain Schubert classes \cite{Walter},\cite{RB2000}. Based on their works, Hong proved the rigidity of smooth Schubert classes in irreducible compact Hermitian symmetric space \cite{Ho1}, and a fully classification of multi-rigid classes in such spaces was obtained by Robles and The \cite{RT}. Since a multi-rigid class must be rigid, our results show the non-multi-rigidity of certain Schubert classes in partial flag varieties.

Most recently, Hong and Mok proved the rigidity \cite{HM} and multi-rigidity \cite{HM2} of smooth Schubert varieties in rational homogeneous spaces of Picard number one, using the geometric theory of uniruled projective manifolds. Different from their method, our approach works for the Schubert classes that are not necessarily smooth.

\subsection*{Organization of the paper} 
In \S\ref{sec-prelim}, we review the basic definitions and introduce our notation in partial flag varieties. In \S\ref{type A}, we study the rigidity problem in partial flag varieties of type A. In particular, we classify the rigid Schubert classes in type A partial flag varieties. In \S\ref{type BD}, we investigate the case of orthogonal partial flag varieties, which are of type B or D. In \S\ref{multi}, we generalize the rigidity problem to rational equivalence classes which are not necessary Schubert classes. We also proved the multi-rigidity of certain Schubert classes in orthogonal Grassmannians. In \S\ref{type C}, we apply the previous results to the symplectic cases (of type C).
\subsection*{Acknowledgments} 
We would like to thank Izzet Coskun, Yuri Tschinkel and others for valuable comments and corrections to improve our manuscript.

\section{Preliminaries}\label{sec-prelim}
In this section, we review basic facts and introduce our notation for flag varieties. Most of them are stated without proof. The proofs can be found in many textbooks in algebraic geometry, e.g. \cite{3264}. Fix once and for all a complex vector space $V$ of dimension $n$.
\subsection{Grassmannian varieties}\label{Grassmannian}
The Grassmannian variety $G(k,V)=G(k,n)$ parametrizes all $k$-dimensional vector subspaces in $V$. It is smooth and irreducible of dimension $k(n-k)$. A Schubert index $a_\bullet$ for $G(k,n)$ is an increasing sequence of positive integers
$$1\leq a_1<...<a_k\leq n.$$
Given a complete flag $F_\bullet:F_1\subset...\subset F_n=V$ of subspaces of $V$ and a Schubert index $a_\bullet$, the Schubert variety $\Sigma_a(F_\bullet)$ is defined as the following locus:
$$\Sigma_a(F_\bullet):=\{\Lambda\in G(k,n)|\dim(\Lambda\cap F_{a_i})\geq i\}.$$
It is an irreducible subvariety of dimension $\sum_{i=1}^k(a_i-i)$. Its rational equivalence class is independent of the choice of $F_\bullet$ and is denoted by $\sigma_a\in A^*(G(k,n))$. The Schubert classes form a free basis of $A^*(G(k,n))$. The Poincar\'e dual of $\sigma_a$ is the Schubert class $\sigma_{a^*}$, where $a^*=(n-a_k+1,...,n-a_1+1)$.

\begin{Prop}
Let $\sigma_{a}$ and $\sigma_b$ be two Schubert classes in $G(k,n)$. Then $\sigma_{a}\cdot\sigma_b=0$ if and only if $$a_i+b_{k-i+1}\leq n,\ \ \text{for some }1\leq i\leq k.$$
\end{Prop}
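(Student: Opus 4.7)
The proof splits naturally into the two directions. The key enabling fact is that, by the transitivity of the $\GL(V)$-action and Kleiman's transversality, $\sigma_a\cdot\sigma_b$ is represented by the intersection $\Sigma_a(F_\bullet)\cap\Sigma_b(F'_\bullet)$ for any general pair of complete flags, so the product vanishes precisely when this intersection is empty.

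\textbf{Vanishing direction ($\Leftarrow$).} Suppose $a_{i_0}+b_{k-i_0+1}\le n$ for some $i_0$. For a general pair of flags, $\dim(F_{a_{i_0}}\cap F'_{b_{k-i_0+1}})=\max(0,\,a_{i_0}+b_{k-i_0+1}-n)=0$. If some $\Lambda$ lay in the intersection, its two subspaces $\Lambda\cap F_{a_{i_0}}$ and $\Lambda\cap F'_{b_{k-i_0+1}}$ would have dimensions at least $i_0$ and $k-i_0+1$; since they sit inside the $k$-plane $\Lambda$ and their dimensions add to more than $k$, the dimension formula forces a nonzero vector in their intersection $F_{a_{i_0}}\cap F'_{b_{k-i_0+1}}$, which contradicts the previous line.

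\textbf{Non-vanishing direction ($\Rightarrow$).} Suppose $a_i+b_{k-i+1}\ge n+1$ for every $i$. Fix a basis $e_1,\dots,e_n$ of $V$ and take the opposite flags $F_i=\langle e_1,\dots,e_i\rangle$, $F'_i=\langle e_{n-i+1},\dots,e_n\rangle$; these are in general position. I will produce an explicit coordinate subspace $\Lambda=\langle e_{c_1},\dots,e_{c_k}\rangle$ in the intersection by greedily choosing
\[
c_0:=0,\qquad c_i:=\max\bigl(c_{i-1}+1,\,n-b_{k-i+1}+1\bigr)\quad(1\le i\le k).
\]
A short induction gives $c_i\le a_i$: in the first case $c_i=c_{i-1}+1\le a_{i-1}+1\le a_i$, and in the second $c_i=n-b_{k-i+1}+1\le a_i$ by the standing hypothesis. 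Thus the $c_i$ are strictly increasing with $n-b_{k-i+1}+1\le c_i\le a_i$. A direct count then yields $\dim(\Lambda\cap F_{a_i})=\#\{\ell:c_\ell\le a_i\}\ge i$ and $\dim(\Lambda\cap F'_{b_j})=\#\{\ell:c_\ell\ge n-b_j+1\}\ge j$, so $\Lambda\in\Sigma_a(F_\bullet)\cap\Sigma_b(F'_\bullet)$. This intersection is proper by Kleiman and now nonempty, so the effective cycle it defines is nonzero in $A^*(G(k,n))$; hence $\sigma_a\cdot\sigma_b\ne 0$.

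The only real content lies in the converse direction, and even there the argument becomes purely combinatorial once one passes to opposite flags: the hypothesis $a_i+b_{k-i+1}\ge n+1$ is tuned exactly so that the intervals $[n-b_{k-i+1}+1,\,a_i]$ are nonempty and admit a strictly increasing integer packing, which is the only place where any care is needed.
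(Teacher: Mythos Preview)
The paper does not actually prove this proposition: it appears in the Preliminaries (\S\ref{Grassmannian}), where results are ``stated without proof'' with a pointer to \cite{3264}. Your argument is the standard one and is correct.

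One small point worth making explicit in the non-vanishing direction: you assert that opposite coordinate flags are ``in general position'' and then invoke Kleiman, but Kleiman only guarantees transversality for a \emph{generic} pair of flags, not for a specific one. What saves you is that the $\GL(V)$-orbit of a pair of opposite flags is the dense open Bruhat cell in $\mathrm{Flag}(V)\times\mathrm{Flag}(V)$, so opposite flags \emph{are} general; equivalently, one can cite directly that opposite Schubert varieties intersect properly. Either way the argument is fine, but a sentence justifying this would tighten the exposition.
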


\begin{Rem}
In literature, it is customary to index a Schubert variety using an non-increasing sequence $\lambda_\bullet$ which reflects the codimension of it. Our notation can be translated into their notation using the relation
$$\lambda_i=n-k+i-a_i.$$
One of the advantages of our notation is that it is independent of the dimension of the ambient space $V$. Our notation can also be easily adapted to orthogonal Grassmannians as well as partial flag varieties.
\end{Rem}

\subsection{Partial flag varieties}
The partial flag variety $F=F(d_1,...,d_k;n)$ is defined to be the following locus: 
$$F(d_1,...,d_k;n):=\{(\Lambda_1,...,\Lambda_k)|\Lambda_i\subset\Lambda_{i+1},\Lambda_i\in G(d_i,V), 1\leq i\leq k\}.$$
(Here we set $\Lambda_{k+1}:=V$ and $d_{k+1}=n$). It is a smooth projective variety of dimension $\sum_{i=1}^k d_i(d_{i+1}-d_{i})$. 

The Schubert varieties in $F$ can be defined as follows: Take a Schubert index $$1\leq a_1<...< a_{d_k}\leq n$$ in the Grassmannian $G(d_k,n)$ and divide the sub-indices into $k$ blocks such that the $i$-th block has length $d_i-d_{i-1}$. We use the upper index to indicate which block the sub-index belongs to, and the resulting sequence
$a^\alpha=(a_1^{\alpha_1},..., a_{d_k}^{\alpha_{d_k}})$ is called a Schubert index in $F(d_1,...,d_k;n)$.

For each pair $(i,j)$, $1\leq i\leq d_k$, $1\leq j\leq k$, set 
$$\mu_{i,j}:=\#\{s|a_s\leq a_i,\alpha_s\leq j\}$$

\begin{Def}
Given a Schubert index $a^\alpha$ for $F(d_1,...,d_k;n)$ and a complete flag of vector subspaces $F_1\subset...\subset F_n=V$, where $\dim(F_i)=i$, the Schubert variety $\Sigma_{a^\alpha}(F_\bullet)$ is defined to be the following locus:
$$\Sigma_{a^\alpha}(F_\bullet):=\{(\Lambda_1,...,\Lambda_k)\in F(d_1,...,d_k;n)|\dim(F_{a_i}\cap \Lambda_j)\geq \mu_{i,j},1\leq i\leq d_k,1\leq j\leq k\}.$$
\end{Def}
The class of $\Sigma_{a^\alpha}(F_\bullet)$ is independent of the choice of $F_\bullet$ and is denoted by $\sigma_{a^\alpha}\in A^*(F)$. The Schubert classes $\sigma_{a^\alpha}$ form a free basis of $A^*(F)$. The Poincar\'e dual of $\sigma_{a^\alpha}$ is the Schubert class $\sigma_{b^{\beta}}$, where $b=a^*=(n-a_{d_k}+1,...,n-a_1+1)$, and $\beta=(\alpha_{d_k},\alpha_{d_k-1},...,\alpha_1)$.

\subsection{Orthogonal Grassmannians}\label{og}
Let $q$ be a non-degenerate symmetric bilinear form on $V$. A linear subspace $W$ is called isotropic with respect to $q$ if $q(W,W)=0$. The maximal dimension of isotropic subspaces is $\left[\frac{n}{2}\right]$, where $n=\dim(V)$. For $k<\frac{n}{2}$, the orthogonal Grassmannian $OG(k,V)=OG(k,n)$ is the subvariety of $G(k,n)$ that parametrizes all isotropic $k$-subspaces with respect to $q$. If $n$ is even and $k=\frac{n}{2}$, then the space of $k$-dimensional isotropic subspaces has two irreducible components, and we let $OG(k,2k)$ denote one of the components.

\begin{Def}
A Schubert index for $OG(k,n)$ consists of two increasing sequences of integers $$1\leq a_1<...< a_s\leq \frac{n}{2}$$ and $$0\leq b_1<...< b_{k-s}\leq \frac{n}{2}-1,$$ of length $s$ and $k-s$ respectively, $0\leq s\leq k$, such that $a_i\neq b_j+1$ for all $1\leq i\leq s,1\leq j\leq k-s$. If $n=2k$, then we further require $s$ and $k$ have the same parity, i.e.
$$ s \equiv k \pmod{2}.$$
\end{Def}
Given an isotropic subspace $W$, we denote $W^\perp$ its orthogonal complement with respect to $q$. Fix a complete flag of isotropic subspaces $F_\bullet=F_1\subset...\subset F_{\left[n/2\right]}$. When $n$ is even, the orthogonal complement of $F_{n/2-1}$ is a union of two maximal isotropic subspaces, one is $F_{n/2}$ and the other one belongs to the different irreducible components than $F_{n/2}$. By abuse of notation, we denote also by $F_{n/2-1}^\perp$ the maximal isotropic subspace in the orthogonal complement of $F_{n/2-1}$ other than $F_{n/2}$. 
\begin{Def}
Given a Schubert index $(a_\bullet;b_\bullet)$ for $OG(k,n)$, the corresponding Schubert variety $\Sigma_{a;b}$ is defined to be the Zariski closure of the following locus:
$$\Sigma^\circ_{a;b}(F_\bullet):=\{\Lambda\in OG(k,n)|\dim(\Lambda\cap F_{a_i})= i, \dim(\Lambda\cap F_{b_j}^\perp)= k-j+1, 1\leq i\leq s, 1\leq j\leq k-s\}.$$
\end{Def}
The Schubert classes $[\Sigma_{a;b}(F_\bullet)]\in A(OG(k,n))$ generate the Chow ring $A(OG(k,n))$ and are independent of the choice of flags $F_\bullet$. We will omit $F_\bullet$ and denote them by $\sigma_{a;b}=[\Sigma_{a;b}(F_\bullet)]$.

\begin{Rem}\label{difference1}
If $a_i=b_j+1$ for some $i,j$, then for every $\Lambda\in\Sigma_{a;b}$, either $\Lambda\cap F_{a_i}=\Lambda\cap F_{b_j}$ or $\Lambda\cap F_{a_i}^\perp=\Lambda\cap F_{b_j}^\perp$. If one of the possibilities results in an empty set, then the other one can better reflect the geometry of the locus. If both are effective, then the resulting locus is reducible.
\end{Rem}

\subsection{Restriction varieties}
There is another important class of subvarieties in orthogonal Grassmannians: the {\em restriction varieties} which generalize the Schubert varieties. We will use restriction varieties later to construct counter-examples of non-rigid Schubert classes. In the rest of this subsection, we review some basic facts about the restriction varieties.

The bilinear form $q$ geometrically defines a quadric hypersurface $Q$ in $\mathbb{P}(V)$. Therefore in the definition of Schubert varieties, one may replace $F_{b_j}^\perp$ with the intersection $\mathbb{P}(F_{b_j}^\perp)\cap Q$ , which is a sub-quadric of $Q$ of corank $b_j$. The restriction varieties are then obtained by varying the coranks of the defining sub-quadrics. 

More precisely, let $Q_d^r$ denote a subquadric of $Q$ of corank $r$, which is obtained by restricting $Q$ to a $d$-dimensional linear subspace. 
\begin{Def}
\label{sequence}
Given a sequence consisting of isotropic subspaces $F_{a_i}$ of $V$ and subquadrics $Q_{d_j}^{r_j}$ :
$$F_{a_1}\subsetneq...\subsetneq F_{a_s}\subsetneq Q_{d_{k-s}}^{r_{k-s}}\subsetneq...\subsetneq Q^{r_1}_{d_1}$$ such that

$(1)$ For every $1\leq j\leq k-s-1$, the singular locus of $Q_{d_j}^{r_j}$ is contained in the singular locus of $Q_{d_{j+1}}^{r_{j+1}}$;

$(2)$ For every pair $(F_{a_i},Q_{d_j}^{r_j})$, $\dim(F_{a_i}\cap \text{Sing}(Q_{d_j}^{r_j}))=\min\{a_i,r_j\}$;

$(3)$ Either $r_i=r_1=n_{r_1}$ or $r_t-r_i\geq t-i-1$ for every $t>i$. Moreover, if $r_t=r_{t-1}>r_1$ for some $t$, then $d_i-d_{i+1}=r_{i+1}-r_i$ for every $i\geq t$ and $d_{t-1}-d_t=1$;

$(A1)$ $r_{k-s}\leq d_{k-s}-3$;

$(A2)$ $a_i-r_j\neq 1$ for all $1\leq i\leq s$ and $1\leq j\leq k-s$;

$(A3)$ Let $x_j=\#\{i|a_i\leq r_j\}$. For every $1\leq j\leq k-s$, $$x_j\geq k-j+1-\left[\frac{d_j-r_j}{2}\right].$$
The associated restriction variety is defined to be the Zariski closure of the following locus:
$$\Gamma^\circ(F_\bullet,Q_\bullet):=\{\Lambda\in OG(k,n)|\dim(\Lambda\cap F_{a_i})= i,\dim(\Lambda\cap Q_{d_j}^{r_j})= k-j+1,1\leq i\leq s, 1\leq j\leq k-s\}.$$
\end{Def}

Since the Schubert classes form a free basis of the Chow ring, it is then natural to ask how to compute the classes of restriction varieties in terms of Schubert classes. In \cite{Coskun2011RestrictionVA}, Coskun obtained a positive algorithm to compute the class of a restriction variety. The idea is to increase the corank of each sub-quadric one by one, until they all reach the maximal possible corank. If at some stages the corank becomes 1 less than some $a_i$, then it would split into two pieces due to the observation in Remark \ref{difference1}.

\begin{Ex}
Consider the restriction variety $\Gamma$ defined by $F_2\subset Q_{n-1}^0$ in $OG(2,n)$, $n\geq 7$. To compute the class of $\Gamma$, we specialize the sub-quadric $Q_{n-1}^0$ to a sub-quadric $Q_{n-1}^1$ of the same dimension, but with the corank $1$ greater. The sequence now becomes $F_2\subset Q_{n-1}^1$. Let $\mathbb{P}(F_1)$ be the singular locus of $Q_{n-1}^1$. By Remark \ref{difference1}, the sequence then splits in to two parts: $F_1\subset F_1^\perp$ and $F_2\subset F_2^\perp$, which correspond to the Schubert classes $\sigma_{1;1}$ and $\sigma_{2;2}$. Therefore $[\Gamma]=\sigma_{1;1}+\sigma_{2;2}$.
\end{Ex}

\subsection{Orthogonal partial flag varieties}\label{of}

Let $q$ be a non-degenerate symmetric bilinear form on $V$. Let $1\leq d_1<...<d_k\leq \frac{n}{2}$ be an increasing sequence of positive integers. For $d_k<\frac{n}{2}$, the orthogonal partial flag variety $OF(d_1,...,d_k;n)$ parametrizes all $k$-step partial flags $(\Lambda_1,...,\Lambda_k)$, where $\Lambda_i\subset \Lambda_{i+1}$ and $\Lambda_i$ are isotropic subspaces of dimension $d_i$. When $d_k=\frac{n}{2}$, then the space of isotropic partial flags has two irreducible components and we let $OF(d_1,...,d_k;n)$ denote one of the components.

\begin{Def}
A Schubert index for $OF(d_1,...,d_k;n)$, $n\geq 2d_k$, consists of two increasing sequences of integers $$1\leq a_1^{\alpha_1}<...<a^{\alpha_s}_s\leq \frac{n}{2}$$ and $$0\leq b^{\beta_1}_1<...<b^{\beta_{k-s}}_{k-s}\leq \frac{n}{2}-1,$$ of length $s$ and $k-s$ respectively, $0\leq s\leq k$, such that 
\begin{enumerate}
\item $a_i\neq b_j+1$ for all $1\leq i\leq s,1\leq j\leq k-s$, and 
\item $d_t-d_{t-1}=\#\{i|\alpha_i=t\}+\#\{j|\beta_j=t\}$ for each $1\leq t\leq k$. (Here we set $d_0=0$.)
\end{enumerate}
If $n=2d_k$, then we further require $s$ and $d_k$ have the same parity, i.e.
$$ s \equiv d_k \pmod{2}.$$
\end{Def}

\begin{Def}
Given a Schubert index $(a^\alpha,b^\beta)$ for $OF(d_1,...,d_k;n)$ and a flag of isotropic subspaces $F_1\subset ...\subset F_{\left[n/2\right]}$ in $V$, set
$$\mu_{i,t}:=\#\{c|a_c\leq a_i,\alpha_c\leq t\},$$
$$\nu_{j,t}:=\#\{d|\alpha_d\leq t\}+\#\{e|b_e\geq b_j,\beta_e\leq t\}.$$
The Schubert variety $\Sigma_{a^\alpha;b^\beta}$ is then defined to be the Zariski closure of the following locus:
$$\Sigma_{a^\alpha;b^\beta}:=\{(\Lambda_1,...,\Lambda_k)\in OF(d_1,...,d_k;n)|\dim(F_{a_i}\cap \Lambda_t)= \mu_{i,t},\dim(F^\perp_{b_j}\cap \Lambda_t)= \nu_{j,t}\}.$$
\end{Def}

\section{The rigidity problem in partial flag varieties}\label{type A}
In this section, we study the rigidity problem in partial flag varieties of type A. 
\begin{Def}
Given a Schubert class $\sigma_a$ in $A^*(F(d_1,...,d_k;n))$, $\sigma_a$ is called {\em rigid} if it can only be represented by Schubert varieties.
\end{Def}
\begin{Ex}
Consider the Grassmannian variety $G(k,n)=F(k;n)$. Let $F_\bullet$ be a complete flag of subspaces in $V$. The Schubert variety $\Sigma_{n-k,n-k+2,...,n}(F_\bullet)$ is a singular hyperplane section of $G(k,n)$ under the Plucker embedding, while a general hyperplane section of $G(k,n)$ is smooth by Bertini's theorem. Therefore the Schubert class $\sigma_{n-k,n-k+2,...,n}$ is not rigid.
\end{Ex}

We first review the rigidity results in Grassmannians, which are the special cases of partial flag varieties when $k=1$, then we generalize them to general partial flag varieties of type A.

\subsection{Rigidity results in Grassmannians}

The main idea is to investigate the rigidity of each sub-index. We first restrict our attention to the "essential" ones.
\begin{Def}
Let $\sigma_a$ be a Schubert class in $G(k,n)$. A sub-index $a_i$ is called {\em essential} if either $i=k$ or $a_{i}<a_{i+1}-1$.
\end{Def}
\begin{Prop}\label{essential a}
Let $X$ be a representative of $\sigma_a$ in $G(k,n)$. If there exists a flag $F_\bullet$ of subspaces such that for all $\Lambda\in X$, $\dim(\Lambda\cap F_{a_i})\geq i$ for all essential $a_i$, then $X$ has to be the Schubert variety defined by $F_\bullet$.
\end{Prop}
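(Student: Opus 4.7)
The plan is to show that the essential rank conditions automatically force all the Schubert rank conditions, so that $X \subset \Sigma_a(F_\bullet)$, and then to upgrade this containment to equality by a dimension and irreducibility argument.

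First I would observe the following elementary linear-algebra fact: if $W \subset V$ is a hyperplane in a subspace $U \subset V$ and $\Lambda$ is any subspace with $\dim(\Lambda \cap U) \geq m$, then $\dim(\Lambda \cap W) \geq m-1$, since $\Lambda \cap W = (\Lambda \cap U) \cap W$ and intersecting a subspace with a hyperplane drops the dimension by at most one. Now let $a_j$ be a non-essential sub-index, so by definition there is an essential sub-index $a_i$ with $i \geq j$ such that the consecutive values $a_j, a_{j+1}, \dots, a_i$ satisfy $a_{j+l} = a_j + l$, equivalently $F_{a_j} \subset F_{a_{j+1}} \subset \cdots \subset F_{a_i}$ is a chain of hyperplane inclusions of length $i-j$. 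Applying the elementary fact repeatedly, the essential hypothesis $\dim(\Lambda \cap F_{a_i}) \geq i$ descends to $\dim(\Lambda \cap F_{a_j}) \geq j$ for every $\Lambda \in X$.

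Consequently, every $\Lambda \in X$ satisfies all the rank conditions $\dim(\Lambda \cap F_{a_j}) \geq j$ for $1 \leq j \leq k$, which is precisely the defining condition of the Schubert variety $\Sigma_a(F_\bullet)$. Hence $X \subset \Sigma_a(F_\bullet)$.

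For the final step, recall from \S\ref{Grassmannian} that $\Sigma_a(F_\bullet)$ is irreducible of dimension $\sum_{i=1}^k (a_i - i)$, and by assumption $X$ is an irreducible subvariety representing the class $\sigma_a$, so $\dim X = \dim \Sigma_a(F_\bullet)$. An irreducible subvariety contained in an irreducible variety of the same dimension must equal it, so $X = \Sigma_a(F_\bullet)$, as required. The main step is really the first one, the descent from essential to non-essential indices; the dimension-count closure is immediate. I expect no substantial obstacle here, since the construction of essential indices is tailored exactly so that the hyperplane-intersection argument applies.
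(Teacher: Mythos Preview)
Your proof is correct and follows essentially the same approach as the paper's own proof: the paper simply observes that $\dim(\Lambda\cap F_{a_{i+1}})\geq i+1$ implies $\dim(\Lambda\cap F_{a_i})\geq i$ when $F_{a_i}$ has codimension one in $F_{a_{i+1}}$, and says the rest follows from the definition. Your version is more explicit in spelling out the descent from essential to non-essential indices and in making the final dimension-plus-irreducibility step visible, which the paper leaves implicit.
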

\begin{proof}
It follows from the definition of Schubert varieties and the observation that $\dim(\Lambda\cap F_{a_{i+1}})\geq i+1$ implies $\dim(\Lambda\cap F_{a_i})\geq i$ if $F_{a_i}$ is of codimension 1 in $F_{a_{i+1}}$.
\end{proof}

This lead to the following definition of rigid sub-indices:
\begin{Def}
Let $\sigma_a$ be a Schubert class in $G(k,n)$. An essential sub-index $a_i$ is called {\em rigid} if for every representative $X$ of $\sigma_a$, there exists a vector subspace $F_{a_i}$ of dimension $a_i$ such that
$$\dim(\Lambda\cap F_{a_i})\geq i,\forall \Lambda\in X.$$
\end{Def}
In \cite[Theorem 1.3]{YL}, we classified the rigid sub-indices in Grassmannians:
\begin{Thm}\cite[Theorem 1.3]{YL}\label{rigid index in g}
Let $\sigma_a$ be a Schubert class in $G(k,n)$. An essential sub-index $a_i$ is rigid if and only if one of the following conditions holds:
\begin{enumerate}
\item $i=k$; or

\item $a_i=i$; or

\item $a_i\leq a_{i+1}-3$; or

\item $a_i=a_{i-1}+1$.
\end{enumerate}
\end{Thm}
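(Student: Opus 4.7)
The plan is to prove the two directions separately. For sufficiency, for each of the four conditions I would exhibit, given an arbitrary representative $X$ of $\sigma_a$, a canonical $a_i$-dimensional subspace witnessing the required rank inequality. For necessity, when none of the conditions holds, I would construct an irreducible representative of $\sigma_a$ that admits no such witness.

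For sufficiency, the four cases use genuinely different canonical witnesses. In case (1), when $i=k$, the witness is the linear span $F_{a_k} := \sum_{\Lambda \in X} \Lambda$; one verifies $\dim F_{a_k} \leq a_k$ by pairing $[X]$ against a suitable Chern class of the universal quotient bundle on $G(k,n)$, since a larger span would force the pairing with the Poincar\'e dual class $\sigma_{a^*}$ to vanish, contradicting $[X]=\sigma_a$. Case (2), $a_i = i$, is dual: $\dim(\Lambda \cap F_i) \geq i$ with $\dim F_i = i$ forces $F_i \subset \Lambda$, and one takes $F_i := \bigcap_{\Lambda \in X} \Lambda$, whose dimension is at least $i$ by the parallel Chern class computation on the universal subbundle (using that $a_1=1,\dots,a_i=i$ is forced once $a_i=i$). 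Case (3), $a_{i+1} - a_i \geq 3$, is the inductive case: one uses the witness $F_{a_{i+1}}$ for $a_{i+1}$ supplied by case (1) (when $i+1=k$) or by the inductive hypothesis, pushes $X$ forward to a representative $X' \subset G(i+1,\, F_{a_{i+1}})$ via $\Lambda \mapsto \Lambda \cap F_{a_{i+1}}$, and applies case (1) of the theorem on the smaller Grassmannian to the top sub-index $a_i$ of the induced Schubert class on $X'$; the gap of at least $3$ is precisely what ensures $a_i$ remains the top essential sub-index after restriction. Case (4), $a_i = a_{i-1}+1$, proceeds from below: the witness $F_{a_{i-1}}$ together with the single-step gap means $F_{a_i}/F_{a_{i-1}}$ is a line determined by the rank-drop locus of a sheaf map on $X$, whose fiber dimension is controlled by semicontinuity.

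For necessity, assume $i<k$, $a_i \geq i+1$, $a_{i+1}-a_i=2$, and $a_i-a_{i-1}\geq 2$. Starting from a Schubert variety $\Sigma_a(F_\bullet)$, I would deform $F_{a_i}$ in a pencil inside the $(a_i+2)$-dimensional space $F_{a_{i+1}}$, keeping $F_{a_{i-1}} \subset F_{a_i}^t$ for every $t \in \mathbb{P}^1$; this uses both $a_{i+1}-a_i=2$ (to have a genuine pencil inside $F_{a_{i+1}}$) and $a_i-a_{i-1}\geq 2$ (so that $F_{a_i}$ is not forced from below). Taking the total space of the one-parameter family and correcting dimension by intersecting with one auxiliary Schubert cycle produces an irreducible $X$ with $[X]=\sigma_a$, whose \emph{local} witnesses $F_{a_i}^t$ genuinely move with $t$. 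A dimension count on the incidence variety $\{(F,\Lambda) : F\in G(a_i,n),\ \Lambda\in X,\ \dim(\Lambda \cap F)\geq i\}$ then shows that no single $F\in G(a_i,n)$ can witness the rank condition for all of $X$.

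The main obstacle will be the verification in the negative direction that $X$ admits no witness $F_{a_i}$ whatsoever, not merely that the obvious Schubert candidate fails; this requires ruling out both moving pencils and sporadic higher-dimensional families of candidate subspaces, which amounts to a careful study of the incidence correspondence above, and is precisely where the arithmetic constraints $a_{i+1}-a_i=2$ and $a_i-a_{i-1}\geq 2$ are essential. On the positive side, the most delicate step is the inductive case (3): matching the induced Schubert class on $G(i+1, F_{a_{i+1}})$ with the inductive hypothesis requires tracking how the essentiality of $a_i$ is preserved under restriction, and the gap of exactly $3$ is the minimal one making this work.
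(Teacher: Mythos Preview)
The paper does not prove this theorem; it is quoted from \cite{YL} and stated without proof. The only part the paper supplies is the construction for the necessity direction, recorded in Remark~3.8 (due to Coskun): when $a_{i-1}+2\le a_i=a_{i+1}-2$ one passes to $G(j,F_{a_j})\cong G(a_j-j,F_{a_j})$ and replaces a Schubert variety there by a smooth hyperplane section under the Pl\"ucker embedding. Your pencil construction is different, and as written the class computation is not justified: the total space of a one-parameter family of Schubert varieties has dimension one too large, and ``correcting dimension by intersecting with one auxiliary Schubert cycle'' will in general change the class, not restore $\sigma_a$.

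More seriously, your sufficiency argument has a genuine gap in cases (3) and (4). In case (3) you write that the witness $F_{a_{i+1}}$ is ``supplied by case (1) (when $i+1=k$) or by the inductive hypothesis''. But the sub-index $a_{i+1}$ need not satisfy any of the four conditions, so there is no inductive witness to invoke. Concretely, take $a=(2,5,7)$ in $G(3,n)$ with $n\ge 8$: then $a_1=2$ satisfies condition (3) since $a_2-a_1=3$, yet $a_2=5$ satisfies none of (1)--(4) (it is essential, $a_3-a_2=2$, $a_2-a_1=3$), hence is not rigid, and no $F_5$ is available for your reduction. The same problem afflicts case (4): you need the witness $F_{a_{i-1}}$, but $a_{i-1}$ is never essential when $a_i=a_{i-1}+1$, and the nearest essential sub-index below it need not be rigid either (e.g.\ $a=(2,4,5,8)$ in $G(4,n)$: here $a_3=5$ satisfies condition (4), but $a_1=2$ is the only essential sub-index below it and is not rigid).

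The upshot is that the four cases cannot be handled independently by bootstrapping from adjacent witnesses; the actual argument (in \cite{YL}) must produce $F_{a_i}$ directly from $X$ without assuming rigidity of neighbouring sub-indices. The hyperplane-slicing induction on $a_i-i$ visible in the proof of Lemma~3.18 of the present paper is a hint of the correct mechanism.
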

In particular, we obtained the classification of rigid Schubert varieties in Grassmannians:
\begin{Thm}\cite[Theorem 3.5]{YL}
Let $\sigma_a$ be a Schubert class in $G(k,n)$. Then $\sigma_a$ is rigid if and only if all essential sub-indices are rigid.
\end{Thm}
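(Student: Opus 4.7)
The forward direction ($\Rightarrow$) is essentially tautological: if $\sigma_a$ is rigid, then any irreducible representative $X$ equals some Schubert variety $\Sigma_a(F_\bullet)$, and the element $F_{a_i}$ of the defining flag witnesses the rigidity of each essential sub-index $a_i$.

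For the converse, let $X$ be an irreducible representative of $\sigma_a$. By the rigidity hypothesis, for every essential $a_i$ one extracts a subspace $F_{a_i} \subset V$ of dimension $a_i$ satisfying $\dim(F_{a_i} \cap \Lambda) \geq i$ for all $\Lambda \in X$. The plan is to (i) show that the $F_{a_i}$'s can be chosen to nest into a chain $F_{a_{i_1}} \subset F_{a_{i_2}} \subset \cdots$ indexed by the essential sub-indices; (ii) extend this chain to a complete flag $F_\bullet$; and (iii) apply Proposition \ref{essential a} to conclude $X = \Sigma_a(F_\bullet)$.

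The main obstacle is step (i), the nesting of the $F_{a_i}$'s. My approach here is a dimension-count contradiction: the subvariety
\[
Y := \bigcap_{a_i \text{ essential}} \{\Lambda \in G(k,n) : \dim(F_{a_i} \cap \Lambda) \geq i\}
\]
contains $X$. When the $F_{a_i}$'s fail to be nested, I expect $[Y]$, computed as the product in cohomology of the individual single-condition Schubert classes and expanded via the Littlewood--Richardson rule, to split into a sum of Schubert summands each supported on a subvariety of strictly smaller dimension than $\Sigma_a(F_\bullet)$. Since $X$ is irreducible with $\dim X = \dim \Sigma_a$, it cannot fit inside any such component, a contradiction. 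Making this class calculation precise for arbitrary non-nested configurations---particularly those that are only mildly non-nested---is the technical core; one natural route is to induct on the number of essential sub-indices, reducing to the case of a single pair $a_i < a_{i'}$. An alternative is to work directly with the spans $\mathrm{span}\{F_{a_i}\cap\Lambda : \Lambda \in X\}$ and argue that a proper refinement would force $X$ into a Schubert locus of strictly smaller dimension.

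Step (ii) is routine: for each non-essential $a_j$, we have $a_j = a_{j+1} - 1$, so we simply choose $F_{a_j}$ to be a hyperplane of $F_{a_{j+1}}$ containing $F_{a_{j-1}}$; the rank condition for $a_j$ follows automatically from that for $a_{j+1}$ because $F_{a_j}$ has codimension one in $F_{a_{j+1}}$. Step (iii) is then a direct invocation of Proposition \ref{essential a}, which yields $X = \Sigma_a(F_\bullet)$ and completes the proof.
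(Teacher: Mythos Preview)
Your overall architecture is correct and matches the paper's: extract an $F_{a_i}$ for each essential $a_i$, show these subspaces nest, then invoke Proposition~\ref{essential a}. Steps (ii) and (iii) are fine. The gap is entirely in step~(i), and your proposed Littlewood--Richardson argument does not close it.

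The problem is that the class of the set-theoretic intersection $Y=\bigcap_{i}Y_{a_i}$ is \emph{not} the cohomological product of the $[Y_{a_i}]$ unless the intersection is dimensionally proper. In the nested case it is visibly improper (the intersection is $\Sigma_a$, which is much larger than what the product predicts), so you cannot argue ``product class has smaller dimension, hence $Y$ is too small'' without first establishing properness in the non-nested case---and that is essentially the statement you are trying to prove. Your alternative sketch (``work with spans'') is pointing in the right direction but is not an argument.

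The paper handles exactly this nesting step in Lemma~\ref{coincide}, by a short induction on $a_i-i$. The base case $a_i=i$ is direct: if $F_{a_i}\not\subset F_{a_j}$, pick a line $G_1\subset F_{a_i}\setminus F_{a_j}$; since $F_{a_i}\subset\Lambda$ for every $\Lambda\in X$, the span $W=G_1+F_{a_j}$ has $\dim(W\cap\Lambda)\ge j+1$ with $\dim W=a_j+1$, contradicting the essentiality of $a_j$. For $a_i>i$ one slices by a general hyperplane $H$ via the incidence correspondence $\{(\Lambda,H):\Lambda\subset H\}$: the fibre $X_H$ represents the class with each positive $a_t$ decreased by one, so induction applies to $F_{a_i}\cap H$ and $F_{a_j}\cap H$, and varying $H$ recovers the full containment. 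No intersection-theory computation is needed. I would replace your step~(i) with this argument.
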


\begin{Rem}\label{ex of non rigid}
Let $\sigma_a$ be a Schubert class in $G(k,n)$. If for some $1\leq i\leq k$,
$$a_{i-1}+2\leq a_i=a_{i+1}-2,$$ then we can construct a non-Schubert variety that represents the Schubert class $\sigma_a$ in $G(k,n)$. The construction is due to Coskun in \cite[Theorem 1.3]{Coskun2011RigidAN}.

Let $a_j$ be the first essential sub-index after $a_i$, i.e. the largest $j$ such that 
$$a_j=a_i+j-i+1.$$
Take an $a_j$-dimensional vector subspace $F_{a_j}$. Consider the Schubert class $\sigma_\mu$, $\mu=(\mu_1,...,\mu_\alpha)$, which corresponds to the Schubert class $\sigma_{a_1,...,a_j}$ under the isomorphism
$$G(j,F_{a_j})\cong G(a_j-j,F_{a_j}),$$
which takes a $j$-plane to its dual in $F_{a_j}$. Then the condition $a_{i-1}+2\leq a_i=a_{i+1}-2$ translates to the conditions $\mu_1>1$ and $\mu_2=\mu_1+2$.

Let $\mu_t$ be the first essential sub-index after $\mu_1$, i.e. the largest $t$ such that $$\mu_t=\mu_1+t.$$ 
Pick a partial flag $W_{\mu_t}\subset...\subset W_{\mu_\alpha}$ of subspaces of $F_{\lambda_j}$. Let $H$ be a smooth hyperplane section of $G(t,W_{\mu_t})$ under the Pl\"ucker embedding. Consider the following variety
$$Y:=\{L\in G(a_j-j,F_{a_j})|L'\subset L\cap W_{\mu_t},\text{ for }L'\in H,\text{ and }\dim(L\cap W_{\mu_s})\geq s,t\leq s\leq \alpha\}.$$
Then $Y$ represents the Schubert class $\sigma_{\mu}$, but is not a Schubert variety since $H$ is not. 

Pick a partial flag $F_{a_j}\subset...\subset F_{a_k}$. Let $X$ be the variety defined as follows:
$$X:=\{\Lambda\in G(k,n)|\Lambda'\subset \Lambda\cap F_{a_j},\text{ for }\Lambda'\in Y^*, \text{ and }\dim(\Lambda\cap F_{a_s})\geq s, j\leq s\leq k\}.$$
Then $X$ is a Schubert variety representing the Schubert class $\sigma_a$, but is not a Schubert variety.
\end{Rem}

\subsection{Classes arose from the canonical projections}
For partial flag varieties, we will study the rigidity problem by tracing the essential sub-indices to sub-indices in Grassmannians via canonical projections 
$$\pi_t:F(d_1,...,d_k;n)\rightarrow G(d_t,n).$$
In this subsection, we calculate the push-forward of Schubert classes under the canonical projections and calculate the class of general fibers of the restriction of $\pi_t$ to a Schubert variety.

\begin{Prop}\label{pushforward of class}
Let $\pi_t$ be the $t$-th canonical projection $F(d_1,...,d_k;n)\rightarrow G(d_t,n)$. Let $\Sigma_{a^\alpha}$ be a Schubert variety in $F(d_1,...,d_k;n)$ and $X$ be a subvariety which is rationally equivalent to $\Sigma_{a^\alpha}$. Then $\pi_t(X)$ is rationally equivalent to $\pi_t(\Sigma_{a^\alpha})$.
\end{Prop}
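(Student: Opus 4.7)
The plan is a direct application of the functoriality of proper push-forward on Chow groups, supplemented by an explicit identification of the image. Since both $F=F(d_1,\ldots,d_k;n)$ and $G(d_t,n)$ are projective, the canonical projection $\pi_t$ is proper, and hence the push-forward $(\pi_t)_{*}\colon A_{*}(F)\to A_{*}(G(d_t,n))$ is well-defined on Chow groups and preserves rational equivalence. Applying this to the hypothesis $[X]=[\Sigma_{a^\alpha}]$ in $A_{*}(F)$ yields $(\pi_t)_{*}[X]=(\pi_t)_{*}[\Sigma_{a^\alpha}]$ in $A_{*}(G(d_t,n))$, which is the desired rational equivalence of $\pi_t(X)$ and $\pi_t(\Sigma_{a^\alpha})$ as cycles in $G(d_t,n)$.

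To make this conclusion concrete and useful for the subsequent analysis of rigid sub-indices, I would next pin down $\pi_t(\Sigma_{a^\alpha})$ explicitly as a Schubert variety. Concretely, for any flag $F_\bullet$, the image $\pi_t(\Sigma_{a^\alpha}(F_\bullet))$ coincides set-theoretically with the Schubert variety $\Sigma_b(F_\bullet)\subset G(d_t,n)$ whose Schubert index is $b=\{a_i:\alpha_i\leq t\}$ and whose rank conditions are $\dim(F_{a_i}\cap\Lambda)\geq\mu_{i,t}$, obtained by retaining only those defining conditions of $\Sigma_{a^\alpha}$ that involve $\Lambda_t$. The inclusion $\pi_t(\Sigma_{a^\alpha})\subseteq\Sigma_b$ is immediate from the definition, and the reverse inclusion is obtained by extending any $\Lambda_t\in\Sigma_b$ to a full $k$-step flag in $\Sigma_{a^\alpha}$: choose $\Lambda_j\subset\Lambda_t$ for $j<t$ and $\Lambda_j\supset\Lambda_t$ for $j>t$ compatibly with the remaining rank conditions, which is possible by a straightforward dimension count at each step.

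The main point requiring some care, if one wants $\pi_t(X)$ and $\pi_t(\Sigma_{a^\alpha})$ to mean reduced subvarieties rather than the cycle-theoretic push-forwards, is the matching of the multiplicities that appear in $(\pi_t)_{*}[X]$ and $(\pi_t)_{*}[\Sigma_{a^\alpha}]$; these are the generic degrees of the respective restrictions of $\pi_t$, and they are nonzero precisely when the generic fiber dimensions vanish. Since $\pi_t$ is a Zariski-locally trivial fibration with irreducible fibers isomorphic to $F(d_1,\ldots,d_{t-1};d_t)\times F(d_{t+1}-d_t,\ldots,d_k-d_t;n-d_t)$, and $X$ and $\Sigma_{a^\alpha}$ are connected by a family realizing the rational equivalence, upper semicontinuity of fiber dimension applied to this family forces the generic fiber dimensions of $\pi_t|_X$ and $\pi_t|_{\Sigma_{a^\alpha}}$ to coincide. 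This step, propagating the matching of fiber dimensions through the rational equivalence, is the only part of the argument that is not purely formal, and it is the main obstacle I would need to handle carefully before concluding.
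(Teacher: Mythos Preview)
Your functoriality argument gives $(\pi_t)_*[X]=(\pi_t)_*[\Sigma_{a^\alpha}]$ in $A_*(G(d_t,n))$, but this is \emph{not} the same as $[\pi_t(X)]=[\pi_t(\Sigma_{a^\alpha})]$. By definition, the cycle push-forward $(\pi_t)_*[V]$ equals $\deg(V/\pi_t(V))\,[\pi_t(V)]$ when $\dim V=\dim\pi_t(V)$ and equals $0$ otherwise. The restriction $\pi_t|_{\Sigma_{a^\alpha}}$ has positive-dimensional generic fiber whenever some $\alpha_i>t$, which is the generic situation for $k\geq 2$; in that case both sides of your equality are zero and the identity is vacuous. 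So the heart of the proposition is precisely the case in which your first paragraph gives no information.

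Your proposed repair does not work either. Rational equivalence in $A_*(F)$ is not realized by a flat family connecting $X$ to $\Sigma_{a^\alpha}$; it is generated by divisors of rational functions on subvarieties of $F$, and there is no ambient deformation over which one could invoke semicontinuity of fiber dimension. Even if such a family existed, semicontinuity would at best give an inequality in one direction, not the equality of generic fiber dimensions you need. This is exactly why the paper does not argue via $(\pi_t)_*$: it instead determines $[\pi_t(X)]$ directly by the method of undetermined coefficients. For each Schubert class $\sigma_c$ in $G(d_t,n)$ of dimension complementary to $\sigma_b$, one checks that $\sigma_b\cdot\sigma_c=0$ forces $\Sigma_{a^\alpha}\cap\pi_t^{-1}(\Sigma_c)=\emptyset$ for general flags, hence $[X]\cdot\pi_t^*(\sigma_c)=0$ and so $X\cap\pi_t^{-1}(\Sigma_c)=\emptyset$, giving $\pi_t(X)\cap\Sigma_c=\emptyset$. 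The one nonzero coefficient is then pinned down by intersecting in $F$ with the Poincar\'e dual of $\sigma_{a^\alpha}$ and projecting. Your identification of $\pi_t(\Sigma_{a^\alpha})$ with $\Sigma_b$ is correct and is also the first step in the paper's proof, but from there you need this intersection-theoretic argument rather than push-forward functoriality.
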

\begin{proof}
It is easy to see that $\pi_t(\Sigma_{a^\alpha})$ is the Schubert variety $\Sigma_b$ in $G(d_i,n)$, where $b$ is an increasing sequence obtained from $a$ by removing all the sub-indices $a_i$ whenever $\alpha_i$ is greater than $t$.

Now it suffices to show that $[\pi_t(X)]=\sigma_b$. We use the method of undetermined coefficients. Let $b^*$ be the dual index $(n+1-b_{d_i},...,n+1-b_1)$ of $b$. Let $c$ be a Schubert index such that $\dim(\sigma_c)=\dim(\sigma_{b^*})$. Then $\sigma_b\cdot\sigma_c=\sigma_{1,...,d_i}$ is the point class if $c=b^*$, and $\sigma_b\cdot \sigma_c=0$ if $c\neq b^*$. Let $\Sigma_c$ be a Schubert variety in $G(d_t,n)$ defined by a general flag in $V$. If $c\neq b^*$, then $\sigma_b\cdot\sigma_c=0$ implies $\sigma_{a^\alpha}\cdot [\pi^{-1}(\Sigma_c)]=0$ and therefore $X\cap\pi^{-1}_t(\Sigma_c)=\emptyset$. Thus $[\pi_t(X)]\cdot\sigma_c=0$ for $c\neq b^*$. Similarly, consider the Schubert variety $\Sigma_{{b'}^\beta}$ in $F$ defined by a general flag, where $b'_i=n+1-a_i$ and $\beta_i=\alpha_i$. The Schubert class $\sigma_{{b'}^\beta}$ is the dual class of $\sigma_{a^\alpha}$, and in particular $\sigma_{{b'}^\beta}\cdot \sigma_{a^\alpha}$ is the point class in $A^*(F)$. This implies $X\cap \Sigma_{{b'}^\beta}$ consists of one point in $F$, and hence $\pi_t(X\cap\Sigma_{{b'}^\beta})$ also consists of one point in $G(d_t,n)$. Notice that $[\pi_t(\Sigma_{{b'}^\beta})]=\sigma_{b^*}$ and therefore $[\pi_t(X)]\cdot\sigma_{b^*}$ is the point class in $A^*(G(d_t,n))$. This shows that $[\pi_t(X)]=\sigma_b$.
\end{proof}

Let $X$ be a subvariety in $F(d_1,...,d_k;n)$ representing the Schubert class $\sigma_{a^\alpha}$. Let $X_i:=\pi_i(X)$ be its image under the $i$-th projection. Let $\pi_i|_{X}:X\rightarrow X_i$ be the restriction of $\pi_i$ to $X$.
\begin{Prop}\label{fiberk}
A general fiber of $\pi_k|_X$ has class $\sigma_{b^\beta}$ where $b^\beta=(1^{\alpha_1},...,d_k^{\alpha_{d_k}})$.
\end{Prop}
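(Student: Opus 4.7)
The plan is to first establish the statement for the model Schubert representative $X = \Sigma_{a^\alpha}(F_\bullet)$ by a direct computation, then extend to an arbitrary representative via intersection theory. Throughout, I adopt the $k$-block convention on the fiber $F(d_1, \ldots, d_{k-1}; d_k) \cong F(d_1, \ldots, d_{k-1}, d_k; d_k)$, in which every Schubert index has lower part $(1, 2, \ldots, d_k)$ and the sub-indices with upper index $k$ impose only the trivial condition $G_j \subset \Lambda_k$.

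For the model case, Proposition \ref{pushforward of class} gives $\pi_k(\Sigma_{a^\alpha}) = \Sigma_{(a_1, \ldots, a_{d_k})}$ in $G(d_k, n)$. For a general $\Lambda_k$ in the open Schubert cell of this image, the subspaces $G_j := \Lambda_k \cap F_{a_j}$ form a complete flag in $\Lambda_k$ with $\dim G_j = j$. Since $\Lambda_i \subset \Lambda_k$ for $i \leq k - 1$, the defining conditions of $\Sigma_{a^\alpha}$ restrict to $\dim(\Lambda_i \cap G_j) \geq \mu_{j,i}$, and a direct comparison of $\mu$-numbers shows that this is exactly the Schubert variety $\Sigma_{b^\beta}(G_\bullet)$ with $b^\beta = (1^{\alpha_1}, \ldots, d_k^{\alpha_{d_k}})$.

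For an arbitrary $X$ with $[X] = \sigma_{a^\alpha}$, Proposition \ref{pushforward of class} yields $[\pi_k(X)] = \sigma_{(a_1, \ldots, a_{d_k})}$ and hence $\dim \pi_k(X) = \dim \Sigma_{(a_1, \ldots, a_{d_k})}$, forcing the generic fiber of $\pi_k|_X$ to have dimension $\dim \Sigma_{a^\alpha} - \dim \Sigma_{(a_1, \ldots, a_{d_k})} = \dim \Sigma_{b^\beta}$. I then apply the method of undetermined coefficients as in the proof of Proposition \ref{pushforward of class}. The model case shows that, in the $k$-block convention, every Schubert class on the fiber arises as the generic fiber of some Schubert variety on $F(d_1, \ldots, d_k; n)$: for a test index $c^\gamma = (1^{\gamma_1}, \ldots, d_k^{\gamma_{d_k}})$ of complementary dimension in the fiber, the lifted Schubert variety $\widetilde{\Sigma} := \Sigma_{\widetilde{c}^{\widetilde{\gamma}}}(F'_\bullet)$ defined by a flag $F'_\bullet$ in general position with respect to $F_\bullet$, with $\widetilde{c} = (n+1-a_{d_k}, \ldots, n+1-a_1)$ and $\widetilde{\gamma} = (\gamma_{d_k}, \ldots, \gamma_1)$, projects under $\pi_k$ to the Poincar\'e dual of $\Sigma_{(a_1, \ldots, a_{d_k})}$ in $G(d_k, n)$, and by the model case its generic fiber is the Poincar\'e dual $\Sigma_{c^{\gamma *}}(G'_\bullet)$ of $\Sigma_{c^\gamma}$ in the fiber. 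Kleiman transversality implies that $X \cap \widetilde{\Sigma}$ is transverse and concentrated over the unique intersection point $\Lambda_k^* \in \pi_k(X) \cap \pi_k(\widetilde{\Sigma})$, giving the identity
\[
\sigma_{a^\alpha} \cdot [\widetilde{\Sigma}] \;=\; [X_{\Lambda_k^*}] \cdot \sigma_{c^{\gamma *}}
\]
in the Chow ring of the fiber. Running the same computation with $\Sigma_{a^\alpha}$ in place of $X$ replaces the right-hand side by $\sigma_{b^\beta} \cdot \sigma_{c^{\gamma *}}$; varying $c^\gamma$ over all Schubert indices of complementary dimension then forces $[X_{\Lambda_k^*}] = \sigma_{b^\beta}$.

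The main obstacle is verifying that $\widetilde{c}^{\widetilde{\gamma}}$ is a genuine Schubert index on $F(d_1, \ldots, d_k; n)$, i.e.\ that its upper indices respect the block sizes $d_t - d_{t-1}$ of the total space; this follows because reversing the upper indices preserves the count within each block, and the Poincar\'e duality on $G(d_k, n)$ produces the correct base component of the index. Once the lift is in place, transversality and the localization of the intersection over a single point in $G(d_k,n)$ are standard consequences of Kleiman's theorem and the genericity of $F'_\bullet$.
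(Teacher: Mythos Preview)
Your proof is correct and shares the paper's two-step structure: verify the claim directly for the Schubert variety, then pass to a general representative via intersection theory. The model-case computation is identical to the paper's.

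Where you diverge is in the second step. You run the full method of undetermined coefficients on the fiber: for each test class $c^\gamma$ of complementary dimension you build a lift $\widetilde{\Sigma}$ to the total space, localize $X\cap\widetilde{\Sigma}$ over the unique point of $\pi_k(X)\cap\pi_k(\widetilde{\Sigma})$, and compare the resulting pairings with those of $\sigma_{b^\beta}$. The paper short-circuits all of this with a single observation: for a general $\Lambda_k\in X_k$ one can choose a Schubert variety $\Sigma_{a^*}\subset G(d_k,n)$ of the dual class meeting $X_k$ transversally exactly at $\Lambda_k$, so that the fiber $X_{\Lambda_k}$ itself coincides set-theoretically with $X\cap\pi_k^{-1}(\Sigma_{a^*})$ and hence its class is the intersection product $\sigma_{a^\alpha}\cdot\pi_k^*(\sigma_{a^*})$ in $A^*(F)$. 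This product depends only on $[X]=\sigma_{a^\alpha}$, so one may compute it once and for all using the Schubert representative, where the first step already gives $\sigma_{b^\beta}$. Your argument reaches the same conclusion, but the paper's route is shorter because it identifies the entire fiber as a single intersection rather than probing it with a family of test classes; your route, on the other hand, makes the comparison with Proposition~\ref{pushforward of class} more explicit and would adapt more readily if one wanted finer information than just the total class.
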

\begin{proof}
First assume $X$ is the Schubert variety, which is defined by the partial flag
$$F_{a_1}^{\alpha_1}\subset...\subset F_{a_{d_k}}^{\alpha_{d_k}}.$$
Let $\Lambda_k$ be a general point in $X_k$. Observe that $\Lambda_k\cap F_{a_i}$ impose a corank condition on the $j$-flag element $\Lambda_j$ if and only if $\alpha_i\leq j$. The fiber over $\Lambda_k$ is the Schubert variety defined by
$$(\Lambda_k\cap F_{a_1})^{\alpha_1}\subset...\subset (\Lambda_k\cap F_{a_{d_k}})^{\alpha_{d_k}}.$$
Since for a general $\Lambda_k\in X_k$, $\dim(\Lambda_k\cap F_{a_i})=i$, the fiber has class $\sigma_{b^\beta}$, where $b^\beta=(1^{\alpha_1},...,d_k^{\alpha_{d_k}}$).

Now consider a general representative $X$ which is not necessarily a Schubert variety. Let $a^*=(n-a_1+1,...,n-a_{d_k}+1)$ be the dual Schubert index of $a_\bullet$ in $G(d_k,n)$. Then for a general point $\Lambda_k$ in $X_k$, there is a Schubert variety $\Sigma_{a^*}$ in $G(d_k,n)$ such that $X_k$ and $\Sigma_{a^*}$ intersect transversally at $\Lambda_k$. Therefore the class of the fiber of $\pi_k|_X$ at $\Lambda_k$ is given by the intersection product $\sigma_{a^\alpha}\cdot\pi_k^*(\sigma_{a^*})$, which equals $\sigma_{b^\beta}$ by replacing $X$ with a Schubert variety and applying the previous argument.

\end{proof}
Now we consider the case when $i=1$. Let $a'^{\alpha'}$ be the sequence obtained from $a^\alpha$ by removing the elements whose upper index equals $1$.
\begin{Prop}\label{fiber1}
A general fiber of $\pi_1|_X$ has class $\sigma_{b^\beta}$ where $b_i=i$ and $\beta_i=1$ for $1\leq i\leq d_1$; $b_{d_1+i}=a_i'+\#\{s|a_s>\alpha_i',\alpha_s=1\}$ and $\beta_{d_1+i}=\alpha_i'$ for $d_1+1\leq d_1+i\leq d_k$.
\end{Prop}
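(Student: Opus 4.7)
The plan is to mirror the strategy used in the proof of Proposition \ref{fiberk}. First, I reduce to the Schubert case by intersection theory: by Proposition \ref{pushforward of class}, $\pi_1(X)$ represents the class $\sigma_{a_{i_1},\ldots,a_{i_{d_1}}}$ in $G(d_1,n)$, where $i_1<\cdots<i_{d_1}$ are the positions of upper-index-$1$ entries in $a^\alpha$. Let $c$ be the Poincar\'e-dual Schubert index in $G(d_1,n)$. For a generic flag, the Schubert variety $\Sigma_c$ meets $\pi_1(X)$ transversally in a single point $\Lambda_1$, and the fiber of $\pi_1|_X$ over $\Lambda_1$ represents the intersection product $\sigma_{a^\alpha}\cdot\pi_1^*(\sigma_c)$, which depends only on $[X]=\sigma_{a^\alpha}$. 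Thus I may assume $X=\Sigma_{a^\alpha}(F_\bullet)$ is itself a Schubert variety.

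In the Schubert case, I will construct an explicit complete flag $G_\bullet$ on $V$ and show that the fiber equals $\Sigma_{b^\beta}(G_\bullet)$. For a generic $\Lambda_1\in\pi_1(\Sigma_{a^\alpha})$, set $G_s:=\Lambda_1\cap F_{a_{i_s}}$ for $1\leq s\leq d_1$ (of dimension $s$) and $G_{b_{d_1+j}}:=F_{a_j'}+\Lambda_1$ for $1\leq j\leq d_k-d_1$. Writing $e_j':=\#\{s: a_s\leq a_j',\ \alpha_s=1\}$, the identity $\dim(F_{a_j'}\cap\Lambda_1)=e_j'$ (for generic $\Lambda_1$) yields $\dim(F_{a_j'}+\Lambda_1)=a_j'+d_1-e_j'=b_{d_1+j}$, matching the formula in the statement. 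Extend $G_\bullet$ arbitrarily to a complete flag. The first $d_1$ Schubert conditions, all of upper index $1$, force $\Lambda_1=G_{d_1}$; and using $\Lambda_1\subset\Lambda_t$, the identity
\[
\dim\bigl((F_{a_j'}+\Lambda_1)\cap\Lambda_t\bigr)=\dim(F_{a_j'}\cap\Lambda_t)+d_1-e_j'
\]
translates each remaining Schubert condition of $\Sigma_{b^\beta}(G_\bullet)$ into the corresponding original condition on the upper-index-$\geq 2$ sub-index $a_j'$ applied to $\Lambda_t$.

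The main obstacle is verifying that the original conditions coming from an upper-index-$1$ sub-index $a_{i_s}$ applied to $\Lambda_t$ with $t\geq 2$, namely $\dim(F_{a_{i_s}}\cap\Lambda_t)\geq\mu_{i_s,t}$, are automatically implied by the conditions already recorded in $\Sigma_{b^\beta}(G_\bullet)$. I would handle this by setting $l^*:=\max\{l: a_l'\leq a_{i_s},\ \alpha_l'\leq t\}$; when no such $l$ exists, $\mu_{i_s,t}=s$ and the estimate follows from $\Lambda_1\cap F_{a_{i_s}}\subset \Lambda_t$ alone. Otherwise, applying the original condition on $a_{l^*}'$ together with the modular law yields
\[
\dim\bigl((\Lambda_t\cap F_{a_{l^*}'})+(\Lambda_1\cap F_{a_{i_s}})\bigr)\geq s+\#\{l: a_l'\leq a_{l^*}',\ \alpha_l'\leq t\},
\]
and by the maximality of $l^*$ the right-hand side equals $\mu_{i_s,t}$. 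Since both summands lie in $\Lambda_t\cap F_{a_{i_s}}$, the required lower bound on $\dim(F_{a_{i_s}}\cap\Lambda_t)$ follows. This identifies the fiber with $\Sigma_{b^\beta}(G_\bullet)$, and the class of a general fiber is therefore $\sigma_{b^\beta}$.
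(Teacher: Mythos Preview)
Your proof is correct and follows essentially the same strategy as the paper: reduce to the Schubert case via the intersection product $\sigma_{a^\alpha}\cdot\pi_1^*(\sigma_{(a^1)^*})$, then identify the fiber over a general $\Lambda_1$ as the Schubert variety defined by the flag whose bottom part sits inside $\Lambda_1$ and whose top part consists of the spans $\Lambda_1+F_{a'_j}$.

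The only noteworthy differences are cosmetic. The paper fills the first $d_1$ slots of the flag with \emph{general} codimension-one subspaces $\Lambda_{1,d_1-1}\subset\cdots\subset\Lambda_{1,1}\subset\Lambda_1$, whereas you use the specific subspaces $\Lambda_1\cap F_{a_{i_s}}$; either choice yields the same Schubert class. More substantively, the paper simply asserts that the fiber equals the described Schubert variety, while you actually carry out the verification, in particular the modular-law argument showing that the rank conditions $\dim(F_{a_{i_s}}\cap\Lambda_t)\geq\mu_{i_s,t}$ for $t\geq 2$ are implied by the remaining ones. That extra care is a genuine improvement over the paper's terse treatment.
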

\begin{proof}
Let $a^1$ be the sub-sequence of $a^\alpha$ which consists of all elements whose upper index equals $1$. Let $(a^1)^*$ be the dual Schubert index of $a^1$ in $G(d_1,n)$. Then the class of a general fiber of $\pi_1|_X$ is given by the intersection product $\sigma_{a^\alpha}\cdot\pi_1^*(\sigma_{(a^1)^*})$. Therefore there is no harm to assume $X$ to be a Schubert variety. Assume $X$ is defined by
$$F_{a_1}^{\alpha_1}\subset...\subset F_{a_{d_k}}^{\alpha_{d_k}}.$$
Let $\Lambda_1$ be a general point in $X_1$. Then the fiber of $\pi_1|_X$ at $\Lambda_1$ is the Schubert variety defined by the flag
$$\Lambda_{1,d_1-1}^1\subset...\subset \Lambda^1_{1,1}\subset \Lambda^1_1\subset(\text{span}(\Lambda_1,F_{a_1'}))^{\alpha'_1}\subset...\subset (\text{span}(\Lambda_1,F_{a'_{d_k-d_1}}))^{\alpha_{d_k-d_1}'},$$
where $\Lambda_{1,1}$ is a general codimension 1 linear subspace of $\Lambda_1$, and $\Lambda_{1,j}$ is a general codimension 1 linear subspace of $\Lambda_{1,j-1}$, $2\leq j\leq d_1-1$. Notice that span$(\Lambda_1,F_{a_i'})$ has dimension $a_i'+\#\{s|a_s>\alpha_i',\alpha_s=1\}$, and the statement then follows.
\end{proof}

For $1<i<k$, let $a'^{\alpha'}$ be the sequence obtained from $a^\alpha$ be removing the elements whose upper index is greater than $i$, and let $a''^{\alpha''}$ be the sequence obtained from $a^\alpha$ by removing $a'^{\alpha'}$.
\begin{Cor}\label{class of fiber}
A general fiber of $\pi_i|_X$ has class $\sigma_{b^\beta}$ where $b_j=j$ and $\beta_j=\alpha_j$ for $1\leq j\leq d_i$; $b_{d_i+t}=a_t''+\#\{s|a_s>\alpha_t',\alpha_s\leq i\}$ and $\beta_{d_i+t}=\alpha_t'$ for $d_i+1\leq d_i+t\leq d_k$.
\end{Cor}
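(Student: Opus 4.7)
The plan is to combine the strategies of Propositions \ref{fiberk} and \ref{fiber1}: the Schubert conditions on $\Lambda_j$ with $j<i$ are handled as in Proposition \ref{fiberk} (by restricting flag elements to $\Lambda_i$), while those on $\Lambda_j$ with $j>i$ are handled as in Proposition \ref{fiber1} (by taking spans with $\Lambda_i$). As a first step, I would reduce to the case $X=\Sigma_{a^\alpha}(F_\bullet)$. By the same undetermined-coefficients argument as in the proofs of the two previous propositions, the class of a general fiber of $\pi_i|_X$ is given by the intersection product $\sigma_{a^\alpha}\cdot\pi_i^*(\sigma_{(a')^*})$, where $a'$ is the push-forward index consisting of the sub-indices of $a^\alpha$ whose upper index is at most $i$ (Proposition \ref{pushforward of class}). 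Since this depends only on $[X]$, we may assume $X$ is a Schubert variety.

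Next I would exhibit the fiber over a general $\Lambda_i\in X_i=\Sigma_{a'}$ as a Schubert variety in $F(d_1,\ldots,d_k;n)$. For a general $\Lambda_i$ one has $\dim(\Lambda_i\cap F_{a_s})=\mu_{s,i}$ for every $s$, and the chain $\{\Lambda_i\cap F_{a_s}\}_{\alpha_s\le i}$ is already a complete flag in $\Lambda_i$ of dimensions $1,2,\ldots,d_i$; label its terms $F'_1\subset\cdots\subset F'_{d_i}=\Lambda_i$. Then extend the flag by adjoining $\mathrm{span}(\Lambda_i,F_{a_t''})$ for each $a_t''\in a''^{\alpha''}$. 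A direct dimension count
\[
\dim\mathrm{span}(\Lambda_i,F_{a_t''})=d_i+a_t''-\#\{s\mid a_s\le a_t'',\alpha_s\le i\}=a_t''+\#\{s\mid a_s>a_t'',\alpha_s\le i\}
\]
matches the $(d_i+t)$-th entry of $b^\beta$, with the upper index $\alpha_t''$ attached.

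The final step is to verify that the Schubert variety defined by this new flag and $b^\beta$ in $F(d_1,\ldots,d_k;n)$ coincides with $\pi_i|_X^{-1}(\Lambda_i)$. The conditions coming from the first $d_i$ entries pin down $\Lambda_i$ (because $F'_{d_i}=\Lambda_i$) and, for $j<i$, translate the original condition $\dim(\Lambda_j\cap F_{a_s})\ge\mu_{s,j}$ (with $\alpha_s\le i$) into a Schubert condition on $\Lambda_j\subset\Lambda_i$ with respect to the refined flag, exactly as in Proposition \ref{fiberk}. The conditions from the remaining entries, via $\Lambda_j\supset\Lambda_i$ and $F_{a_t''}+\Lambda_i=\mathrm{span}(\Lambda_i,F_{a_t''})$, recover $\dim(\Lambda_j\cap F_{a_t''})\ge\mu_{t,j}$ for $j\ge\alpha_t''$, exactly as in Proposition \ref{fiber1}. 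The main obstacle I expect is the purely combinatorial bookkeeping to match the rank numbers $\mu^b_{s,j}$ of the Schubert variety defined by $b^\beta$ with the rewritten versions of the original $\mu_{s,j}$; this should amount to applying the matching arguments of the two edge-case propositions in both directions simultaneously.
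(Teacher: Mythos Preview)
Your approach is correct and essentially the same as the paper's: both reduce to a Schubert variety and then combine Proposition~\ref{fiberk} (for the $j\le i$ conditions) with Proposition~\ref{fiber1} (for the $j>i$ conditions). The only difference is packaging: the paper invokes the two-step factorization
\[
F(d_1,\ldots,d_k;n)\longrightarrow F(d_1,\ldots,d_i;n)\longrightarrow G(d_i,n),
\]
noting that fibers of the first map are isomorphic to fibers of $F(d_i,\ldots,d_k;n)\to G(d_i,n)$, and then applies each proposition to one stage; you instead write out the combined flag $F'_1\subset\cdots\subset F'_{d_i}=\Lambda_i\subset\mathrm{span}(\Lambda_i,F_{a_1''})\subset\cdots$ directly in $F(d_1,\ldots,d_k;n)$ and check the rank conditions by hand. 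The factorization viewpoint lets the paper avoid the combinatorial bookkeeping you flag at the end, since each piece of the matching is already contained in the two cited propositions; your version is more explicit but requires reproving that matching.
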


\begin{proof}
Consider the two-step projection 
$$F(d_1,...,d_k;n)\rightarrow F(d_1,...,d_i;n)\rightarrow F(d_i,n).$$
The fibers of the first projection are isomorphic to the fibers of the projection $$F(d_i,...,d_k;n)\rightarrow G(d_i,n).$$ The statement then follows by applying Proposition \ref{fiber1} to the projection $F(d_i,...,d_k;n)\rightarrow G(d_i,n)$ and applying Proposition \ref{fiberk} to the projection $F(d_1,...,d_i;n)\rightarrow G(d_i,n)$.
\end{proof}

\subsection{General partial flag varieties of type A}
In this section, we study the rigidity problem in general partial flag varieties of type A. The basic idea is to trace the rigidity of essential indices under the canonical projection from the flag varieties to Grassmannians. 

We start with an example:
\begin{Ex}
Consider the Schubert class $\sigma_{2^1,4^2}$ and $\sigma_{2^2,4^1}$ in the partial flag variety $F(1,2,4)$. Under the canonical projection $\pi_2:F(1,2;4)\rightarrow G(2,4)$, both classes are taken to the class $\sigma_{2,4}\in A(G(2,4))$ which is not a rigid Schubert class. We claim that $\sigma_{2^1,4^2}$ is rigid while $\sigma_{2^2,4^1}$ is not.

First consider the Schubert class $\sigma_{2^1,4^2}$. Let $X$ be a representative of it. Let $\pi_1$ be the first projection $\pi_1:F(1,2;4)\rightarrow G(1,4)$. Then $[\pi_1(X)]=\sigma_2\in A(G(1,4))$ which is a rigid Schubert class by Theorem \ref{rigid index in g}. Let $F_2$ be the unique vector subspace of dimension $2$ such that $\Lambda_1\subset F_2$ for all $\Lambda_1\in \pi_1(X)$. Then for every $\Lambda_2\supset \Lambda_1$, $\dim(\Lambda_2\cap F_2)\geq 1$. Therefore $X$ is contained in the following set
$$\{(\Lambda_1,\Lambda_2)|\Lambda_1\subset F_2,\dim(\Lambda_2\cap F_2)\geq 1\},$$
which is the Schubert variety $\Sigma_{2^1,4^2}(F_2\subset V)$. Since $\dim(X)=\dim(\Sigma_{2^1,4^2})$ and the Schubert varieties are irreducible, $X$ is the Schubert variety $\Sigma_{2^1,4^2}$.

Then consider the Schubert class $\sigma_{2^2,4^1}$. Let $H$ be a smooth hyperplane section of $G(2,4)$ under the Plucker embedding. Let $Y$ be the variety defined as follows:
$$Y:=\{(\Lambda_1,\Lambda_2)|\Lambda_2\in H,\Lambda_1\subset\Lambda_2\}.$$
Then $[Y]=\sigma_{2^2,4^1}$, but is not a Schbuert variety since $H$ is not.
\end{Ex}
\begin{Def}
Let $a^\alpha$ be a Schubert index in the partial flag variety $F=F(d_1,...,d_k;n)$. A sub-index $a_i$ is called {\em essential} if it is essential with respect to the class $(\pi_t)_*(\sigma_{a^\alpha})$ for some $1\leq t\leq k$.
\end{Def}
\begin{Prop}\label{essential flag}
Let $X$ be a representative of $\sigma_{a^\alpha}$ in $F(d_1,...,d_k;n)$. If there exists a flag $F_\bullet$ of subspaces such that for all $(\Lambda_1,...,\Lambda_k)\in X$, $\dim(F_{a_i}\cap \Lambda_j)\geq \mu_{i,j}$ for all essential $a_i$, then $X$ has to be the Schubert variety defined by $F_\bullet$.
\end{Prop}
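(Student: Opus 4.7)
The plan is to reduce the statement to the containment $X \subseteq \Sigma_{a^\alpha}(F_\bullet)$: once this inclusion is established, the equality $[X] = \sigma_{a^\alpha} = [\Sigma_{a^\alpha}(F_\bullet)]$ forces the two closed subvarieties to have the same dimension, and the irreducibility of $\Sigma_{a^\alpha}(F_\bullet)$ then gives $X = \Sigma_{a^\alpha}(F_\bullet)$. The entire work therefore consists of bootstrapping the given rank conditions on essential sub-indices to rank conditions on every pair $(i,j)$, in the same spirit as the Grassmannian argument of Proposition \ref{essential a}, except that the single codimension-one step there is replaced by a longer chain of non-essential indices.

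First I would unpack what ``non-essential'' means in this setting. By Proposition \ref{pushforward of class}, $(\pi_t)_*(\sigma_{a^\alpha})$ is the Schubert class in $G(d_t,n)$ whose index is the subsequence of $a$ obtained by deleting entries with $\alpha_s > t$. Hence $a_i$ fails to be essential in every such push-forward precisely when, taking $t = \alpha_i$, the next surviving entry of the truncated sequence equals $a_i + 1$; since the $a$'s are strictly increasing integers, this forces $a_{i+1} = a_i + 1$ and $\alpha_{i+1} \leq \alpha_i$. Iterating forward, a maximal run of consecutive non-essential indices $a_i, a_{i+1}, \ldots, a_{i'-1}$ terminates at a first essential index $a_{i'}$ (which always exists, because $a_{d_k}$ is essential in $(\pi_k)_*$), and along the chain one has $a_{i'} = a_i + (i' - i)$ together with $\alpha_s \leq \alpha_i$ for all $i < s \leq i'$.

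The main numerical identity is then $\mu_{i',j} - \mu_{i,j} = i' - i$ whenever $j \geq \alpha_i$, since every intermediate $\alpha_s$ already satisfies $\alpha_s \leq \alpha_i \leq j$ and therefore contributes to both counts. Combined with the fact that $F_{a_i}$ has codimension $a_{i'} - a_i = i' - i$ inside $F_{a_{i'}}$, this yields $\dim(F_{a_i} \cap \Lambda_j) \geq \dim(F_{a_{i'}} \cap \Lambda_j) - (i' - i) \geq \mu_{i,j}$, using the assumed essential condition at $a_{i'}$. The residual range $j < \alpha_i$ is handled by moving backwards: if $s^* \leq i$ is the largest index with $\alpha_{s^*} \leq j$, a direct count shows $\mu_{s^*,j} = \mu_{s^*+1,j} = \cdots = \mu_{i,j}$, and the inclusion $F_{a_{s^*}} \subset F_{a_i}$ propagates the inequality already established at $(s^*, j)$ to $(i, j)$.

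Putting the two regimes together establishes $\dim(F_{a_i} \cap \Lambda_j) \geq \mu_{i,j}$ for every $(i,j)$ and every $(\Lambda_1, \ldots, \Lambda_k) \in X$, which is exactly the condition for $X \subseteq \Sigma_{a^\alpha}(F_\bullet)$; equality then follows from the dimension-and-irreducibility argument of the first paragraph. The only subtle point is the bookkeeping between the forward reduction (for $j \geq \alpha_i$) and the backward reduction (for $j < \alpha_i$); once the identity $\mu_{i',j} - \mu_{i,j} = i' - i$ along a non-essential chain is noted, the codimension estimates themselves are automatic.
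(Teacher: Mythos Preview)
Your argument is correct, but it takes a different route from the paper. The paper's proof is more modular: for each $t$ it applies Proposition~\ref{essential a} to the image $X_t = \pi_t(X)$ in $G(d_t,n)$ (noting that any $a_i$ essential for $(\pi_t)_*(\sigma_{a^\alpha})$ is in particular essential for $\sigma_{a^\alpha}$, so the hypothesis supplies the needed rank conditions), concludes each $X_t$ is a Schubert variety, and then observes that $X \subset \bigcap_t \pi_t^{-1}(X_t) = \Sigma_{a^\alpha}(F_\bullet)$, finishing by the same dimension-and-irreducibility step you use. Your approach instead redoes the codimension bookkeeping of Proposition~\ref{essential a} directly in the flag variety, chaining each non-essential $a_i$ forward to an essential $a_{i'}$ and tracking $\mu_{i,j}$ by hand. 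This is more self-contained but longer; the paper's version buys brevity by black-boxing the Grassmannian case. One small simplification available to you: in your backward step, the index $s^*$ is automatically essential (since $\alpha_{s^*} \leq j < \alpha_{s^*+1}$), so the inequality at $(s^*,j)$ comes straight from the hypothesis rather than from a prior forward reduction.
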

\begin{proof}
Let $X_t:=\pi_t(X)$, $1\leq t\leq k$, be the image of $X$ under the $t$-th canonical projection. By assumption, for all sub-indices $a_i$ which are essential with respect to the class $(\pi_t)_*(\sigma_{a^\alpha})$, $\dim(F_{a_i}\cap \Lambda_t)\geq \mu_{i,t}$ for all $\Lambda_t\in X_t$. By Proposition \ref{essential a}, $X_t$ are Schubert varieties defined by $F_\bullet$ for all $1\leq t\leq k$, and therefore $X$ has to be the Schubert variety.
\end{proof}
Directly from the definition, we have the following characterization of essential sub-indices:
\begin{Prop}
Let $\sigma_{a^\alpha}$ be a Schubert class for $F(d_1,...,d_k;n)$. A sub-index $a_i$ is essential if and only if one of the following conditions holds:
\begin{itemize}
\item $i=d_k$;
\item $i<d_k$, $a_i<a_{i+1}-1$;
\item $i<d_k$, $\alpha_i<\alpha_{i+1}$.
\end{itemize}
\end{Prop}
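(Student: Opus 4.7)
The plan is to reduce the statement directly to the Grassmannian criterion via Proposition \ref{pushforward of class}. That proposition identifies $(\pi_t)_*(\sigma_{a^\alpha})$ with the Schubert class $\sigma_{b(t)}$ in $G(d_t,n)$, where $b(t)$ is the subsequence of $a$ obtained by deleting those $a_j$ with $\alpha_j > t$. Combined with the Grassmannian definition (a sub-index $c_j$ of $\sigma_c\in A^*(G(m,n))$ is essential iff $j=m$ or $c_j < c_{j+1}-1$), the proposition becomes a purely combinatorial statement about when $a_i$ appears as an essential entry of some $b(t)$. The key observation is that $a_i$ belongs to $b(t)$ precisely when $\alpha_i \leq t$, and the element of $b(t)$ immediately succeeding $a_i$ (when one exists) is $a_{i'}$ where $i' = \min\{j > i : \alpha_j \leq t\}$.

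For the forward implication, I would argue by contrapositive: assume $i < d_k$, $a_{i+1} = a_i + 1$, and $\alpha_{i+1} \leq \alpha_i$. Take any $t$ with $\alpha_i \leq t$; then $\alpha_{i+1} \leq t$ too, so both $a_i$ and $a_{i+1}$ lie in $b(t)$, making them consecutive in $b(t)$. Since $a_{i+1} - a_i = 1$ and $a_i$ is not the last entry of $b(t)$, the sub-index $a_i$ fails the essentiality criterion for $\sigma_{b(t)}$. For $t < \alpha_i$, $a_i$ is not even present in $b(t)$. Hence $a_i$ is not essential in any push-forward.

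For the reverse implication I would check the three cases separately. If $i = d_k$, take $t = \alpha_i$: then $a_i$ is the largest (hence last) entry of $b(t)$, so essential. If $a_{i+1} - a_i \geq 2$, take $t = k$: then $b(k) = a$, and the successor of $a_i$ in $b(k)$ is $a_{i+1}$ which lies more than one unit away, so $a_i$ is essential. If $\alpha_i < \alpha_{i+1}$, take $t = \alpha_i$: then $a_i \in b(t)$ but $a_{i+1} \notin b(t)$; the successor $a_{i'}$ of $a_i$ in $b(t)$ (should one exist) has $i' \geq i+2$, so $a_{i'} \geq a_{i+2} > a_{i+1} > a_i$, giving $a_{i'} \geq a_i + 2$, and $a_i$ is essential; if no such successor exists, $a_i$ is the last entry of $b(t)$ and is again essential.

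There is no real obstacle here—the argument is just a bookkeeping exercise once Proposition \ref{pushforward of class} is in hand. The only point requiring a small amount of care is verifying in Case 3 that the gap between $a_i$ and the next surviving entry of $b(t)$ is at least $2$, which uses the strict inequality $a_{i+1} > a_i$ forced by the original indexing $a_1 < \cdots < a_{d_k}$.
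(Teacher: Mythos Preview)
Your argument is correct and is exactly the unpacking of what the paper means by ``directly from the definition'': the paper gives no proof of this proposition, and your case analysis via Proposition~\ref{pushforward of class} is the intended routine verification.
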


\begin{Def}
An essential sub-index $a_i$ is called {\em rigid} if for every representative $X$ of $\sigma_{a^\alpha}$, there exists a linear space $F_{a_i}$ of dimension $a_i$ such that 
$$\dim(F_{a_i}\cap\Lambda_j)\geq \mu_{i,j}, \forall (\Lambda_1,...,\Lambda_k)\in X,\alpha_i\leq j\leq k.$$
\end{Def}

\begin{Thm}\label{rigid index in f}
Given a Schubert class $\sigma_{a^\alpha}$ in $F(d_1,...,d_k;n)$. If $a_i$ is rigid with respect to the Schubert class $(\pi_j)_*(\sigma_{a^\alpha})\in A(G(d_j,n))$ for some $j\geq \alpha_i$, then $a_i$ is rigid with respect to the Schubert class $\sigma_{a^\alpha}\in A(F(d_1,...,d_k;n))$.
\end{Thm}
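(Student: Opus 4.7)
The plan is to extract a concrete linear subspace $F_{a_i}$ from the given Grassmannian rigidity at level $j$, and then to verify that this same subspace satisfies the rank inequalities $\dim(F_{a_i}\cap\Lambda_t)\ge\mu_{i,t}$ at every partial-flag level $t\in[\alpha_i,k]$.

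First, I would take an arbitrary representative $X$ of $\sigma_{a^\alpha}$ and form $X_j:=\pi_j(X)\subset G(d_j,n)$. By Proposition~\ref{pushforward of class}, $X_j$ represents the pushforward class $(\pi_j)_*(\sigma_{a^\alpha})$, and since $\alpha_i\le j$, the entry $a_i$ appears in the Schubert index of this pushforward at position $\mu_{i,j}$. Invoking the hypothesized rigidity of $a_i$ on $G(d_j,n)$, I obtain an $a_i$-dimensional subspace $F_{a_i}\subset V$ satisfying
$$\dim(F_{a_i}\cap\Lambda_j)\ge\mu_{i,j}\quad\text{for every }(\Lambda_1,\dots,\Lambda_k)\in X.$$

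Next, for each $t$ with $\alpha_i\le t\le k$ and $t\ne j$, I would verify $\dim(F_{a_i}\cap\Lambda_t)\ge\mu_{i,t}$ by passing to the two-step coarsening projection into $F(d_t,d_j;n)$ (or $F(d_j,d_t;n)$ if $t>j$). By Proposition~\ref{pushforward of class} the image of $X$ represents the corresponding Schubert class in the two-step flag variety, and the fiber-class computations of Propositions~\ref{fiber1} and~\ref{fiberk}, together with Corollary~\ref{class of fiber}, describe the generic fiber of the projection $\pi_j|_X$. On such a fiber the extra intersection multiplicities $\mu_{i,t}-\mu_{i,j}=\#\{s:a_s\le a_i,\ \min(j,t)<\alpha_s\le\max(j,t)\}$ correspond to Schubert-type conditions demanded by the rational equivalence class $[X]=\sigma_{a^\alpha}$, forcing the additional dimensions of $\Lambda_t$ (either gained beyond $\Lambda_j$ or descending within $\Lambda_j$) to lie in $F_{a_i}$. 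Applying the Grassmannian rigidity iteratively through this two-step reduction, I would conclude the bound on an open dense subset of $X$, and by semicontinuity of intersection dimension, on all of $X$.

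The main obstacle is this propagation step. Set-theoretic containment alone gives only $\dim(F_{a_i}\cap\Lambda_t)\ge\mu_{i,j}$ for $t\ge j$, which is generally strictly weaker than $\mu_{i,t}$, and gives no direct bound at all for $t<j$. Bridging both gaps requires exploiting the global equivalence class $[X]=\sigma_{a^\alpha}$ through the fiber classes computed in the preliminary propositions; the hypothesis $\alpha_i\le j$ is used precisely to ensure $a_i$ is a genuine sub-index of the pushforward Schubert class on which Grassmannian rigidity may be invoked.
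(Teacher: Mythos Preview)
Your outline has the correct skeleton---obtain $F_{a_i}$ from rigidity at level $j$ via Proposition~\ref{pushforward of class}, then propagate the rank inequality to every other level $t\in[\alpha_i,k]$ using the fiber classes of Corollary~\ref{class of fiber}---and this is exactly the architecture of the paper's proof. However, the propagation step, which you yourself flag as ``the main obstacle,'' is not actually carried out. The sentence ``forcing the additional dimensions of $\Lambda_t$ \dots\ to lie in $F_{a_i}$'' is an assertion, not an argument, and ``applying the Grassmannian rigidity iteratively'' is unclear: the hypothesis gives rigidity of $a_i$ only at the single level $j$, so there is nothing to iterate.

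The paper fills this gap by a direct contradiction argument. Suppose $\dim(F_{a_i}\cap\Lambda_t)<\mu_{i,t}$ for some $t$ and a general $\Lambda_t\in\pi_t(X)$. If $j<t$, set $W=F_{a_i}\cap\Lambda_t$; then every $\Lambda_j$ in the fiber over $\Lambda_t$ satisfies $\dim(W\cap\Lambda_j)=\dim(F_{a_i}\cap\Lambda_j)\ge\mu_{i,j}$, but $\dim W<\mu_{i,t}$, contradicting the fiber-class computation that gives $c_{\mu_{i,j}}=\mu_{i,t}$. If $t<j$ (one may reduce to $j=k$), either a codimension count $d_k-d_t=\mu_{i,k}-\mu_{i,t}$ already forces the inequality, or one takes $U=\mathrm{span}(\Lambda_t,F_{a_i})$, computes $\dim U$ and $\dim(U\cap\Lambda_k)$, and compares against the entry $c_{d_t+\gamma}=d_t+\gamma+a_l-l$ of the fiber class, where $l$ is the minimal index with $l>i$ and $\alpha_l>t$; the essentiality of $a_i$ gives $a_l-l>a_i-i$, and a small linear-algebra calculation produces the contradiction. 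These explicit dimension counts are the actual content of the proof, and they are absent from your proposal.
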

\begin{proof}
Let $X$ be a representative of $\sigma_{a^\alpha}$. By assumption, there exists a linear space $F_{a_i}$ of dimension $a_i$ such that 
$$\dim(F_{a_i}\cap \Lambda_j)\geq \mu_{i,j},\ \forall \Lambda_j\in \pi_j(X).$$
We claim that the same inequality holds for all $\alpha_i\leq t\leq k$.

Suppose, for a contradiction, that $\dim(F_{a_i}\cap \Lambda_t)<\mu_{i,t}$ for some $\alpha_i\leq t\leq k$ and $\Lambda_t\in \pi_t(X)$. By the semi-continuity of dimension, this inequality holds for a general $\Lambda_t\in \pi_t(X)$. Consider the fiber of $\pi_t$ over a general point $\Lambda_t\in \pi_t(X)$. By Corollary \ref{class of fiber}, $\pi_t|_X$ has class $\sigma_{b^\beta}$ where $b_{\mu_{i,t}}=\mu_{i,t}$ and $\beta_{\mu_{i,t}}=\alpha_i$. 

If $j<t$, then by Proposition \ref{pushforward of class}, $({\pi_j}\circ \pi_i^{-1})(\Lambda_t)$ has class $\sigma_{c}$ where $c_{\mu_{i,j}}=\mu_{i,t}$. On the other hand, set $W=F_{a_i}\cap\Lambda_t$. By assumption, $w:=\dim(W)<\mu_{i,t}$. Since $\dim(F_{a_i}\cap\Lambda_j)\geq \mu_{i,j}$, we get for all $\Lambda'_j\in \pi_j(\pi_t^{-1}(\Lambda_t))$, 
$$\dim(W\cap\Lambda'_j)= \dim(F_{a_i}\cap\Lambda'_j)\geq \mu_{i,j}.$$
Since $\dim(W)<\mu_{i,t}$, it contradicts the condition that $c_{\mu_{i,j}}=\mu_{i,t}$.

If $t<j$, replacing $X$, if necessary, with its image under the projection $$F(d_1,...,d_k;n)\rightarrow F(d_1,...,d_j;n),$$ we may assume $j=k$ and $a_i$ is rigid (and hence essential) in the $k$-th component. If there is no $l$ such that $l>i$ and $\alpha_l>t$, then $d_k-d_t=\mu_{i,k}-\mu_{i,t}$. Since $\Lambda_t$ is of codimension $d_k-d_t$ in $\Lambda_k$, we must have
$$\dim(F_{a_i}\cap \Lambda_t)\geq \dim(F_{a_i}\cap\Lambda_k)-(d_k-d_t)\geq \mu_{i,k}-(\mu_{i,k}-\mu_{i,t})=\mu_{i,t}.$$
If there exists $l$ such that $l>i$ and $\alpha_l>t$, assume $l$ is minimum among such $l$. Set $\gamma=\#\{s|s\leq l,\alpha_s>t\}$. By Corollary \ref{class of fiber} and Proposition \ref{pushforward of class}, $({\pi_k}\circ \pi_i^{-1})(\Lambda_t)$ has class $\sigma_{c}$ where $$c_{d_t+\gamma}=d_t+\gamma+a_l-l.$$
On the other hand, let $U=$span$(\Lambda_t,F_{a_i})$. Then $\dim(U)=d_t+a_i-w$, and 
\begin{eqnarray}
\dim(U\cap \Lambda_k)=d_t+i-w&>&d_i+i-\mu_{i,t}\nonumber\\
&=&d_t+\gamma-1.\nonumber
\end{eqnarray}
Let $U'$ be a general subspace of codimension $i-w-\gamma$ in $U$. Then
$$\dim(U'\cap\Lambda_k)\geq d_t+i-w-(i-w-\gamma)=d_t+\gamma.$$
Since $a_i$ is essential, $a_l-l>a_i-i$, and therefore 
$$\dim(U')=d_t+a_i-w-(i-w-\gamma)<d_t+\gamma+a_l-l.$$
This contradicts the relation $c_{d_t+\gamma}=d_t+\gamma+a_l-l.$ We conclude that $a_i$ is rigid.

\end{proof}

The converse of Theorem \ref{rigid index in f} is also true. We can construct a counter-example when an essential $a_i$ is not rigid in all components. For simplicity, we assume $k=2$. The case when $k\geq 3$ can be constructed similarly.

Let $\sigma_{a^\alpha}$ be a Schubert class in $F(d_1,d_2;n)$. Assume a sub-index $a_i$ is essential but not rigid with respect to the classes $(\pi_j)_*(\sigma_{a^\alpha})$, $j=1,2$. There are two cases depending on whether $a_{i+1}=a_i+1$. For each case, we will first give an example and then provide a general construction.

Case I: $a_{i+1}\neq a_i+1$.
\begin{Ex}
Consider the Schubert class $\sigma_{1^1,3^2,5^2}$ in $F(2,3;5)$. Let $\pi:F(2,3;5)\rightarrow G(3,5)$ be the canonical projection. Then $\pi_*(\sigma_{1^1,3^2,5^2})=\sigma_{1,3,5}\in A(G(3,5)).$ By Theorem \ref{rigid index in g}, the sub-index $3$ is not rigid. The construction in Remark \ref{ex of non rigid} gives a non-Schubert subvariety $Y$ in $G(3,5)$ with class $[Y]=\sigma_{1,3,5}$, such that 
$$F_1\subset \Lambda\subset F_5,\text{ for all } \Lambda\in Y.$$
Let 
$$X:=\{(F_1,\Lambda_2)\in F(2,3;5)|\Lambda_2\in Y\}.$$
It is easy to check that $X$ has class $\sigma_{1^1,3^2,5^2}$, but is not a Schubert variety since $Y$ is not.
\end{Ex}
Construction of Case I: By assumption, we must have $a_{i-1}+2\leq a_i=a_{i+1}-2$. By Remark \ref{ex of non rigid}, there exists a non-Schubert subvariety $Y$ of $G(d_2,n)$ with class $[Y]=\sigma_a\in A(G(d_2,n))$ such that $$\dim(\Lambda\cap F_{a_j})\geq j,\ \text{ for all }\Lambda\in Y\text{ and }j\neq i,$$
for some $(d_2-1)$-step partial flag 
$$F_{a_1}\subset...\subset F_{a_{i-1}}\subset F_{a_{i+1}}\subset ...\subset F_{a_{d_2}}.$$

Define 
$$X:=\{(\Lambda_1,\Lambda_2)|\Lambda_2\in Y,\Lambda_1\subset \Lambda_2,\dim(F_{a_j}\cap\Lambda_1)\geq \mu_{j,1},\alpha_j=1,j\neq i\}$$
It is then straightforward to verify that $X$ is irreducible and has class $\sigma_{a^\alpha}$.

Let $\pi_s:F(d_1,d_2;n)\rightarrow G(d_s,n)$, $s=1,2$ be the canonical projections. Let $a'$ be the sub-sequence of $a$ obtained by removing all the sub-indices whose upper index is greater than $1$. The length of $a'$ equals $d_1$. The fiber of $\pi_2|_X$ at a point $\Lambda_2\in Y$ is isomorphic to the Schubert variety in $G(d_1,d_2)$ defined by the partial flag
$$F_{a'_1}\cap \Lambda_2\subset...\subset F_{a'_{d_1}}\cap \Lambda_2$$
(if $\alpha_i=1$ and $a'_{i'}=a_i$, then we omit the flag element $F_{a'_{i'}}\cap \Lambda_2$).
By the fiber dimension theorem, $X$ is irreducible. By specializing $Y$ to a Schubert variety, we get $\dim(X)=\dim(\Sigma_{a^\alpha})$.

To compute its class, we intersect $X$ with Schubert varieties of complementary dimension. The Poincar\'e dual of $\sigma_{a^\alpha}$ is the Schubert class $\sigma_{b^\beta}$, where $b=(n-a_{d_2}+1,...,n-a_1+1)$ and $\beta=(\alpha_{d_2},...,\alpha_1)$. Take a general Schubert variety $\Sigma_{b^\beta}$ defined by a partial flag $$G_{n-a_{d_2}+1}^{\alpha_{d_2}}\subset...\subset G_{n-a_1+1}^{\alpha_1}.$$
Notice that $\pi_2(\Sigma_{b^\beta})$ is the Schubert variety $\Sigma_{b}$ in $G(d_2,n)$. Since $\sigma_b$ is the Poincar\'e dual of $\sigma_a$ in $G(d_2,n)$, $Y$ intersect $\Sigma_{b}$ in a unique point $\Lambda_2$. By Corollary \ref{class of fiber}, $(\pi_2|_{\Sigma_{b^\beta}})^{-1}(\Lambda_2)$ has class $\sigma_{e^\beta}$, where $e=(1,2,...,d_2)$. By proposition \ref{pushforward of class} and the previous description of the fibers of $\pi_2|_X$, we can see that at the point $\Lambda_2$, the fiber of $\pi_2|_{\Sigma_{b^\beta}}$ has the dual class of the fiber of $\pi_2|_{X}$ in $G(d_1,\Lambda_2)$. Therefore they meet in a unique point $\Lambda_1\in G(d_1,\Lambda_2)$, and thus $(\Lambda_1,\Lambda_2)$ is the unique point contained in the intersection of $X$ and $\Sigma_{b^\beta}$.

On the other hand, if $\sigma_{c^\gamma}$ is a Schubert class of complementary dimension, $c^\gamma\neq b^\beta$, then we claim that $\sigma_{a^\alpha}\cdot \sigma_{c^\gamma}=0$. Let $\Sigma_{c^\gamma}$ be a general Schubert variety. First observe that if $\pi_2(\Sigma_{c^\gamma})\cap \pi_2(X)=\emptyset$, then $\Sigma_{c^\gamma}\cap X=\emptyset$. Since $[\pi_2(X)]=\sigma_a$ and $[\pi_2(\Sigma_{c^\gamma})=\sigma_c$, we get $\sigma_{a^\alpha}\cdot\sigma_{c^\gamma}=0$ if $a_j+c_{d_2-j+1}\leq n$ for some $j$. Now assume $c_{d_2-j+1}\geq n-a_j+1=b_{d_2-j+1}$, for all $1\leq j\leq d_2$. Since $\dim(\sigma_{c^\gamma})=\dim(\sigma_{b^\beta})$, if $c=b$ then $\gamma=\beta$. If $c>b$, then $\pi_2|_{\Sigma_{c^\gamma}}$ will have lower fiber dimension than $\pi_2|_{\Sigma_{b^\beta}}$. For every $\Lambda_2\in \pi_2(X)\cap \pi_2(\Sigma_{c^\gamma})$, by Proposition \ref{pushforward of class} and Corollary \ref{class of fiber}, the class of $(\pi_1\circ\pi_2|_{\Sigma_{c^\gamma}})(\Lambda_2)$ has lower dimension than the Poincar\'e dual of the class of $(\pi_1\circ\pi_2|_{\Sigma_{a^\alpha}})(\Lambda_2)$. Therefore $(\pi_1\circ\pi_2|_{\Sigma_{c^\gamma}})(\Lambda_2)\cap (\pi_1\circ\pi_2|_{\Sigma_{c^\gamma}})(\Lambda_2)=\emptyset$ for any $\Lambda_2\in \pi_2(X)\cap \pi_2(\Sigma_{c^\gamma})$, which implies $X\cap \Sigma_{c^\gamma}=\emptyset$. We conclude that $X$ has class $\sigma_{a^\alpha}$.

Case II: $\alpha_i=1$ and $a_{i+1}=a_i+1$.
\begin{Ex}
Consider the Schubert class $\sigma_{2^1,3^2,4^1}$ in $F(2,3;4)$. Under the projection $\pi_1:F(2,3;4)\rightarrow G(2,4)$, it is taken to the class $\sigma_{2,4}$ where $2$ is not rigid. The class $\sigma_{2,4}$ in $G(2,4)$ can be represented by a hyperplane section $H$ of $G(2,4)$ in the Pl\"ucker embedding. Define
$$X:=\{(\Lambda_1,\Lambda_2)\in F(2,3;4)|\Lambda_1\in H,\Lambda_1\subset \Lambda_2\subset F_4\}.$$
$X$ is the preimage of $H$ under $\pi_1$. It is easy to see that $X$ is a hyperplane section of $F(2,3;4)$, whose class is given by $\sigma_{2^1,3^2,4^1}$.
\end{Ex}
Construction of Case II: By assumption, we must have $\alpha_{i+1}=2$, $a_{i+1}=a_i+2$ and $\alpha_{i+2}=1$. Let $a'$ be the subsequence obtained from $a$ by removing all the sub-indices with upper index $2$. Since $\alpha_i=1$, there is a sub-index $i'$ such that $a_i=a'_{i'}$. A same construction as in Remark \ref{ex of non rigid} gives a non-Schubert subvariety $Y$ of $G(d_1,n)$ with class $[Y]=\sigma_{a'}\in A(G(d_1,n))$ such that $$\dim(\Lambda\cap F_{a'_j})\geq j,\ \text{ for all }\Lambda\in Y\text{ and }j\neq i',$$
for some $(d_1-1)$-step partial flag 
$$F_{a'_1}\subset...\subset F_{a'_{i'-1}}\subset F_{a'_{i'+1}}\subset ...\subset F_{a'_{d_1}}.$$
Complete the above partial flag to a partial flag $$F_{a_1}\subset...\subset F_{a_{i}-2}\subset F_{a_{i+1}}\subset ...\subset F_{a_{d_2}}.$$

Define
$$X:=\{(\Lambda_1,\Lambda_2)|\Lambda_1\in Y,\Lambda_1\subset \Lambda_2,\dim(F_{a_j}\cap \Lambda_2)\geq j,\ \text{ for }a_j\neq a_i-1,a_i,a_i+1\}.$$
The fiber of $\pi_1|_X$ at a point $\Lambda_1\in \pi_1(X)$ is the Schubert varieties defined by the partial flag that consists of $\Lambda_1$ and the span of $\Lambda_1$ with $F_{a_j}$, $\alpha_j=2$. By the fiber dimension theorem, $X$ is irreducible. By intersecting the $X$ with Schubert varieties of complementary dimension, we conclude that $[X]=\sigma_{a^\alpha}$.

More generally, given a Schubert class $\sigma_{a^\alpha}$ in $F=F(d_1,...,d_k;n)$ and assume the sub-index $a_i$ is essential but not rigid. If $a_{i+1}=a_i+1$, then set $I=\alpha_{i+1}$. Otherwise, set $I=k$. The construction in Remark \ref{ex of non rigid} gives a non-Schubert subvariety $Y$ in $G(d_{I},n)$ such that $$\dim(F_{a_j}\cap \Lambda_I)\geq \mu_{j,I},\text{ for all }j\neq i\text{ such that } \alpha_j\leq I, \forall\Lambda_I\in Y$$
for some partial flag
$$F_{a_1}\subset...\subset F_{a_i-2}\subset F_{a_i+2}\subset...\subset F_{a_{d_k}}.$$
Define
$$X:=\{(\Lambda_1,...,\Lambda_k)\in F|\Lambda_I\in Y,\dim(F_{a_j}\cap \Lambda_s)\geq \mu_{j,s}\text{ for }s\neq I,\text{ and }a_j\neq a_i-1,a_i,a_i+1\}.$$
Then a similar argument as above shows that $X$ has class $\sigma_{a^\alpha}$ but is not a Schubert variety.

Combining Theorem \ref{rigid index in g} and Theorem \ref{rigid index in f}, we obtain the following classification of rigid sub-indices in partial flag varieties:
\begin{Cor}
Given a Schubert index $a^\alpha=(a_1^{\alpha_1},...,a_{d_k}^{\alpha_{d_k}})$ in $F(d_1,...,d_k;n)$. An essential sub-index $a_i$ is rigid if and only if one of the following holds:
\begin{enumerate}
\item $a_{i+1}-a_i\geq 3$;
\item $a_{i+1}-a_i=2$ and 
\begin{enumerate}
\item either $a_i-a_{i-1}=1$; or
\item $\alpha_i<\alpha_{i+1}$
\end{enumerate}
\item $a_{i+1}-a_i=1$, and 
\begin{enumerate}
\item either $a_{i+2}-a_i\geq 3$; or
\item $\alpha_{i+2}>\alpha_i$; or
\item $a_{i-1}=a_i-1$ and $\alpha_{i-1}<\alpha_{i+1}$.
\end{enumerate}
\end{enumerate}
\end{Cor}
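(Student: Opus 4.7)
The plan is to reduce the statement to the Grassmannian classification (Theorem \ref{rigid index in g}) by means of Theorem \ref{rigid index in f} and the converse counter-example constructions (Cases I and II) appearing in its proof. Together these give the following reformulation: an essential sub-index $a_i$ is rigid in $F(d_1,\ldots,d_k;n)$ if and only if there exists some $t$ with $\alpha_i\leq t\leq k$ such that $a_i$ is essential and rigid with respect to the push-forward class $(\pi_t)_*(\sigma_{a^\alpha})$. By Proposition \ref{pushforward of class}, this push-forward is the Schubert class $\sigma_{a^{(t)}}$ in $G(d_t,n)$, where $a^{(t)}$ is the subsequence of $(a_1,\ldots,a_{d_k})$ obtained by keeping exactly those $a_j$ with $\alpha_j\leq t$. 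So the task becomes: for each essential $a_i$, decide whether some $t\geq \alpha_i$ can be chosen so that one of conditions (1)--(4) of Theorem \ref{rigid index in g} holds for $a_i$ viewed as an entry of $a^{(t)}$.

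I would then proceed by case analysis on $a_{i+1}-a_i$ (with the conventions $a_{d_k+1}=\infty$, $a_0=0$). In Case (1), $a_{i+1}-a_i\geq 3$, one takes $t=k$: the successor of $a_i$ in $a^{(k)}=a$ is $a_{i+1}$, so the gap is $\geq 3$ and condition (3) of Theorem \ref{rigid index in g} applies. In Case (2), $a_{i+1}-a_i=2$: with $t=k$ rigidity in $a^{(k)}$ is available only under the predecessor-gap condition $a_i=a_{i-1}+1$ (subcase 2a, via condition (4) of Theorem \ref{rigid index in g}); otherwise one must choose $t<\alpha_{i+1}$ to remove $a_{i+1}$, which demands $\alpha_i<\alpha_{i+1}$ (subcase 2b), and then the choice $t=\alpha_i$ produces a successor of $a_i$ in $a^{(t)}$ at least $a_{i+2}\geq a_i+3$, giving condition (3).

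In Case (3), $a_{i+1}-a_i=1$, the index $a_{i+1}$ \emph{must} be excluded from $a^{(t)}$ for $a_i$ to be essential there, forcing $\alpha_i\leq t<\alpha_{i+1}$ (which is compatible with essentiality of $a_i$ in $F$). The successor of $a_i$ in $a^{(t)}$ is some $a_j$ with $j\geq i+2$. Subcase (3a), $a_{i+2}-a_i\geq 3$: take $t=\alpha_i$; then the successor is $\geq a_{i+2}\geq a_i+3$, hence condition (3). Subcase (3b), $\alpha_{i+2}>\alpha_i$: again $t=\alpha_i$ removes both $a_{i+1}$ and $a_{i+2}$, so the successor is $\geq a_{i+3}\geq a_i+3$. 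Subcase (3c), $a_{i-1}=a_i-1$ and $\alpha_{i-1}<\alpha_{i+1}$: the choice $t=\max(\alpha_i,\alpha_{i-1})<\alpha_{i+1}$ simultaneously keeps $a_{i-1}$ as the predecessor of $a_i$ and excludes $a_{i+1}$, producing $a_i=(a^{(t)})_{i^-}+1$ and condition (4).

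For the converse, if none of (1)--(3) holds then in every push-forward $a^{(t)}$ the entry $a_i$ either remains non-essential or exhibits a Grassmannian-non-rigid pattern (gap-1 or gap-2 successor with no gap-1 predecessor); by Theorem \ref{rigid index in g} it is non-rigid in every $a^{(t)}$, and then the explicit non-Schubert representatives furnished by Cases I and II in the proof of Theorem \ref{rigid index in f} witness that $a_i$ is not rigid for $\sigma_{a^\alpha}$. The main obstacle will be the bookkeeping in Case (3), namely tracking the shifted neighbors of $a_i$ in $a^{(t)}$ as $t$ varies and, in subcase (3c), verifying that $t=\max(\alpha_i,\alpha_{i-1})$ indeed simultaneously preserves $a_{i-1}$ as the immediate predecessor and excludes $a_{i+1}$ as the immediate successor. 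Once this accounting is carried out, the ``if'' direction reduces to Theorem \ref{rigid index in g} and the ``only if'' direction to the converse constructions already present in the proof of Theorem \ref{rigid index in f}.
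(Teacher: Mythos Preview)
Your proposal is correct and follows essentially the same route as the paper: reduce via Theorem \ref{rigid index in f} (and its converse constructions) to the Grassmannian criterion of Theorem \ref{rigid index in g}, then do a case analysis on $a_{i+1}-a_i$ to locate a suitable $t$. The only cosmetic difference is your choice of $t$: the paper uniformly takes $t=\alpha_i$ for cases (1), (2), (3a), (3b) and $t=\max(\alpha_{i-1},\alpha_i)$ for (3c), whereas you take $t=k$ for (1) and (2a); both choices work and your converse analysis matches the paper's.
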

\begin{proof}
If (1), (2), (3-a) or (3-b) holds, then by Theorem \ref{rigid index in g} and Proposition \ref{pushforward of class}, $a_i$ is a rigid sub-index with respect to the Schubert class $(\pi_{\alpha_i})_*(\sigma_{a^\alpha})$ in $G(d_{\alpha_i},n)$.

If (3-c) holds, let $b=\max\{\alpha_{i-1},\alpha_i\}$. Then $a_i$ is rigid with respect to the Schubert class $(\pi_{{b}})_*(\sigma_{a^\alpha})$ in $G(d_{{b}},n)$.

Conversely, if $a_{i+1}-a_i=2$ and both (2-a) and (2-b) fail, then $a_i$ is not rigid with respect to $(\pi_j)_*(\sigma_{a^\alpha})$ for all $j\geq \alpha_i$.

If $a_{i+1}-a_i=1$, and (2-a), (2-b) and (2-c) fail, then for each $j\geq \alpha_i$, either $a_i$ is not essential or $a_i$ is not rigid with respect to $(\pi_j)_*(\sigma_{a^\alpha})$.

\end{proof}
As an application of the previous result, we classify the rigid Schubert classes in partial flag varieties. Given a Schubert class, if all of the essential sub-indices are rigid, then by Theorem \ref{rigid index in f}, for every representative $X$, there exists a unique set of subspaces $E=\{F_{a_i}\}_{a_i\text{essential}}$ that imposes the corank conditions on $X$. If furthermore the subspaces in $E$ form a partial flag, then $X$ is a Schubert variety.
\begin{Rem}
It is not always true that the subspaces in $E$ form a partial flag. For example, consider the partial flag variety $F(1,3;4)$. Let $\pi_1:F(1,3;4)\rightarrow G(1,4)$ and $\pi_2:F(1,3;4)\rightarrow G(3,4)$ be the canonical projections. Let $F_1$ be a one-dimensional linear subspace, and $F_3$ be a three-dimensional linear subspace that does not contain $F_1$. Let $Y$ be the Schubert variety in $G(3,4)$ defined by
$$Y:=\{\Lambda\in G(3,4)|F_1\subset \Lambda\}.$$
Define
$$X:=\{(\Lambda_1,\Lambda_2)\in F(1,3;4)|\Lambda_2\in Y,\Lambda_1\subset(\Lambda_2\cap F_3)\}.$$
By assumption, $F_3\notin Y$. For every $\Lambda\in Y$, $\dim(\Lambda\cap F_3)=2$. Therefore every fiber of $\pi_2|_X$ is isomorphic to $\mathbb{P}^1$. In particular, all the fibers are irreducible and have the same dimension. By the theorem of fiber dimension, $X$ is irreducible. By intersecting $X$ with Schubert varieties of complementary dimension, the class of $X$ is given by $\sigma_{1^2,3^1,4^2}$. Now the sub-indices $1$ and $3$ are rigid, which correspond to $F_1$ and $F_3$ respectively, but $F_1$ is not contained in $F_3$ by assumption.
\end{Rem}
The following lemma gives a direction when we should expect the subspaces in $E$ form a partial flag:
\begin{Lem}\label{coincide}
Let $\sigma_{a}$ be a Schubert class in $G(k,n)$. Let $X$ be a representative of $\sigma_a$. Assume there are two linear subspaces $F_{a_i}$ and $F_{a_j}$ of dimension $a_i$ and $a_j$ respectively, such that $\dim(\Lambda\cap F_{a_i})\geq i$, $\dim(\Lambda\cap F_{a_j})\geq j$ for all $\Lambda\in X$. If $i<j$ and $a_j$ is essential, then $F_{a_i}\subset F_{a_j}$.
\end{Lem}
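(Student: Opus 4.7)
The plan is to argue by contradiction. Suppose $F_{a_i}\not\subseteq F_{a_j}$, and set $G:=F_{a_i}\cap F_{a_j}$ with $\dim G=a_i-d$ for some $d\geq 1$. I would introduce the incidence variety
\[
Y \;:=\; \{\Lambda\in G(k,n):\dim(\Lambda\cap F_{a_i})\geq i,\ \dim(\Lambda\cap F_{a_j})\geq j\},
\]
which contains $X$ by hypothesis, so that $\dim\Sigma_a=\dim X\leq\dim Y$. The strategy is to show $\dim Y<\dim\Sigma_a$, yielding the contradiction.

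I would stratify $Y$ according to $\ell:=\dim(\Lambda\cap G)$, for $\ell$ in the admissible range $[\max(0,i+j-k),\min(i,a_i-d)]$. A direct parametrization---first picking $L=\Lambda\cap G$ in $G$, then an $i$-plane $\Lambda\cap F_{a_i}$ in $F_{a_i}$ meeting $G$ exactly in $L$, then a $j$-plane $\Lambda\cap F_{a_j}$ in $F_{a_j}$ similarly, and finally $\Lambda$ as a $k$-plane in $V$ containing their span---yields
\[
\dim Y_\ell \;=\; \ell(a_i-d-\ell)+(i-\ell)(a_i-i)+(j-\ell)(a_j-j)+(k-i-j+\ell)(n-k).
\]
Setting $D_1:=i(a_i-i)+(j-i)(a_j-j)+(k-j)(n-k)$ (the dimension of the analogous nested Schubert variety corresponding to $F'_{a_i}\subseteq F_{a_j}$) and $c:=n-k+j-a_j$, this rewrites cleanly as $\dim Y_\ell=D_1-\ell d-(i-\ell)(c-\ell)$. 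For $\ell\leq\min(i,c)$ both correction terms are non-negative; for $\ell\in(c,i]$ one completes the square in $\ell$. Combining the two cases with $\ell\leq i$ and $d\geq 1$ shows $\dim Y_\ell<D_1$ strictly for every admissible $\ell$. The essentialness of $a_j$ guarantees $c\geq 1$ whenever the corank condition at $F_{a_j}$ is nontrivial; the degenerate case $c=0$ corresponds to $F_{a_j}=V$, for which $F_{a_i}\subseteq F_{a_j}$ is trivial.

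The main obstacle is promoting the strict inequality $\dim Y<D_1$ to $\dim Y<\dim\Sigma_a$. When $\dim\Sigma_a=D_1$---i.e., when the only essential positions of $a$ are $\{i,j\}$ together with the last position $k$---this is immediate. Otherwise there are additional essential positions contributing to $\codim\sigma_a$, and to close the gap I would incorporate the extra corank conditions that $X$ inherits from $[X]=\sigma_a$ (cf.\ Proposition \ref{essential a}) into a refinement of the above stratification, so that each $\dim Y_\ell$ drops by the further amount $D_1-\dim\Sigma_a$. The careful bookkeeping of this refinement---verifying that the ``virtual'' Schubert conditions at the additional essential positions genuinely intersect each stratum and reduce its dimension by the desired amount---is the principal technical step.
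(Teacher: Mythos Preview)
Your dimension-count approach is genuinely different from the paper's, but it has a real gap precisely where you flag it. You establish $\dim Y<D_1$ with $D_1=i(a_i-i)+(j-i)(a_j-j)+(k-j)(n-k)$, and this suffices only when the essential positions of $a$ are exactly $\{i,j,k\}$, so that $D_1=\dim\Sigma_a$. In every other case $\dim\Sigma_a<D_1$, and the inequality $\dim Y<D_1$ gives no contradiction with $X\subset Y$.

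Your proposed remedy---``incorporate the extra corank conditions that $X$ inherits from $[X]=\sigma_a$''---does not work, because $X$ inherits no such conditions. The hypothesis gives you only the two subspaces $F_{a_i}$ and $F_{a_j}$; nothing in the statement, and nothing in Proposition~\ref{essential a}, produces subspaces $F_{a_m}$ for the other essential positions $m$. Whether such subspaces exist is exactly the rigidity question, so assuming them is circular. Without them there is no way to cut $Y$ down further, and the bookkeeping you allude to cannot be carried out.

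The paper avoids this entirely by induction on $a_i-i$. The base case $a_i=i$ is a one-line argument: a line $G_1\subset F_{a_i}\setminus F_{a_j}$ would force $\dim(\Lambda\cap\mathrm{span}(G_1,F_{a_j}))\geq j+1$ for all $\Lambda\in X$, contradicting that $a_j$ is essential. For $a_i>i$, one slices by a general hyperplane $H$: the subvariety $X_H=\{\Lambda\in X:\Lambda\subset H\}$ has class $\sigma_{a'}$ with $a'_t=a_t-1$ whenever $a_t>t$, so induction gives $F_{a_i}\cap H\subset F_{a_j}\cap H$, and varying $H$ yields $F_{a_i}\subset F_{a_j}$. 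The point is that this uses only the class of $X$ (to identify $[X_H]$) and the two given subspaces; it never needs subspaces at the other essential positions.
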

\begin{proof}
We use induction on $a_i-i$. If $a_i=i$, suppose for a contradiction that $F_{a_i}\not\subset F_{a_j}$. Take a one-dimensional linear subspace $G_1$ in $F_{a_i}$, which is not contained in $F_{a_j}$. Let $W$ be the span of $G_1$ and $F_{a_j}$. Then $\dim(W)=a_j+1$ and for every $\Lambda\in X$, $\dim(\Lambda\cap W)\geq j+1$. This contradicts the condition that $a_j$ is essential.

Now assume $a_i-i>0$ and the statement is true whenever $a_{i'}-i'<a_i-i$. Consider the incidence correspondence
$$I:=\{(\Lambda,H)|\Lambda\in X, H\text{ is a hyperplane containing }\Lambda\}\subset F(k,n-1;n).$$
Let $\pi_2:I\rightarrow G(n-1,n)\cong (\mathbb{P}^{n-1})^*$ be the second canonical projection. Let $Y=\pi_2(I)$ be the image of $I$ under $\pi_2$. For every $H\in Y$, define $X_H:=\{\Lambda\in X|\Lambda\subset H\}$. By assumption, $X_H$ is non-empty. For a general $H\in Y$, by Corollary \ref{class of fiber} and Proposition \ref{pushforward of class}, the class of $X_H$ in $G(k,n)$ is given by $\sigma_{a'}$, where $a'_t=t$ if $a_t=t$, $a'_t=a_t-1$ if $a_t>t$. Let $F_{a_i}^H:=F_{a_i}\cap H$ and $F_{a_j}^H:=F_{a_j}\cap H$. By induction, $F_{a_i}^H\subset F_{a_j}^H$. By assumption, $\dim(X)\geq 1$ and therefore $\dim(Y)\geq 1$. Thus the hyperplanes parametrized by $Y$ cover the entire space, in particular, they cover $F_{a_j}$ and $F_{a_i}$. As varying $H\in Y$, we conclude that $F_{a_i}\subset F_{a_j}$. 
\end{proof}

This leads to the following definition.
\begin{Def}\label{link}
Let $\sigma_{a^\alpha}$ be a Schubert class in $F(d_1,...,d_k;n)$. We define a relation `$\rightarrow$' between two sub-indices: $a_i\rightarrow a_j$ if $i<j$ and $a_j$ is essential in $(\pi_t)_*(\sigma_{a^\alpha})$ for some $t\geq \min(\alpha_i,\alpha_j)$. This relation extends to a strict partial order (which we also denote by `$\rightarrow$') on the set $A=\{a_i\}_{i=1}^{d_k}$ by transitivity. If $a_i\rightarrow a_j$, then we say $a_i$ is linked to $a_j$.
\end{Def}
We then obtain the following classification of rigid Schubert classes in $F(d_1,...,d_k;n)$:

\begin{Thm}\label{rigid in f}
A Schubert class $\sigma_{a^\alpha}\in A(F(d_1,...,d_k;n))$ is rigid if and only if all essential sub-indices are rigid and the set of all essential sub-indices is strict totally ordered under the relation `$\rightarrow$' defined in Definition \ref{link}.
\end{Thm}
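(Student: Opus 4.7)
The plan is to prove each implication of the biconditional separately, with Lemma \ref{coincide} serving as the main tool to transport nesting of candidate subspaces from a single Grassmannian back up to the partial flag variety via the canonical projections $\pi_t$.

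For the sufficient direction, I would assume that all essential sub-indices are rigid and that the set $E$ of essentials is totally ordered under $\rightarrow$. Given any representative $X$ of $\sigma_{a^\alpha}$, Theorem \ref{rigid index in f} provides for each essential $a_i$ a subspace $F_{a_i}$ of dimension $a_i$ with $\dim(F_{a_i}\cap\Lambda_t)\ge\mu_{i,t}$ for all $t\ge\alpha_i$ and all $(\Lambda_1,\dots,\Lambda_k)\in X$. The key step is to verify that $\{F_{a_i}\}_{a_i\in E}$ forms a partial flag of $V$. Given two essentials $a_i,a_j$ with $i<j$, the total order produces a chain $a_i=a_{i_0}\to a_{i_1}\to\cdots\to a_{i_s}=a_j$ in $E$, and at each link the witnessing value of $t$ lets one apply Lemma \ref{coincide} to the Grassmannian image $\pi_t(X)$ (whose class is $(\pi_t)_*\sigma_{a^\alpha}$ by Proposition \ref{pushforward of class}) to conclude $F_{a_{i_r}}\subset F_{a_{i_{r+1}}}$. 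Composing these inclusions yields $F_{a_i}\subset F_{a_j}$, so after extending to a complete flag $F_\bullet$ of $V$, Proposition \ref{essential flag} identifies $X$ with the Schubert variety $\Sigma_{a^\alpha}(F_\bullet)$.

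For the necessary direction, I would argue the contrapositive. If some essential sub-index $a_i$ fails to be rigid, the counter-example constructions of Case I and Case II (which leverage the Grassmannian non-Schubert variety of Remark \ref{ex of non rigid}) produce a non-Schubert representative of $\sigma_{a^\alpha}$, so the class is not rigid. Otherwise, assume every essential sub-index is rigid but there exist essentials $a_i, a_j$ with $i<j$ that are incomparable under $\rightarrow$. Modeled on the $F(1,3;4)$ example in the remark preceding Lemma \ref{coincide}, one constructs a non-Schubert representative $X$ by choosing $F'_{a_i}$ and $F'_{a_j}$ of the prescribed dimensions with $F'_{a_i}\not\subset F'_{a_j}$, completing the remaining essential subspaces into a coherent incidence datum, and defining $X$ as the locus of flags satisfying the resulting modified corank inequalities (e.g.\ by replacing a condition of the form $\Lambda_t\supset F'_{a_i}$ with $\Lambda_t\supset$ the image of $F'_{a_i}$ in an appropriate quotient, as in the remark). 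Then a fiber-dimension computation over a well-chosen projection $\pi_t|_X$, combined with intersection against the Poincar\'e dual Schubert class in the style of the computations in Case I/II, confirms $[X]=\sigma_{a^\alpha}$, while the non-nesting of the rigidly-determined $F'_{a_i}$ and $F'_{a_j}$ prevents $X$ from being a Schubert variety.

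The main obstacle will be the second half of the necessary direction: producing a uniform non-Schubert construction for every pair of incomparable essentials and verifying that the resulting $X$ lies in the correct class. One must establish that $X$ is irreducible with the expected dimension and that its intersection with a generic representative of the Poincar\'e dual is a single reduced point; the delicate interplay between the upper indices $\alpha_i,\alpha_j$ and the failure of $\rightarrow$-comparability is precisely what ensures that no auxiliary condition tightens the Schubert class, paralleling but generalizing the arithmetic done in the Case I and Case II examples.
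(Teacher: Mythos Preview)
Your proposal is correct and follows essentially the same architecture as the paper's proof: Theorem \ref{rigid index in f} plus Lemma \ref{coincide} for sufficiency, the Case I/II constructions for a non-rigid essential, and a perturbed-flag construction for an incomparable pair.

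One simplification you should be aware of for the last part: the paper's construction for incomparable essentials is considerably more direct than what you outline. Rather than modeling on the $F(1,3;4)$ example and speaking of ``images in appropriate quotients,'' the paper first reduces to the case where $a_i\not\rightarrow a_j$ with no other essential sub-index strictly between them. It then starts from an honest partial flag $F_{a_1}\subset\cdots\subset F_{a_{d_k}}$ and replaces only the single element $F_{a_j}$ by a subspace $G_{a_j}$ of the same dimension satisfying $F_{a_{i-1}}\subset G_{a_j}\subset F_{a_{j+1}}$ and $\dim(F_{a_i}\cap G_{a_j})=a_i-1$. The variety $X$ is then defined by the same corank inequalities as the Schubert variety, using $G_{a_j}$ in place of $F_{a_j}$ and dropping the conditions for $s\in(i,j]$. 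The failure of $a_i\rightarrow a_j$ is exactly what guarantees that this perturbation does not change the class, and the construction avoids the fiber-dimension and intersection-number verifications you anticipate as the main obstacle. Your description of that step is somewhat garbled (the $F(1,3;4)$ remark uses an intersection $\Lambda_2\cap F_3$, not a quotient), so it is worth rewriting that portion along the paper's cleaner lines.
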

\begin{proof}
If all essential sub-indices are rigid and the set of all essential sub-indices is totally ordered by `$\rightarrow$', then by Theorem \ref{rigid index in f} and Lemma \ref{coincide}, every representative of $\sigma_{a^\alpha}$ is a Schubert variety defined by the partial flag obtained from Theorem \ref{rigid index in f}.

Conversely, if one of the essential sub-indices is not rigid, then we have constructed a non-Schubert subvariety that represents the class $\sigma_{a^\alpha}$. If there are two essential sub-indices $a_i$ and $a_j$, $i<j$ such that $a_i$ is not linked to $a_j$, assume there is no other essential $a_{r}$ between $i$ and $j$. Take a partial flag
$$F_{a_1}^{\alpha_1}\subset...\subset F_{a_{d_k}}^{\alpha_{d_k}}.$$
Replace $F_{a_j}$ with another linear subspace $G_{a_j}$ of the same dimension such that $$F_{a_{i-1}}\subset G_{a_j}\subset F_{a_{j+1}},$$
and $\dim(F_{a_{i}}\cap G_{a_j})=a_{i}-1$.

Define 
$$X:=\{\Lambda\in F(d_1,...,d_k)|\dim(G_{a_j}\cap \Lambda_t)\geq \mu_{j,t},\dim(F_{a_s}\cap \Lambda_t)\geq \mu_{s,t}, \text{ for }s\notin (i,j]\}.$$
Then $X$ represents the Schubert class $\sigma_{a^\alpha}$ but is not a Schubert variety since it is not defined by a partial flag.
\end{proof}

\section{The rigidity problem in orthogonal partial flag varieties}\label{type BD}
In this section, we study the rigidity problem in orthogonal partial flag varieties (type B or type D), which generalizes orthogonal Grassmannians. We first review the rigidity results in orthogonal Grassmannians.
\subsection{Orthogonal Grassmannians}We start with the definition of essential sub-indices:
 \begin{Def}\label{def1}
Let $\sigma_{a;b}$ be a Schubert class in $OG(k,n)$ defined in Section \ref{og}. A sub-index $a_i$ is called {\em essential} if one of the following holds:
\begin{itemize}
\item $i<s$ and $a_{i}<a_{i+1}-1$
\item $n$ is odd and $i=s$;
\item $n$ is even, $i=s$ and $a_s+b_{k-s}\neq n-2$.
\end{itemize}

A sub-index $b_j$ is called {\em essential} if either $j=1$ or $b_{j}\neq b_{j-1}+1$. 
\end{Def}
\begin{Prop}
Let $X$ be a representative of $\sigma_{a;b}$ in $OG(k,n)$. If there exists a flag $F_\bullet$ of isotropic subspaces such that for all $\Lambda\in X$, $\dim(F_{a_i}\cap \Lambda)\geq i$ for all essential $a_i$, $\dim(F_{b_j}^\perp\cap \Lambda)\geq k-j+1$ for all essential $b_j$, then $X$ has to be the Schubert variety defined by $F_\bullet$.
\end{Prop}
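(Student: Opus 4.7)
The plan is to follow the template of Proposition~\ref{essential a}: show that the rank inequalities imposed on essential sub-indices force the full system of rank inequalities defining $\Sigma_{a;b}(F_\bullet)$, so that $X\subseteq \Sigma_{a;b}(F_\bullet)$, and then use $[X]=\sigma_{a;b}=[\Sigma_{a;b}(F_\bullet)]$ together with the irreducibility of the Schubert variety to conclude $X=\Sigma_{a;b}(F_\bullet)$.

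First, I would propagate the $a$-inequalities. By Definition~\ref{def1}, a non-essential $a_i$ has $i<s$ and $a_{i+1}=a_i+1$, so $F_{a_i}\subset F_{a_{i+1}}$ has codimension one. Starting from the nearest essential sub-index $a_{i'}$ with $i'>i$ and running a decreasing induction, each unit drop in flag dimension reduces the intersection with $\Lambda$ by at most one, so $\dim(\Lambda\cap F_{a_i})\geq \dim(\Lambda\cap F_{a_{i'}})-(i'-i)\geq i'-(i'-i)=i$. The same idea, run forward, handles the $b$-inequalities: a non-essential $b_j$ satisfies $b_j=b_{j-1}+1$, so $F_{b_j}^\perp$ has codimension one inside $F_{b_{j-1}}^\perp$, and $\dim(\Lambda\cap F_{b_j}^\perp)\geq \dim(\Lambda\cap F_{b_{j-1}}^\perp)-1\geq k-j+1$.

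The only subtle step is the even case $n=2m$, where $a_s$ is declared non-essential precisely when $a_s+b_{k-s}=n-2$, i.e.\ $F_{a_s}$ has codimension two inside $F_{b_{k-s}}^\perp$. The generic codimension estimate only yields $\dim(\Lambda\cap F_{a_s})\geq s-1$, so an extra unit must come from the parity constraint intrinsic to $OG(m,2m)$. After quotienting by the radical $F_{b_{k-s}}$, the space $F_{b_{k-s}}^\perp/F_{b_{k-s}}$ carries a non-degenerate symmetric form with two rulings of maximal isotropics, and the choice of component of $OG(m,2m)$ (recorded via $F_{n/2-1}^\perp$ in \S\ref{og}) forces $\Lambda/(\Lambda\cap F_{b_{k-s}})$ and the image of $F_{a_s}$ to lie in the same ruling. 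Since two maximal isotropics in one ruling of an even-dimensional non-degenerate quadric meet in a subspace whose dimension has a prescribed parity, the estimate is upgraded from $s-1$ to $s$.

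Combining the above, every $\Lambda\in X$ satisfies the full system of rank inequalities, hence $X\subseteq \Sigma_{a;b}(F_\bullet)$; the class equality together with irreducibility of the Schubert variety then forces $X=\Sigma_{a;b}(F_\bullet)$. The main obstacle is the parity argument in the even case; the remaining steps are direct transplants of the type~$A$ proof to the orthogonal setting.
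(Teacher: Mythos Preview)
Your propagation of the $a$- and $b$-inequalities via codimension-one steps matches the paper for $n$ odd. The gap is in the even case, where the paper's convention that $F_{n/2-1}^\perp$ denotes the \emph{other maximal isotropic} (of dimension $n/2$, not $n/2+1$) changes the codimension bookkeeping in ways your argument does not track.

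First, this convention creates a case you missed: when $b_{k-s}=n/2-1$ is non-essential (so $b_{k-s-1}=n/2-2$), the space $F_{n/2-1}^\perp$ has codimension \emph{two} in $F_{n/2-2}^\perp$, so your codimension-one step for the $b$-chain breaks here. The paper handles this separately by the same dichotomy-plus-irreducibility argument as below. Conversely, when $a_s=b_{k-s}=n/2-1$, the convention makes $F_{a_s}$ of codimension \emph{one} in $F_{b_{k-s}}^\perp$, so no parity is needed there at all; your ``codimension two'' claim is incorrect in that sub-case.

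Second, your parity argument for the remaining sub-case $a_s=n/2$, $b_{k-s}=n/2-2$ only works when $k=n/2$: it is the choice of component of $OG(n/2,n)$ that pins down the ruling of $\Lambda$, and for $k<n/2$ there is no such constraint on $\Lambda$. The paper instead argues pointwise in the $4$-dimensional quotient $F_{n/2-2}^\perp/F_{n/2-2}$: the image of $\Lambda\cap F_{n/2-2}^\perp$ is isotropic, hence meets at least one of the two fixed maximal isotropics $\overline{F_{n/2}}$, $\overline{F_{n/2-1}^\perp}$ in dimension $\geq 1$, giving the dichotomy $\dim(\Lambda\cap F_{n/2})\geq s$ or $\dim(\Lambda\cap F_{n/2-1}^\perp)\geq s$. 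Then irreducibility of $X$ (not parity) forces $X$ into one of the two resulting Schubert varieties. Your global containment-then-irreducibility strategy is the right endgame, but the upgrade from $s-1$ to $s$ must come from this dichotomy rather than from a ruling constraint that need not exist.
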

\begin{proof}
First assume $n$ is odd. The statement then follows from the definition and the observation that if $a_i=a_{i+1}-1$, then $\dim(\Lambda\cap F_{a_{i+1}})\geq i+1$ implies $\dim(\Lambda\cap F_{a_{i}})\geq i$, and if $b_j=b_{j-1}+1$, then $F^\perp_{b_j}$ is of codimension 1 in $F^\perp_{b_{j-1}}$ and therefore $\dim(\Lambda\cap F_{b_{j-1}}^\perp)\geq k-j+2$ implies $\dim(\Lambda\cap F_{b_{j}}^\perp)\geq k-j+1$.

The case when $n$ is even is slightly different when $a_s=\frac{n}{2}$ or $b_{k-s}=\frac{n}{2}-1$, due to the convention that we use $F_{n/2-1}^\perp$ to denote, instead of the orthogonal component of $F_{n/2-1}$, the maximal isotropic subspace containing $F_{n/2-1}$ other than $F_{n/2}$. 

If $a_s=b_{k-s}=\frac{n}{2}-1$, then since $F_{n/2-1}$ is of codimension 1 in $F_{n/2-1}^\perp$, $\dim(\Lambda\cap F_{n/2-1}^\perp)\geq s+1$ implies $\dim(\Lambda\cap F_{n/2-1})\geq s$. 

If $a_s=\frac{n}{2}$ and $b_{k-s}=\frac{n}{2}-2$, then the condition $\dim(\Lambda\cap F_{n/2-2}^\perp)\geq s+1$ implies either $\dim(\Lambda\cap F_{n/2})\geq s$ or $\dim(\Lambda\cap F_{n/2-1}^\perp)\geq s$. Each possibility gives one Schubert variety. Since $X$ is irreducible, $X$ must be one of the Schubert varieties.

If $b_{k-s}=\frac{n}{2}-1$ and $b_{k-s-1}=\frac{n}{2}-2$, then the condition $\dim(\Lambda\cap F_{n/2-2}^\perp)\geq s+1$ implies either $\dim(\Lambda\cap F_{n/2})\geq s$ or $\dim(\Lambda\cap F_{n/2-1}^\perp)\geq s$. Each possibility gives one Schubert variety. Since $X$ is irreducible, $X$ must be one of the Schubert varieties.
\end{proof}

\begin{Def}
An essential sub-index $a_i$ is called {\em rigid} if for every subvariety $X$ of $OG(k,n)$ representing $\sigma_{a;b}$, there exists an isotropic subspace $F_{a_i}$ of dimension $a_i$ such that $$\dim(\Lambda\cap F_{a_i})\geq i,\ \ \ \forall\Lambda\in X.$$

An essential sub-index $b_j$ is called {\em rigid} if for every subvariety $X$ representing $\sigma_{a;b}$, there exists an isotropic subspace $F_{b_j}$ of dimension $b_j$ such that $$\dim(\Lambda\cap F_{b_j}^\perp)\geq k-j+1,\ \ \ \forall\Lambda\in X.$$
\end{Def}

In \cite[Theorem 1.6]{YL}, we classified the rigid sub-indices in orthogonal Grassmannians:
\begin{Thm}\cite[Theorem 1.6]{YL}\label{rigid index og}
Let $\sigma_a^b$ be a Schubert class in $OG(k,n)$. An essential sub-index $a_i$ is not rigid if and only if one of the following holds:
\begin{enumerate}
\item $a_i\neq b_j$ for all $1\leq j\leq k-s$, $a_i-a_{i-1}\geq 2$ and $$a_{i+1}-a_i=2+\#\{j|a_i< b_j<a_{i+1}\};$$
\item $a_i=b_j$ for some $j$ and $$\#\{\mu|a_\mu\leq b_j\}=k-j+b_{j}-\frac{n-3}{2}.$$
\end{enumerate}

An essential sub-index $b_j$ is rigid if and only if either $b_j=0$ or there exist $1\leq i\leq s$ and $j\leq j'\leq k-s$ such that $a_i=b_{j'}$ and $$\#\{\mu|a_{\mu}\leq b_{j'}\}>k-j'+b_{j'}-\frac{n-3}{2}.$$
\end{Thm}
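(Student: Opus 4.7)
The plan is to prove the two halves of the classification separately, addressing the non-rigid direction by constructing explicit counter-examples and the rigid direction by a careful induction. In both cases, the key geometric tool is the theory of restriction varieties in orthogonal Grassmannians (cf.\ Definition~\ref{sequence}), which provides a natural interpolation between Schubert varieties attached to different flag-and-quadric data. As a guiding heuristic, the numerical conditions in (1) and (2) are precisely the thresholds at which Coskun's specialization algorithm for expressing restriction varieties in the Schubert basis either produces a splitting into several classes or remains a single Schubert class; this is what allows restriction varieties to witness non-rigidity.

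For the ``not rigid'' directions, I would produce explicit non-Schubert representatives of $\sigma_{a;b}$. In case~(1) for $a_i$, when $a_i\neq b_j$ for all $j$, $a_i-a_{i-1}\geq 2$, and $a_{i+1}-a_i=2+\#\{j\mid a_i<b_j<a_{i+1}\}$, I would degenerate the isotropic datum $F_{a_i}$ into a sub-quadric of the next larger ambient with appropriately chosen corank, obtaining a restriction variety whose class is exactly $\sigma_{a;b}$ by Coskun's algorithm; a smooth hyperplane section (in the spirit of Remark~\ref{ex of non rigid}) of the corresponding Grassmannian-like factor inside it produces a genuine non-Schubert representative. In case~(2), when $a_i=b_j$ and the count $\#\{\mu\mid a_\mu\leq b_j\}=k-j+b_j-\frac{n-3}{2}$ holds, the sub-quadric $Q_{n-b_j}^{b_j}$ already carries the maximum possible corank, so one may replace the coincident pair $(F_{a_i},F_{b_j}^\perp)$ by a moving sub-quadric of the same corank and dimension, and a specialization argument shows the resulting variety still represents $\sigma_{a;b}$. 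The non-rigidity statements for $b_j$ are handled by analogous degenerations that drop the corank of $Q_{n-b_j}^{b_j}$ by one and then split via the phenomenon in Remark~\ref{difference1}.

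For the ``rigid'' directions, I would mimic the incidence-correspondence induction used in Lemma~\ref{coincide}: given a representative $X$, form
\[ I=\{(\Lambda,H)\mid \Lambda\in X,\ \Lambda\subset H\}\subset F(k,n-1;n), \]
push forward to $(\mathbb{P}^{n-1})^*$, and over a general hyperplane $H$ reduce to a rigidity problem in $OG(k,n-2)$ of one lower rank (after intersecting with $H$ and its orthogonal). For the initial $b_j$-rigidity statement, $b_j=0$ is immediate since $F_0^\perp=V$. In the coupling case, where $a_i=b_{j'}$ for some $j'\geq j$ with strict inequality in the count, the strict inequality forces the singular locus of the sub-quadric witnessing $b_{j'}$-rigidity to be an isotropic subspace of dimension exactly $b_{j'}=a_i$; this subspace then serves as $F_{a_i}$, and its being contained in the singular loci of the smaller sub-quadrics $Q_{n-b_j}^{b_j}$ propagates rigidity downward to $b_j$. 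For $a_i$-rigidity outside the cases in (1) and (2), either some $a_i-a_{\ell}$ gap is too small to allow a restriction-variety perturbation and one applies Theorem~\ref{rigid index in g} after the natural forgetful map $OG(k,n)\to G(k,n)$, or the conditions of (1) and (2) fail in exactly the way needed so that any attempted specialization produces a splitting, precluding a non-Schubert representative.

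The main obstacle will be the coupling between $a$-indices and $b$-indices at coincidences $a_i=b_j$. Unlike the type~A situation in \S\ref{type A}, where each projection $\pi_t$ contributes an independent condition, in the orthogonal case the isotropic subspace enforced by $a_i$ must coincide with the singular locus of the sub-quadric enforced by $b_j$, and the delicate threshold $\#\{\mu\mid a_\mu\leq b_j\}=k-j+b_j-\frac{n-3}{2}$ is exactly the maximum-corank boundary governing when that coincidence can be perturbed. Rigorously verifying, via Coskun's algorithm, that this numerical threshold is the exact cut-off between a restriction variety remaining Schubert-class-irreducible (hence witnessing non-rigidity) and splitting into genuinely distinct Schubert classes (hence permitting rigidity) will be the most delicate bookkeeping in the argument.
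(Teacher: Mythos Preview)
The paper does not supply its own proof of this theorem: it is quoted verbatim from \cite[Theorem~1.6]{YL}, and the only related content in the present paper is Remark~\ref{exinof}, which sketches the counter-example constructions for the non-rigid direction. So there is no full ``paper's proof'' to compare against; I will instead comment on your proposal against that remark and on its internal soundness.

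For the non-rigid direction your general philosophy (restriction varieties, corank specialization) matches Remark~\ref{exinof}, but your case~(1) construction for $a_i$ is not right. You propose to ``degenerate $F_{a_i}$ into a sub-quadric'' and then take a smooth hyperplane section in the spirit of Remark~\ref{ex of non rigid}. Neither step works as stated: $F_{a_i}$ is an isotropic subspace, not a quadric, and the type~A hyperplane-section trick does not transplant to $OG(k,n)$, because a general hyperplane section of a Schubert variety there does not stay in $OG(k,n)$. The actual construction (see Remark~\ref{exinof}) is more delicate: one first passes to a smaller orthogonal Grassmannian $OG(k-z_i,n)$ or $OG(k-\gamma+1,n)$, builds a non-Schubert representative there, and then spans back up with an auxiliary isotropic $G_{z_i}$. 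Your description of case~(2) and of the $b_j$ non-rigidity is closer to what is done, though the phrase ``moving sub-quadric of the same corank'' obscures that one actually lowers the corank by one and then adjusts the $a$-sequence so that the specialization algorithm returns exactly $\sigma_{a;b}$.

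For the rigid direction there is a genuine gap. You propose to settle $a_i$-rigidity outside cases~(1) and~(2) by pushing forward along the forgetful map $OG(k,n)\to G(k,n)$ and invoking Theorem~\ref{rigid index in g}. But $i_*\sigma_{a;b}$ is in general a nontrivial positive combination of Schubert classes in $G(k,n)$ (this is exactly what Coskun's algorithm computes; see also Lemma~\ref{maxindex}), so Theorem~\ref{rigid index in g}, which concerns a single Schubert class, does not apply. One would need a result of the type of Theorem~\ref{rigid index in mix}, and even that carries extra numerical hypotheses not guaranteed by the mere failure of~(1) and~(2). The incidence-correspondence reduction $OG(k,n)\rightsquigarrow OG(k,n-2)$ is a reasonable ingredient for the rigid direction (and is used in the paper, e.g.\ in Lemma~\ref{atob}), but by itself it does not close the argument; the proof in \cite{YL} requires substantially more, in particular a careful analysis of the quadric swept out by $X$ and its corank, along the lines of Lemma~\ref{rankbound}.
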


In particular, we obtained a classification of rigid Schubert classes in orthogonal Grassmannians:
\begin{Thm}\cite[Theorem 1.7]{YL}\label{rigid in og}
Let $\sigma_a^b$ be a Schubert class in $OG(k,n)$. Let $b_\gamma$ be the largest essential sub-index in $b=(b_1,...,b_{k-s})$. Then $\sigma_a^b$ is rigid if and only if all of the following conditions hold:
\begin{enumerate}
\item $b_\gamma=a_i$ for some $1\leq i\leq s$ and $$\#\{\mu|a_{\mu}\leq b_{\gamma}\}>k-\gamma+b_{\gamma}-\frac{n-3}{2};$$
\item there is no $1\leq i\leq s$ such that $a_i\neq b_j$ for all $1\leq j\leq s$, $a_i-a_{i-1}\geq 2$ and $$a_{i+1}-a_i=2+\#\{j|a_i< b_j<a_{i+1}\}.$$
\end{enumerate}
\end{Thm}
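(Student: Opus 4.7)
The plan is to prove both directions by reducing rigidity of $\sigma_a^b$ to the rigidity of its essential sub-indices, which is already characterized in Theorem~\ref{rigid index og}, and then verifying that the isotropic subspaces they determine assemble into an isotropic flag.

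First I would address sufficiency. The goal is to show that conditions~(1) and~(2) force every essential sub-index to be rigid. Condition~(2) is precisely the negation of the type-(1) non-rigidity of essential $a$-sub-indices in Theorem~\ref{rigid index og}, handling those $a_i$ that do not coincide with any $b_j$. Condition~(1), inserted with $j' = \gamma$ into the $b$-part of Theorem~\ref{rigid index og}, gives rigidity of every essential $b_j$ with $j \leq \gamma$; since $b_\gamma$ is the largest essential $b$-sub-index, this covers all of them. For the remaining case of an essential $a_i = b_{j'}$ that could be type-(2) non-rigid, I would argue that the strict inequality in condition~(1) propagates down to $b_{j'}$ using monotonicity of $\#\{\mu \mid a_\mu \leq b_j\}$ and of $k - j + b_j$ in $j$, so that the type-(2) numerical equality never occurs.

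Given that all essential sub-indices are rigid, each representative $X$ of $\sigma_a^b$ supplies canonical isotropic subspaces $\{F_{a_i}\}$ and $\{F_{b_j}\}$. I would then show these form an isotropic flag by adapting Lemma~\ref{coincide} to the orthogonal setting: sweep by hyperplanes of $V$ to reduce to lower-dimensional orthogonal Grassmannians, induct on codimension, and use the monotonicity of the essential sub-indices to force nesting $F_{a_i} \subset F_{a_{i'}}$ and $F_{b_j}^\perp \supset F_{b_{j'}}^\perp$ in the correct order. The strict inequality in condition~(1) is then invoked to rule out the splitting phenomenon of Remark~\ref{difference1} and to guarantee the $F_{b_j}$ nest compatibly with the $F_{a_i}$ as an isotropic flag.

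For necessity, I would construct non-Schubert representatives whenever a condition fails. If condition~(2) fails, some essential $a_i$ with $a_i \neq b_j$ for all $j$ is type-(1) non-rigid; adapting the Grassmannian construction of Remark~\ref{ex of non rigid} — using a smooth hyperplane section of a smaller orthogonal Grassmannian inside a suitable isotropic subspace — yields a non-Schubert subvariety with class $\sigma_a^b$. If condition~(1) fails, then either $b_\gamma$ equals no $a_i$, or the inequality degenerates to equality; in both subcases $b_\gamma$ is non-rigid by Theorem~\ref{rigid index og}, and the restriction-variety technology recalled in Section~\ref{og} produces a non-Schubert representative by replacing $F_{b_\gamma}^\perp$ with a subquadric $Q_{n-b_\gamma}^r$ of larger corank; the class is verified via Coskun's specialization algorithm from \cite{Coskun2011RestrictionVA}.

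The main obstacle I expect is the flag-compatibility step in sufficiency. As the remark preceding Lemma~\ref{coincide} already warns in the type-$A$ case, individual rigid sub-indices do not always assemble into a compatible flag, and in the orthogonal setting one must additionally control isotropy and the orthogonal-complement relations. Propagating the strict inequality of condition~(1) through all smaller essential $b_j$ so as to exclude every Remark~\ref{difference1}-type splitting is the delicate heart of the argument. A secondary challenge is the class computation for the restriction-variety counterexamples when condition~(1) fails by equality, since there one must track how the splitting rule interacts with the borderline numerical case.
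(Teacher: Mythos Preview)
This theorem is stated in the paper as a citation of \cite[Theorem~1.7]{YL} and is not proved here; it appears in \S4.1 purely as background for the orthogonal partial flag variety results. There is thus no proof in this paper to compare your proposal against.

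That said, your outline is consistent with the machinery the paper develops later. For necessity, the restriction-variety and cone constructions you describe are exactly those recorded (again from \cite{YL}) in Remark~\ref{exinof}. For the flag-compatibility step in sufficiency---which you correctly identify as the main obstacle---the paper later proves Lemma~\ref{cbile} (together with Lemma~\ref{rankbound} and Lemma~\ref{atob}), which is the orthogonal analogue of Lemma~\ref{coincide} and handles all four required nestings $F_{a_{i_1}}\subset F_{a_{i_2}}$, $F_{b_{j_1}}\subset F_{b_{j_2}}$, $F_{a_i}\subset F_{b_j}$, $F_{b_j}\subset F_{a_i}$; your hyperplane-sweep idea is in the same spirit.

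One genuine gap: your ``propagation by monotonicity'' of the strict inequality in condition~(1) from $j'=\gamma$ down to every smaller $j'$ with $a_i=b_{j'}$ does not work as stated. Both $\#\{\mu:a_\mu\le b_j\}$ and $k-j+b_j-\tfrac{n-3}{2}$ are non-decreasing in $j$, so strict inequality at $\gamma$ does not formally force it at smaller $j$. The correct route is indirect: if $a_i=b_j$ and equality held at $j$, then $a_i$ would be type-(2) non-rigid by Theorem~\ref{rigid index og}; by the equivalence in Lemma~\ref{atob} this would make $b_j$ non-rigid as well. But condition~(1), applied with $j'=\gamma\ge j$, makes $b_j$ rigid by the $b$-criterion in Theorem~\ref{rigid index og}, a contradiction. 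So your conclusion is right, but it goes through the rigidity criteria rather than bare monotonicity.
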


\begin{Rem}\label{exinof}
In \cite{YL}, we constructed the counter-examples when $a_i$ or $b_j$ is not rigid. For reader's convenience, we give a brief description of those constructions.

First assume $b_j$ is not rigid. There are two cases: either $a_i\neq b_j'$ for all $1\leq i\leq s$ and $j\leq j'\leq k-s$, or  $a_i=b_{j'}$ for some $1\leq i\leq s$ and $j\leq j'\leq k-s$ and $\#\{\mu|a_\mu\leq b_{j'}\}=k-j'+b_{j'}-\frac{n-3}{2}.$ In the first case, consider the restriction variety $X$ defined by the following sequence
$$F_{a_1}\subset...\subset F_{a_s}\subset Q_{n-b_{k-s}}^{b_{k-s}-1}\subset...\subset Q_{n-b_j}^{b_{j}-1}\subset Q_{n-b_{j-1}}^{{b_{j-1}}}\subset...\subset Q_{n-b_1}^{b_1}.$$
The $X$ is a representative of $\sigma_{a;b}$ but is not a Schubert variety.

If $a_i=b_{j'}$ for some $1\leq i\leq s$ and $j\leq j'\leq k-s$ and $\#\{\mu|a_\mu\leq b_{j'}\}=k-j'+b_{j'}-\frac{n-3}{2},$ let $j_0$ be the minimal one among such $j'$. Consider the restriction variety $X$ defined by the sequence
$$F_{a'_1}\subset...\subset F_{a'_s}\subset Q_{n-b_{k-s}}^{b_{k-s}-1}\subset...\subset Q_{n-b_j}^{b_{n_j}-1}\subset Q_{n-b_{j-1}}^{{b_{j-1}}}\subset...\subset Q_{n-b_1}^{b_1},$$
where $a'$ is obtained from $a$ by changing all the $a_i$ such that $a_i\geq b_{j_0}$ to the maximal admissible sequence with respect to $b_{k-s}-1,....,b_{j_0}-1$ (i.e. the sequence consists of all the numbers from $b_{j_0}+1$ to $\left[\frac{n}{2}\right]$ excluding the numbers equal to $b_{j'}$). The $X$ is a non-Schubert representative of $\sigma_{a;b}$.

Now assume $a_i$ is not rigid. There are also two cases: either $a_i=b_j$ for some $j$ and $\#\{\mu|a_\mu\leq b_j\}=k-j+b_{j}-\frac{n-3}{2},$ or $a_i\neq b_j$ for all $1\leq j\leq k-s$, $a_i-a_{i-1}\geq 2$ and $a_{i+1}-a_i=2+\#\{j|a_i< b_j<a_{i+1}\}.$ In the first case, $b_j$ is not rigid, and the previous construction also works for this case.

If $a_i\neq b_j$ for all $1\leq j\leq k-s$, $a_i-a_{i-1}\geq 2$ and $a_{i+1}-a_i=2+\#\{j|a_i< b_j<a_{i+1}\}$, then the sequence $b$ must contain $a_i+1,...,a_{i+1}-2$. Say $b_\gamma=a_i+1$. First assume that $\gamma=1$. Choose a complete isotropic flag $F_\bullet$. In $OG(k-z_i,n)$, consider the Schubert index $(a';b')$ defined by $a'_\mu=a_\mu+z_i$ for $1\leq \mu\leq i$, $a'_{\mu}=a_{\mu}$ for $i+1\leq \mu\leq s$, $b'_j=b_{j+z_i}$ for $1\leq j\leq k-s-z_i$. Then $a'_i=a_i+z_i$ is not rigid. Let $Y$ be a subvariety of $OG(k-z_i,n)$, but not a Schubert variety, such that $[Y]=\sigma_{a'}^{b'}$, $\dim(\Lambda\cap F_{a'_\mu})\geq \mu$ for $\mu\neq i$, $\dim(\Lambda\cap F_{b'_j}^\perp)\geq k-j+1$ for $1\leq j\leq k-s-z_i$ and for all $\Lambda\in Y$. Let $X$ be the Zariski closure in $OG(k,n)$ of the following locus of $k$-planes:
$$\{\text{span}\{G_{z_i},\Lambda\}|G_{z_i}\text{ is a linear subspace contained in } F_{a_i+1}^\perp\backslash F_{a_{i+1}}^\perp, \Lambda\in Y,\Lambda\subset G_{z_i}^\perp\}. $$
Then $X$ is a non-Schubert variety representing the class $\sigma_{a;b}$.

Now assume $\gamma\geq 2$. Let $Z$ be a non-Schubert subvariety of $OG(k-\gamma+1,n)$ representing $\sigma_{a'}^{b'}$, where $a'=a$, $b'_j=b_{j+\gamma-1}$ for $1\leq j\leq k-s-\gamma+1$, such that there does not exist a linear space of dimension $a_i$ that meet all $k-\gamma+1$-planes parametrized by $Z$ in dimension at least $i$. Let $X$ be the Zariski closure of 
$$\{\Lambda\in OG(k,n)|\dim(\Lambda\cap F_{b_j}^\perp)\geq k-j+1, 1\leq j<\gamma,\Lambda\cap F_{b_\gamma}\in Y\}.$$
Then $X$ is a non-Schubert variety representing the class $\sigma_{a;b}$.
\end{Rem}

\subsection{Classes that arise from the canonical projections}
Consider the canonical projections $\pi_i:OF(d_1,...,d_k;n)\rightarrow OG(d_i,n)$. The images of Schubert varieties are again Schubert varieties. In this section, we calculate the class of the push-forward cycle, as well as the class of a general fiber.
\begin{Prop}\label{class of pushforward in of}
Let $X$ be a subvariety in $OF(d_1,...,d_k;n)$ representing the Schubert class $\sigma_{a^\alpha;b^\beta}$. Then 
$$[\pi_i(X)]=\sigma_{a'^{\alpha'};b'^{\beta'}},$$
where $(a'^{\alpha'},b'^{\beta'})$ is obtained from $(a^\alpha;b^\beta)$ by removing the elements whose upper index is greater than $i$.
\end{Prop}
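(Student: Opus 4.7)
The plan is to follow the strategy of Proposition \ref{pushforward of class} and adapt it to the orthogonal setting. First I would verify that when $X=\Sigma_{a^\alpha;b^\beta}(F_\bullet)$ is itself a Schubert variety, the set-theoretic image $\pi_i(X)$ coincides with $\Sigma_{a'^{\alpha'};b'^{\beta'}}(F_\bullet)$ in $OG(d_i,n)$. This is direct from the definitions in \S\ref{of}: forgetting the flag components $\Lambda_j$ with $j>i$ preserves exactly those rank conditions coming from pairs $(F_{a_l},\Lambda_t)$ with $\alpha_l\le i$ and $(F_{b_m}^\perp,\Lambda_t)$ with $\beta_m\le i$, which is precisely the data determining $(a'^{\alpha'};b'^{\beta'})$. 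A quick parity check handles the $n=2d_k$ case, namely that after removing the upper-index-$>i$ sub-indices the $s$ vs.\ $d_i$ parity condition still holds, which follows from the defining constraints of $OF(d_1,\ldots,d_k;n)$.

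With the model case settled, I would use the method of undetermined coefficients. Expand $[\pi_i(X)]=\sum c_e\sigma_e$ in the Schubert basis of $A^*(OG(d_i,n))$, where $e=(c;d)$ ranges over Schubert indices of complementary dimension to $\sigma_{a';b'}$. The pullback $\pi_i^{-1}(\Sigma_e)$ is the Schubert variety in $OF$ whose primary sub-indices form $(c;d)$ with all upper indices equal to $i$. For any $e$ not equal to the Poincaré dual $(a';b')^*$ of $(a';b')$ in $OG(d_i,n)$, the product $\sigma_{a';b'}\cdot\sigma_e=0$ in $A^*(OG(d_i,n))$ forces $\Sigma_{a^\alpha;b^\beta}(F_\bullet)\cap\pi_i^{-1}(\Sigma_e(G_\bullet))=\emptyset$ for general $G_\bullet$ (the projected intersection lives inside the empty intersection $\Sigma_{a';b'}\cap\Sigma_e$). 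By rational equivalence this emptiness persists for $X$ in place of $\Sigma_{a^\alpha;b^\beta}$, hence $\pi_i(X)\cap\Sigma_e(G_\bullet)=\emptyset$, giving $[\pi_i(X)]\cdot\sigma_e=0$, i.e.\ $c_e=0$.

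For the single remaining index $e=(a';b')^*$, I would use the Poincaré-dual Schubert variety $\Sigma^\vee$ of $\sigma_{a^\alpha;b^\beta}$ inside $OF(d_1,\ldots,d_k;n)$, taken with respect to a general flag. Then $X\cap\Sigma^\vee$ is a single transverse point, and the first step (applied to $\Sigma^\vee$) identifies $\pi_i(\Sigma^\vee)$ with the Schubert variety whose class is $\sigma_{(a';b')^*}$. Consequently $\pi_i(X)\cap \pi_i(\Sigma^\vee)$ consists of a single point in $OG(d_i,n)$, so $[\pi_i(X)]\cdot\sigma_{(a';b')^*}=[\mathrm{pt}]$. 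Combined with the vanishing of all other coefficients this yields $[\pi_i(X)]=\sigma_{a'^{\alpha'};b'^{\beta'}}$.

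The main obstacle I expect is the type $D$ subtlety at $n=2d_k$, where $F_{n/2-1}^\perp$ denotes, per the convention of \S\ref{og}, the maximal isotropic subspace in the complement other than $F_{n/2}$. One must verify that the image $\pi_i(\Sigma_{a^\alpha;b^\beta})$ lies in the correct irreducible component of $OG(d_i,n)$ and that $(a';b')^*$ lies in the dual component, so that the Poincaré pairing is the genuine one. A related bookkeeping point is checking that the orthogonal Poincaré duality on $OF$ restricts compatibly along $\pi_i$ to Poincaré duality on $OG(d_i,n)$, which amounts to tracking how the parity condition on $s$ interacts with the truncation that produces $(a'^{\alpha'};b'^{\beta'})$.
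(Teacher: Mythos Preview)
Your proposal is correct and follows essentially the same approach as the paper, which simply states that the proof is identical to that of Proposition~\ref{pushforward of class}. Your extra attention to the type~$D$ parity and component issues is more than the paper provides, though note that when $d_i<d_k=n/2$ the target $OG(d_i,n)$ is irreducible and carries no parity constraint, so most of that bookkeeping is unnecessary.
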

\begin{proof}
The proof is identical to the proof of Proposition \ref{pushforward of class}.
\end{proof}

\begin{Prop}
Let $X$ be a representative of $\sigma_{a^\alpha;b^\beta}$. A general fiber of $\pi_k|_X$ has class $\sigma_{c^\mu}$, where $c^\mu=(1^{\alpha_1},...,s^{\alpha_s},(s+1)^{\beta_{d_k-s}},...,d_k^{\beta_1})$.
\end{Prop}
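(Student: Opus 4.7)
The plan is to adapt the proof of Proposition \ref{fiberk} to the orthogonal setting: first compute the fiber explicitly when $X$ is the Schubert variety $\Sigma_{a^\alpha;b^\beta}(F_\bullet)$, and then reduce the case of a general representative to this one using an intersection product argument.

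First, I would fix an isotropic flag $F_\bullet$ and analyze a general point $\Lambda_k\in \pi_k(\Sigma_{a^\alpha;b^\beta})=\Sigma_{a;b}$. On the open cell $\Sigma^\circ_{a;b}$ we have the equalities $\dim(\Lambda_k\cap F_{a_i})=i$ for $1\le i\le s$ and $\dim(\Lambda_k\cap F^\perp_{b_j})=d_k-j+1$ for $1\le j\le d_k-s$. Since $\Lambda_k$ is isotropic and $F_\bullet$ is an isotropic flag, we have inclusions $F_{a_s}\subseteq F^\perp_{b_{d_k-s}}\subseteq\cdots\subseteq F^\perp_{b_1}$, so setting
$$E_i:=\Lambda_k\cap F_{a_i}\quad (1\le i\le s),\qquad E_{d_k-j+1}:=\Lambda_k\cap F^\perp_{b_j}\quad (1\le j\le d_k-s),$$
we obtain a complete flag $0\subsetneq E_1\subsetneq E_2\subsetneq\cdots\subsetneq E_{d_k}=\Lambda_k$ inside $\Lambda_k$.

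Next I would identify the fiber over $\Lambda_k$. Since every subspace of the isotropic $\Lambda_k$ is automatically isotropic, $\pi_k^{-1}(\Lambda_k)\cong F(d_1,\ldots,d_{k-1};\Lambda_k)$ is a type A partial flag variety. For any $\Lambda_t\subseteq \Lambda_k$ with $t<k$, we have $F_{a_i}\cap\Lambda_t=E_i\cap\Lambda_t$ and $F^\perp_{b_j}\cap\Lambda_t=E_{d_k-j+1}\cap\Lambda_t$, so the defining dimension conditions of $\Sigma_{a^\alpha;b^\beta}$ translate directly into type A Schubert conditions on the flag $E_\bullet$. The bookkeeping then reduces to verifying that the corank prescribed by the sub-index $c_\ell=\ell$ (with $\ell\le s$) at step $t$ is exactly $\mu_{\ell,t}$, and that the corank prescribed by $c_\ell=d_k-j+1$ (with $j=d_k-s-m+1$ for $\ell=s+m$) at step $t$ is exactly
$$\#\{r\le s:\alpha_r\le t\}+\#\{e\ge j:\beta_e\le t\}=\nu_{j,t},$$
which matches the formula for $\nu_{j,t}$ from Section \ref{of}. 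This identifies the fiber as the Schubert variety in $F(d_1,\ldots,d_{k-1};\Lambda_k)$ of class $\sigma_{c^\mu}$ with $c^\mu=(1^{\alpha_1},\ldots,s^{\alpha_s},(s+1)^{\beta_{d_k-s}},\ldots,d_k^{\beta_1})$.

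Finally, for a general representative $X$ of $\sigma_{a^\alpha;b^\beta}$, Proposition \ref{class of pushforward in of} gives $[\pi_k(X)]=\sigma_{a;b}$. Picking a general Schubert variety $\Sigma^*$ in $OG(d_k,n)$ Poincar\'e dual to $\Sigma_{a;b}$, it meets $\pi_k(X)$ transversely in a single point $\Lambda_k$, and the class of the fiber $\pi_k^{-1}(\Lambda_k)\cap X$ is computed by the intersection product $[X]\cdot \pi_k^*([\Sigma^*])$ restricted to the fiber and hence depends only on $[X]$; we therefore may replace $X$ by $\Sigma_{a^\alpha;b^\beta}(F_\bullet)$ and invoke the previous steps. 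The main obstacle is the bookkeeping in the upper-index translation, in particular verifying that $\nu_{j,t}$ corresponds to the correct corank on $E_{d_k-j+1}$ so that the sub-index $d_k-j+1$ in $c^\mu$ picks up the upper index $\beta_j$; a secondary subtlety is the case $n=2d_k$, where one must check that $\Lambda_k$ stays in the fixed component and that the convention for $F^\perp_{n/2-1}$ is compatible with the flag $E_\bullet$, which is automatic because we work on the dense open Schubert cell.
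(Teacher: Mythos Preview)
Your proposal is correct and follows exactly the approach the paper intends: the paper's proof consists of the single sentence ``The proof is identical to the proof of Proposition \ref{fiberk},'' and you have simply spelled out that identification in the orthogonal setting, including the extra bookkeeping for the $b$-indices and the remark on the $n=2d_k$ convention.
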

\begin{proof}
The proof is identical to the proof of Proposition \ref{fiberk}.
\end{proof}

More generally, we have

\begin{Cor}\label{class of pullback in of}
Let $X$ be a representative of $\sigma_{a^\alpha;b^\beta}$. Then a general fiber of $\pi_m|_X$, $1\leq m\leq k$, has class $\sigma_{a'^{\alpha'},b'^{\beta'}}$, where $(a'^{\alpha'},b'^{\beta'})$ is obtained as follows. Set
$$y_{j,m}:=\#\{p|a_p>b_j,\alpha_p\leq m\},$$ 
$$z_{j,m}:=\#\{q|b_q\geq b_j,\beta_q\leq m\}$$
$$h_{i,m}:=\#\{r|b_r\geq a_i,\beta_r\leq m\}$$
 for $1\leq i\leq s$ and $1\leq j\leq k-s$. 
\begin{itemize}
\item If $\alpha_i\leq m$, replace $a_i$ with $\mu_{i,m}$ and put it in a set $A$; 
\item If $\alpha_i> m$, replace $a_i$ with $a_i+\mu_{s,m}-\mu_{i,m}+h_{i,m}$ and put it in the set $A$; 
\item If $\beta_j\leq m$, replace $b_j$ with $\nu_{j,m}$, and put it in the set $A$; 
\item If $\beta_j>m$, replace $b_j$ with $b_j+y_{j,m}+z_{j,m}$, and put it in a set $B$. 
\end{itemize}
The sequences $a'^{\alpha'}$ and $b'^{\beta'}$ are then obtained by arranging the elements in $A$ and $B$ resp. in an increasing order.

\end{Cor}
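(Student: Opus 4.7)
The plan is to extend the proof of Corollary \ref{class of fiber} to the orthogonal setting in direct analogy. First, by the same universal-coefficient argument used in Propositions \ref{fiberk} and \ref{pushforward of class}, the class of a general fiber of $\pi_m|_X$ equals the class of a general fiber of $\pi_m|_{\Sigma_{a^\alpha;b^\beta}}$ (it can be read off from the intersection product $[X]\cdot\pi_m^*(\sigma^\vee)$, where $\sigma^\vee$ is the Poincar\'e dual in $OG(d_m,n)$ of $(\pi_m)_*[X]$), so I may assume $X$ is the Schubert variety defined by a generic isotropic flag $F_\bullet$. I then factor $\pi_m$ as the composition
\[
OF(d_1,\ldots,d_k;n)\xrightarrow{\varphi_1} OF(d_1,\ldots,d_m;n)\xrightarrow{\varphi_2} OG(d_m,n),
\]
so that the fiber of $\pi_m|_X$ over a general $\Lambda_m$ is biregular to the product of the $\varphi_2$-fiber (partial flags inside $\Lambda_m$) and the $\varphi_1$-fiber (isotropic extensions of $\Lambda_m$ inside $\Lambda_m^\perp$).

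For the $\varphi_2$-fiber, the partial flags $\Lambda_1\subset\cdots\subset\Lambda_{m-1}\subset\Lambda_m$ live in the type-A flag variety $F(d_1,\ldots,d_{m-1};\Lambda_m)$. The conditions imposed by $F_{a_i}$ with $\alpha_i\le m$ and by $F_{b_j}^\perp$ with $\beta_j\le m$ reduce to intersection conditions with the subspaces $F_{a_i}\cap\Lambda_m$ and $F_{b_j}^\perp\cap\Lambda_m$ of $\Lambda_m$, which for generic $\Lambda_m\in\pi_m(X)$ have dimensions $\mu_{i,m}$ and $\nu_{j,m}$ respectively. Applying the Proposition immediately preceding this Corollary (the orthogonal analogue of Proposition \ref{fiberk}) to $\Sigma_{a^\alpha;b^\beta}\subset OF(d_1,\ldots,d_m;n)$, the $\varphi_2$-fiber is the Schubert variety whose index contributes the new sub-indices $\mu_{i,m}$ (with upper index $\alpha_i$) when $\alpha_i\le m$ and $\nu_{j,m}$ (with upper index $\beta_j$) when $\beta_j\le m$, accounting for the first and third bullet points of the statement.

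For the $\varphi_1$-fiber, the isotropicity of $\Lambda_m\subset\Lambda_{m+1}\subset\cdots\subset\Lambda_k$ is equivalent to the quotients $\Lambda_{m+t}/\Lambda_m$ forming an isotropic partial flag in the quadratic space $\Lambda_m^\perp/\Lambda_m$ of dimension $n-2d_m$. The conditions from $F_{a_i}$ with $\alpha_i>m$ become isotropic-subspace conditions in $\Lambda_m^\perp/\Lambda_m$, imposed by the image of $F_{a_i}\cap\Lambda_m^\perp$ in the quotient; those from $F_{b_j}^\perp$ with $\beta_j>m$ become coisotropic-type conditions, imposed by the image of $F_{b_j}\cap\Lambda_m^\perp$. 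A direct dimension count for generic $\Lambda_m$, using the relative positions of $F_{a_i}$ and $F_{b_j}^\perp$ with respect to $\Lambda_m$ together with the isotropy of $F_\bullet$, yields the shifted sub-indices $a_i+\mu_{s,m}-\mu_{i,m}+h_{i,m}$ and $b_j+y_{j,m}+z_{j,m}$; applying the orthogonal analogue of Proposition \ref{fiber1} to this data identifies the $\varphi_1$-fiber with the claimed Schubert variety in $OF(d_{m+1}-d_m,\ldots,d_k-d_m;n-2d_m)$. Combining the two halves, and arranging the resulting elements of $A$ and $B$ in increasing order, yields the statement.

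The main obstacle is the bookkeeping in the third step: verifying the precise generic-dimension formulas by carefully tracking how $F_{a_i}$ and $F_{b_j}$ intersect $\Lambda_m^\perp$ and then project to $\Lambda_m^\perp/\Lambda_m$, in particular through the auxiliary counts $h_{i,m}$, $y_{j,m}$, $z_{j,m}$ that record how many other flag elements cross the relevant thresholds, and confirming that the non-crossing condition $a_i\ne b_j+1$ is preserved under the shifts. Additional care is needed in the even-dimensional case $n=2d_k$, where the choice of irreducible component of $OF(\cdot;n)$ and the convention for $F_{n/2-1}^\perp$ must be respected throughout.
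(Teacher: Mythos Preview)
Your approach is correct in spirit but takes a genuinely different route from the paper's proof. The paper does \emph{not} factor $\pi_m$ through $OF(d_1,\ldots,d_m;n)$ and does not pass to the quotient $\Lambda_m^\perp/\Lambda_m$. Instead, after the same reduction to $X=\Sigma_{a^\alpha;b^\beta}$, it stays entirely in the ambient $OF(d_1,\ldots,d_k;n)$ and writes down a single modified flag $G_\bullet$ in $V$ whose associated Schubert variety is the fiber. Concretely, each flag element is replaced according to its upper index: $F_{a_i}\mapsto\Lambda_m\cap F_{a_i}$ if $\alpha_i\le m$ and $F_{a_i}\mapsto\mathrm{span}(\Lambda_m,\Lambda_m^\perp\cap F_{a_i})$ if $\alpha_i>m$; likewise $F_{b_j}^\perp\mapsto\Lambda_m\cap F_{b_j}^\perp$ if $\beta_j\le m$ and $F_{b_j}^\perp\mapsto\bigl(\mathrm{span}(\Lambda_m,\Lambda_m^\perp\cap F_{b_j})\bigr)^\perp$ if $\beta_j>m$. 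The dimension formulas in the statement then drop out by inspecting $\dim$ of these spans and intersections for generic $\Lambda_m$.

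Your product decomposition is more conceptual and makes the two halves of the index (those with upper index $\le m$ versus $>m$) transparently independent, but it costs you an extra reassembly step to get back to a Schubert index for the full $OF(d_1,\ldots,d_k;n)$, and it leans on an ``orthogonal analogue of Proposition~\ref{fiber1}'' that the paper never states separately. The paper's span construction avoids the quotient entirely and delivers the answer as a Schubert index in the original ambient in one stroke; in particular it sidesteps your acknowledged bookkeeping obstacle, since the shifts $h_{i,m},y_{j,m},z_{j,m}$ are simply $\dim\mathrm{span}(\Lambda_m,\Lambda_m^\perp\cap F_{a_i})-a_i$ etc., read off directly rather than tracked through a projection.
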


\begin{proof}
The class of a general fiber can be realized as an intersection product in $OF(d_1,...,d_k;n)$, and therefore there is no harm to assume $X$ to be a Schubert variety.

Assume $X$ is a Schubert variety defined by the following partial flag
$$F_{a_1}^{\alpha_1}\subset...\subset F_{a_s}^{\alpha_s}\subset (F_{b_{k-s}}^\perp)^{\beta_{k-s}}\subset...\subset (F_{b_1}^\perp)^{\beta_1}.$$
Let $\Lambda_m$ be a general point in $X_m:=\pi_m(X)$. 
Let $G$ be the sequence obtained from $F$ by the following procedures:
\begin{enumerate}
\item For every $1\leq i\leq s$ and $1\leq j\leq d_k$,
\begin{itemize}
\item If $\alpha_i\leq m$, replace $F_{a_i}$ by $\Lambda_m\cap F_{a_i}$;
\item If $\alpha_i>m$, replace $F_{a_i}$ by span$(\Lambda_m,\Lambda_m^\perp\cap F_{a_i})$;
\item If $\beta_j\leq m$, replace $F_{b_j}^\perp$ by $\Lambda_m\cap F_{b_j}^\perp$;
\item If $\beta_j>m$, replace $F_{b_j}^\perp$ by $(\text{span}(\Lambda_m,\Lambda_m^\perp\cap F_{b_j}))^\perp$.
\end{itemize}
\item Re-arrange the elements so that the dimension is in an increasing order.
\end{enumerate}

Then the fiber of $\pi_m|_X$ is the Schubert variety in $OF(d_1,...,d_k;n)$ defined by the flag $G$.

\end{proof}

\subsection{Orthogonal partial flag varieties}
In this section, we study the rigidity problem in orthogonal flag varieties which are described in Section \ref{of}.

\begin{Ex}
Consider the orthogonal flag variety $OF(1,2;5)$. There are two canonical projections $$\pi_1:OF(1,2;5)\rightarrow OG(1,5);$$ $$\pi_2:OF(1,2;5)\rightarrow OG(2,5).$$ 
Consider the Schubert class $\sigma_{1^1;1^2}$. The class $(\pi_2)_*(\sigma_{1^1;1^2})=\sigma_{1;1}\in A(OG(2,5))$ is not rigid by Theorem \ref{rigid in og}, nevertheless we claim that the Schubert class $\sigma_{1^1;1^2}$ is rigid in $OF(1,2;5)$.

Let $X$ be a representative of $\sigma_{1^1;1^2}$ in $OF(1,2;5)$. Notice that $(\pi_1)_*(\sigma_{1^1;1^2})=\tilde{\sigma}_1\in A(OG(1,5))$ is the class of a point, and is rigid by Theorem \ref{rigid in og}. Let $\pi_1(X)=\{F_1\}$. Then every isotropic 2-plane that contains $F_1$ must be contained in $F_1^\perp$. Therefore $X$ is contained in the Schubert variety 
$$\Sigma=\{(\Lambda_1,\Lambda_2)\in OF(1,2;5)|F_1\subset \Lambda_1,\Lambda_2\subset F_1^\perp\},$$
which has the same class of $X$, and in particular $\dim(X)=\dim(\Sigma)$. Since the Schubert varieties are irreducible, $X=\Sigma$ is a Schubert variety.

\end{Ex}

\begin{Def}\label{defofrigid}
Let $(a^\alpha,b^\beta)$ be a Schubert index for $OF(d_1,...,d_k;n)$. A sub-index $a_i$ or $b_j$ is called {\em essential} if it is essential with respect to the class $(\pi_t)_*(\sigma_{a^\alpha;b^\beta})$ in $OG(d_t,n)$ for some $1\leq t\leq k$.
\end{Def}

\begin{Prop}
Let $X$ be a representative of $\sigma_{a^\alpha;b^\beta}$ in $OF(d_1,...,d_k;n)$. If there exists a flag $F_\bullet$ of isotropic subspaces such that for all $(\Lambda_1,...,\Lambda_k)\in X$, $\dim(F_{a_i}\cap \Lambda_t)\geq \mu_{i,t}$ for all essential $a_i$, $\dim(F_{b_j}^\perp\cap \Lambda_t)\geq \nu_{j,t}$ for all essential $b_j$, $1\leq t\leq k$, then $X$ has to be a Schubert variety.
\end{Prop}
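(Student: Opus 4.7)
The plan is to reduce this proposition to its orthogonal Grassmannian counterpart (the proposition stated immediately after Definition \ref{def1}) by pushing $X$ down along each canonical projection
$$\pi_t\colon OF(d_1,\ldots,d_k;n)\longrightarrow OG(d_t,n),\qquad 1\leq t\leq k,$$
and then reassembling the information obtained on each factor.

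First, I would set $X_t:=\pi_t(X)$. By Proposition \ref{class of pushforward in of}, $X_t$ represents the Schubert class $(\pi_t)_*(\sigma_{a^\alpha;b^\beta})$ in $OG(d_t,n)$, whose Schubert index is obtained from $(a^\alpha;b^\beta)$ by deleting the entries of upper index greater than $t$. A direct check confirms that the numerical data $\mu_{i,t}$ and $\nu_{j,t}$ attached to the orthogonal partial flag coincide with the usual numerical data attached to this projected class in $OG(d_t,n)$. By Definition \ref{defofrigid}, every sub-index that is essential for the projected class in $OG(d_t,n)$ is essential for $\sigma_{a^\alpha;b^\beta}$ in $OF(d_1,\ldots,d_k;n)$. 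The hypothesis then guarantees that for every such sub-index and every $(\Lambda_1,\ldots,\Lambda_k)\in X$ we have $\dim(F_{a_i}\cap\Lambda_t)\geq\mu_{i,t}$ (resp.\ $\dim(F_{b_j}^\perp\cap\Lambda_t)\geq\nu_{j,t}$), so after projection every $\Lambda_t\in X_t$ satisfies the rank conditions associated with the essential sub-indices of $(\pi_t)_*(\sigma_{a^\alpha;b^\beta})$. Applying the orthogonal Grassmannian version of the proposition, this forces $X_t$ to be precisely the Schubert variety in $OG(d_t,n)$ defined by $F_\bullet$.

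Once each $X_t$ is known to be a Schubert variety, every element of $X_t$ automatically satisfies the \emph{entire} set of defining rank conditions of that Schubert variety, not just the essential ones. Translating back, each $(\Lambda_1,\ldots,\Lambda_k)\in X$ satisfies $\dim(F_{a_i}\cap\Lambda_t)\geq\mu_{i,t}$ and $\dim(F_{b_j}^\perp\cap\Lambda_t)\geq\nu_{j,t}$ for all admissible $i,j,t$, so $X\subseteq\Sigma_{a^\alpha;b^\beta}(F_\bullet)$. Since $[X]=\sigma_{a^\alpha;b^\beta}=[\Sigma_{a^\alpha;b^\beta}(F_\bullet)]$ the two varieties have the same dimension, and as the Schubert variety is irreducible we conclude $X=\Sigma_{a^\alpha;b^\beta}(F_\bullet)$.

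The one point requiring care is the type $D$ subtlety in even dimension: when $a_s=n/2$ or $b_{k-s}\in\{n/2-2,\,n/2-1\}$ the conclusion of the OG proposition can split into two distinct Schubert possibilities (reflecting the two maximal isotropic families). I would handle this exactly as in the proof of the OG analog: irreducibility of $X_t$ (inherited from irreducibility of $X$) picks out a single component at each level $t$, and the compatibility of these choices across different $t$ is automatic since they are all projections of the same irreducible $X$. Beyond this, the argument is essentially a direct transfer of the orthogonal Grassmannian case through the projections $\pi_t$.
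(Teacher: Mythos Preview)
Your proposal is correct and follows essentially the same approach as the paper: the paper's proof is simply ``The proof [is] similar to Proposition \ref{essential flag},'' and what you have written is precisely that argument spelled out in the orthogonal setting, including the extra care for the type $D$ ambiguity in even dimension. You have supplied the details the paper leaves implicit, but there is no substantive difference in method.
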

\begin{proof}
The proof similar to Proposition \ref{essential flag}.
\end{proof}

\begin{Def}
Let $(a^\alpha,b^\beta)$ be a Schubert index for $OF(d_1,...,d_k;n)$. An essential $a_i$ is called {\em rigid} if for every representative $X$ of $\sigma_{a^\alpha;b^\beta}$, there exists an isotropic subspace $F_{a_i}$ of dimension $a_i$ such that 
$$\dim(F_{a_i}\cap\Lambda_t)\geq \mu_{i,t}, \forall (\Lambda_1,...,\Lambda_k)\in X,\alpha_i\leq t\leq k.$$
An essential $b_j$ is called {\em rigid} if for every representative $X$, there exists an isotropic subspace $F_{b_j}$ of dimension $b_j$ such that
$$\dim(F_{b_j}^\perp\cap\Lambda_t)\geq \nu_{j,t}, \forall (\Lambda_1,...,\Lambda_k)\in X,\beta_j\leq t\leq k.$$
\end{Def}

Similar as in \S\ref{type A}, if the rank condition in Definition \ref{defofrigid} holds for one component, then it should hold for all the other components. To be more precise, 

\begin{Thm}\label{rigidindexinoff}
Let $\sigma_{a^\alpha;b^\beta}$ be a Schubert class in $OF(d_1,...,d_k;n)$. Let $X$ be a representative of $\sigma_{a^\alpha;b^\beta}$. Let $F_{a_i}$ and $F_{b_j}$ be isotropic subspaces of dimension $a_i$ and $b_j$ respectively. If for some $\alpha_i\leq t\leq k$, $$\dim(F_{a_i}\cap\Lambda_t)\geq \mu_{i,t}\text{ for all }(\Lambda_1,...,\Lambda_k)\in X,$$ then the same inequailty holds for all $\alpha_i\leq t'\leq k$.

Similarly, if for some $\beta_j\leq t\leq k$, $$\dim(F_{b_j}^\perp\cap\Lambda_t)\geq \nu_{j,t}\text{ for all }(\Lambda_1,...,\Lambda_k)\in X,$$ then the same inequality holds for all $\beta_j\leq t'\leq k$.
\end{Thm}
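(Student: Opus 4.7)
The proof is parallel to that of Theorem \ref{rigid index in f}, adapted to the orthogonal setting, with Proposition \ref{class of pushforward in of} and Corollary \ref{class of pullback in of} playing the roles that Proposition \ref{pushforward of class} and Corollary \ref{class of fiber} played in the type $A$ case. I would first treat the statement for $a_i$; the statement for $b_j$ would then follow by a symmetric argument in which $F_{a_i}$ is replaced by $F_{b_j}^\perp$ and $\mu_{i,t}$ is replaced by $\nu_{j,t}$.

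Assume $\dim(F_{a_i}\cap\Lambda_t)\geq\mu_{i,t}$ for all $(\Lambda_1,\dots,\Lambda_k)\in X$, and suppose for contradiction that $\dim(F_{a_i}\cap\Lambda_{t'})<\mu_{i,t'}$ for some $\alpha_i\leq t'\leq k$ and some point of $X$. By semi-continuity of dimension this strict inequality holds on a dense open subset of $\pi_{t'}(X)$. I would split into the sub-cases $t'>t$ and $t'<t$ and produce a contradiction in each.

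For $t'>t$, fix a general $\Lambda_{t'}\in\pi_{t'}(X)$ at which the inequality fails and set $W:=F_{a_i}\cap\Lambda_{t'}$, so $w:=\dim W<\mu_{i,t'}$. Since $\Lambda_t\subset\Lambda_{t'}$ by the flag condition, every point of $X$ above $\Lambda_{t'}$ satisfies $\dim(W\cap\Lambda_t)=\dim(F_{a_i}\cap\Lambda_t)\geq\mu_{i,t}$. By Corollary \ref{class of pullback in of} together with Proposition \ref{class of pushforward in of}, the image of the fiber $\pi_{t'}^{-1}(\Lambda_{t'})\cap X$ under $\pi_t$ is a subvariety of $G(d_t,\Lambda_{t'})$ whose Schubert class has a sub-index of the form $c_{\mu_{i,t}}=\mu_{i,t'}$. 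On the other hand the same family of $\Lambda_t$'s is contained in the Schubert locus $\{\Lambda_t\subset\Lambda_{t'}:\dim(\Lambda_t\cap W)\geq\mu_{i,t}\}$, and the class of this locus has strictly larger codimension than $\sigma_c$ whenever $w<\mu_{i,t'}$, a contradiction. For $t'<t$, argue symmetrically: fix a general $\Lambda_t\in\pi_t(X)$, let $U:=\mathrm{span}(F_{a_i},\Lambda_t)$, and use Corollary \ref{class of pullback in of} and Proposition \ref{class of pushforward in of} to compute the class of $(\pi_{t'}\circ\pi_t^{-1})(\Lambda_t)$ in $G(d_{t'},\Lambda_t)$. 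A dimension count of the same flavour as in the second half of the proof of Theorem \ref{rigid index in f}, distinguishing according to whether there exists $\ell>i$ with $\alpha_\ell>t'$, delivers the required contradiction. For the $b_j$ statement one performs the same argument using $F_{b_j}^\perp$ and $\nu_{j,t}$, with extra care when $n$ is even and $b_j=n/2-1$, following the convention for $F_{n/2-1}^\perp$ laid out in Section \ref{og}.

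The main obstacle is the combinatorial bookkeeping dictated by Corollary \ref{class of pullback in of}: the class of a general fibre is described by several piecewise formulas that intertwine the $a$- and $b$-sub-indices, and one has to identify, in each sub-case, exactly which sub-index of the fibre class is pinned to $\mu_{i,t}$ (resp.\ $\nu_{j,t}$) in order to extract the incidence contradicting $w<\mu_{i,t'}$. A secondary technical point is the reduction to an auxiliary flag variety: when $t'<t$ it is convenient to replace $X$ by its image under the forgetful map $OF(d_1,\dots,d_k;n)\to OF(d_1,\dots,d_{\max(t,t')};n)$, so as to discard flag components that play no role in the dimension count, paralleling the device used in the $t<j$ half of the proof of Theorem \ref{rigid index in f}.
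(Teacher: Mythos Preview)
Your overall framework is right --- mirror the argument of Theorem~\ref{rigid index in f} with Proposition~\ref{class of pushforward in of} and Corollary~\ref{class of pullback in of} in place of their type~$A$ analogues --- and your sketch of the case $t'>t$ for $a_i$ matches the paper. There is, however, a slip in your $t'<t$ paragraph: you fix a general $\Lambda_t$ (where the inequality \emph{holds}) and form $U=\mathrm{span}(F_{a_i},\Lambda_t)$, but this $U$ contains $\Lambda_t$ and hence every $\Lambda_{t'}\subset\Lambda_t$, so it carries no information. As in Theorem~\ref{rigid index in f} one must instead fix a general $\Lambda_{t'}$ where the inequality \emph{fails}, set $U=\mathrm{span}(\Lambda_{t'},F_{a_i})$, and then project \emph{up} to level $k$; the point is that $\dim U$ is larger than expected precisely because $w=\dim(F_{a_i}\cap\Lambda_{t'})$ is too small.

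The real gap is your treatment of $b_j$. You assert that the $b_j$ statement follows by the ``same argument'' with $F_{b_j}^\perp$ in place of $F_{a_i}$, but this substitution breaks down in the case $t'<t$. The space $F_{b_j}^\perp$ has dimension $n-b_j\ge n/2$, so $\mathrm{span}(\Lambda_{t'},F_{b_j}^\perp)$ is typically all of $V$ (or close to it) and yields no constraint on $\Lambda_k$. The paper's proof introduces a genuinely different auxiliary object: the \emph{isotropic} subspace
\[
U=\mathrm{span}\bigl(\Lambda_{t'},\,\Lambda_{t'}^\perp\cap F_{b_j}\bigr),
\]
and then computes both $\dim U$ and $\dim(\Lambda_k\cap U^\perp)$ via the bilinear form (using identities of the type $\dim(A\cap B^\perp)=\dim A-\dim B+\dim(A^\perp\cap B)$). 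These two quantities, when added, violate the sub-index $b'_{z_r}$ coming from Corollary~\ref{class of pullback in of}. This step uses the orthogonal structure in an essential way and is not a formal duality; you should supply it rather than claim symmetry. The special case $b_j=\tfrac{n}{2}-1$ when $n$ is even is handled separately, as you anticipated, by the involution swapping the two families of maximal isotropics, after which it reduces to the $a_s=\tfrac{n}{2}$ situation.
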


\begin{proof}
The proof of the case of $a_i$ is identical to the proof of Theorem \ref{rigid index in f}.

The case of $b_j$ can also be done similarly. If $n$ is even and $b_j=\frac{n}{2}-1$, then $F_{n/2-1}^\perp$ is the maximal isotropic subspace that contains $F_{n/2-1}$ and belongs to the different component than $F_{n/2}$. Since the two components of the space of maximal isotropic subspaces can be interchanged under an involution of quadrics, the case of $b_j=\frac{n}{2}-1$ can be treated in the same way as $a_i=\frac{n}{2}$.

Now assume $n$ is odd or $b_j<\frac{n}{2}-1$. Let $\Lambda_r$ be a general point in $\pi_r(X)$. Suppose, for a contradiction, that $$\dim(F_{b_j}^\perp\cap\Lambda_r)<\nu_{j,r}.$$ Let $W:= F_{b_j}^\perp\cap \Lambda_r$ and $w=\dim(W)$. Let $\sigma_{a';b'}$ be the class of $\pi_t(\pi_r^{-1}(\Lambda_r))$ in $OG(k_t,n)$, which can be obtained from Proposition \ref{class of pushforward in of} and Corollary \ref{class of pullback in of}.

If $r>t$, then $a'_{\mu_{j,t}}=\mu_{j,r}$. However, $\dim(W)<\mu_{j,r}$ and for all $\Lambda_t\in \pi_t(\pi_r^{-1}(\Lambda_r))$,
\begin{eqnarray}
\dim(W\cap \Lambda_t)&=&\dim(F_{b_j}^\perp\cap\Lambda_r)\cap\Lambda_t\nonumber\\
&=&\dim(F_{b_j}^\perp\cap\Lambda_t )\nonumber\\
&\geq&\mu_{j,t}\nonumber
\end{eqnarray}
We reach a contradiction.

Now we may assume $t=k$ and $r<t$. Set $$z_r:=\#\{e|e<j,\beta_e>r\}.$$
Observe that $d_k-d_r=\mu_{j,k}-\mu_{j,r}+z_r$. If $z_r=0$, then since $\Lambda_r$ is of codimension $d_k-d_r$ in $\Lambda_k$,
\begin{eqnarray}
\dim(F_{b_j}^\perp\cap\Lambda_k)\geq \mu_{j,k}\Rightarrow\dim(F_{b_j}^\perp\cap\Lambda_r)&\geq&\mu_{j,k}-(d_k-d_r)\nonumber\\
&=&\mu_{j,r}\nonumber
\end{eqnarray}
If $z_r\neq 0$, let $$l:=\max\{e|e<j,\beta_e>r\}.$$ Then by Corollary \ref{class of pullback in of}, $$b'_{z_r}=b_l+\mu_{l,r}-\#\{i|\alpha_i\leq r,a_i\leq b_l\}.$$
On the other hand, consider the following isotropic subspace $$U:=\text{span}(\Lambda_r,\Lambda_r^\perp\cap F_{b_j}).$$
It is easy to check that
\begin{eqnarray}
\dim(U)&=&b_j+\dim(\Lambda_r\cap F_{b_j}^\perp)-\dim(\Lambda_r\cap F_{b_j})\nonumber\\
&=&b_j+w-\dim(\Lambda_r\cap F_{b_j})\nonumber\\
&\geq&b_j+w-\#\{i|\alpha_i\leq r,a_i\leq b_j\}\nonumber
\end{eqnarray}
and
\begin{eqnarray}
\dim(\Lambda_k\cap U^\perp)&=&\dim(\Lambda_k\cap \Lambda_r^\perp\cap (\Lambda_r^\perp\cap F_{b_j})^\perp)\nonumber\\
&=&\dim(\Lambda_k\cap (\Lambda_r^\perp\cap F_{b_j})^\perp)\nonumber\\
&=&d_k-\dim(\Lambda_r^\perp\cap F_{b_j})+\dim(\Lambda_r^\perp\cap F_{b_j}\cap \Lambda_k^\perp)\nonumber\\
&=&d_k-(b_j-d_r+\dim(\Lambda_r\cap F_{b_j}^\perp))+(b_j-d_k+\dim(\Lambda_k\cap F_{b_j}^\perp))\nonumber\\
&=&d_r-w+\mu_{j,k}\nonumber\\
&\geq&d_r-\mu_{j,r}+\mu_{j,k}+1\nonumber\\
&=&d_k-z_r+1.\nonumber
\end{eqnarray}
Hence
\begin{eqnarray}
\dim(U)+\dim(\Lambda_k\cap U^\perp)&\geq&b_j+d_r+\mu_{j,k}-\#\{i|\alpha_i\leq r,a_i\leq b_j\}\nonumber\\
&=&b_j+d_k+\mu_{j,r}-z_r-\#\{i|\alpha_i\leq r,a_i\leq b_j\}\nonumber\\
&>&b_l+\mu_{l,r}-\#\{i|\alpha_i\leq r,a_i\leq b_l\}+d_k-z_r+1.\nonumber
\end{eqnarray}
We reach a contradiction. Therefore $\dim(F_{b_j}^\perp\cap\Lambda_r)\geq\mu_{j,r}$ for all $r\geq \beta_j$.
\end{proof}

If an essential sub-index $a_i$ or $b_j$ is rigid in the $t$-th component, then the conditions in Theorem \ref{rigidindexinoff} automatically hold. However, unlike the case of type A flag varieties in \S\ref{type A}, this condition is sufficient but not necessary. This can be seen from the following two observations in orthogonal Grassmannians.

First, if $a_i$ is rigid and $a_i=b_j$, then we should expect $b_j$ is also rigid, and vice versa.

\begin{Lem}\label{atob}
Let $\sigma_{a;b}$ be a Schubert class in $OG(k,n)$, and let $X$ be a representative of $\sigma_{a;b}$. Assume $a_i=b_j\neq \frac{n}{2}-1$ for some $1\leq i\leq s$ and $1\leq j\leq k-s$. Let $F_{a_i}$ be an isotropic subspace of dimension $a_i$. Then
$$\dim(F_{a_i}\cap \Lambda)\geq i,\forall \Lambda\in X$$
if and only if 
$$\dim(F_{a_i}^\perp\cap \Lambda)\geq k-j+1,\forall \Lambda\in X.$$
\end{Lem}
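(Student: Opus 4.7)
\emph{Proof proposal.} I will prove the forward direction $(\Rightarrow)$; the reverse direction is obtained by the same argument with the roles of $F_{a_i}$ and $F_{a_i}^\perp$ interchanged and the identity in Step 1 below reversed. Assume $\dim(F_{a_i}\cap\Lambda)\geq i$ for every $\Lambda\in X$. The goal is to show $\dim(F_{a_i}^\perp\cap\Lambda)\geq k-j+1$ for every $\Lambda\in X$.

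\emph{Step 1 (translation via rank-nullity).} For an isotropic $\Lambda$ consider the pairing map $\Lambda\to F_{a_i}^*,\ \lambda\mapsto q(\lambda,\cdot)|_{F_{a_i}}$. Its kernel is $\Lambda\cap F_{a_i}^\perp$ and its image has codimension $\dim(F_{a_i}\cap\Lambda^\perp)$ in $F_{a_i}^*$, so rank-nullity gives the identity
\[
\dim(\Lambda\cap F_{a_i}^\perp) \;=\; k - a_i + \dim(F_{a_i}\cap\Lambda^\perp).
\]
Since $\Lambda$ is isotropic, $\Lambda\subseteq\Lambda^\perp$, and hence $F_{a_i}\cap\Lambda\subseteq F_{a_i}\cap\Lambda^\perp$. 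The desired inequality is therefore equivalent to $\dim(F_{a_i}\cap\Lambda^\perp)\geq a_i-j+1$, which strengthens the a priori bound $\dim(F_{a_i}\cap\Lambda^\perp)\geq i$ by exactly the amount measured by the $b_j$-condition of $\sigma_{a;b}$. In particular the case $a_i\leq i+j-1$ is automatic, and only the range $a_i\geq i+j$ requires further argument.

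\emph{Step 2 (producing a Schubert representative adapted to $F_{a_i}$).} Complete $F_{a_i}$ to a full isotropic flag $G_\bullet$ with $G_{a_i}=F_{a_i}$ and the remaining $G_\nu$ chosen in general position; let $\Sigma:=\Sigma_{a;b}(G_\bullet)$. By construction $[\Sigma]=\sigma_{a;b}=[X]$, and because $a_i=b_j$ the relation $G_{b_j}^\perp=F_{a_i}^\perp$ forces $\Sigma\subseteq W_1\cap W_2$, where
\[
W_1:=\{\Lambda:\dim(F_{a_i}\cap\Lambda)\geq i\},\qquad W_2:=\{\Lambda:\dim(F_{a_i}^\perp\cap\Lambda)\geq k-j+1\}.
\]
Thus the class $\sigma_{a;b}$ admits at least one representative inside $W_1\cap W_2$, while by hypothesis $X\subseteq W_1$; the remaining task is to show $X\subseteq W_2$. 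Let $X_0:=X\cap W_2$; it is closed by semi-continuity, and we may assume $X$ is irreducible (else treat each component).

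\emph{Step 3 (class computation forcing $X_0=X$).} Consider the Poincar\'e-dual Schubert class $\sigma_{a^*;b^*}$ and represent it by a Schubert variety $\Sigma^\vee(H_\bullet)$ defined by a complete isotropic flag $H_\bullet$ in general position relative to $X$ and to $F_{a_i}$. Then $[X]\cdot[\Sigma^\vee(H_\bullet)]=1$, and so $X\cap\Sigma^\vee(H_\bullet)$ consists of a single reduced point $\Lambda^*$. Using the group action of $SO(V)$ (or $O(V)$) on $OG(k,n)$ through the stabilizer of $F_{a_i}$, vary $H_\bullet$ through its $SO(V)_{F_{a_i}}$-orbit: the corresponding intersection points $\Lambda^*$ sweep out a Zariski dense open subset of $X$. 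The key point is that each such $\Lambda^*$ lies in $W_2$, because the dual Schubert condition at the essential flag element of dimension $a_i$ (which is the element $F_{a_i}$ itself, fixed by the stabilizer) is compatible only with the ``large'' stratum $W_2$; this is where the equality $a_i=b_j$ is used, collapsing the two conditions to a single essential flag element. Therefore $X_0$ contains a Zariski dense open subset of $X$, and being closed it equals $X$.

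\emph{The main obstacle} is the precise identification in Step 3 of the intersection point $\Lambda^*$ as lying in $W_2$; equivalently, proving that the class $\sigma_{a;b}$, regarded inside the Chow group of $W_1$, is pushed forward from $W_1\cap W_2$. I expect the cleanest way to do this is to argue that any irreducible component of $X\setminus X_0$ has class strictly smaller than $\sigma_{a;b}$ on $W_1$, by a direct dimension count with the sub-quadric $Q_{n-a_i}^{a_i}$ whose singular locus is $F_{a_i}=F_{b_j}$ (the restriction-variety description from \S\ref{og}), giving the needed contradiction. The hypothesis $a_i\neq n/2-1$ is used precisely here: it ensures that $F_{a_i}^\perp$ is unambiguously defined and does not involve the switch between the two families of maximal isotropic subspaces, so that $W_2$ is the intended single Schubert-type subvariety rather than a union of components.
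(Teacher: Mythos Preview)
Your Step 1 is a clean and correct reduction, and Step 2 is fine as setup. The genuine gap is Step 3, which you yourself flag as ``the main obstacle'': you assert that the intersection points $\Lambda^*$ sweep out a dense subset of $X$ as $H_\bullet$ ranges over the $SO(V)_{F_{a_i}}$-orbit, and that each such $\Lambda^*$ lies in $W_2$. Neither assertion is justified. The stabilizer of $F_{a_i}$ is a proper parabolic-type subgroup, and there is no reason the resulting family of dual Schubert varieties is rich enough to cut out a generic point of an \emph{arbitrary} representative $X$; more seriously, the claim that $\Lambda^*\in W_2$ because the dual condition ``is compatible only with the large stratum $W_2$'' is circular---it is precisely the content of the lemma that the $W_1$-condition together with the class $\sigma_{a;b}$ forces the $W_2$-condition. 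Your suggested fallback (comparing classes of $X\setminus X_0$ on $W_1$) is not developed and would require exactly the kind of quadric/corank analysis that the paper carries out directly.

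The paper's argument is entirely different and avoids the group-action heuristic. It is a double induction. First, for the base case $j=1$ (so $a_i=b_1$), one inducts on $a_i-i$: the case $a_i=i$ is immediate from $F_i\subset\Lambda\Rightarrow\Lambda\subset F_i^\perp$ (and conversely via a point-slice class computation); for $a_i>i$ one intersects with a general hyperplane $H$, computes the class of $X_H$ in $OG(k,n-2)$ (where the relevant index drops by one), applies the inductive hypothesis to identify the swept-out quadric $Q_{X_H}$ and its singular locus, and then bounds the corank of $Q_X$ from both sides to force $Q_X=\mathbb{P}(F_{a_i}^\perp)\cap Q$. Second, for $j>1$ one inducts on $j$: choose a general point $p\in Q_X$, set $X_p=\{\Lambda\in X:p\in\mathbb{P}(\Lambda)\}$ and $F_{a_i}^p=\mathrm{span}(p,p^\perp\cap F_{a_i})$, compute $[X_p]=\sigma_{a';b'}$ with $a'_{i+1}=a_i=b'_{j-1}$, apply the inductive hypothesis to $X_p$ and $F_{a_i}^p$, and then pass back to $F_{a_i}$ using that $F_{a_i}^\perp\cap(F_{a_i}^p)^\perp$ has codimension one in $(F_{a_i}^p)^\perp$. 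Both directions of the equivalence are handled symmetrically within each inductive step. These slicing arguments are where the real work lies, and nothing in your Step 3 substitutes for them.
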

\begin{proof}
First assume $j=1$ and $a_i=b_1$. We use induction on $a_i-i$. If $a_i=i$, then $F_i\subset \Lambda$ implies $\Lambda\subset F_i^\perp$. Conversely, if $\Lambda\subset F_i^\perp$ for all $\Lambda\in X$, then for each $p\in \mathbb{P}(F_{i}^\perp)\cap Q$, define $X_p:=\{\Lambda\in X|p\in\mathbb{P}(\Lambda)\}$. By specializing $X$ to a Schubert variety, we obtain if $p\in F_i$, then $[X_p]=\sigma_{a;b}=[X]$, which implies $X_p=X$. Therefore $F_i\subset \Lambda$ for all $\Lambda\in X$. 
If $a_i-i>0$, consider the incidence correspondence
$$I:=\{(\Lambda,H)|\Lambda\in X, X\subset H,H\text{ is a hyperplane}\}.$$
Let $H$ be a general point in $\pi_2(I)$. Define $X_H:=\{\Lambda\in X|\Lambda\subset H\}$. By \cite[Lemma 6.2]{Coskun2014RigidityOS}, the projective linear spaces parametrized by $X$ sweep out a quadric $Q_X$ of dimension $n-a_i-2$. Set $r:=corank(Q_X)$ and $Q_H:=Q_X\cap \mathbb{P}(H)$. Then $Q_H$ is a quadric of dimension $n-a_i-3$ with corank $r-1$ if $r\geq 1$ and is smooth if $r=0$. Notice that $Q_H$ is contained in a smooth quadric of dimension $n-4$ and $\Lambda\subset Q_H$ for all $\Lambda \in X_H$. Therefore $X_H$ is a subvariety of $OG(k,n-2)$. The class of $X_H$ in $OG(k,n-2)$ is given by $\sigma_{a',b'}$, where $a'_i=a_i-1$ and $b'_1=b_1-1$. Set $F_H=F_{a_i}\cap H$. If $\dim(F_{a_i}\cap \Lambda)\geq i$ for all $\Lambda\in X$, then $\dim(F_{H}\cap \Lambda)\geq i$ for all $\Lambda\in X_H$. By induction on $a_i-i$, the projective linear spaces parametrizes by $X_H$ sweep out the quadric $Q_H$ and $F_H$ is the singular locus of $Q_H$. This implies $r=\dim(F_H)+1=a_i$ and $Q_X=\mathbb{P}(F_{a_i}^\perp)\cap Q$. Conversely, if $Q_X=\mathbb{P}(F_{a_i}^\perp)\cap Q$. Then $Q_H$ has corank $a_i-1$ and the singular locus $F_{a_i}\cap H=F_H$. By induction, $\dim(F_{H}\cap \Lambda)\geq i$ for all $\Lambda\in X_H$. As varying $H$, $X_H$ cover $X$ and therefore $\dim(F_{a_i}\cap \Lambda)\geq i$ for all $\Lambda\in X$.

Now assume $j>1$. We use induction on $j$. Let $p$ be a general point in $Q_X$. Then the class of $X_p$ is given by $\sigma_{a';b'}$ where $a'_i=a_i+1$ if $a_i\leq b_1$, $a'_i=a_i$ if $a_i>b_1$, and $b'_t=b_{t+1}$ for $1\leq t\leq k-s-1$. Since $a_i=b_j>b_1$, $a'_i=a_i=b'_{j-1}$. Set $F_{a_i}^p:=\text{span}(p,p^\perp\cap F_{a_i})$. $\dim(F_{a_i}^p)=a_i$. If $\dim(F_{a_i}\cap \Lambda)\geq i, \forall \Lambda\in X$, then for every $\Lambda\in X_p$, 
\begin{eqnarray}
\dim(\Lambda\cap F_{a_i}^p)&=&\dim(\Lambda\cap\text{span}(p,p^\perp\cap F_{a_i}))\nonumber\\
&=&1+\dim(\Lambda\cap F_{a_i})\nonumber\\
&\geq&i+1\nonumber
\end{eqnarray}
By induction, $\dim(\Lambda \cap (F_{a_i}^p)^\perp)\geq k-(j-1)+1=k-j+2$. Notice that
$$F_{a_i}^\perp\cap (F_{a_i}^p)^\perp=F_{a_i}^\perp\cap p^\perp\cap (p^\perp\cap F_{a_i})^\perp=F_{a_i}^\perp\cap p^\perp,$$
which is a codimension one subspace in $(F_{a_i}^p)^\perp$. Therefore 
\begin{eqnarray}
\dim(\Lambda\cap F_{a_i}^\perp)&\geq&\dim(\Lambda\cap (F_{a_i}^p)^\perp)-1\nonumber\\
 &\geq&n-j+1.\nonumber
\end{eqnarray}
Conversely, if $\dim(F_{a_i}^\perp\cap \Lambda)\geq k-j+1,\forall \Lambda\in X$, then 
\begin{eqnarray}
\dim(\Lambda\cap (F_{a_i}^p)^\perp)&=&\dim(\Lambda\cap p^\perp\cap(p^\perp\cap F_{a_i})^\perp)\nonumber\\
&=&\dim(\Lambda\cap (p^\perp\cap F_{a_i})^\perp)\nonumber\\
&\geq&\dim(\Lambda\cap p)+\dim(\Lambda\cap F_{a_i}^\perp)\nonumber\\
&\geq&k-j+2.\nonumber
\end{eqnarray}
By induction, $\dim(F_{a_i}^p\cap \Lambda)\geq i+1,\forall \Lambda\in X_p$. Since $$F_{a_i}\cap F_{a_i}^p=F_{a_i}\cap\text{span}(p,p^\perp\cap F_{a_i})=p^\perp\cap F_{a_i},$$
$\dim(F_{a_i}\cap \Lambda)\geq \dim(F_{a_i}^p\cap \Lambda)-1\geq i,\forall \Lambda\in X_p$. As varying $p$, we conclude that $\dim(F_{a_i}\cap \Lambda)\geq i,\forall \Lambda\in X$.

\end{proof}

Second, if $b_j$ is rigid, then all essential $b_{j'}$, $j'<j$, are also rigid.

\begin{Lem}\label{rankbound}
Let $X$ be a subvariety representing the Schubert class $\sigma_{a;b}$ in $OG(k,n)$. Assume there exists an isotropic subspace $F_{b_{j}}$, $b_j\neq\frac{n}{2}-1$, such that 
$$\dim(\Lambda\cap F_{b_j}^\perp)\geq k-j+1,\forall \Lambda\in X.$$
Then for every essential $b_{j'}<b_j$, there exists an isotropic subspace $F_{b_{j'}}\subset F_{b_j}$ such that
$$\dim(\Lambda\cap F_{b_{j'}}^\perp)\geq k-j'+1,\forall \Lambda\in X.$$
\end{Lem}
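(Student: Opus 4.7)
The proof proceeds by induction on the number of essential sub-indices of $b$ strictly between $b_{j'}$ and $b_j$, reducing to the case where $b_{j'}$ is the essential sub-index immediately preceding $b_j$; once this single-step statement is established, the nested chain $F_{b_{j_1}}\subset\cdots\subset F_{b_{j_r}}\subset F_{b_j}$ is built inductively by applying it repeatedly, using the fact that, once $F_{b_{j'}}\subset F_{b_j}$ is produced, the hypothesis is satisfied for $b_{j'}$ in place of $b_j$.

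For the base case, the hypothesis confines $X$ to the rank locus $\{\Lambda:\dim(\Lambda\cap F_{b_j}^\perp)\geq k-j+1\}$. I would study the rational map
\[
\phi\colon X\dashrightarrow G(j-1,W), \qquad \Lambda\mapsto(\Lambda+F_{b_j}^\perp)/F_{b_j}^\perp,
\]
where $W=V/F_{b_j}^\perp$ has dimension $b_j$, defined on the open locus where $\dim(\Lambda\cap F_{b_j}^\perp)=k-j+1$. Let $\bar X$ be the closure of its image. The bilinear form $q$ induces a perfect pairing $W\times F_{b_j}\to \mathbb{C}$; via this pairing, $(b_j-b_{j'})$-dimensional subspaces of $W$ correspond to $b_{j'}$-dimensional subspaces of $F_{b_j}$ by taking annihilators, and the desired property of $F_{b_{j'}}$ translates to the existence of a subspace $M\subset W$ of dimension $b_j-b_{j'}$ with $\dim(\bar\Lambda\cap M)\geq j-j'$ for every $\bar\Lambda\in\bar X$.

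The class $[\bar X]$ in $G(j-1,b_j)$ is computed directly from $\sigma_{a;b}$: because each $F_{a_i}$ lies inside $F_{b_j}^\perp$, the $a$-conditions descend trivially, while the surviving $b$-conditions with $b_l<b_j$ give the Schubert index $c=(c_1,\ldots,c_{j-1})$ with $c_t=b_j-b_{j-t}$. Essentiality of $b_{j'}$ in the original class translates to essentiality of the sub-index $c_{j-j'}$ in $G(j-1,b_j)$. The existence of $M$ is then obtained by combining Theorem \ref{rigid index in g} with the additional constraints that $\bar X$ inherits from the isotropic structure of $X$; dualizing $M$ via $q$ produces $F_{b_{j'}}\subset F_{b_j}$, and a direct dimension count verifies $\dim(\Lambda\cap F_{b_{j'}}^\perp)\geq k-j'+1$.

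The main obstacle is precisely the last step: for some index configurations, $c_{j-j'}$ is not rigid by the Grassmannian classification alone (for instance, the Schubert class $\sigma_{2,4}$ in $G(2,4)$ admits smooth hyperplane sections as non-Schubert representatives). One must therefore exploit the fact that $\bar X$ is not an arbitrary representative of $[\bar X]$ but arises from a family of isotropic $k$-planes in $OG(k,n)$, and combine this with the constraints imposed by the $a$-indices and by the essentiality of $b_{j'}$ in $\sigma_{a;b}$ to rule out such non-Schubert representatives. Making this reconciliation between Grassmannian rigidity and the full orthogonal structure precise is the delicate heart of the argument, paralleling the sweep-out techniques from the proofs of Theorem \ref{rigid index og} and Lemma \ref{atob}.
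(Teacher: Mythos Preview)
Your proposal has a genuine gap, which you yourself identify: after projecting to $G(j-1,b_j)$ via $\Lambda\mapsto(\Lambda+F_{b_j}^\perp)/F_{b_j}^\perp$, you need the sub-index $c_{j-j'}=b_j-b_{j'}$ to be rigid for the image class, and the type-$A$ classification (Theorem~\ref{rigid index in g}) does not give this in general. Your final paragraph concedes that bridging this gap is ``the delicate heart of the argument,'' but you do not supply the bridge; invoking unspecified ``additional constraints that $\bar X$ inherits from the isotropic structure'' is not a proof. There is also a secondary issue: the claim that $[\bar X]$ equals the single Schubert class $\sigma_c$ is not automatic, since $\phi$ is only a rational map on the rank-jump locus and depends on the particular $F_{b_j}$, so one must argue (e.g.\ by the method of Proposition~\ref{pushforward of class}) that the class of the image is independent of the representative.

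The paper's argument avoids the type-$A$ reduction entirely and uses orthogonal geometry throughout. For the base case $j'=1$ it considers the quadric $Q_X$ swept out by the $\mathbb{P}^{k-1}$'s in $X$, shows $Q_{n-b_j}^{b_j}=\mathbb{P}(F_{b_j}^\perp)\cap Q\subset Q_X$ by an intersection-product argument, and then applies the elementary corank inequality for nested quadrics: since $Q_X\subset Q$ has dimension $n-b_1-2$ and contains a sub-quadric of corank $b_j$, its corank is forced to equal $b_1$, producing $F_{b_1}$ directly. For $j'\geq2$ the induction is on $j'$ (not on the number of essential steps): one slices $X$ by a general point $p\in Q_X$, replaces $F_{b_j}$ by $\text{span}(p,p^\perp\cap F_{b_j})$, applies the inductive hypothesis to get a subspace $\bar F_{b_{j'}}^p$, and then assembles the family $\{\bar F_{b_{j'}}^p\}_p$ into a subvariety $Z\subset OG(b_{j'},n)$ whose class one computes to be $\sigma_{1,\ldots,b_1,b_1+2,\ldots,b_{j'};\,b_1}$. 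The sub-index $b_{j'}$ is rigid for this specific class by the orthogonal classification, yielding $F_{b_{j'}}$. The missing ingredient in your outline is precisely this corank-bound mechanism, which is what replaces the unavailable type-$A$ rigidity.
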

\begin{proof}
We use induction on $j'$. If $j'=1$, by \cite[Lemma 6.2]{Coskun2014RigidityOS}, the projective linear spaces $\mathbb{P}^{n-1}$ parametrized by $X$ sweeps out a quadric $Q_X$ of dimension $n-b_1-2$. On the other hand, the intersection of $\mathbb{P}(F_{b_j}^\perp)$ and $Q$ is a sub-quadric $Q_{n-b_j}^{b_j}$, whose dimension equals $n-b_j-2$ and corank equals $b_j$. We claim that $Q_{n-b_j}^{b_j}$ is contained in $Q_X$ and therefore $Q_X$ has corank at least $b_1$. Indeed, let $G_{b_j+1}$ be a general isotropic linear space of dimension $b_j+1$. Define the Schubert variety
$$\Sigma:=\{\Lambda\in OG(k,n)|\dim(\Lambda\cap G_{b_j+1})\geq j\}.$$
Specializing $X$ to a Schubert variety, it is easy to check $\sigma_{a;b}\cdot [\Sigma]\neq 0$. Therefore $X\cap \Sigma\neq \emptyset.$ The projective linear space $\mathbb{P}(G_{b_j+1})$ meets $\mathbb{P}(F_{b_j})^\perp$ in a projective point $p\in Q_{n-b_j}^{b_j}$, which must be contained in every $\Lambda\in X\cap\Sigma$. Since every general point in $Q_{n-b_j}^{b_j}$ can occur in this way, the quadric $Q_{n-b_j}^{b_j}$ is contained in $Q_X$. By the corank bound on the sub-quadrics, 
\begin{eqnarray}
\text{corank}(Q_X)&\geq& \text{corank}(Q_{n-b_j}^{b_j})-\left[\dim(Q_X)-\dim(Q_{n-b_j}^{b_j})\right]\nonumber\\
&=&b_j-[(n-b_1-2)-(n-b_j-2)]=b_1.\nonumber
\end{eqnarray}
Since $Q_X$ is contained in the smooth quadric hypersurface $Q$ in $\mathbb{P}^{n-1}$,
\begin{eqnarray}
\text{corank}(Q_X)&\leq& \text{corank}(Q)+\left[\dim(Q-\dim(Q_X)\right]\nonumber\\
&=&0+[(n-2)-(n-b_1-2)]=b_1.\nonumber
\end{eqnarray}
We conclude that corank$(Q_X)=b_1$. Let $\mathbb{P}(F_{b_1})$ be the singular locus of $Q_X$, then for every $\Lambda\in X$, $\mathbb{P}(\Lambda)\subset Q_X=\mathbb{P}(F_{b_1}^\perp)\cap Q$, and therefore $\Lambda\subset F_{b_1}^\perp$ as desired. It is clear that such $F_{b_1}$ is unique and is contained in $F_{b_j}$.

Now assume $j'\geq 2$. Take a general linear space $G_{b_1+1}$ of dimension $b_1+1$. The projective linear space $\mathbb{P}(G_{b_1+1})$ intersect $\mathbb{P}(F_{b_1}^\perp)$ in a projective point $\mathbb{P}(P_1)=p\in Q_X$. Define
$$X_p:=\{\Lambda\in X|p\in\mathbb{P}(\Lambda)\}.$$
$X_p$ is the intersection of $X$ with the Schubert variety $\Sigma:=\{\Lambda\in OG(k,n)|\dim(\Lambda\cap G_{b_1+1})\geq 1\}$. Specializing $X$ to a Schubert variety, we can see that the class of $X_p$ is given by $\sigma_{\bar{a};\bar{b}}$, where $\bar{a}_1=1$, $\bar{a}_i=a_{i-1}+1$ for $i\geq 2$ and $a_{i-1}\leq b_1$, $\bar{a}_i=a_{i-1}$ for $i\geq 2$ and $a_{i-1}>b_1$, $\bar{b}_t=b_{t+1}$ for $1\leq t\leq k-s-1$. Set $\bar{F}_{b_j}=$span$(P_1,P_1^\perp\cap F_{b_j})$. For every $\Lambda\in X_p$, $\Lambda\subset P_1^\perp$, and therefore
\begin{eqnarray}
\Lambda\cap F_{b_j}^\perp&\subset& (\Lambda\cap P_1^\perp)\cap F_{b_j}^\perp\nonumber\\
&\subset&\Lambda\cap P_1^\perp\cap (P_1^\perp\cap F_{b_j})^\perp\nonumber\\
&\subset&\Lambda\cap \bar{F}_{b_j}^\perp.\nonumber
\end{eqnarray}
Notice that for a general $p\in Q_X$, $p\notin\mathbb{P}(F_{b_j})^\perp$ if $j>1$. Therefore for every $\Lambda\in X_p$,
$$\dim(\Lambda\cap \bar{F}_{b_j}^\perp)\geq 1+\dim(\Lambda\cap F_{b_j}^\perp)\geq k-j+2.$$
By induction, there exists a unique isotropic subspace $\bar{F}^p_{b_{j'}}\subset \bar{F}_{b_j}$ of dimension $b_{j'}=\bar{b}_{j'-1}$ such that 
$$\dim(\Lambda\cap (\bar{F}^p_{b_{j'}})^\perp)\geq k-j'+2,\forall \Lambda\in X_p.$$
Varying $p\in Q_X$, we get a set $Z^\circ$ of isotropic subspaces of dimension $b_{j'}$. Let $Z$ be the Zariski closure of $Z^\circ$ in $OG(b_{j'},n)$. We claim that the class of $Z$ is given by $\sigma_{1,...,b_1,b_1+2,...,b_{j'};b_1}$. To see this, let $I$ be the Zariski closure of the locus of all the possible pairs $(p,\bar{F}_{b_{j'}}^p).$ Let $\pi_1:I\rightarrow Q_X$ and $\pi_2:I\rightarrow OG(b_{j'},n)$ be the two projections. Then $\dim(\pi_1(I))=\dim(Q_X)=n-b_1-2$ and a general fiber of $\pi_1$ has dimension $0$. Therefore $\dim(I)=n-b_1-2$. For every $q\in \mathbb{P}(\bar{F}^p_{b_{j'}})\subset \mathbb{P}((\bar{F}^p_{b_j})^\perp)$ and $r \in \bar{F}_{b_{j'}}^\perp$, there exists $\Lambda\in X_p$ that contains the projective line $\overline{pq}$. Therefore a general fiber of $\pi_2$ has dimension $b_{j'}-1$. Thus
$$\dim(Z)=\dim(\pi_2(I))=n-b_1-b_{j'}-1.$$
Now we intersect $Z$ with the Schubert varieties of complementary dimension in $OG(b_{j'},n)$. Let $H_{b_1+1}$ be a general isotropic subspace of dimension $b_1+1$. Define the Schubert variety $$\Sigma=\Sigma_{b_1+1;b_{j'}-1,...,b_1+1,b_1-1,...,0}:=\{W\in OG(b_{j'},n)|\dim(W\cap H_{b_1+1})\geq 1\}.$$
$\mathbb{P}(H_{b_1+1})$ meet $Q_X$ in a projective point $p\in Q_X$, which gives a unique $\bar{F}_{b_{j'}}^p\in Z\cap \Sigma$. On the other hand, suppose for a contradiction that $[Z]$ has another summand $\sigma_{\mu;\nu}$ such that $\mu_{b_{j'}-1}\geq b_{j'}+1$. Let $H_{b_{j'}}$ be a general isotropic subspace of dimension $b_{j'}$. Let $\Sigma'$ be the Schubert variety defined by
$$\Sigma':=\{W\in OG(b_{j'},n)|\dim(W\cap H_{b_{j'}}^\perp)\geq 2\}.$$ 
By assumption, $[\Sigma']\cdot \sigma_{\mu;\nu}\neq 0$ and therefore $\Sigma'\cap Z\neq \emptyset$. Hence there exists $p\in Q_X$ such that $\dim(\bar{F}_{b_{j'}}^p\cap H_{b_{j'}}^\perp)\geq 2$. Since $\bar{F}_{b_{j'}}^p=\bar{F}_{b_{j'}}^q$ for $q\in \bar{F}_{b_{j'}}^p$, we can assume $p\in H_{b_{j'}}^\perp$. Let $p'\neq p$ be another point in $ \bar{F}_{b_{j'}}^p\cap H_{b_{j'}}^\perp$. Then there exists $\Lambda\in X_p$ such that $\Lambda$ contains both $p$ and $q$, and hence $\dim(\Lambda\cap H_{b_{j'}}^\perp)\geq 2$. However, it contradicts the fact that $\sigma_{a;b}\cdot[\Sigma']=0$. We conclude that $[Z]=\sigma_{1,...,b_1,b_1+2,...,b_{j'};b_1}$.
 
By \cite[Corollary 5.9]{YL}, the sub-index $b_{j'}$ is rigid with respect to the Schubert class $\sigma_{1,...,b_1,b_1+2,...,b_{j'};b_1}$. Therefore there exists an isotropic subspace $F_{b_{j'}}$ such that $\dim(F_{b_{j'}}\cap \bar{F}_{b_{j'}})\geq b_{j'}-1$ for all $\bar{F}_{b_{j'}}\in Z$. Now it is easy to check for every $\Lambda\in X$, $\Lambda\in X_p$ for some $p\in Q_X$, and hence
\begin{eqnarray}
\dim(\Lambda\cap F_{b_{j'}}^\perp)&=&\dim(\Lambda)-\dim(F_{b_{j'}})+\dim(\Lambda^\perp\cap F_{b_{j'}})\nonumber\\
&\geq&\dim(\Lambda)-\dim(F_{b_{j'}})+\dim(\Lambda^\perp\cap \bar{F}_{b_{j'}}^p)-1\nonumber\\
&=&\dim(\Lambda\cap (\bar{F}_{b_{j'}}^p)^\perp)-1\nonumber\\
&\geq &k-j'+1.\nonumber
\end{eqnarray}
To show that $F_{b_{j'}}$ is contained in $F_{b_j}$, suppose for a contradiction that there exists a point $q\in\mathbb{P}(F_{b_j'})$ that is not contained in $\mathbb{P}(F_{b_j})$. Applying the previous argument to $b_{j'}$ and $b_1$, we obtain $F_{b_1}\subset F_{b_{j'}}$. Let $M$ be the span of $q$ and $F_{b_1}$. Since $b_{j'}$ is essential, $M\subsetneq F_{b_{j'}}$. Take a general $p$ in $M^\perp$ that does not contained in the span of $q$ and $F_{b_j}$. By induction, we have $$\text{span}(p,p^\perp\cap F_{b_{j'}})\subset \text{span}(p,p^\perp\cap F_{b_j}).$$
On the other hand, by the construction, the point $q$ is contained in the left side, but not contained in the right side. We reach a contradiction. We conclude by induction that the statement is true for all essential $1\leq j'<j$.
\end{proof}

\begin{Rem}\label{remark4.17}
The case when $n$ is even and $b_j=\frac{n}{2}-1$ can be done similarly due to the observation that if a sub-quadric of $Q$ contains a maximal isotropic subspace, then it must have the maximal possible corank.
\end{Rem}
\begin{Ex}
Consider the Schubert class $\sigma_{3^2;3^1,1^1,0^2}$ in $OF(2,4;11)$. Under the first projection, the sub-index $b_2=1$ is not rigid with respect to the Schubert class $\sigma_{;3,1}$ in $OG(2,11)$. Under the second projection, $b_2=1$ is not essential. Nevertheless, we claim that the sub-index $b_2$ is rigid with respect to the class $\sigma_{3^2;3^1,1^1,0^2}$. 

Let $X$ be a representative of $\sigma_{3^2;3^1,1^1,0^2}$ in $OF(2,4;11)$. By Theorem \ref{rigid in og}, the Schubert class $\sigma_{3;3,1,0}$ is rigid. Therefore there exists a unique isotropic subspace $F_3$ of dimension $3$ such that for all $(\Lambda_1,\Lambda_2)\in X$, $\dim(F_3\cap \Lambda_2)\geq 1$ and $\dim(F_3^\perp\cap \Lambda_2)\geq 2$. By Theorem \ref{rigidindexinoff}, $\dim(F_3^\perp\cap \Lambda_1)\geq 1$. And by Lemma \ref{rankbound}, there exists an isotropic subspace $F_1$ of dimension $1$ such that $\dim(F_1^\perp\cap \Lambda_1)\geq 2$. 
\end{Ex}

These observations lead to the following definition:
\begin{Def}
Let $\sigma_{a^\alpha;b^\beta}$ be a Schubert class in $OF(d_1,...,d_k;n)$. Set
 $$E:=\{a_i|a_i \text{ essential}\}\cup\{b_j|b_j\text{ essential}\}.$$ We define a relation `$\Rightarrow$' between two sub-indices: $b_{j_1}\Rightarrow b_{j_2}$ if $j_2<j_1\neq\frac{n}{2}-1$ and $b_{j_2}$ is essential in $(\pi_t)_*(\sigma_{a^\alpha;b^\beta})$ for some $t\geq \min(\beta_{j_1},\beta_{j_2})$; $a_i\Rightarrow b_j$ and also $b_j\Rightarrow a_i$ if $a_i=b_j\neq \frac{n}{2}-1$. We then extend this relation to $E$ by transitivity and reflexivity (which we also denote by `$\Rightarrow$').
\end{Def}

We obtain the following characterization of rigid sub-indices:
\begin{Cor}\label{rigidindexinof}
Let $\sigma_{a^\alpha;b^\beta}$ be a Schubert class in $OF(d_1,...,d_k;n)$. An essential sub-index $a_i$ or $b_j$ is rigid if and only if there exists an element $e\in E$ such that $e\Rightarrow a_i$ (or $e\Rightarrow b_j$ resp.) and the sub-index $e$ is rigid with respect to the class $(\pi_t)_*(\sigma_{a^\alpha;b^\beta})$ for some $t$.
\end{Cor}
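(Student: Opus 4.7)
The plan is to follow the template of Theorem \ref{rigid in f}: use the canonical projections $\pi_t$ to reduce the rigidity of a flag sub-index to Grassmannian rigidity via Proposition \ref{class of pushforward in of}, then lift results back to the flag variety via Theorem \ref{rigidindexinoff}. The relation `$\Rightarrow$' encodes precisely which Grassmannian rigidities can be chained (through Lemmas \ref{atob} and \ref{rankbound}) to rigidify a given sub-index in $OF(d_1,\ldots,d_k;n)$.

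For sufficiency, suppose $e \in E$ with $e \Rightarrow a_i$ and $e$ rigid in $(\pi_{t_0})_*(\sigma_{a^\alpha;b^\beta})$ (the $b_j$ case is symmetric). Given any representative $X$ of $\sigma_{a^\alpha;b^\beta}$, $\pi_{t_0}(X)$ represents the pushforward class by Proposition \ref{class of pushforward in of}, so rigidity supplies an isotropic subspace $F_e$ imposing the correct rank condition on $\pi_{t_0}(X)$; Theorem \ref{rigidindexinoff} spreads this condition to every component $\Lambda_t$ with $t$ at least the upper index of $e$. I then decompose the relation $e \Rightarrow a_i$ into single-step implications and propagate rigidity stepwise: each step $b_{j_r} \Rightarrow b_{j_{r+1}}$ (with $j_{r+1}<j_r\neq\tfrac{n}{2}-1$ and $b_{j_{r+1}}$ essential in some $(\pi_{t_{r+1}})_*$) is resolved by applying Lemma \ref{rankbound} to $\pi_{t_{r+1}}(X)$ and then lifting via Theorem \ref{rigidindexinoff}; each equality step $a_{i_r} = b_{j_r}$ is handled by Lemma \ref{atob} combined with Theorem \ref{rigidindexinoff} (the exclusion of $\tfrac{n}{2}-1$ reflects Remark \ref{remark4.17}). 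Iterating yields the subspace $F_{a_i}$ required to witness rigidity of $a_i$.

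For necessity, assume no such $e$ exists. In particular $a_i$ is not rigid in any $(\pi_t)_*(\sigma_{a^\alpha;b^\beta})$. Fix some $t \geq \alpha_i$ and apply Remark \ref{exinof} to obtain a non-Schubert subvariety $Y \subset OG(d_t,n)$ representing $(\pi_t)_*(\sigma_{a^\alpha;b^\beta})$ on which no $a_i$-dimensional isotropic subspace achieves the $a_i$-rank condition. Using a generic isotropic partial flag compatible with the sub-indices of $\sigma_{a^\alpha;b^\beta}$ whose upper indices exceed $t$, define
$$X := \overline{\{(\Lambda_1,\ldots,\Lambda_k)\in OF(d_1,\ldots,d_k;n) : \Lambda_t \in Y,\ \text{and all other Schubert conditions from the flag hold}\}}.$$
Corollary \ref{class of pullback in of} describes the fibers of $\pi_t|_X$ as irreducible Schubert-type varieties of the correct dimension, so $X$ is irreducible of class $\sigma_{a^\alpha;b^\beta}$; and since $\pi_t(X) = Y$ admits no $a_i$-rigid subspace, $X$ witnesses non-rigidity of $a_i$.

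The main obstacle lies in the necessity argument: one must argue that no \emph{indirect} route through a different projection $s \neq t$ can force an $a_i$-dimensional rigid subspace on $X$. The definition of `$\Rightarrow$' is calibrated exactly to enumerate all such possible chains of inferences via Lemmas \ref{atob} and \ref{rankbound}, so the hypothesis that no $\Rightarrow$-ancestor of $a_i$ is rigid in any projection precisely rules out each such inference. Verifying this rigorously requires a careful inspection of which Grassmannian-rigidity transfers connect sub-indices across projections, matching them against the clauses in the definition of `$\Rightarrow$'; this case analysis is the technical core of the argument.
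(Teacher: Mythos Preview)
Your sufficiency argument is correct and matches the paper's: chain Theorem \ref{rigidindexinoff}, Lemma \ref{atob}, and Lemma \ref{rankbound} along the steps of the relation $e\Rightarrow a_i$ (or $e\Rightarrow b_j$).

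The necessity direction, however, has a genuine gap. You propose a single construction: pick some $t\geq\alpha_i$, take a non-Schubert $Y\subset OG(d_t,n)$ from Remark \ref{exinof}, and lift it to $OF$ by imposing ``all other Schubert conditions from the flag''. This works only in the case $a_i\neq b_j$ for all $j$, where the Grassmannian counterexample $Y$ satisfies the rank conditions for \emph{every} sub-index except $a_i$ with respect to an honest isotropic flag, so the lift is well-defined (and even here the choice of $t$ matters: the paper takes $t=k$ if $a_{i+1}\neq a_i+1$ and $t=\alpha_{i+1}-1$ otherwise, to ensure $a_i$ is essential in $(\pi_t)_*$). When $a_i=b_j$ for some $j$, the counterexample in Remark \ref{exinof} is a restriction variety built from a sequence in which the entire tail of the $a$-sequence (all $a_{i'}$ with $a_{i'}\geq b_{j_0}$) is replaced by a maximal admissible sequence, and the coranks of the quadrics $Q_{n-b_{j'}}$ for $j'\geq j$ are lowered by one. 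Thus $Y$ does \emph{not} satisfy the original rank conditions for those other sub-indices with respect to any isotropic flag, and your phrase ``all other Schubert conditions from the flag hold'' has no meaning: there is no flag to impose them from. The paper accordingly does not lift from $OG(d_t,n)$ in this case but instead writes down a restriction-type sequence with upper indices directly in $OF(d_1,\ldots,d_k;n)$; the analogous construction for a non-rigid $b_j$ is more elaborate still, requiring one to replace certain quadrics $Q_{n-b_{j'}}^{b_{j'}-1}$ by quadrics $\bar{Q}_{n-b_{j'}}^{b_{j'}}$ precisely when $j'\not\Rightarrow j$.

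Your final paragraph correctly identifies that a case analysis is the technical core, but the proposal does not carry it out, and the uniform lift you describe cannot serve as a placeholder for it.
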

\begin{proof}
If all the conditions hold, then $a_i$ or $b_j$ is rigid following from Theorem \ref{rigidindexinoff}, Lemma \ref{atob} and Lemma \ref{rankbound}. If one of the conditions in the statement fails, then we construct the counter-examples that are not Schubert varieties.

First assume $a_i$ is not rigid under all the canonical projections. If $a_i\neq b_j$ for all $j$, then set $t=k$ if $a_{i+1}\neq a_i+1$ and $t=\alpha_{i+1}-1$ if $a_{i+1}=a_i+1$. Since $a_i$ is essential, $\alpha_i\leq t$. By assumption, $a_i$ is not rigid with respect to the class $(\pi_t)_*(\sigma_{a^\alpha;b^\beta})$. Using the construction in Remark \ref{exinof}, there exists a non-Schubert subvariety $Y$ in $OG(d_t,n)$ with a partial flag $$F_{a_1}\subset ...\subset \hat{F}_{a_i}...\subset F_{a_s}\subset F_{b_{k-s}}^\perp\subset...\subset F_{b_1}^\perp$$ such that $[Y]=(\pi_t)_*(\sigma_{a^\alpha;b^\beta})$ and $\dim(F_{a_{i'}}\cap \Lambda)\geq \mu_{i',t}$, $\forall i'\neq i$, $\alpha_{i'}\leq t$ and $\dim(F_{b_j}^\perp\cap \Lambda)\geq \nu_{j,t}$, $\forall \beta_j\leq t$. Define
$$X:=\{(\Lambda_1,...,\Lambda_k)\in OF(d_1,...,d_k)|\Lambda_t\in Y,\dim(F_{a_i'}\cap\Lambda_r)\geq \mu_{i',r}, \dim(F_{b_j}^\perp\cap \Lambda_r)\geq \nu_{j,r}, \forall i'\neq i,r\neq t.\}$$
Then $[X]=\sigma_{a^\alpha;b^\beta}$ while $X$ is not a Schubert variety.

If $a_i=b_j$ and there does not exist $e\in J$ such that $e\Rightarrow a_i$ and $e$ is rigid with respect to the class $(\pi_t)_*(\sigma_{a^\alpha;b^\beta})$ for some $t$, set $J:=\{j'|b_{j'}=a_{i'}\text{ for some }i'\geq i\}$. Take a sequence of isotropic subspaces and sub-quadrics
$$F_{a'_1}^{\alpha_1}\subset...\subset F_{a'_s}^{\alpha_s}\subset (Q_{n-b_{d_k-s}}^{b_{d_k-s}-1})^{\beta_{d_k-s}}\subset...\subset (Q_{n-b_j}^{b_{j}-1})^{\beta_j}\subset (Q_{n-b_{j-1}}^{{b_{j-1}}})^{\beta_{j-1}}\subset...\subset (Q_{n-b_1}^{b_1})^{\beta_1},$$
where $a'=(a'_1,...,a'_s)$ is obtained from $a=(a_1,...,a_s)$ by changing all the $a_{i'}$, $i'\geq i,$ to the sequence that consists of all the numbers from $b_{j}+1$ to $\left[\frac{n}{2}\right]$ excluding the numbers equal to $b_{j'}$, $j'\in J$). Let $X$ be the restriction variety defined by the above sequence. Then $X$ is a non-Schubert representative of $\sigma_{a^\alpha;b^\beta}$.

Now assume there does not exist $e\in J$ such that $e\Rightarrow b_j$ and $e$ is rigid with respect to the class $(\pi_t)_*(\sigma_{a^\alpha;b^\beta})$ for some $t$. Set $t=k$ if $b_{j-1}\neq b_j-1$ and $t=\beta_{j-1}-1$ if $b_{j-1}=b_j-1$. Since $b_j$ is essential by assumption, $\beta_j\leq t$. Set $j_0:=\min\{J\}=\min\{j'|j'\geq j, b_{j'}=a_i\text{ for some }i\}$. Take a sequence 
$$F_{a'_1}^{\alpha_1}\subset...\subset F_{a'_s}^{\alpha_s}\subset (Q_{n-b_{d_k-s}}^{b_{d_k-s}-1})^{\beta_{d_k-s}}\subset...\subset (Q_{n-b_j}^{b_{j}-1})^{\beta_j}\subset (Q_{n-b_{j-1}}^{{b_{j-1}}})^{\beta_{j-1}}\subset...\subset (Q_{n-b_1}^{b_1})^{\beta_1},$$
where $a'=(a'_1,...,a'_s)$ is obtained from $a=(a_1,...,a_s)$ by changing all the $a_{i'}\geq b_{j_0}$ to the sequence that consists of all the numbers from $b_{j_0}+1$ to $\left[\frac{n}{2}\right]$ excluding the numbers equal to $b_{j'}$, $j'\in J$). For each $j'>j$ such that $j'\not\Rightarrow j$, replace $Q_{n-b_{j'}}^{b_{j'}-1}$ with another quadric $\bar{Q}_{n-b_{j'}}^{b_{j'}}$ that contains $F_{a_s}$ and is contained in $Q_{n-b_{j-1}}^{b_{j-1}}$. If $j''>j'$ and $j''\not\Rightarrow j$, then we require $\bar{Q}_{n-b_{j''}}^{b_{j''}}\subset\bar{Q}_{n-b_{j'}}^{b_{j'}}$. Let $X$ be the subvariety defined by the resulting sequence of isotropic subspaces and sub-quadrics. Then $X$ is a non-Schubert representative of $\sigma_{a^\alpha;b^\beta}$.
\end{proof}

\begin{Def}\label{compof}
Let $\sigma_{a^\alpha;b^\beta}$ be a Schubert class in $OF(d_1,...,d_k;n)$. Set
 $$E:=\{a_i|a_i \text{ essential}\}\cup\{b_j|b_j\text{ essential}\}.$$ We define a relation `$\rightarrow$' between two sub-indices: 
\begin{itemize}
\item $a_{i_1}\rightarrow a_{i_2}$ if $i_1\leq i_2$ and $a_{i_2}$ is essential in $(\pi_t)_*(\sigma_{a^\alpha;b^\beta})$ for some $t\geq \min(\alpha_{i_1},\alpha_{i_2})$. (If $n$ is even and $a_{i_2}=\frac{n}{2}$, then we do not require $a_{i_2}$ to be essential.) ; 
\item $a_i\rightarrow b_j$ if $a_i\leq b_{j}$;
\item $b_j\rightarrow a_i$ if $b_j\leq a_i$, $b_j\neq \frac{n}{2}-1$ and both $a_i$ and $b_j$ are essential in $(\pi_t)_*(\sigma_{a^\alpha;b^\beta})$ for some $t\geq \min(\alpha_i,\beta_{j})$; 
\item $b_{j_1}\rightarrow b_{j_2}$ if $j_1\leq j_2$ and $b_{j_1}$ is essential in $(\pi_t)_*(\sigma_{a^\alpha;b^\beta})$ for some $t\geq \min(\beta_{j_1},\beta_{j_2})$. 
\end{itemize}
We then extend this relation to $E$ by transitivity (which we also denote by `$\rightarrow$').
\end{Def}

\begin{Lem}\label{cbile}
Let $\sigma_{a;b}$ be a Schubert class in $OG(k;n)$, and let $X$ be a representative of $\sigma_{a;b}$. Assume there exist isotropic subspaces $F_{a_{i_1}}$, $F_{a_{i_2}}$, $F_{b_{j_1}}$, $F_{b_{j_2}}$ such that for all $\Lambda\in X$,
$$\dim(F_{a_{i_\gamma}}\cap \Lambda)\geq i_\gamma,\gamma=1,2;$$
$$\dim(F^\perp_{b_{j_\gamma}}\cap \Lambda)\geq k-j_\gamma+1,\gamma=1,2.$$
 (The sub-indices $a_{i_1},a_{i_2},b_{j_1},b_{j_2}$ need not to be rigid.)
\begin{itemize}
\item If $i_1\leq i_2$ and $a_{i_2}$ is essential, then $F_{a_{i_1}}\subset F_{a_{i_2}}$. (If $n$ is even and $a_{i_2}=\frac{n}{2}$, then we do not require $a_{i_2}$ to be essential);
\item If $j_1\leq j_2$ and $b_{j_1}$ is essential, then $F_{b_{j_1}}\subset F_{b_{j_2}}$;
\item If $a_{i_1}\leq b_{j_1}$, then $F_{a_{i_1}}\subset F_{b_{j_1}}$;
\item If $b_{j_1}\leq a_{i_1}$, $b_{j_1}\neq \frac{n}{2}-1$ and both $a_{i_1}$ and $b_{j_1}$ are essential then $F_{b_{j_1}}\subset F_{a_{i_1}}$.
\end{itemize}
\end{Lem}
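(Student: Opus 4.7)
My plan is to handle the four containments by combining the main tools already developed: Lemma \ref{coincide} furnishes the template for an inductive sweeping argument, Lemma \ref{atob} exchanges $a$- and $b$-type rank conditions when the dimensions coincide, and Lemma \ref{rankbound} recursively produces essential $b$-subspaces inside a larger one. The common thread is that an essential sub-index's associated subspace is uniquely determined by a structural feature of $X$ (typically the singular locus of a sub-quadric of $Q_X$), so the containment follows once one can exhibit any subspace of the expected form inside the target.

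For case (1), I would adapt the proof of Lemma \ref{coincide} to $OG(k,n)$ by induction on $a_{i_1} - i_1$. In the base case $a_{i_1} = i_1$, the condition $\dim(\Lambda \cap F_{a_{i_1}}) \ge i_1$ forces $F_{a_{i_1}} \subset \Lambda$ for every $\Lambda \in X$. A dimension count shows that the family $\{F_{a_{i_2}} \cap \Lambda\}_{\Lambda \in X}$ sweeps a dense subset of $F_{a_{i_2}}$; since each $F_{a_{i_2}} \cap \Lambda$ lies in the isotropic $\Lambda$, which contains $F_{a_{i_1}}$, we have $F_{a_{i_1}} \perp (F_{a_{i_2}} \cap \Lambda)$ for all $\Lambda \in X$, and density gives $F_{a_{i_1}} \perp F_{a_{i_2}}$. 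Hence $F_{a_{i_1}} + F_{a_{i_2}}$ is isotropic; if this sum strictly contained $F_{a_{i_2}}$, the induced rank condition $\dim(\Lambda \cap (F_{a_{i_1}} + F_{a_{i_2}})) \ge i_2 + 1$ would contradict essentiality of $a_{i_2}$. The induction step mirrors Lemma \ref{coincide}: one forms the incidence correspondence over general hyperplanes $H \subset V$ containing $\Lambda$; for $H$ chosen so that $q|_H$ is non-degenerate, $X_H := \{\Lambda \in X : \Lambda \subset H\}$ lives in a smaller $OG(k, n-1)$ where the induction hypothesis applies to the restricted subspaces $F_{a_{i_\gamma}} \cap H$; varying $H$ recovers the containment.

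Case (2) is a direct consequence of Lemma \ref{rankbound}: applied with $F_{b_j} := F_{b_{j_2}}$ (the boundary case $b_{j_2} = n/2 - 1$ being covered by Remark \ref{remark4.17}), it produces an isotropic $F'_{b_{j_1}} \subset F_{b_{j_2}}$ realizing the rank condition for $b_{j_1}$. Inspecting that proof, $F'_{b_{j_1}}$ is characterized structurally from $X$ as the singular locus of a sub-quadric naturally attached to $X$, so any isotropic $b_{j_1}$-subspace realizing the same rank condition must coincide with it; this is the content of essentiality. Hence $F_{b_{j_1}} = F'_{b_{j_1}} \subset F_{b_{j_2}}$. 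Case (3) follows the same inductive template as case (1), with no essentiality required: in the base case $a_{i_1} = i_1$, the inclusion $F_{a_{i_1}} \subset \Lambda \subset \Lambda^\perp$ together with $\Lambda \cap F_{b_{j_1}}^\perp \subset \Lambda$ gives $F_{a_{i_1}} \perp (\Lambda \cap F_{b_{j_1}}^\perp)$ for every $\Lambda$, and since these subspaces sweep a dense subset of $F_{b_{j_1}}^\perp$, non-degeneracy of $q$ yields $F_{a_{i_1}} \subset (F_{b_{j_1}}^\perp)^\perp = F_{b_{j_1}}$. The hyperplane induction step is identical to case (1).

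Case (4) combines the previous techniques. If $a_{i_1} = b_{j_1}$, Lemma \ref{atob} shows that $F_{a_{i_1}}$ also realizes the $b_{j_1}$-rank condition, and the uniqueness argument of case (2) gives $F_{a_{i_1}} = F_{b_{j_1}}$. If $b_{j_1} < a_{i_1}$, the isotropy of $\Lambda$ together with the given $a_{i_1}$-condition yields the derived inequality $\dim(\Lambda \cap F_{a_{i_1}}^\perp) \ge k - a_{i_1} + i_1$; applying the construction of Lemma \ref{rankbound} with $F_{a_{i_1}}$ in the role of the input $F_{b_j}$ produces an isotropic $F'_{b_{j_1}} \subset F_{a_{i_1}}$ with the correct rank condition, and essentiality of $b_{j_1}$ gives $F_{b_{j_1}} = F'_{b_{j_1}} \subset F_{a_{i_1}}$. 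The main obstacle will be establishing the density claim used in the base cases of (1) and (3), namely that $\bigcup_{\Lambda \in X}(F_{a_{i_2}} \cap \Lambda)$, or the analogous union for $F_{b_{j_1}}^\perp$, is Zariski dense in the ambient subspace; this is routine for Schubert varieties but requires a separate intersection-theoretic argument for an arbitrary representative of $\sigma_{a;b}$. A secondary technical issue is ensuring Lemma \ref{rankbound} applies in case (4) with an input whose dimension need not correspond to an actual index in the $b$-sequence, which requires a close reading of its proof to confirm that only the rank-condition input is used.
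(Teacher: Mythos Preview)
Your Case 2 matches the paper. For Cases 1, 3, and 4, the paper takes a different and more direct route: \emph{point-slicing} rather than hyperplane-slicing or reuse of Lemma~\ref{rankbound}. For Case 1, it picks a line $G_1 \subset F_{a_{i_1}}$ not contained in $F_{a_{i_2}}$, forms $X_1 = \{\Lambda \in X : G_1 \subset \Lambda\}$, and computes $[X_1] = \sigma_{a';b'}$ with $a'_u = a_u$ for $u > i_1$; since $\Lambda \subset G_1^\perp$ for $\Lambda \in X_1$, one forces $F_{a_{i_2}} \subset G_1^\perp$, whence $G_1 + F_{a_{i_2}}$ is isotropic of dimension $a_{i_2}+1$ and meets every $\Lambda \in X_1$ in dimension $\ge i_2+1$, contradicting essentiality of $a_{i_2}$. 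Cases 3 and 4 follow the same pattern with $p$ taken general in $\mathbb{P}(F_{a_{i_1}})$ and $\mathbb{P}(F_{b_{j_1}})$ respectively, and the contradiction again comes from comparing the resulting rank condition against the computed class of $X_p$. This sidesteps both obstacles you flag: no density of a swept-out family is needed (only that $X_p \neq \emptyset$, which follows from the class computation), and there is no passage to $OG(k,n-1)$ with its attendant type change and reindexing.

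Your Case 4 has a genuine gap beyond the technicality you note. The proof of Lemma~\ref{rankbound} is not driven solely by the rank-condition input: the integer $j$ appearing in $\dim(\Lambda \cap F_{b_j}^\perp) \ge k-j+1$ must match the actual position in the $b$-sequence of $\sigma_{a;b}$, since it is used both to define the Schubert variety $\Sigma = \{\Lambda : \dim(\Lambda \cap G_{b_j+1}) \ge j\}$ whose nonzero intersection with $[X]$ is invoked, and in the induction on $j'$. Substituting $F_{a_{i_1}}$ yields the rank condition $\dim(\Lambda \cap F_{a_{i_1}}^\perp) \ge k - a_{i_1} + i_1$, corresponding to a fictitious position $a_{i_1} - i_1 + 1$; nothing ensures that $j_1$ lies below this value or that the intersection-theoretic steps still hold. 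The paper's argument avoids this entirely: taking $p \in \mathbb{P}(F_{b_{j_1}})$ general and supposing $p \notin \mathbb{P}(F_{a_{i_1}})$, the span of $p$ and $F_{a_{i_1}}$ is an $(a_{i_1}+1)$-dimensional space meeting every $\Lambda \in X_p$ in dimension $\ge i_1+1$, which contradicts essentiality of $a_{i_1}$ against the computed class of $X_p$.
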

\begin{proof}
For the first case, suppose for a contradiction that $F_{a_{i_1}}\not\subset F_{a_{i_2}}$. Take a one-dimensional isotropic subspace $G_1$ in $F_{a_{i_1}}$, which is not contained in $F_{a_{i_2}}$. Set $X_1:=\{\Lambda\in X|G_1\subset \Lambda\}$. Then $X_1$ has class $\sigma_{a';b'}$, where $b'=b$, $a'_1=1$, $a'_{u+1}=a_u+1$ for $u<i_1$, $a'_u=a_u$ for $u>i_1$. The condition $G_1\subset\Lambda$ implies $\Lambda\subset G_1^\perp$. Therefore
$$\dim(\Lambda\cap F_{a_{i_2}})=\dim(\Lambda\cap (G_1^\perp\cap F_{a_{i_2}})), \text{ for all }\Lambda\in X_1.$$
It is then necessary that $G_1^\perp\cap F_{a_{i_2}}=F_{a_{i_2}}$, i.e. $F_{a_{i_2}}\subset G_1^\perp$. Therefore the span $W$ of $G_1$ and $F_{a_{i_2}}$ is isotropic. Since $G_1$ is not contained in $F_{a_{i_2}}$, $\dim(W)=a_{i_2}+1$. If $F_{a_{i_2}}$ is a maximal isotropic subspace, then we already reach a contradition. If $F_{a_{i_2}}$ is not maximal, then for every $\Lambda\in X_1$, $\dim(\Lambda\cap W)\geq i_2+1$. Therefore $a_{i_2+1}=a'_{i_2+1}=a_{i_2}+1$. This contradicts the condition that $a_{i_2}$ is essential.

The second case follows from Lemma \ref{rankbound} and Remark \ref{remark4.17}.

For the third case, assume $a_{i_1}<b_{j_1}$. Using the first case, it is safe to replace $a_{i_1}$ with the largest essential sub-index less or equal to $b_{j_1}$. Similarly, using the second case, it is safe to replace $b_{j_1}$ by the smallest essential sub-index greater or equal to $a_{i_1}$. If $b_{j_1}=\frac{n}{2}-1$, then under an involution of quadrics, it reduces to the first case. If $a_{i_1}$ and $b_{j_1}$ are equal, then it follows from Lemma \ref{atob}. Now assume $a_{i_1}<b_{j_1}<\frac{n}{2}-1$ both are essential. Take a general point $p$ in $\mathbb{P}(F_{a_{i_1}})$. Define $X_p:=\{\Lambda \in X|p\in\mathbb{P}(\Lambda)\}.$ By specializing $X$ to a Schubert variety, the class of $X_p$ is given by $\sigma_{a';b'}$, where $a'_1=1$, $a'_{i+1}=a_{i}+1$ if $i<i_1$, $a'_{i}=a_{i}$ if $i>i_1$, $b'_{j}=b_{j}$ if $b_{j}\geq a_{i_1}$, $b'_{j}=b_{j}+1$ if $b_{j}<a_{i_1}$. In particular, $b'_{j_1}=b_{j_1}$. Suppose for a contradiction that $p\notin \mathbb{P}(F_{b_{j_1}})$. If $p\in \mathbb{P}(F_{b_{j_1}}^\perp)$, then $W:=$ span$(p,F_{b_{j_1}})$ is isotropic. For every $\Lambda\in X_p$, $\Lambda\subset p^\perp$, and therefore $\dim(\Lambda\cap F_{b_{j_1}}^\perp)=\dim( \Lambda\cap (p^\perp\cap F_{b_{j_1}}^\perp))=\dim(\Lambda\cap W^\perp)$. Since $W$ has dimension $b_{j_1}+1$, it contradicts the condition $b'_{j_1}=b_{j_1}$. If $p\notin \mathbb{P}(F_{b_{j_1}}^\perp)$, then let $U=p^\perp \cap F_{b_{j_1}}$. Since $p\in U^\perp$, for every $\Lambda\in X_p$, $\dim(\Lambda\cap U^\perp)\geq1+\dim(\Lambda\cap F_{b_{j_1}}^\perp)\geq k-j_1+2$. Since $\dim(U)=b_{j_1}-1$, $b'_{j_1-1}=b_{j_1}-1$, which implies $b'_{j_1}$ and therefore $b_{j_1}$ is not essential. We reach a contradiction.

For the last case, assume $b_{j_1}<a_{i_1}$ and both $a_{i_1}$ and $b_{j_1}$ are essential. Take a general point $p\in\mathbb{P}(F_{b_{j_1}})$. Define $X_p:=\{\Lambda\in X|p\in\mathbb{P}(\Lambda)\}$. Set $i_0=\min\{i|a_i\geq b_{j_1}\}$. Then $[X_p]=\sigma_{a'';b''}$, where $a''_1=1$, $a''_{i+1}=a_i+1$ if $i<i_0$, $a''_{i}=a_i$ if $i>i_0$, $b''_{j}=b_j$ if $j\geq j_1$, $b''_j=b_j+1$ if $j<j_1$. In particular, $a''_{i_1+1}=a_{i_1+1}$. Suppose for a contradiction that $p\notin\mathbb{P}(F_{a_{i_1}})$. Let $W'$ be the span of $p$ and $F_{a_{i_1}}$. Then for every $\Lambda\in X$, $\dim(\Lambda\cap W')\geq 1+\dim(\Lambda\cap F_{a_{i_1}})\geq i_1+1$. Since $W'$ has dimension $a_{i_1}+1$, $a''_{i_1+1}\leq a_{i_1}+1$. However, this implies $a_{i_1}$ is not essential, we reach a contradiction.
\end{proof}

As an application, we classify the rigid Schubert classes in $OF(d_1,...,d_k;n)$:
\begin{Thm}
Let $\sigma_{a^\alpha;b^\beta}$ be a Schubert class in $OF(d_1,...,d_k;n)$. The class $\sigma_{a^\alpha;b^\beta}$ is rigid if and only if all essential indices are rigid and the set $E$ defined in Definition \ref{compof} is totally ordered under the relation `$\rightarrow$'. (The sub-indices $a_{i_1},a_{i_2},b_{j_1},b_{j_2}$ need not to be rigid.)
\end{Thm}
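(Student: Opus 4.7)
The plan is to adapt the strategy used for \Cref{rigid in f} in the type~A setting, substituting \Cref{rigidindexinof} for \Cref{rigid index in f} and \Cref{cbile} for \Cref{coincide}.

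For the sufficiency direction, assume that all essential sub-indices are rigid and that $E$ is totally ordered under `$\rightarrow$'. Given any representative $X$ of $\sigma_{a^\alpha;b^\beta}$, \Cref{rigidindexinof} supplies, for each essential $a_i$, an isotropic subspace $F_{a_i}$ of dimension $a_i$ with $\dim(F_{a_i}\cap \Lambda_t)\geq \mu_{i,t}$ for every $(\Lambda_1,\dots,\Lambda_k)\in X$ and $t\geq \alpha_i$, and likewise for each essential $b_j$ an isotropic subspace $F_{b_j}$ with $\dim(F_{b_j}^\perp\cap \Lambda_t)\geq \nu_{j,t}$. Because $E$ is totally ordered under `$\rightarrow$', \Cref{cbile} (applied to the images $\pi_t(X)$ for appropriate $t$) forces these subspaces to be pairwise nested in the corresponding manner, so that they assemble into a single isotropic partial flag $F_\bullet$. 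The proposition stated just after \Cref{defofrigid} then identifies $X$ with the Schubert variety $\Sigma_{a^\alpha;b^\beta}(F_\bullet)$.

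For the necessity direction I would split into two cases. If some essential sub-index fails to be rigid, \Cref{rigidindexinof} already exhibits a non-Schubert representative. Otherwise, suppose all essential sub-indices are rigid but $E$ is not totally ordered, so there exist essential $e_1,e_2\in E$ which are incomparable under `$\rightarrow$'. Fix the Schubert variety defined by some isotropic partial flag $F_\bullet$ realizing all essential subspaces, and perturb the flag element corresponding to $e_2$: replace it by another isotropic subspace $G$ of the same dimension, chosen so that $G$ fails to nest with the subspace assigned to $e_1$ but still nests correctly with every other essential subspace (possible precisely because $e_1$ and $e_2$ are incomparable, so no rigidity link forces them to be nested). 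The locus $X$ cut out by the perturbed rank conditions is irreducible by a fiber-dimension argument, and a comparison with Schubert classes of complementary dimension (via Propositions \ref{class of pushforward in of} and Corollary \ref{class of pullback in of}, exactly as in the type~A construction at the end of \S\ref{type A}) shows $[X]=\sigma_{a^\alpha;b^\beta}$; however, $X$ cannot be a Schubert variety, since any collection of rigid subspaces associated to $X$ must include $G$ and so cannot be packaged into a single partial flag.

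The main obstacle will be the construction in the necessity direction: in the orthogonal setting one must keep the perturbation $G$ isotropic, compatible with the sub-quadric corank conditions governing the $b_j$-type indices, and (when the incomparable pair involves a $b$-type index) realized via a restriction-variety modification of the form appearing in \Cref{exinof}. One also has to handle the boundary cases where $n$ is even and a sub-index equals $\tfrac{n}{2}$ or $\tfrac{n}{2}-1$, where the convention on $F_{n/2-1}^\perp$ requires separate care. Once these ingredients are in place, the rest of the argument is a direct transcription of the type~A proof using the orthogonal analogues of the key lemmas.
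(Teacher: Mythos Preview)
Your proposal is correct and follows essentially the same route as the paper's proof: sufficiency via the rigidity hypothesis plus \Cref{cbile} to assemble the isotropic subspaces into a flag, and necessity by invoking the counter-examples from the proof of \Cref{rigidindexinof} when some index is non-rigid and by perturbing a single flag element when $E$ fails to be totally ordered. The paper cites \Cref{rigidindexinoff} rather than \Cref{rigidindexinof} in the sufficiency step, but this is cosmetic (the existence of the subspaces is just the definition of rigidity, and \Cref{rigidindexinoff} is what guarantees the rank condition for all $t$); also, in the perturbation step the paper makes explicit that when $a_{i_1}\not\rightarrow a_{i_2}$ one has $a_{i_2+1}=a_{i_2}+1$, so no $b_j$ equals $a_{i_2}$ and the replacement $G_{a_{i_2}}$ can be taken inside $F_{a_{i_2}+1}$, which handles the isotropy concern you flagged.
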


\begin{proof}
If all essential sub-indices are rigid, then by Theorem \ref{rigidindexinoff}, for every representative $X$, there exists an isotropic subspace for each essential sub-index. If furthermore the set $E$ of all essential sub-indices is totally ordered, then by Lemma \ref{cbile}, these isotropic subspaces form a partial flag, and $X$ is the Schubert variety defined by this partial flag.

Conversely, if one of essential indices is not rigid, then we have constructed in the proof of Corollary \ref{rigidindexinof} a non-Schubert subvariety that represents the Schubert class $\sigma_{a^\alpha;b^\beta}$.

Now assume all essential sub-indices are rigid, but $E$ is not totally ordered under the relation `$\rightarrow$'. Take a complete flag of isotropic subspaces
$$F_1\subset...\subset F_{[n/2]},$$
and remove all the flag elements that does not correspond to an element in $E$. Since $E$ is not totally ordered, we can pick one place where the link is broke, say $a_{i_1}$ and $a_{i_2}$ are essential, $a_{i_1}\not\rightarrow a_{i_2}$, and there is no other essential sub-index between $a_{i_1}$ and $a_{i_2}$. Since $a_{i_2}$ is necessary not essential in the $k$-th projection (otherwise we must have $a_{i_1}\rightarrow a_{i_2})$, $a_{i_2+1}=a_{i_2}+1$, and hence there is not $b_j$ such that $b_j=a_{i_2}$. Replace $F_{a_{i_2}}$ with another isotropic subspace $G_{a_{i_2}}$ of the same dimension such that $$F_{a_{i_1}-1}\subset G_{a_{i_2}}\subset F_{a_{i_2}+1},$$
and $\dim(F_{a_{i_1}}\cap G_{a_{i_2}})=a_{i_1}-1$.

Define 
\begin{eqnarray}
& &X:=\{\Lambda\in OF(d_1,...,d_k)|\dim(G_{a_{i_2}}\cap \Lambda_t)\geq \mu_{i_2,t},\dim(F_{a_i}\cap \Lambda_t)\geq \mu_{i,t}, \nonumber\\
& &\ \ \ \ \ \ \ \ \ \ \ \ \ \  \ \ \ \ \ \ \ \ \ \ \ \ \ \ \ \ \ \ \ \ \dim(F_{b_j}^\perp\cap \Lambda_t)\geq\nu_{j,t}, \text{ for }a_i,b_j\in E-\{i_2\},1\leq t\leq k\}\nonumber
\end{eqnarray}
Then $X$ represents the Schubert class $\sigma_{a^\alpha}$ but is not a Schubert variety.

The case of $b_{j}\not\rightarrow a_i$ and the case of $b_{j_1}\not\rightarrow b_{j_2}$ can be constructed similarly.
\end{proof}

\section{Generalized rigidity problem}\label{multi}
In this section, we discuss a similar question but much more generalized than the regular rigidity problem. We will focus on the case of Grassmannians, even though a similar question could be formulated for partial flag varieties. As an application, we prove certain Schubert classes in orthogonal Grassmannians are multi-rigid. 

\subsection{Generalized rigidity problem in Grassmannians}
Let $X$ be an irreducible subvariety in the Grassmannian $G(k,n)$ with $[X]=\sum c_a\sigma_{a}$, where $\sigma_a$ are Schubert classes and $c_a$ are non-negative integers. For each $1\leq i\leq k$, set $$\gamma_i(X):=\min\{d|\exists F_d\in G(d,n)\text{ s.t. }\dim(F_d\cap \Lambda)\geq i, \forall \Lambda\in X\}.$$
We ask the following question:
\begin{Prob}
Let $X$ be an irreducible subvariety with class $[X]=\sum c_a\sigma_{a}$. For each $1\leq i\leq k$, when does $\gamma_i(X)$ only depend on the class $[X]$?
\end{Prob}
If $X=\Sigma_a$ is a Schubert variety, then we have $\gamma_i(X)=a_i$. If furthermore $a_i$ is rigid, then $\gamma_i(X)$ is preserved under rational equivalence. Recall also that a Schubert class is called {\em multi-rigid} if every multiple of it can only be represented by a union of Schubert varieties. More generally,
\begin{Def}
Let $\sigma_a$ be a Schubert class in the Grassmannian $G(k,n)$. An essential sub-index $a_i$ is called {\em multi-rigid} if for every irreducible representative $X$ of the class $m\sigma_a$, $m\in\mathbb{Z}^+$, there exists a unique subspace $F_{a_i}$ of dimension $a_i$ such that for every $k$-plane $\Lambda$ parametrized by $X$,
$$\dim(\Lambda\cap F_{a_i})\geq i.$$
\end{Def}
It is clear that if $[X]=m\sigma_a$ and $a_i$ is multi-rigid, then $\gamma_i(X)=a_i$. In \cite{YL2}, we classified the multi-rigid sub-indices in Grassmannians:
\begin{Thm}\cite[Theorem 4.1.5]{YL2}\label{multirigid in g}
Let $\sigma_a$ be a Schubert class in $G(k,n)$. Set $a_0=0$ and $a_{n+1}=\infty$. An essential sub-index $a_i$ is multi-rigid if and only if 
\begin{enumerate}
\item either $a_{i-1}+1=a_i\leq a_{i+1}-3$; or
\item $a_i=n$.
\end{enumerate}
\end{Thm}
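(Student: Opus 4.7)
The plan is to deduce both directions of the equivalence: the sufficient direction from the generalized rigidity theorem stated earlier in this subsection, and the necessary direction via Coskun-type counterexample constructions generalizing Remark~\ref{ex of non rigid}.

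For sufficiency, the case $a_i = n$ is trivial because $V$ is the unique $n$-dimensional subspace, so $F_{a_i} = V$ is forced and the condition $\dim(\Lambda \cap V) = k \geq i$ is automatic. For the main case $a_{i-1}+1 = a_i \leq a_{i+1}-3$, let $X$ be an irreducible representative of $m\sigma_a$. I would apply the generalized rigidity theorem with the singleton $A = \{a\}$: then each $A_j = \{a\}$, $m_j = a_j$, and the hypothesis $a_{i-1}+1 = m_i \leq a_{i+1}-3$ is precisely the given condition, yielding $\gamma_i(X) = a_i$ and hence existence of an $a_i$-dimensional subspace $F_{a_i}$ realizing the rank bound. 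For uniqueness, I would let $\mathcal{F} \subset G(a_i,n)$ denote the locus of all such subspaces, and argue that if $\mathcal{F}$ were positive-dimensional, then exploiting $a_{i-1}+1 = a_i$ together with incidence-correspondence techniques as in Lemma~\ref{coincide} would produce an $(a_i-1)$-dimensional subspace meeting every $\Lambda \in X$ in dimension $\geq i-1$, ultimately contradicting the minimality $\gamma_i(X) = a_i$ via a further application of the generalized theorem.

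For necessity, I would construct an irreducible representative of $m\sigma_a$ admitting no $F_{a_i}$ of the required dimension whenever $a_i < n$ and either $a_{i+1} - a_i \leq 2$ or $a_i - a_{i-1} \geq 2$. The first case is already handled (even for $m = 1$) by the construction recalled in Remark~\ref{ex of non rigid}. In the second case, the idea is to exploit the gap between $a_{i-1}$ and $a_i$: with the remaining flag data fixed, one allows the $i$-th rank condition to be imposed by an irreducible degree-$m$ family of $a_i$-dimensional subspaces sweeping out an $(a_i+1)$-dimensional ambient subspace $F_{a_i+1}$, and takes the closure of the corresponding incidence variety of $k$-planes $\Lambda$ in $G(k,n)$. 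An intersection computation via Proposition~\ref{pushforward of class} against dual Schubert classes of complementary dimension identifies the class as $m\sigma_a$, while the sweeping $a_i$-dimensional subspaces span all of $F_{a_i+1}$, ruling out any common $F_{a_i}$ and forcing $\gamma_i(X) = a_i + 1 > a_i$.

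The main obstacle will be the necessity argument in the second case, specifically verifying that the class of the constructed incidence variety is exactly a positive multiple of $\sigma_a$ alone, with no other Schubert classes appearing; this requires careful intersection-theoretic bookkeeping against all dual Schubert classes of the appropriate codimension, and in particular ruling out contributions from classes $\sigma_{a'}$ with $a'_i \neq a_i$ via the specific geometry of the sweep. A secondary subtlety is the uniqueness step in the sufficiency proof, which goes beyond the bare existence statement of the generalized rigidity theorem and may require an inductive or Bertini-style refinement.
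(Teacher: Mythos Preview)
The paper does not actually prove this theorem: it is quoted from the external reference \cite{YL2} and used as a black box. So there is no in-paper argument to compare against. That said, your proposal has two genuine problems worth flagging.

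\textbf{Circularity in the sufficiency direction.} The ``generalized rigidity theorem'' you invoke, with the sets $A_j$ and the numbers $m_j$, is Theorem~\ref{rigid index in mix}. In the paper that theorem comes \emph{after} Theorem~\ref{multirigid in g}, and its proof explicitly appeals to Theorem~\ref{multirigid in g} in the inductive step (``Apply Theorem~\ref{multirigid in g} to each irreducible component of $Y$''). So deducing Theorem~\ref{multirigid in g} from Theorem~\ref{rigid index in mix} is circular as the paper is organized. One could try to unwind this into a single simultaneous induction, but that is real work you have not sketched, and it is not what the paper does; the paper simply imports the result from \cite{YL2}.

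\textbf{Gap in the necessity direction.} You say the case $a_{i+1}-a_i\le 2$ ``is already handled (even for $m=1$) by the construction recalled in Remark~\ref{ex of non rigid}.'' That remark, however, requires $a_{i-1}+2\le a_i=a_{i+1}-2$; in particular it needs $a_i-a_{i-1}\ge 2$. When $a_{i-1}+1=a_i=a_{i+1}-2$, the sub-index $a_i$ is rigid by Theorem~\ref{rigid index in g} (case $a_i=a_{i-1}+1$), so no $m=1$ counterexample exists, and Remark~\ref{ex of non rigid} does not apply. This is precisely a case that is rigid but not multi-rigid, and it genuinely needs a higher-multiple construction (e.g.\ via duality and a degree-$m$ family, as you sketch for your ``second case''). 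Your case split therefore misses this regime.
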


The following proposition shows that $\gamma_k(X)$ is only dependent on the class of $X$:

\begin{Prop}\label{generalmulti}
Let $X$ be an irreducible subvariety of $G(k,n)$ with class $[X]=\sum_{a\in A}m_a\sigma_a$, where $m_a\in\mathbb{Z}^+$ and $A$ is a finite set of Schubert indices. Assume 
$$\max\{a_{k-1}|a\in A\}+1=\max\{a_k|a\in A\}.$$
Then $\gamma_k(X)=\max\{a_k|a\in A\}$.
\end{Prop}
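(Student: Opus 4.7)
My plan is to show both bounds on $\gamma_k(X)=\dim W_X$, where $W_X:=\sum_{\Lambda\in X}\Lambda\subseteq V$ is the linear span of $X$; write $m:=\max\{a_k:a\in A\}$. The hypothesis $\max\{a_{k-1}\}+1=m$ produces an index $a^*\in A$ realising both $a^*_{k-1}=m-1$ and $a^*_k=m$, since any $a'\in A$ with $a'_{k-1}=m-1$ satisfies $m-1<a'_k\leq m$, forcing $a'_k=m$. The lower bound $\dim W_X\geq m$ is immediate from the closed immersion $X\subseteq G(k,W_X)\hookrightarrow G(k,n)$: the associated pushforward on Chow groups preserves Schubert indices, so every $a$ appearing in $[X]$ must satisfy $a_k\leq\dim W_X$.

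For the upper bound $\dim W_X\leq m$, I would argue by contradiction using induction on $n$, with the base case $n=m$ being trivial. Suppose $\dim W_X=d>m$. For a general hyperplane $H\subset V$, set $X_H:=\{\Lambda\in X:\Lambda\subset H\}$. A Bertini-type argument using the base-point freeness of the family of sub-Grassmannians $\{G(k,H)\}_{H\in\mathbb{P}V^*}$ yields $X_H$ irreducible for generic $H$. Using the restriction formula $\iota^*\sigma_a^{(n)}=\sigma_{a-1}^{(n-1)}$ for non-degenerate $a$ under the inclusion $\iota:G(k,H)\hookrightarrow G(k,n)$, one computes
\[
[X_H]=\sum_{a\in A^{\mathrm{nd}}}m_a\,\sigma_{a-1}^{(n-1)}\in A^*\bigl(G(k,n-1)\bigr),
\]
where $A^{\mathrm{nd}}\subseteq A$ consists of $a$ with $a_i\geq i+1$ for all $i$. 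The index $a^*-1$ is non-degenerate and the new maxima satisfy $\max(a-1)_{k-1}+1=m-1=\max(a-1)_k$, so the proposition's hypothesis is preserved. By the inductive hypothesis, $\dim W_{X_H}=m-1$; meanwhile $W_{X_H}\subseteq W_X\cap H$ has dimension at most $d-1>m-1$, so $W_{X_H}$ is a proper subspace of $W_X\cap H$.

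The crux---and the main obstacle---is to show that the $(m-1)$-subspaces $W_{X_H}$, as $H$ ranges over a generic open set in $\mathbb{P}V^*$, all lie in a common $m$-dimensional subspace $W^*\subset V$. Granted this, every $\Lambda\in X$ is contained in $X_H$ for any hyperplane $H\supset\Lambda$, so $\Lambda\subset W_{X_H}\subset W^*$, forcing $W_X\subseteq W^*$ and hence $\dim W_X\leq m$, the desired contradiction. To identify $W^*$, I would iterate the hyperplane cut: applying the inductive hypothesis to $X_{H\cap H'}\subset G(k,n-2)$ yields $\dim W_{X_{H\cap H'}}=m-2$, and a dimension count on the family $(H,H')\mapsto W_{X_{H\cap H'}}$ (as a rational map into $G(m-2,n)$) forces the envelopes $\{W_{X_H}\}_H$ to sweep out at most an $m$-dimensional subspace. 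Carefully justifying this rigidity of envelopes is the heart of the argument.
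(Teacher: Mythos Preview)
Your inductive approach has a genuine gap at the Bertini step. The sub-Grassmannian $G(k,H)\subset G(k,n)$ has codimension $k$, not $1$, so $X_H=X\cap G(k,H)$ is not a hyperplane section and there is no Bertini-type reason for it to be irreducible. Concretely, take $k=2$, $n=4$, and $[X]=c\,\sigma_{2,3}$ with $c\geq 2$ (the hypothesis holds with $m=3$): then $X$ is a surface, $G(2,H)\cong\mathbb{P}^2$ has codimension $2$, and $[X_H]=c\,\sigma_{1,2}$ is $c$ times the point class, so for generic $H$ the slice $X_H$ consists of $c$ distinct points. Your inductive hypothesis then only controls each point separately, not their common span, and the assertion ``$\dim W_{X_H}=m-1$'' fails outright. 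A second, independent gap is that the index $a^*$ with $a^*_{k-1}=m-1$, $a^*_k=m$ need not lie in $A^{\mathrm{nd}}$: for $A=\{(1,3,4)\}$ in $G(3,n)$ the only choice has $a^*_1=1$, so $a^*-1$ is not a valid index, $A^{\mathrm{nd}}$ is empty, and the inductive hypothesis has nothing to bite on. Finally, you yourself flag the ``crux'' envelope-rigidity step as unfinished; even granting irreducibility of $X_H$, your iterated-slicing sketch does not obviously close, and this is where the real content would have to lie.

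The paper sidesteps all of this by working directly with the swept variety $Y=\bigcup_{\Lambda\in X}\mathbb{P}(\Lambda)\subset\mathbb{P}^{n-1}$. One first shows $\dim Y=m-1$ by Pieri (intersecting $[X]$ with the special Schubert classes $\sigma_{n-m,\,n-k+2,\ldots,n}$ and $\sigma_{n-m+1,\,n-k+2,\ldots,n}$), and then proves $Y$ is linear via a tangent-space trick: for a general $p\in Y$ one computes the class of $X_p=\{\Lambda\in X:p\in\mathbb{P}(\Lambda)\}$, and the hypothesis $\max a_{k-1}+1=m$ forces the swept variety $Z$ of $X_p$ to again have dimension $m-1$; since $Z\subset Y$ and $Y$ is irreducible this gives $Z=Y$, but every $\mathbb{P}(\Lambda)$ through $p$ lies in $T_pY$, so $Y=Z\subset T_pY$ and $Y$ is a linear $\mathbb{P}^{m-1}$. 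This argument never leaves $G(k,n)$ and never needs any slice to be irreducible.
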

\begin{proof}
Let $Y$ be the subvariety swept out by $\mathbb{P}^{k-1}$ parametrized by $X$. Write $c=\max\{a_k|a\in A\}$. We claim that $Y$ is a projective linear space of projective dimension $c-1$.

Let $G_{n-c}$ and $G_{n-c+1}$ be general linear spaces of (affine) dimension $n-c$ and $n-c+1$ resp. Consider the Schubert varieties
$$\Sigma_{n-c,n-k+2,...,n}:=\{\Lambda\in G(k,n)|\dim(\Lambda\cap G_{n-c})\geq 1\}$$
and
$$\Sigma_{n-c+1,n-k+2,...,n}:=\{\Lambda\in G(k,n)|\dim(\Lambda\cap G_{n-c+1})\geq 1\}.$$
Then we can compute the intersection product using Pieri's Lemma, 
$$[X]\cdot\sigma_{n-c,n-k+2,...,n}=0,$$
$$[X]\cdot\sigma_{n-c+1,n-k+2,...,n}=\sum_{a\in A'}m_a\sigma_{a'}\neq 0,$$
where $A':=\{a\in A|a_{k}=c\}$ and $a'$ is obtained from $a$ by the relation $a'_1=1$, $a'_{i+1}=a_i+1$, $1\leq i\leq k-1$. This implies 
$$Y\cap\mathbb{P}(G_{n-c})=\emptyset\text{ and }Y\cap\mathbb{P}(G_{n-c+1})\neq \emptyset.$$
Therefore $Y$ has a projective dimension $c-1$.

Now we need to show $Y$ is linear. Let $p$ be a general point in $Y$. Let $X_p:=\{\Lambda\in X|p\in\mathbb{P}(\Lambda)\}$. Then $X_p$ has class $[X_p]=\sum_{a\in A'}\frac{m_a}{m_p}\sigma_{a'}$, where $m_p|gcd\{m_a|a\in A'\}$. Let $Z$ be the variety swept out by $\mathbb{P}^{k-1}$ parametrized by $X_p$. Since $\max\{a_{k-1}|a\in A\}+1=\max\{a_k|a\in A\},$ a similar argument above shows that $\dim(Z)=c-1=\dim(Y)$. Since $X$ is irreducible, so is $Y$. Therefore $Y=Z$. Notice that $Z$, and therefore $Y$, is contained in the tangent space $T_pY$ at the point $p$. This force $Y$ being a linear space. This completes the proof.
\end{proof}

\begin{Cor}\label{1 rigid in sum}
Let $X$ be an irreducible subvariety of $G(k,n)$ with class $[X]=\sum_{a\in A}m_a\sigma_a$, where $m_a\in\mathbb{Z}^+$ and $A$ is a finite set of Schubert indices. For each $a\in A$, set $d_a:=\max\{i|a_i=i\}$. If $a_1\geq 2$, set $d_a=0$. Let $d=\min\{d_a|a\in A\}$. Assume $d\geq 1$, and there exists $a\in A$ such that $a_{d}=d$ and $a_{d+1}\geq d+3$. Then there exists a unique linear space $F_{d}$ of dimension $d$ that is contained in all $k$-planes parametrized by $X$.
\end{Cor}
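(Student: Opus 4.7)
The plan is to reduce the statement to Proposition~\ref{generalmulti} via the duality isomorphism $\iota\colon G(k,n)\xrightarrow{\sim}G(n-k,n)$, $\Lambda\mapsto\Lambda^{\perp}$, coming from a choice of nondegenerate bilinear form on $V$. Under $\iota$, the Schubert class $\sigma_a$ in $G(k,n)$ corresponds to the Schubert class $\sigma_{a'}$ in $G(n-k,n)$ whose partition is the transpose of that of $\sigma_a$. Using the dictionary $\lambda_i=n-k+i-a_i$ between indices and partitions together with $\lambda^T_j=\#\{i:\lambda_i\geq j\}$, a direct computation gives
\begin{equation*}
a'_{n-k}=n-\#\{i:a_i=i\}=n-d_a,\qquad a'_{n-k-1}=n-1-\#\{i:a_i\leq i+1\}.
\end{equation*}
Moreover, the conclusion ``there exists a unique $d$-dimensional $F_d$ contained in every $\Lambda\in X$'' is equivalent via $\iota$ to ``there exists a unique $(n-d)$-dimensional linear subspace $G\subset V$ containing every $\Lambda'\in X':=\iota(X)$''.

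Next I would verify the hypothesis of Proposition~\ref{generalmulti} for the irreducible subvariety $X'\subset G(n-k,n)$, which has class $\sum_{a\in A}m_a\sigma_{a'}$. Since $\min_{a\in A}d_a=d$, we have $\max_{a\in A}a'_{n-k}=n-d$. For every $a\in A$, the inequality $a_i\geq i$ gives $\{i:a_i\leq i+1\}\supseteq\{1,\ldots,d_a\}\supseteq\{1,\ldots,d\}$, so $a'_{n-k-1}\leq n-1-d$. For the distinguished $a^{\ast}\in A$ satisfying $a^{\ast}_d=d$ and $a^{\ast}_{d+1}\geq d+3$, strict monotonicity of the index forces $a^{\ast}_i\geq i+2$ for every $i\geq d+1$; hence $\#\{i:a^{\ast}_i\leq i+1\}=d$ and $a'^{\,\ast}_{n-k-1}=n-1-d$. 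Consequently $\max_{a\in A}a'_{n-k-1}+1=n-d=\max_{a\in A}a'_{n-k}$, which is exactly the hypothesis required by Proposition~\ref{generalmulti}.

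Applying Proposition~\ref{generalmulti} to $X'$ then yields $\gamma_{n-k}(X')=n-d$; the proof of that proposition identifies the minimal containing subspace $G\subset V$ with the linear span of the projective subspaces parametrized by $X'$, so $G$ is unique of dimension $n-d$. Setting $F_d:=G^{\perp}$ produces the desired $d$-dimensional linear subspace contained in every $\Lambda\in X$, and the identification $F_d=\bigcap_{\Lambda\in X}\Lambda$ makes uniqueness manifest. The main (modest) obstacle is the bookkeeping in translating between Schubert indices and their transpose partitions, and in particular in observing that the hypothesis $a^{\ast}_{d+1}\geq d+3$, combined with strict monotonicity of the Schubert index, is precisely what is needed to obtain the gap condition $\max a'_{n-k-1}+1=\max a'_{n-k}$ required to invoke Proposition~\ref{generalmulti}.
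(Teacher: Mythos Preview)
Your proof is correct and follows exactly the approach indicated in the paper, which simply states that the corollary ``follows from the duality $G(k,n)\cong G(n-k,n)$ and Proposition~\ref{generalmulti}.'' You have supplied the bookkeeping the paper omits: the explicit formulas for $a'_{n-k}$ and $a'_{n-k-1}$ under transpose of partitions, the verification that the gap hypothesis of Proposition~\ref{generalmulti} holds for $X'=\iota(X)$, and the identification $F_d=G^{\perp}=\bigcap_{\Lambda\in X}\Lambda$ yielding uniqueness.
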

\begin{proof}
Follows from the duality $G(k,n)\cong G(n-k,n)$ and Proposition \ref{generalmulti}.
\end{proof}
The following result generalizes Proposition \ref{generalmulti}:
\begin{Thm}\label{rigid index in mix}
Let $X$ be an irreducible subvariety with class $[X]=\sum_{a\in A}c_a\sigma_a$, $c_a\in \mathbb{Z}^+$. Set $m_i=\max\limits_{a\in A}\{a_i\}$. Define $$A_1:=\{a\in A|a_1=m_1\}$$ and inductively
$$A_i:=\{a\in A_{i-1}|a_i=\max\limits_{a'\in A_{i-1}}\{a_i'\}\},\  2\leq i\leq k.$$
If $\max\limits_{a\in A_i}\{a_i\}=m_i$ and there exists an index $a\in A_i$ such that $a_{i-1}+1=m_i\leq a_{i+1}-3$, then $\gamma_i(X)=m_i$.
\end{Thm}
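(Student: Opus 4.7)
My plan is to adapt the sweep-out strategy of Proposition \ref{generalmulti}, which handles the case $i=k$, to general $i$.

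The lower bound $\gamma_i(X) \geq m_i$ is straightforward by intersection theory. If some $F_d$ of dimension $d$ meets every $\Lambda \in X$ in dimension $\geq i$, then $X$ is contained in the Schubert variety $\Sigma_b$ with $b = (d-i+1, \ldots, d, n-k+i+1, \ldots, n)$. For each $a \in A$ with $c_a > 0$, we have $[X] \cdot \sigma_{a^*} = c_a > 0$, so a generic translate of $\Sigma_{a^*}$ meets $X \subseteq \Sigma_b$, forcing $\sigma_b \cdot \sigma_{a^*} \neq 0$. By the Grassmannian vanishing criterion, $a_i \leq d$, and taking the maximum over $a$ yields $m_i \leq d$.

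For the upper bound $\gamma_i(X) \leq m_i$ I would construct the required $F_{m_i}$ by a sweep. Fix $a \in A_i$ realizing $a_{i-1}+1 = m_i \leq a_{i+1}-3$, together with a generic partial flag $G_{a_1} \subset \cdots \subset G_{a_{i-1}}$. Let $X' \subseteq X$ be the closure of the locus where $\dim(\Lambda \cap G_{a_j}) \geq j$ for $1 \leq j \leq i-1$; a Pieri-type calculation shows that the class of $X'$ is a sum of Schubert classes whose top summand has sub-indices $(a_1, \ldots, a_{i-1}, m_i, \ast, \ldots, \ast)$, with positive coefficient guaranteed by the hypothesis $\max_{A_i} a_i = m_i$ and the inductive structure $A_i \subseteq A_{i-1} \subseteq \cdots$. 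Now define the swept-out subvariety
\[ Y_i \;:=\; \overline{\bigcup_{\Lambda \in X'} \mathbb{P}\bigl(\Lambda \cap W\bigr)} \;\subseteq\; \mathbb{P}(W), \]
where $W$ is a generic linear subspace of $V$ of suitable dimension containing $G_{a_{i-1}}$, chosen so that $\Lambda \cap W$ has expected dimension $i$ for a general $\Lambda \in X'$. The key claim is that $Y_i$ is a projective linear subspace of dimension $m_i - 1$, whose affine cone will then be the required $F_{m_i}$.

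To establish this I would pin down $\dim Y_i = m_i - 1$ by intersecting with two distinguished Schubert varieties, mirroring the two Pieri classes $\sigma_{n-c,n-k+2,\ldots,n}$ and $\sigma_{n-c+1,n-k+2,\ldots,n}$ used in the proof of Proposition \ref{generalmulti}, now modified to probe the $i$-th sub-index rather than the last. Linearity then follows from the tangent-space argument: for a general $p \in Y_i$, the locus $X'_p = \{\Lambda \in X' : p \in \mathbb{P}(\Lambda \cap W)\}$ sweeps out a subvariety $Z_p$ whose class, computed via Corollary \ref{class of fiber} and Proposition \ref{pushforward of class}, has the same ``top structure'' as $[X']$, so that $\dim Z_p = \dim Y_i$; since $X'$ is irreducible, so is $Y_i$, and $Z_p \subseteq T_p Y_i$ forces $Y_i \subseteq T_p Y_i$, hence $Y_i$ is linear. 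The main obstacle is the intersection-theoretic verification that the top summand of $[X']$ genuinely dominates the sweep, so that $\dim Y_i$ is exactly $m_i - 1$ rather than smaller; here the separation condition $m_i \leq a_{i+1} - 3$ plays precisely the role of the gap hypothesis in Proposition \ref{generalmulti}, ensuring that the relevant non-Pieri intersection products do not vanish and that no lower-dimensional collapse occurs.
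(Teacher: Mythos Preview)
Your approach differs substantially from the paper's, which argues by induction on $m_i$. The base case $m_i=i$ is Corollary~\ref{1 rigid in sum} (the dual of Proposition~\ref{generalmulti}). For $m_i>i$, the paper slices $X$ by generic hyperplanes $H$: the fiber $X_H=\{\Lambda\in X:\Lambda\subset H\}$ has a class in which each positive-gap index drops by $1$, so by induction each irreducible component of $X_H$ determines a subspace $F^H_{m_i-1}$. Varying $H$ produces a locus $Y\subset G(m_i-1,n)$; a dimension count shows $[Y]=c\,\sigma_b$ with $b_{m_i-1}=m_i$ and $b_{m_i-2}=m_i-1$, and Theorem~\ref{multirigid in g} then yields finitely many candidate subspaces $F_{m_i}^1,\dots,F_{m_i}^t$ containing every $F^H_{m_i-1}$. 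Since the $X_H$ cover $X$ and $X$ is irreducible, some single $F_{m_i}^j$ works for all $\Lambda\in X$.

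Your sketch has a genuine gap at exactly this covering step. The linear space $F_{m_i}$ you construct is, by design, adapted only to the sublocus $X'\subsetneq X$ cut out by a fixed generic flag $G_{a_1}\subset\cdots\subset G_{a_{i-1}}$ (and further depends on an unspecified auxiliary $W$). You argue that $\dim(\Lambda\cap F_{m_i})\ge i$ for $\Lambda\in X'$, but the conclusion $\gamma_i(X)\le m_i$ demands this for every $\Lambda\in X$. Bridging that gap would require showing that your $F_{m_i}$ is independent of the choice of $G_\bullet$ and $W$, which is itself a rigidity statement of the same strength as the theorem; and you would also need $X'$ to be irreducible for the tangent-space argument, which is not automatic since $X'$ is an intersection with a Schubert variety. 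The paper's hyperplane-slice induction sidesteps both issues: the slices $X_H$ jointly cover $X$, and the passage from finitely many $F_{m_i}^j$ to a single one is handled by irreducibility of $X$ at the very end, not by uniqueness of the construction along the way.
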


\begin{proof}
We use induction on $m_i$. If $m_i=i$, then it reduces to Corollary \ref{1 rigid in sum}. 

Assume $m_i>i$. Consider the incidence correspondence
$$I=\{(\Lambda,H)|\Lambda\in X,\Lambda\subset H, H\in (\mathbb{P}^{n-1})^*\}$$
Let $\pi_1:I\rightarrow X$ and $\pi_2:I\rightarrow (\mathbb{P}^{n-1})^*$ be the two projections. Let $\dim(X)=d$. The fibers of $\pi_1$ have dimension $n-k-1$ and therefore $I$ has dimension $d+n-k+1$.

Let $H$ be a general point in $\pi_2(I)$. Set $i_0:=\{i'|m_{i'}>i'\}$ and $B:=\{a\in A|a_{i_0}>i_0\}$. Then the fiber over $H$ has class $\sum\limits_{a\in B}c_a\sigma_{a'}$ where $a'_i=a_i$ if $a_i=i$ and $a'_i=a_i-1$ if $a_i\geq2$. By induction, there exists a unique linear subspace $F^H_{m_i-1}$ for each irreducible component of the fiber over $H$. Varying $H$ in $\pi_2(I)$ and let $Y$ be the resulting locus in $G(m_{i-1},n)$. We claim that $[Y]=c\sigma_b$ where $b_{m_{i}-1}=m_i$ and $b_{m_i-2}=m_i-1$.

Let $Z$ be Zariski closure of the locus of all possible pairs $(F_{m_i}^H,H)$ in $G(m_i-1,n)\times (\mathbb{P}^{n-1})^*$. Let $\pi_3:Z\rightarrow G(m_i-1,n)$ and $\pi_4:Z\rightarrow (\mathbb{P}^{n-1})^*$ be the two projections. Then $Y=\pi_3(Z)$. Let $s$ be a sub-index such that $a_s=s$ and $a_{s+1}\neq s+1$. Then we have $\dim(\pi_4(Z))=n-1-s$. Let $H$ be a general point in $\pi_4(Z)$. Then the fiber over $H$ has dimension $0$, and therefore $\dim(Z)=n-1-s$. Notice that the fibers of $\pi_3$ have dimension at most $n-m_i$, and therefore
$$\dim(\pi_3(Z))\geq m_i-s-1.$$
By assumption, there exists an index $a\in A_i$ such that $m_i=a_{i-1}+1\neq i$, and thus
$$m_i-s-1\geq 2.$$
Therefore $[Y]=c\sigma_b$ where $b_{m_i-1}=b_{m_i-2}+1=m_i$. Apply Theorem \ref{multirigid in g} to each irreducible component of $Y$, we obtain a finite number of linear subspaces $F_{m_i}^1$, ..., $F_{m_i}^t$ of dimension $m_i$ such that for every $F_{m_{i-1}}\in Y$, $F_{m_i-1}\subset F_{m_i}^{j}$ for some $1\leq j\leq t$. Now it is easy to check that for every $\Lambda\in X$, $\dim(\Lambda\cap F_{m_i}^{j})\geq j$ for some $j$. Set $X_{j}:=\{\Lambda\in X|\dim(\Lambda\cap F_{m_i}^j)\geq i\}$. Then $X=X_1\cup...\cup X_t$. Since $X$ is irreducible, $X=X_j$ for some $j$. Therefore $\mu_i(X)=m_i$.

\end{proof}

\subsection{Multi-rigidity problem in orthogonal Grassmannians}
As an application of the results in the previous section, we investigate the multi-rigidity problem in the orthogonal Grassmannian $OG(k,n)$, which can be viewed as a subvariety in $G(k,n)$ via the natural inclusion
$$OG(k,n)\hookrightarrow G(k,n).$$

\begin{Def}
Let $\sigma_{a;b}$ be a Schubert class for $OG(k,n)$. An essential sub-index $a_i$ is called {\em multi-rigid} if for every irreducible representative $X$ of the class $m\sigma_{a;b}$, $m\in\mathbb{Z}^+$, there exists an isotropic subspace $F_{a_i}$ of dimension $a_i$ such that for every $k$-plane $\Lambda$ parametrized by $X$,
$$\dim(\Lambda\cap F_{a_i})\geq i.$$
Similarly, an essential sub-index $b_j$ is called {\em multi-rigid} if for every irreducible representative $X$ of the class $m\sigma_{a;b}$, $m\in\mathbb{Z}^+$, there exists an isotropic subspace $F_{b_j}$ of dimension $b_j$ such that for every $k$-plane $\Lambda$ parametrized by $X$, $$\dim(\Lambda\cap F_{b_j}^\perp)\geq k-j+1.$$
\end{Def}

\begin{Ex}
Consider the Schubert class $\sigma_{1;1}$ in $OG(2,7)$. We claim that the class $\sigma_{1;1}$ is multi-rigid. Let $X$ be an irreducible representative of $m\sigma_{1;1}$, $m\in\mathbb{Z}^+$. There is a natural inclusion morphism
$$i:OG(2,7)\hookrightarrow G(2,7).$$
The push-forward class $i_*(m\sigma_{1;1})$ is the class $2m\sigma_{1,5}\in A(G(2,7))$. By Theorem \ref{multirigid in g}, the sub-index $1$ is multi-rigid with respect to the Schubert class $\sigma_{1,5}$. Since $X$ is irreducible, there exists a unique affine line $F_1$ such that 
$$F_1\subset \Lambda,\text{ for all }\Lambda\in X.$$
This shows the sub-index $1$ is multi-rigid with respect to the Schubert class $\sigma_{1;1}$. Moreover, observe that
$$F_1\subset\Lambda\Rightarrow \Lambda\subset F_1^\perp,$$
and hence $X$ is the Schubert variety defined by
$$F_1\subset F_1^\perp.$$
Therefore the class $\sigma_{1;1}$ is multi-rigid.
\end{Ex}

As shown in the above example, we look at the push-forward of Schubert classes under the inclusion $i:OG(k,n)\hookrightarrow G(k,n)$. Consider the Schubert variety $\Sigma_{a;b}$ defined by the following sequence of isotropic subspaces and sub-quadrics:
$$L_\bullet:F_{a_1}\subset...\subset F_{a_s}\subset Q_{n-b_{k-s}}^{b_{k-s}}\subset...\subset Q_{n-b_1}^{b_1},$$
where $Q_{n-{b_j}}^{b_j}=Q\cap F_{b_j}^\perp$ are sub-quadrics of $Q$ of corank $b_j$. To compute its class in $G(k,n)$, we increase the corank of the leftmost sub-quardric by one at a time by specialization. If the difference between the lower and upper indices becomes 2, say $Q_{n-b_{k-s}}^{n-b_{k-s}-2}$, then this sub-quadric breaks into a union of two linear subspaces of dimension $n-b_{k-s}-1$. If the corank becomes 1 less than some $a_i$, say $Q_{n-b_{k-s}}^{a_i-1}$, $a_i\geq b_{k-s}+2$, then by the observation in Remark \ref{difference1}, the sequence splits into two sequences $L_1$ and $L_2$, where $L_1$ is obtained by replacing $Q_{n-b_{k-s}}^{a_i-1}$ with $Q_{n-b_{k-s}-1}^{a_i}$, and $L_2$ is obtained by replacing $F_{a_i}$ with $F_{a_i-1}$. Notice that the dimension of the isotropic flag elements does not change in $L_1$. 

Let $F_{a_i-1}$ be the singular locus of $Q_{n-b_{k-s}}^{a_i-1}$. Let $\Lambda$ be a general point of $X$ and set $W:=\Lambda\cap Q_{n-b_{k-s}}^{a_i-1}$. Then $W$ is an isotropic subspace of dimension $s+1$ contained in the quadric $Q_{n-b_{k-s}}^{a_i-1}$. Therefore the dimension of the intersection of $W$ and the singular locus $F_{a_i-1}$ is at least $s+1-\left[\frac{n-b_{k-s}-(a_i-1)}{2}\right]$. Thus $L_1$ is effective if and only if 
$$i-1\geq s+1-\left[\frac{n-b_{k-s}-(a_i-1)}{2}\right].$$

This observation gives the following lemma:
\begin{Lem}\label{maxindex}
Let $\sigma_{a;b}$ be a Schubert class in $OG(k,n)$, and $i:OG(k,n)\hookrightarrow G(k,n)$ be the natural inclusion. For each $1\leq j\leq k-s$, set $x_j:=\{i|a_i\leq b_j\}$. Let $a_i$ be an essential sub-index. Assume for all $j$ such that $b_j<a_i$, 
$$x_j\geq k-j+1-\left[\frac{n-b_j-(a_{x_j+1}-1)}{2}\right].$$
Then $$i_*(\sigma_{a;b})=2^{k-s}\sigma_{\bar{a}}+\sum_\lambda c_\lambda\sigma_\lambda,$$ where $\bar{a}_t=a_t$ and $\lambda_t\leq a_t$, for $1\leq t\leq i$.
\end{Lem}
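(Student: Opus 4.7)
The plan is to compute $i_*(\sigma_{a;b})$ by running the Coskun specialization algorithm described in the paragraph preceding the lemma: starting from the sequence
$$L_\bullet: F_{a_1}\subset\cdots\subset F_{a_s}\subset Q^{b_{k-s}}_{n-b_{k-s}}\subset\cdots\subset Q^{b_1}_{n-b_1}$$
defining $\Sigma_{a;b}$, the coranks of the sub-quadrics are increased one step at a time from the leftmost to the rightmost, and at each step one tracks both the multiplicative factor picked up at a splitting and how the first $i$ entries of the $a$-sequence evolve. Two splitting mechanisms occur: whenever the corank of a sub-quadric reaches $a_t-1$ for an $a_t$ exceeding the current corank, the sequence splits into a branch $L_1$ (in which the sub-quadric is replaced by a sub-quadric of the same dimension with corank $a_t$, while the $a$-sequence is unchanged) and a branch $L_2$ (in which the sub-quadric is unchanged but $F_{a_t}$ is replaced by $F_{a_t-1}$); and once a sub-quadric attains the maximum possible corank it breaks into a union of two linear subspaces, contributing a factor of $2$.

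The first step is to interpret the numerical hypothesis. For every $j$ with $b_j<a_i$ one has $x_j<i$, so $x_j+1\leq i$, and the inequality
$$x_j\geq k-j+1-\left[\frac{n-b_j-(a_{x_j+1}-1)}{2}\right]$$
is exactly the effectiveness condition for the branch $L_1$ computed in the paragraph preceding the lemma, now evaluated at the moment when the corank of the $j$-th sub-quadric reaches $a_{x_j+1}-1$. Hence at every splitting involving some $a_t$ with $t\leq i$, the branch $L_1$ remains effective and contributes to the push-forward, so the algorithm never aborts the branch that preserves $a_1,\ldots,a_i$.

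The second step is to follow the pure-$L_1$ path through the entire algorithm. Along this path, $a_1,\ldots,a_i$ are never touched by any splitting, and each of the $k-s$ sub-quadrics is eventually driven to its maximum corank, where it splits into two linear subspaces. Collecting the surviving Schubert index produces an index $\bar a$ with $\bar a_t=a_t$ for $1\leq t\leq i$, and a multiplicative factor of $2^{k-s}$ from the $k-s$ terminal splittings. Every other branch, on the other hand, arises from taking $L_2$ at some splitting, which strictly decreases some $a_t$; since subsequent specializations can only further decrease entries, every resulting Schubert class $\sigma_\lambda$ satisfies $\lambda_t\leq a_t$ for $1\leq t\leq i$, with strict inequality in at least one such coordinate.

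The main obstacle is the bookkeeping required to pin the coefficient of $\sigma_{\bar a}$ to exactly $2^{k-s}$ and not larger: one must verify that no $L_2$-branch can later recreate the index $\bar a$ in its first $i$ coordinates. This will follow from the monotonicity observation that once $L_2$ has been invoked on some $a_t$ with $t\leq i$, the resulting strict drop is permanent, so the only contribution to $\sigma_{\bar a}$ indeed comes from the pure-$L_1$ path, and the factor $2^{k-s}$ arises entirely from the final splitting of each of the $k-s$ sub-quadrics into pairs of linear subspaces. Combining these observations yields the decomposition $i_*(\sigma_{a;b})=2^{k-s}\sigma_{\bar a}+\sum_\lambda c_\lambda\sigma_\lambda$ with the stated bounds on $\lambda$.
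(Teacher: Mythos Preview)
Your proposal follows the same approach as the paper: the paper does not give a separate formal proof of this lemma but instead presents it as the immediate consequence of the specialization discussion in the preceding paragraph (culminating in ``This observation gives the following lemma''). Your write-up expands that sketch---tracking the pure-$L_1$ branch, interpreting the numerical hypothesis as the $L_1$-effectiveness condition, and attributing the factor $2^{k-s}$ to the terminal splitting of each sub-quadric---in exactly the way the paper intends.
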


\begin{Thm}\label{multiindex}
Let $\sigma_{a;b}$ be a Schubert class in $OG(k,n)$. Let $a_i$ be an essential sub-index. Assume for all $j$ such that $b_j<a_{i+1}$, $$x_j\geq k-j+1-\left[\frac{n-b_j-(a_{x_j+1}-1)}{2}\right].$$
Then $a_i$ is multi-rigid if one of the following conditions hold:
\begin{itemize}
\item $i<s$ and $a_{i-1}+1=a_i\leq a_{i+1}-3$;
\item $i=s$, $a_s=a_{s-1}+1$ and $a_s\leq n-b_{k-s}-(s-x_j)-4$.
\end{itemize}
\end{Thm}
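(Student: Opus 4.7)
The strategy is to deduce multi-rigidity in $OG(k,n)$ from the generalized rigidity result of Theorem \ref{rigid index in mix} in the ambient Grassmannian, via the natural inclusion $i \colon OG(k,n) \hookrightarrow G(k,n)$. Let $X \subset OG(k,n)$ be an irreducible subvariety with $[X] = m \sigma_{a;b}$, $m \in \mathbb{Z}^+$. Its image $Y := i(X) \subset G(k,n)$ is again irreducible with class $m \cdot i_*(\sigma_{a;b})$. By Lemma \ref{maxindex}, this class expands in the Schubert basis as
\[
[Y] \;=\; 2^{k-s} m \, \sigma_{\bar{a}} \;+\; \sum_\lambda m c_\lambda \, \sigma_\lambda,
\]
where $\bar{a}_t = a_t$ and $\lambda_t \leq a_t$ for every $1 \leq t \leq i$. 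In the notation of Theorem \ref{rigid index in mix}, this forces $m_t = a_t$ for $t \leq i$ and identifies $\bar{a}$ as the unique Schubert index in $A_1 \cap \cdots \cap A_i$.

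I would next verify the hypothesis $\bar{a}_{i-1}+1 = \bar{a}_i \leq \bar{a}_{i+1} - 3$ of Theorem \ref{rigid index in mix} at the index $\bar{a}$. In Case 1 ($i<s$), one has $\bar{a}_{i-1}=a_{i-1}$, $\bar{a}_i=a_i$ and $\bar{a}_{i+1}=a_{i+1}$, so this reduces directly to the assumption $a_{i-1}+1=a_i\leq a_{i+1}-3$. In Case 2 ($i=s$), the value of $\bar{a}_{s+1}$ must be extracted from the iterative specialization algorithm described before Lemma \ref{maxindex}: applying the splitting rule at the moment the corank of the leftmost sub-quadric reaches $a_s-1$ produces $\bar{a}_{s+1} = n-b_{k-s}-(s-x_{k-s})-1$, so the arithmetic hypothesis $a_s\leq n-b_{k-s}-(s-x_j)-4$ rewrites exactly as $\bar{a}_s\leq \bar{a}_{s+1}-3$, while $\bar{a}_{s-1}+1=\bar{a}_s$ follows from $a_{s-1}+1=a_s$.

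Applying Theorem \ref{rigid index in mix} to $Y$ then produces a linear subspace $F_{a_i} \subset V$ of dimension $a_i$ with $\dim(F_{a_i} \cap \Lambda) \geq i$ for every $\Lambda \in X$. To upgrade to isotropicity, set $W := F_{a_i} \cap F_{a_i}^\perp$. Since each $\Lambda \in X$ is isotropic, $\Lambda \cap F_{a_i} \subset F_{a_i}^\perp$, hence $\Lambda \cap F_{a_i}\subset W$ and $\dim(\Lambda \cap W)\geq i$. Thus $W$ is an admissible candidate in the definition of $\gamma_i(Y)$, forcing $\dim W \geq \gamma_i(Y) = a_i$; combined with $W \subset F_{a_i}$, this yields $W = F_{a_i}$, so $F_{a_i}$ is isotropic and multi-rigidity of $a_i$ follows. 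The main obstacle will be Case 2: precisely computing $\bar{a}_{s+1}$ from the iterated push-forward procedure and verifying that the numerical hypothesis involving $x_j$ aligns exactly with the inequality $\bar{a}_s\leq\bar{a}_{s+1}-3$ required to invoke Theorem \ref{rigid index in mix}.
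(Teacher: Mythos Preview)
Your overall strategy---push forward along $i\colon OG(k,n)\hookrightarrow G(k,n)$, invoke Lemma~\ref{maxindex} and Theorem~\ref{rigid index in mix}, then argue isotropicity---is exactly the paper's approach, and your verification of the numerical hypothesis $\bar a_{i-1}+1=\bar a_i\leq\bar a_{i+1}-3$ is more explicit than what the paper writes. (Note that the stronger hypothesis ``for all $j$ with $b_j<a_{i+1}$'' in the theorem is precisely what lets you apply Lemma~\ref{maxindex} at index $i+1$ to pin down $\bar a_{i+1}$; you use this implicitly in Case~1.)

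The genuine gap is your isotropicity step. The implication ``$\Lambda$ isotropic $\Rightarrow \Lambda\cap F_{a_i}\subset F_{a_i}^\perp$'' is false: isotropicity of $\Lambda$ gives $q(v,w)=0$ only for $v,w\in\Lambda$, not for $v\in\Lambda\cap F_{a_i}$ and arbitrary $w\in F_{a_i}$. A concrete counterexample: in $V$ with basis $e_1,\dots,e_4$ and $q(e_1,e_3)=q(e_2,e_4)=1$, take $\Lambda=\langle e_1,e_2\rangle$ (isotropic) and $F=\langle e_1,e_3\rangle$; then $\Lambda\cap F=\langle e_1\rangle$ but $e_1\notin F^\perp$ since $q(e_1,e_3)=1$. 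So you cannot conclude $\dim(\Lambda\cap W)\geq i$, and the $\gamma_i$ argument collapses.

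The paper closes this step differently: it takes a general isotropic $G_{a_i-1}$ and the Schubert variety $\Sigma=\{\Lambda:\dim(\Lambda\cap G_{a_i-1}^\perp)\geq k-i+1\}$. If $F_{a_i}$ were not isotropic, then $\mathbb P(F_{a_i})\cap\mathbb P(G_{a_i-1}^\perp)$ would be a single point lying off the quadric $Q$, forcing $X\cap\Sigma=\emptyset$; but $m\sigma_{a;b}\cdot[\Sigma]\neq 0$ gives a contradiction. You should replace your $W$-argument with this intersection-theoretic one (or something equivalent).
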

\begin{proof}
Let $X$ be a representative of $m\sigma_{a;b}$, $m\in\mathbb{Z}^+$. Let $i:OG(k,n)\hookrightarrow G(k,n)$ be the natural inclusion. By Lemma \ref{maxindex}, $i(X)$ has class  $2^{k-s}m\sigma_{\bar{a}}+\sum_\lambda mc_\lambda\sigma_\lambda$, where $\bar{a}_t=a_t$ and $\sigma_\lambda$ are Schubert classes such that $\lambda_t\leq \bar{a}_t$, $1\leq t\leq i+1$. By Theorem \ref{rigid index in mix}, $\mu_i(X)=a_i$. Therefore there exists a linear subspace $F_{a_i}$ such that $\dim(\Lambda\cap F_{a_i})\geq i$ for all $\Lambda\in X$. Let $G_{a_i-1}$ be a general isotropic subspace of dimension $a_i-1$, and let $\Sigma:=\{\Lambda\in OG(k,n)|\dim(\Lambda\cap G_{a_i-1}^\perp)\geq k-i+1\}$ be the corresponding Schubert variety in $OG(k,n)$. If $F_{a_i}$ is not isotropic, then $\mathbb{P}(F_{a_i})$ meets $\mathbb{P}(G_{a_i-1})^\perp$ in a projective point outside $Q$, and therefore $X\cap \Sigma=\emptyset$. However, $m\sigma_{a;b}\cdot[\Sigma]\neq 0$, we reach a contradiction. Therefore $F_{a_i}$ is an isotropic subspace. 
\end{proof}

\begin{Rem}
If $a_{i+1}\neq a_i+1$ and either $a_{i+1}-a_i=2$ or $a_i-a_{i-1}\geq 2$, and furthermore this is no $j$ such that $a_i=b_j$, then it is possible to construct an irreducible subvariety that represents $m\sigma_{a;b}$ for every $m\in \mathbb{Z}^+$. We give a brief construction of such subvarieties, using the idea in \cite{Coskun2013} and \cite{Coskun2014RigidityOS}.

We first construct an irreducible subvariety in $G(s,a_s)$ (or in $G(s,a_s+1)$ if $i=s$) that represents the class $m\sigma_{a_\bullet}$. If $a_{i}-a_{i-1}\geq 2$, take a partial flag 
$$F_{a_1}\subset...\subset F_{a_{i-1}}\subset F_{a_i-2}\subset F_{a_i+1}\subset F_{a_{i+1}}\subset...\subset F_{a_s}.$$
Take a $\mathbb{P}^2$ in $\mathbb{P}(F_{a_i+1})$ that doesn't meet $\mathbb{P}(F_{a_i-2})$, and take a smooth plane curve $C$ of degree $m$ in it. Let $Y$ be the cone over $C$ with vertex $\mathbb{P}(F_{a_i-2})$. (If $a_i=2$, then take $Y=C$.) Define
$$Z:=\{\Lambda\in G(s,a_s)|\dim(\Lambda\cap F_{a_j})\geq j\text{ for }j\neq i,\mathbb{P}(\Lambda)\cap Y\text{ is linear, }\dim(\mathbb{P}(\Lambda)\cap Y)\geq i-1\}.$$
Then $Y$ is an irreducible subvariety in $G(s,a_s)$ that represents the class $m\sigma_{a\bullet}$. If $a_{i+1}-a_i=2$, then using the duality $G(s,a_s)\cong G(a_s-s,a_s)$, the class $m\sigma_{a}$ is taken to the class $m\sigma_{a'}$, where $a'_{i'}-a'_{i'-1}\geq 2$ for some essential $a'_{i'}$ with respect to the Schubert index $a'$. The previous construction gives an irreducible subvariety $Z'$ that represents the class $m\sigma_{a'}$ in $G(a_s-s,a_s)$. Let $Z$ be the image of $Z'$ under the duality. Then $Z$ is irreducible and represents the class $m\sigma_{a}$ in $G(s,a_s)$.

Now using the subvariety $Z$ we can construct an irreducible subvariety $X$ in $OG(k,n)$ that represents the class $m\sigma_{a;b}$. If either $i\neq s$ or $a_s\neq \left[\frac{n}{2}\right]$, then take a maximal isotropic subspace $F_{\left[n/2\right]}$. The previous construction gives an irreducible subvariety $Z$ in $G(s,F_{\left[n/2\right]})$ represents the class $m\sigma_a$. Let $X$ be the Zariski closure of the following locus
$$\{\Lambda\in OG(k,n)|(\Lambda\cap F_{\left[n/2\right]})\in Z,\dim(\Lambda\cap F_{b_j}^\perp)=k-j+1\}.$$
Then $X$ is an irreducible representative of $m\sigma_{a;b}$. If $a_i=\left[\frac{n}{2}\right]$, then the same description will work except we replace $F_{\left[n/2\right]}$ with $Q_{a_i+1}^{a_i-2}$.

\end{Rem}

\begin{Prop}\label{bmulti}
Let $\sigma_{a;b}$ be a Schubert class in $OG(k,n)$. If $b_j=a_i$ and $a_i$ satisfies the condition in Theorem \ref{multiindex}, then $b_j$ is also multi-rigid.
\end{Prop}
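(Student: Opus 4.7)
The plan is to reduce this to Theorem~\ref{multiindex} via a multi-rigidity analog of Lemma~\ref{atob}. Let $X$ be an irreducible representative of $m\sigma_{a;b}$ for some $m \in \mathbb{Z}^+$. Since $a_i$ satisfies the hypothesis of Theorem~\ref{multiindex}, we obtain an isotropic subspace $F_{a_i} \subset V$ of dimension $a_i$ such that $\dim(\Lambda \cap F_{a_i}) \geq i$ for every $\Lambda \in X$. The goal is to show that the same subspace works on the $b$-side, i.e.\ $\dim(\Lambda \cap F_{a_i}^{\perp}) \geq k - j + 1$ for every $\Lambda \in X$, and then to conclude that $F_{b_j} := F_{a_i}$ is the required isotropic subspace (which has the correct dimension since $b_j = a_i$).

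The key step is to upgrade Lemma~\ref{atob} from single Schubert classes to irreducible representatives of $m\sigma_{a;b}$. I would follow the same inductive scheme used there, on $a_i - i$ (base case) and on $j$ (outer induction): take a general point $p \in Q_X$ and restrict to $X_p = \{\Lambda \in X \mid p \in \mathbb{P}(\Lambda)\}$, which by Pieri's rule has class $m \sigma_{a';b'}$ with the indices $(a';b')$ exactly as in the original lemma. Everywhere in that proof the identity $[X_p] = \sigma_{a;b} = [X]$ is used only to force $X_p = X$ via irreducibility; for our $X$ the analogous identity reads $[X_p] = m\sigma_{a;b} = [X]$, and irreducibility of $X$ still forces $X_p = X$. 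The sweeping-out argument built from \cite[Lemma~6.2]{Coskun2014RigidityOS}, the hyperplane-section incidence correspondence, and the compatibility of $F_{a_i}^H := F_{a_i} \cap H$ with the inductive hypothesis carry over verbatim, because those steps depend on $X$ being an irreducible subvariety whose class has a given numerical shape, not on the multiplicity of that class.

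With the extended version of Lemma~\ref{atob} in hand, the hypothesis $\dim(F_{a_i} \cap \Lambda) \geq i$ for all $\Lambda \in X$ yields $\dim(F_{a_i}^{\perp} \cap \Lambda) \geq k - j + 1$ for all $\Lambda \in X$, which is exactly the multi-rigidity of $b_j$ with witness $F_{b_j} = F_{a_i}$.

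The main obstacle will be verifying that each step in the proof of Lemma~\ref{atob} really does only use the numerical class (times a positive integer) together with irreducibility of $X$. The most delicate point is the base case $a_i = i$, $j = 1$, where the argument reconstructs the isotropic subspace from the sub-quadric $Q_X$ swept out by the projective spaces parametrized by $X$: one needs to check that $Q_X$ has the same corank and dimension when $X$ represents $m \sigma_{a;b}$ as when $X$ represents $\sigma_{a;b}$, since those invariants govern the singular locus from which $F_{a_i}$ is recovered. This is a direct consequence of the fact that $X$ is irreducible and of the Pieri-type intersection numbers scaling linearly with $m$, but it is the place where a careful check is required.
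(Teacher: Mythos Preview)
Your outer induction on $j$ and the treatment of the step $j\ge 2$ match the paper's proof almost exactly: take a general point $q$ in the swept-out variety, pass to $X_q$ with class $m\sigma_{a'';b''}$, replace $F_{a_i}$ by $F_{a_i}^q=\mathrm{span}(q,q^\perp\cap F_{a_i})$, apply the inductive hypothesis, and drop back by one. That part is fine.

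The gap is in the base case $j=1$. First, a mislabeling: the step you flag as ``$a_i=i$'' is the trivial one ($F_i\subset\Lambda\Rightarrow\Lambda\subset F_i^\perp$). The nontrivial step in Lemma~\ref{atob} is $a_i>i$, and that is precisely where the proof invokes \cite[Lemma~6.2]{Coskun2014RigidityOS} to know that the swept-out locus $Q_X$ is an honest sub-quadric of the expected dimension and corank. Your justification (``Pieri-type intersection numbers scale linearly with $m$'') establishes only that $Q_X$ has the right dimension; it does not show $Q_X$ is a quadric, and the hyperplane-section argument in Lemma~\ref{atob} uses that structure to transport corank information from $Q_H$ back to $Q_X$. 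So as written, your reduction to Lemma~\ref{atob} is not complete: you would either have to reprove Coskun's lemma for irreducible representatives of $m\sigma_{a;b}$, or bypass it.

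The paper bypasses it. For $j=1$ it argues directly, with no induction on $a_i-i$ and no appeal to Coskun's lemma: let $Y$ be the variety swept out by $X$; for a general $p\in\mathbb{P}(F_{a_i})$ let $Z$ be the variety swept out by $X_p$. Two Pieri-type intersections show $\dim Y=\dim Z=n-a_i-2$; since $Z\subset Y$ and $Y$ is irreducible (via the incidence correspondence), $Y=Z$. But $Z\subset p^\perp$, so $Y\subset p^\perp$ for every such $p$, hence $Y\subset\mathbb{P}(F_{a_i}^\perp)\cap Q$, and equality follows by comparing dimensions. This uses only irreducibility of $Y$ and dimension counts, so it works for any positive multiple $m$ without further input. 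If you want to make your approach go through, this is the replacement you need at $j=1$.
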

\begin{proof}
Let $X$ be an irreducible representative of $m\sigma_{a;b}$, $m\in\mathbb{Z}^+$. Let $Y$ be the variety swept out by $\mathbb{P}^{k-1}$ parametrized by $X$. Consider the incidence correspondence
$$I:=\{(p,\mathbb{P}^{k-1})|p\in\mathbb{P}^{k-1},\mathbb{P}^{k-1}\in X\}.$$
Then $Y$ is the image of $I$ under the first projection. Under the second projection, $I$ maps onto $X$. Since $X$ is irreducible and all fibers of the second projection have dimension $k-1$ and are irreducible, by the fiber dimension theorem, $I$ is irreducible. Being the image of an irreducible variety, $Y$ is also irreducible.

By assumption, there is a unique isotropic subspace $F_{a_i}$ such that $\dim(\Lambda\cap F_{a_i})\geq i$ for all $\Lambda\in X$. We use induction on $j$. If $j=1$, then it suffices to show $Y=\mathbb{P}(F_{a_i}^\perp)\cap Q$. Take a general point $p\in\mathbb{P}(F_{a_i})$. Set $X_p:=\{\Lambda\in X|p\in\Lambda\}$. The class of $X_p$ is given by $m\sigma_{a';b'}$ where $a'_1=1$, $a'_{i'}=a_{i'-1}+1$ if $2\leq i'\leq i$, $a'_{i'}=a_{i'}$ if $i'>i$, $b'_{j'}=b_{j'}$ for $1\leq j'\leq k$. Let $Z$ be the variety swept out by $\mathbb{P}^{k-1}$ parametrized by $X_p$. Let $G_{a_{i}}$ and $G_{a_i+1}$ be two general isotropic subspaces of dimension $a_i$ and $a_i+1$ respectively. Let $\Sigma_{a_i}:=\{\Lambda\in OG(k,n)|\dim(\Lambda\cap G_{a_i})\geq 1\}$ and $\Sigma_{a_i+1}:=\{\Lambda\in OG(k,n)|\dim(\Lambda\cap G_{a_i+1})\geq 1\}$ be the corresponding Schubert varieties. By intersecting $X$ and $X_p$ with the Schubert varieties $\Sigma_{a_i}$ and $\Sigma_{a_i+1}$, we obtain that $\dim(Y)=\dim(Z)=n-a_i-2$. Since $Y$ is irreducible and by the construction it is necessary that $Z\subset Y$, we conclude that $Y=Z$. Also notice that for every $\Lambda\in X_p$, $\Lambda\subset p^\perp$ and hence $Z\subset p^\perp$. Since $Y=Z\subset Y\cap p^\perp$, $Y$ must be contained in $p^\perp$ for every $p\in F_{a_i}$. Therefore $Y\subset \mathbb{P}(F_{a_i}^\perp)\cap Q=Q_{n-a_i}^{a_i}$, where $Q_{n-a_i}^{a_i}$ is a sub-quadric of corank $a_i$ and dimension $n-a_i-2$. In particular, $Q_{n-a_i}^{a_i}$ is irreducible and therefore $Y=Q_{n-a_i}^{a_i}$ as desired.

Now assume $j\geq 2$. Let $q$ be a general point in $Y$ and set $X_q:=\{\Lambda\in X|q\in\Lambda\}$. Then the class of $X_q$ is given by $m\sigma_{a'';b''}$, where $a''_1=1$, $a''_{i''+1}=a_{i''}+1$ if $a_{i''}\leq b_1$, $a''_{i''+1}=a_{i''}$ if $a_{i''}>b_1$, $b''_{j''}=b_{j''+1}$ for $1\leq j''\leq k-s-1$. In particular, $a''_{i+1}=b''_{j-1}$ and $a''_{i+1}$ satisfies the condition in Theorem \ref{multiindex}. Set $F_{a_i}^q=$span$(q,q^\perp\cap F_{a_i})$. Then for all $\Lambda\in X_q$, $\Lambda\subset q^\perp$ and therefore
\begin{eqnarray}
\dim(\Lambda\cap F_{a_i}^q)&=&\dim(\Lambda\cap \text{span}(q,q^\perp\cap F_{a_i}))\nonumber\\
&\geq&1+\dim(\Lambda\cap F_{a_i})\nonumber\\
&\geq&1+i\nonumber
\end{eqnarray}
By induction, we assume $\dim(\Lambda\cap (F_{a_i}^q)^\perp)\geq k-j+2$ for all $\Lambda\in X_q$. Since $F_{a_i}^p$ intersects $F_{a_i}$ in a codimension $1$ subspace, we have all $\Lambda\in X_q$,
\begin{eqnarray}
\dim(\Lambda\cap F_{a_i}^\perp)&=&\dim(\Lambda)-\dim(F_{a_i})+\dim(\Lambda^\perp\cap F_{a_i})\nonumber\\
&\geq&\dim(\Lambda)-\dim(F_{a_i})+\dim(\Lambda^\perp\cap F^p_{a_i})-1\nonumber\\
&\geq&\dim(\Lambda\cap (F_{a_i}^p)^\perp)-1\nonumber\\
&\geq&k-j+1.\nonumber
\end{eqnarray}
Since $X_q$ covers $X$ as we varying $q\in Y$, $\dim(\Lambda\cap F_{a_i}^\perp)\geq k-j+1$ for all $\Lambda\in X$. We conclude that the statement is true by induction.
\end{proof}

\begin{Cor}
Let $\sigma_{a;b}$ be a Schubert class in $OG(k,n)$. If all essential $a_i$ satisfies the condition in Theorem \ref{multiindex}, and for every essential $b_j$, $b_j=a_i$ for some $i$, then $\sigma_{a;b}$ is multi-rigid.
\end{Cor}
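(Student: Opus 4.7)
The plan is to mimic the proof of Theorem \ref{rigid in og} (rigidity of a Schubert class from rigidity of its essential sub-indices), substituting Theorem \ref{multiindex} and Proposition \ref{bmulti} for their rigidity analogs, and reducing to the irreducible case via Chow-class bookkeeping.

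First I would reduce to irreducible representatives. Let $X$ be any subvariety of $OG(k,n)$ with $[X] = m\sigma_{a;b}$ and decompose $X = X_1 \cup \cdots \cup X_r$ into irreducible components. Since Schubert classes form a free basis of $A(OG(k,n))$, each $[X_p]$ must be a non-negative integer multiple of $\sigma_{a;b}$, say $[X_p] = m_p\sigma_{a;b}$ with $\sum m_p = m$. It therefore suffices to show that every irreducible $Y$ with $[Y] = m'\sigma_{a;b}$ is a Schubert variety, which then forces $m' = 1$.

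Fix such $Y$. By hypothesis every essential $a_i$ satisfies the condition of Theorem \ref{multiindex} and is therefore multi-rigid, producing an isotropic subspace $F_{a_i}$ of dimension $a_i$ with $\dim(\Lambda \cap F_{a_i}) \geq i$ for every $\Lambda \in Y$. For each essential $b_j$, the hypothesis supplies some $i$ with $b_j = a_i$; the Schubert-index constraint $a_{i+1} \neq b_j + 1$ forces $a_{i+1} > a_i + 1$ when $i < s$, so this $a_i$ is essential (the $i = s$ case is similarly controlled by the non-adjacency condition). Proposition \ref{bmulti} then delivers an isotropic subspace $F_{b_j}$ with $\dim(\Lambda \cap F_{b_j}^\perp) \geq k - j + 1$ for all $\Lambda \in Y$.

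Next I would verify that the collection $\{F_{a_i}\} \cup \{F_{b_j}\}$ organizes into a single flag $F_\bullet$ of isotropic subspaces. This flag-compatibility step is modeled on Lemma \ref{cbile}: if two such subspaces failed the expected inclusion, taking their span would yield an isotropic subspace of strictly larger dimension obeying a stronger rank condition on all $\Lambda \in Y$, contradicting essentiality of the next sub-index. The arguments of Lemma \ref{atob} and Lemma \ref{rankbound} rely only on intersection with auxiliary Schubert varieties and irreducibility of the representative, so they transport verbatim from the $[X] = \sigma_{a;b}$ setting to the multi-rigid setting $[Y] = m'\sigma_{a;b}$, since multiplying by $m'$ preserves the non-vanishing of the relevant intersection products. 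Once $F_\bullet$ is in hand, the rank conditions place $Y$ inside $\Sigma_{a;b}(F_\bullet)$; irreducibility of $Y$ and the equality $\dim Y = \dim \sigma_{a;b} = \dim \Sigma_{a;b}(F_\bullet)$ force $Y = \Sigma_{a;b}(F_\bullet)$ and $m' = 1$.

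The main obstacle will be the flag-compatibility step. Because $a_i$ and $b_j$ can coincide in the orthogonal setting, one must use Lemma \ref{atob} to merge the pair $(F_{a_i}, F_{b_j})$ into a single flag element at that dimension, while Lemma \ref{rankbound} handles nesting among the $F_{b_j}$'s themselves. Particular care is needed to check that each of the compatibility lemmas remains valid when the ambient representative has class $m'\sigma_{a;b}$ rather than $\sigma_{a;b}$, since this is precisely the distinction between ordinary rigidity and multi-rigidity; the key observation is that specializing $Y$ to a union of Schubert varieties (or more precisely, computing intersection products against test Schubert varieties) still yields the same numerical obstructions that drove the $m=1$ arguments in \S\ref{type BD}.
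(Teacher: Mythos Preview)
Your proposal is correct and follows essentially the same approach as the paper: obtain the isotropic subspaces $F_{a_i}$ and $F_{b_j}$ from Theorem \ref{multiindex} and Proposition \ref{bmulti}, verify they form a partial flag, and conclude by dimension that $Y$ is the Schubert variety. The paper's proof is terser---it simply invokes ``a similar proof as in Lemma \ref{coincide}'' for flag compatibility rather than Lemma \ref{cbile}, and omits your explicit reduction to irreducible components---but the architecture is the same, and your caution that the compatibility lemmas must be rechecked for class $m'\sigma_{a;b}$ is exactly the point the paper glosses over.
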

\begin{proof}
Let $X$ be an irreducible representative of $m\sigma_{a;b}$. By Proposition \ref{bmulti}, there is a unique isotropic subspace corresponding to each essential $a_i$. A similar proof as in Lemma \ref{coincide} shows that those isotropic subspaces form a partial flag. It is then easy to see that $X$ is the Schubert variety defined by this partial flag.
\end{proof}

\section{Applications to symplectic Grassmannians/partial flag varieties}\label{type C}
\subsection{Symplectic Grassmannians}Let $V$ be a complex vector space of dimension $n$. Assume $n=2m$ is even. Let $\omega$ be a non-degenerate, skew-symmetric bilinear form on $V$. A subspace $W$ is called isotropic if for every $v,w\in W$, $\omega(v,w)=0$. The symplectic Grassmannian $SG(k,n)$ parametrizes all $k$-dimensional isotropic subspaces of $V$ with respect to $\omega$. It is a homogeneous variety $G/P$ with $G=Sp(n)$ and $P$ a maximal parabolic subgroup of $G$. 

The Schubert varieties for $SG(k,n)$ can be defined similarly using an isotropic flag.
\begin{Def}
A Schubert index for $SG(k,n)$ consists of two increasing sequences of non-negative integers
$$1\leq a_1<...<a_s\leq m,$$
$$0\leq b_1<...<b_{k-s}\leq m-1,$$
such that $a_i-b_j\neq 1$ for any $1\leq i\leq s$ and $1\leq j\leq k-s$.

Given an isotropic flag
$$F_{a_1}\subset...\subset F_{a_s}\subset F_{b_{k-s}}^\perp\subset...\subset F_{b_1}^\perp,$$
the Schubert variety $\Sigma_{a;b}(F_\bullet)$ is defined to be the Zariski closure of the following locus:
$$\Sigma^\circ_{a;b}(F_\bullet):=\{\Lambda\in SG(k,n)|\dim(\Lambda\cap F_{a_i})=i,\dim(\Lambda\cap F_{b_j}^\perp)=k-j+1\}.$$
\end{Def}

Same as in the previous sections, we ask for the rigidity on each sub-index:
 \begin{Def}
Let $\sigma_{a;b}$ be a Schubert class in $SG(k,n)$. A sub-index $a_i$ is called {\em rigid} if for every subvariety $X$ of $SG(k,n)$ representing $\sigma_{a;b}$, there exists a unique isotropic subspace $F_{a_i}$ of dimension $a_i$ such that $$\dim(\Lambda\cap F_{a_i})\geq i,\ \ \ \forall\Lambda\in X.$$
Similarly, a sub-index $b_j$ is called {\em rigid} if for every subvariety $X$ representing $\sigma_{a;b}$, there exists a unique isotropic subspace $F_{b_j}$ of dimension $b_j$ such that $$\dim(\Lambda\cap F_{b_j}^\perp)\geq k-j+1\ \ \ \forall\Lambda\in X.$$
\end{Def}

Consider the natural inclusion $SG(k,n)\hookrightarrow G(k,n)$. The push-forward of a Schubert class $\sigma_{a;b}$ under $i$ can be computed using Coskun's geometric branching rule \cite{Coskun2012}. In particular, if $a_i=i$ and either $a_{i+1}-a_i\geq 3$ or $i=s$ and $n\geq 2k+2$, then the push-forward class is a finite sum $\sum_\lambda c_\lambda\sigma_\lambda$, where $c_\lambda$ are positive integers and $\sigma_\lambda$ are Schubert classes in $G(k,n)$ such that $\lambda_i=1$ for all $\lambda$, and there exists at least one $\lambda$ such that $\lambda_{i+1}-\lambda_i\geq 3$. Now combine with Theorem \ref{rigid index in mix}, we obtain the multi-rigidity (and hence rigidity) of such Schubert classes:
\begin{Cor}\label{SG1}
Let $\sigma_{a;b}$ be a Schubert class in $SG(k,n)$. If $a_i=i$ and
\begin{itemize}
\item either $a_{i+1}-a_i\geq 3$, or
\item $i=s$ and $n\geq 2k+2$
\end{itemize}
then the sub-index $a_i$ is rigid.
\end{Cor}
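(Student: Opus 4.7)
The plan is to use the natural inclusion $\iota \colon SG(k,n) \hookrightarrow G(k,n)$ in order to transport the problem to the Grassmannian, and then to apply the generalized rigidity result of Theorem \ref{rigid index in mix}. Let $X$ be an arbitrary irreducible representative of $\sigma_{a;b}$ in $SG(k,n)$ and view $\iota(X)$ as an irreducible subvariety of $G(k,n)$. The first step is to compute $[\iota(X)] \in A^*(G(k,n))$ via Coskun's geometric branching rule \cite{Coskun2012}. Under the hypothesis $a_i = i$ together with either $a_{i+1}-a_i \geq 3$ or $i = s$ and $n \geq 2k+2$, the rule (as recalled in the paragraph preceding the corollary) gives an expansion
$$[\iota(X)] = \sum_\lambda c_\lambda \sigma_\lambda, \qquad c_\lambda \in \mathbb{Z}^+,$$
in which every index $\lambda$ satisfies $\lambda_t = t$ for all $t \leq i$, and at least one such $\lambda$ has $\lambda_{i+1} - \lambda_i \geq 3$.

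Next, I would apply Theorem \ref{rigid index in mix} to $\iota(X)$ with respect to the sub-index $i$. Here $m_i = i$, and the lex-maximal set $A_i$ from the statement of that theorem is exactly the collection of $\lambda$ with $\lambda_t = t$ for $t \leq i$. The numerical condition $\lambda_{i-1}+1 = i = m_i \leq \lambda_{i+1} - 3$ required by the theorem is furnished by the $\lambda$ produced in the previous step, so Theorem \ref{rigid index in mix} yields $\gamma_i(\iota(X)) = i$. In other words, there exists a linear subspace $F_i \subset V$ of dimension $i$ contained in every $\Lambda \in X$. In this base case $m_i = i$, the proof of Theorem \ref{rigid index in mix} reduces to Corollary \ref{1 rigid in sum}, which supplies the \emph{uniqueness} of $F_i$.

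Finally, since every $\Lambda \in X$ is isotropic and $F_i \subset \Lambda$, the symplectic form $\omega$ vanishes identically on $F_i$, so $F_i$ is an isotropic subspace. Combined with the uniqueness, this identifies $F_i$ as the unique isotropic subspace of dimension $a_i = i$ with $\dim(F_i \cap \Lambda) \geq i$ for every $\Lambda \in X$, which is precisely the rigidity of the sub-index $a_i$.

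The main obstacle I anticipate is verifying the asserted shape of the push-forward class in both hypothesized cases. When $a_{i+1} - a_i \geq 3$, the "leading" summand $\sigma_a$ in the branching-rule expansion already carries the required gap $\lambda_{i+1} - \lambda_i \geq 3$, so the analysis is essentially formal. The case $i = s$ with $n \geq 2k+2$ is more delicate: one must trace how the $b$-block degenerates during the successive specializations of the branching procedure to confirm that no intermediate class in $A_i$ closes the gap below $3$. Once this combinatorial check is in place, the rigidity conclusion is an immediate consequence of the Grassmannian machinery developed earlier in the paper.
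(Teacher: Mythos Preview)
Your proposal is correct and follows essentially the same route as the paper: push forward along $\iota\colon SG(k,n)\hookrightarrow G(k,n)$, invoke Coskun's symplectic branching rule to control the shape of the class, and then apply Theorem~\ref{rigid index in mix} (which in this base case $m_i=i$ reduces to Corollary~\ref{1 rigid in sum}). The only point you add beyond the paper's sketch is the explicit isotropy check for $F_i$; your argument that $F_i\subset\Lambda$ with $\Lambda$ isotropic forces $F_i$ isotropic is of course valid and makes this step simpler than the analogous intersection-theoretic verification used in the orthogonal case (Theorem~\ref{multiindex}).
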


\begin{Cor}
The Schubert class $\sigma_{a;b}$, where $a=(1,...,i)$ and $b=(i,i+1,...,k-1)$, $1\leq i\leq k$ is rigid for $SG(k,n)$, $n\geq 2k+2$.
\end{Cor}
\begin{proof}
By Corollary \ref{SG1}, for every representative $X$ of $\sigma_{a;b}$, there exists a unique isotropic subspace $F_i$ of dimension $i$ that is contained in every $\Lambda$ in $X$. The inclusion $F_i\subset\Lambda$ implies $\Lambda\subset F_i^\perp$, and therefore $X$ is contained in the Schubert variety $\Sigma_{a;b}:=\{\Lambda\in SG(k,n)|F_i\subset \Lambda\subset F_i^\perp\}$. By comparing the dimension, we conclude that $X$ is the Schubert variety $\Sigma_{a;b}$. 
\end{proof}

\subsection{Symplectic partial flag varieties}Let $V$ and $\omega$ be defined as before. The symplectic partial flag variety $SF(d_1,...,d_k;n)$ parametrizes $k$-step isotropic partial flags $\Lambda_1\subset...\subset \Lambda_k$. When $k=1$, we obtain the symplectic Grassmannian $SG(d_1,n)$.

Fix an isotropic flag 
$F_1\subset ...\subset F_{m}\subset F_{m-1}^\perp...\subset F_1^\perp\subset F_0^\perp=V$ and set
$$\mu_{i,t}:=\#\{c|a_c\leq a_i,\alpha_c\leq t\},$$
$$\nu_{j,t}:=\#\{d|\alpha_d\leq t\}+\#\{e|b_e\geq b_j,\beta_e\leq t\}.$$
The Schubert variety $\Sigma_{a^\alpha;b^\beta}$ is then defined to be the Zariski closure of the following locus:
\begin{eqnarray}
\Sigma_{a^\alpha;b^\beta}:=\{(\Lambda_1,...,\Lambda_k)\in SF(d_1,...,d_k;n)&|&\dim(F_{a_i}\cap \Lambda_t)= \mu_{i,t}, \alpha_i\leq t,\nonumber\\
& &\dim(F^\perp_{b_j}\cap \Lambda_t)=\nu_{j,t},\beta_j\leq t\}.\nonumber
\end{eqnarray}
Using a similar argument for orthogonal partial flag varieties, we can deduce the rigidity in $SF(k,n)$ from the rigidity in $SG(k,n)$.
\begin{Prop}
Let $\sigma_{a^\alpha;b^\beta}$ be a Schubert class in $SF(d_1,...,d_k;n)$. Let $X$ be a representative of $\sigma_{a^\alpha;b^\beta}$. Let $F_{a_i}$ and $F_{b_j}$ be isotropic subspaces of dimension $a_i$ and $b_j$ respectively. If for some $\alpha_i\leq t\leq k$, $$\dim(F_{a_i}\cap\Lambda_t)\geq \mu_{i,t}\text{ for all }(\Lambda_1,...,\Lambda_k)\in X,$$ then the same inequailty holds for all $\alpha_i\leq t'\leq k$.

Similarly, if for some $\beta_j\leq t\leq k$, $$\dim(F_{b_j}^\perp\cap\Lambda_t)\geq \nu_{j,t}\text{ for all }(\Lambda_1,...,\Lambda_k)\in X,$$ then the same inequality holds for all $\beta_j\leq t'\leq k$.
\end{Prop}

\bibliographystyle{plain}
\begin{thebibliography}{10}
\bibitem{BH}
Borel, A. and Haefliger, A. 
\newblock La classe d'homologie fondamentale d'un espace analytique. 
\newblock {\em Bulletin de la Société Mathématique de France }, Volume 89 (1961), pp. 461-513.

\bibitem{RB2000}
Bryant, R.
\newblock Rigidity and quasi-rigidity of extremal cycles in compact Hermitian symmetric spaces.
\newblock {\em math. DG.} /0006186.  

\bibitem{Coskun2011RigidAN}
Coskun, I.
\newblock Rigid and non-smoothable Schubert classes.
\newblock {\em Journal of Differential Geometry }, 87:493--514, (2011).

\bibitem{Coskun2011RestrictionVA}
Coskun, I.
\newblock Restriction varieties and geometric branching rules.
\newblock {\em Advances in Mathematics }, 228:2441--2502, (2011).

\bibitem{Coskun2012}
Coskun, I. 
\newblock Symplectic restriction varieties and geometric branching rules.
\newblock {\em Clay Mathematics Proceedings}, 18:205-239, (2013).

\bibitem{Coskun2013}
Coskun, I. and Robles, C.
\newblock Flexibility of Schubert classes.
\newblock {\em Differential Geometry and its Applications}, 31:759-774, (2013).

\bibitem{Coskun2014RigidityOS}
Coskun, I.
\newblock Rigidity of Schubert classes in orthogonal Grassmannians.
\newblock {\em Israel Journal of Mathematics }, 200:85--126, (2014).

\bibitem{3264}
Eisenbud, D. and Harris, J.
\newblock 3264 and all that: a second course in algebraic geometry.
\newblock {\em Cambridge University Press}, (2016).

\bibitem{Ho1}
Hong, J. 
\newblock Rigidity of Smooth Schubert Varieties in Hermitian Symmetric Spaces. 
\newblock {\em Transactions of the American Mathematical Society } 359, no. 5 (2007): 2361–81.

\bibitem{HM}
Hong, J. and Mok, N.
\newblock Characterization of smooth Schubert varieties in rational homogeneous manifolds of Picard number 1.
\newblock {\em Journal of Algebraic Geometry } 22,  (2012): 333–362.

\bibitem{HM2}
Hong, J. and Mok, N.
\newblock Schur rigidity of Schubert varieties in rational homogeneous manifolds of Picard number one.
\newblock {\em Selecta Mathematica} 26,  (2020): 1–27.

\bibitem{YL}
Liu, Y.
\newblock The rigidity problem in orthogonal Grassmannians.
\newblock arXiv:2210.14540

\bibitem{YL2}
Liu, Y.
\newblock The rigidity problem of Schubert varieties,
\newblock Ph.D. thesis, University of Illinois at Chicago, 2023.

\bibitem{RT}
Robles, C. and The, D.
\newblock Rigid Schubert varieties in compact Hermitian symmetric spaces.
\newblock {\em Selecta Mathematica } 18 (2011): 717-777.

\bibitem{Walter}
Walters, M. 
\newblock Geometry and uniqueness of some extreme subvarieties in complex Grassmannians,
\newblock Ph.D. thesis, University of Michigan, 1997. 
\end {thebibliography}

\end{document}